\newcommand{\p}{{\bf{p}}}
\newtheorem{theorem}{Theorem}[section]
\newtheorem{lemma}[theorem]{Lemma}
\newtheorem{proposition}[theorem]{Proposition}
\newtheorem{corollary}[theorem]{Corollary}
\newtheorem{definition}[theorem]{Definition}
\newtheorem{definitions}[theorem]{Definitions}
\newtheorem{Example}[theorem]{Example}
\newtheorem{remark}[theorem]{\normalfont{\em{Remark}}}
\newtheorem{remarks}[theorem]{\normalfont{\em{Remarks}}}
\newtheorem{fact}[theorem]{Fact}
\renewcommand*\env@matrix[1][\arraystretch]{%
\edef\arraystretch{#1}%
\hskip -\arraycolsep
\let\@ifnextchar\new@ifnextchar
\array{*\c@MaxMatrixCols c}}
\title[\tiny{Anosov representations, strongly convex cocompact groups and weak eigenvalue gaps}]{Anosov representations, strongly convex cocompact groups and weak eigenvalue gaps}
\author{Konstantinos Tsouvalas}
\date{\today}
\date{\today}
\begin{document}
\frenchspacing
\maketitle
\begin{abstract} We provide characterizations of Anosov representations of word hyperbolic groups into real semisimple Lie groups in terms of the existence of equivariant limit maps on the Gromov boundary, the Cartan property and the uniform gap summation property introduced by Guichard--Gu\'eritaud--Kassel--Wienhard in \cite{GGKW}. We also study representations of finitely generated groups satisfying weak uniform gaps in eigenvalues and establish conditions to be Anosov. As an application, we also obtain a characterization of strongly convex cocompact subgroups of the projective linear group $\mathsf{PGL}_d(\mathbb{R})$. 

\end{abstract}
\vspace{0.6cm}
\section{Introduction}
\par Anosov representations of fundamental groups of closed negatively curved Riemannian manifolds were introduced by Labourie \cite{labourie} in his study of the Hitchin component. Labourie's definition was later extended by Guichard--Wienhard in \cite{GW} for general word hyperbolic groups. Anosov representations have been extensively studied during the last decade by Guichard--Wienhard \cite{GW}, Kapovich--Leeb--Porti \cite{KLP1,KLP2, KLP3}, Bochi--Potrie--Sambarino \cite{BPS}, Gu\'eritaud--Guichard--Kassel--Wienhard \cite{GGKW}, Danciger--Gu\'eritaud--Kassel \cite{DGK0}, Zimmer \cite{Zimmer} and others, and are now are recognized as a higher rank analogue of convex cocompact representations of word hyperbolic groups into simple Lie groups of real rank $1$. Moreover, recently, there have been introduced certain generalizations of classical Anosov representations for relatively hyperbolic groups and other groups; we refer to the work of Kapovich--Leeb \hbox{\cite{KLrel}, Zhu \cite{Zhu} and Weisman \cite{Weisman} for more details.}

\par Based on the existing characterizations established in \cite{GW, GGKW, KLP1, KLP2, KLP3, BPS, kassel-potrie}, one may define Anosov representations of a hyperbolic group into a semisimple Lie group in terms of the existence of a pair of well-behaved limit maps from the Gromov boundary of the domain group to the corresponding flag spaces, or entirely in terms of uniform gaps in the Cartan or Lyapunov projection of the image of the representation. The purpose of the present paper is to provide new characterizations and strengthen some of the existing ones. Our characterizations are in terms of the existence of limit maps, the Cartan property (see subsection \ref{Cartan0}) and the uniform gap summation property introduced in \cite{GGKW}. As an application of our main results, we also obtain characterizations of strongly convex cocompact subgroups of the projective linear group $\mathsf{PGL}_d(\mathbb{R})$ \hbox{(see subsection \ref{scc0}).} More generally, we study linear representations of finitely generated groups satisfying weak uniform gaps in eigenvalues and we establish sufficient conditions for the domain group to be word hyperbolic and the representation to be Anosov (see sub-section \ref{weak0}). In order to provide such conditions, we study the relation between strong property (U), introduced by Kassel--Potrie in \cite{kassel-potrie}, and the uniform gap summation property. More precisely, we prove that a finitely generated non-virtually nilpotent group $\Gamma$ which admits a linear representation with the uniform gap summation property (see Definition \ref{ugspdef}), then $\Gamma$ satisfies strong property (U) which is a condition relating the word length and the stable translation length of certain group elements (see Theorem \ref{nontrivial}). 

\subsection{Characterizations in terms of limit maps and the Cartan property} \label{Cartan0} Let $\Gamma$ be an infinite word hyperbolic group, $G$ be a linear, non-compact semisimple Lie group with finitely many connected components and fix $K$ a maximal compact subgroup of $G$. We also fix a Cartan subspace $\mathfrak{a}$ of $\mathfrak{g}$, $\overline{\mathfrak{a}}^{+}$ a closed Weyl chamber of $\mathfrak{a}$, a Cartan decomposition $G=K\exp(\overline{\mathfrak{a}}^{+})K$ and consider the Cartan projection $\mu:G \rightarrow \overline{\mathfrak{a}}^{+}$. 

\par Every subset $\theta \subset \Delta$ of simple restricted roots of $G$ defines a pair of opposite parabolic subgroups $P_{\theta}^{+}$ and $P_{\theta}^{-}$, well-defined up to conjugation. Labourie's dynamical definition of a $P_{\theta}$-Anosov representation $\rho:\Gamma \rightarrow G$ requires the existence of a pair of continuous $\rho$-equivariant maps from the Gromov boundary $\partial_{\infty}\Gamma$ to the flag spaces $G/P_{\theta}^{+}$ and $G/P_{\theta}^{-}$ called the \emph{Anosov limit maps} of $\rho$ (see Definition \ref{Def-of-Anosov-rep}). Our first characterization of Anosov representations is based on the existence of a pair of transverse continuous, equivariant limit maps on the Gromov boundary of the domain group, one of which satisfies the Cartan property: \begin{theorem}\label{maintheorem} Let $\Gamma$ be a word hyperbolic group, $G$ a real semisimple Lie group, $\theta \subset \Delta$ a subset of simple restricted roots of $G$ and $\rho:\Gamma \rightarrow G$ a representation. Then $\rho$ is $P_{\theta}$-Anosov if and only if the following conditions are simultaneously satisfied:
\begin{enumerate}[label=(\roman*)]
\item \label{maintheorem-1} $\rho$ is $P_{\theta}$-divergent.
\item \label{maintheorem-2} There exists a pair of continuous, $\rho$-equivariant transverse maps 
$$\xi^{+}:\partial_{\infty}\Gamma \rightarrow G/P_{\theta}^{+} \ \ and \ \ \xi^{-}:\partial_{\infty}\Gamma \rightarrow G/P_{\theta}^{-}$$ 
and the map $\xi^{+}$ satisfies the Cartan property.\end{enumerate} \end{theorem}

Let us now briefly explain the assumptions of Theorem \ref{maintheorem}. For a representation $\rho:\Gamma \rightarrow G$ of a hyperbolic group $\Gamma$, two $\rho$-equivariant maps \hbox{$\xi^{+}:\partial_{\infty}\Gamma \rightarrow G/P_{\theta}^{+}$} and $\xi^{-}:\partial_{\infty}\Gamma \rightarrow G/P_{\theta}^{-}$ are \emph{transverse}, if for any two distinct points  $x^{+},x^{-} \in \partial_{\infty}\Gamma$ there is $g \in G$ such that $\xi^{+}(x^{+})=gP_{\theta}^{+}$ and $\xi^{-}(x^{-})=gP_{\theta}^{-}$. The representation \hbox{$\rho:\Gamma \rightarrow G$} is \hbox{$P_{\theta}$-divergent} if for every infinite sequence $(\gamma_n)_{n \in \mathbb{N}}$ of elements of $\Gamma$ and $\alpha \in \theta$, the sequence $\big( \alpha(\mu(\rho(\gamma_n))) \big)_{n \in \mathbb{N}}$ goes to infinity. The map $\xi^{+}:\partial_{\infty}\Gamma \rightarrow G/P_{\theta}^{+}$ satisfies the \emph{Cartan property} if for every sequence $(\gamma_n)_{n \in \mathbb{N}}$ of elements of $\Gamma$ converging to a point $x\in \partial_{\infty}\Gamma$ in the Gromov boundary, then $\xi^{+}(x)=\lim_{n}k_{n}P_{\theta}^{+}$, where $\rho(\gamma_n)=k_{n}\exp(\mu(\rho(\gamma_n)))k_{n}'$, \hbox{$k_n,k_n'\in K$,} is written in the Cartan decomposition of $G$. Examples of maps with this property are the limit maps of an Anosov representation (see \cite{BPS} and \cite[Thm. 1.3 (4) \& 5.3 (4)]{GGKW}). We discuss the Cartan property in more detail in \S \ref{scartan}, where we prove (see Corollary \ref{Cartan}) that for any Zariski dense representation $\rho:\Gamma \rightarrow G$ a (necessarily unique if it exists) continuous $\rho$-equivariant map $\xi:\partial_{\infty}\Gamma \rightarrow G/P_{\theta}^{\pm}$ has to satisfy the Cartan property.

\par In Theorem \ref{maintheorem} the assumption that the map $\xi^{+}$ satisfies the Cartan is necessary and cannot be dropped (see Example \ref{cp-necessity}). Moreover, we do not assume that the image $\rho(\Gamma)$ contains a $P_{\theta}$-proximal element in $G/P_{\theta}^{\pm}$ or that the pair of maps $(\xi^{+},\xi^{-})$ is compatible at some point $x \in \partial_{\infty}\Gamma$, i.e. the intersection $\textup{Stab}_{G}(\xi^{+}(x)) \cap \textup{Stab}_{G}(\xi^{-}(x))$ is a parabolic subgroup of $G$. Under the assumption that both maps $(\xi^{+}, \xi^{-})$ satisfy the Cartan property, Theorem \ref{maintheorem} also follows from \cite[Thm 1.7]{KLP3}. We explain how Theorem \ref{maintheorem} is related to \cite[Thm. 1.7]{KLP3}, \cite[Thm. 5.47]{KLP1} and \cite[Thm. 1.3]{GGKW} at the end of this section.

\par Let $\Gamma$ be a finitely generated group. We fix a left invariant word metric $d_{\Gamma}$ on $\Gamma$ induced by a finite generating subset of $\Gamma$ and let $|\cdot|_{\Gamma}:\Gamma \rightarrow \mathbb{N}$ be the word length function defined by $|\gamma|_{\Gamma}=d_{\Gamma}(\gamma,e), \gamma \in \Gamma$.  As an application of Theorem \ref{maintheorem}, we deduce the following characterization of Anosov representations entirely in terms of the growth of the Cartan projection of the image of a representation. 

\begin{corollary} \label{CCartan} Let $\Gamma$ be an infinite word hyperbolic group, $G$ a real semisimple Lie group, $\theta \subset \Delta$ a subset of simple restricted roots of $G$, $\{\omega_{\alpha}\}_{\alpha \in \theta}$ the associated set of fundamental weights. Fix $|\cdot|_{\Gamma}:\Gamma \rightarrow \mathbb{N}$ a word length function  on $\Gamma$.  A representation $\rho:\Gamma \rightarrow G$ is $P_{\theta}$-Anosov if and only if the following conditions are simultaneously satisfied:
\begin{enumerate}[label=(\roman*)]
\item \label{CCartan-1}There exist $C,c>1$ such that for every $\gamma \in \Gamma$ non-trivial and $\alpha \in \theta$,
$$\alpha\big(\mu(\rho(\gamma))\big)  \geqslant c \log |\gamma |_{\Gamma}-C.$$

\item \label{CCartan-2} There exist $B,b>0$ such that for every $\gamma \in \Gamma$ and $\alpha\in \theta$, $$  \omega_{\alpha} \big(2\mu (\rho(\gamma))-\mu (\rho(\gamma^2)) \big)  \leqslant B\big (2|\gamma |_{\Gamma}-|\gamma^2|_{\Gamma} \big)+b.$$\end{enumerate}  \end{corollary}

\par Now let $\rho:\Gamma \rightarrow G$ be a Zariski dense representation which admits a pair of $\rho$-equivariant, continuous limit maps \hbox{$\xi^{+}:\partial_{\infty} \Gamma \rightarrow G/P_{\theta}^{+}$} and $\xi^{-}:\partial_{\infty}\Gamma \rightarrow G/P_{\theta}^{-}$. In \cite[Thm. 5.11]{GW}, Guichard--Wienhard proved that $\rho$ is $P_{\theta}$-Anosov if and only if $\xi^{+}$ and $\xi^{-}$ are compatible and transverse. By Theorem \ref{maintheorem} and Corollary \ref{Cartan}, we obtain the following slightly improved version of their theorem. For a quasi-convex subgroup $H$ of $\Gamma$ we denote by $\iota_{H}:\partial_{\infty}H \xhookrightarrow{} \partial_{\infty}\Gamma$ the Cannon--Thurston map extending the natural inclusion $H \xhookrightarrow{} \Gamma$.

\begin{theorem} \label{Zariskidense} Let $\Gamma$ be a word hyperbolic group, $H$ a quasiconvex subgroup of $\Gamma$, $G$ a semisimple Lie group, $\theta \subset \Delta$ a subset of simple restricted roots of $G$ and $\rho:\Gamma \rightarrow G$ a Zariski dense representation. Suppose that $\rho$ admits continuous, $\rho$-equivariant maps $\xi^{+}: \partial_{\infty}\Gamma \rightarrow G/P_{\theta}^{+}$ and $\xi^{-}:\partial_{\infty}\Gamma \rightarrow  G/P_{\theta}^{-}$. Then the restriction $\rho|_{H}:H\rightarrow G$ is $P_{\theta}$-Anosov if and only if the maps $\xi^{+}\circ \iota_ {H}:\partial_{\infty}H \rightarrow G/P_{\theta}^{+}$ and $\xi^{-}\circ \iota_{H}:\partial_{\infty}H \rightarrow G/P_{\theta}^{-}$ are transverse.\end{theorem}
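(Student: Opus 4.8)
The plan is to derive both implications from Theorem \ref{maintheorem} together with the uniqueness of equivariant limit maps that satisfy the Cartan property. Since $\rho$ is Zariski dense, Corollary \ref{Cartan} guarantees that $\xi^{+}$, and symmetrically $\xi^{-}$, satisfies the Cartan property. Because $H$ is quasiconvex, the Cannon--Thurston map $\iota_{H}$ embeds $\partial_{\infty}H$ homeomorphically onto the limit set of $H$, and a sequence of elements of $H$ converging to a point $x\in\partial_{\infty}H$ also converges, viewed as a sequence in $\Gamma$, to $\iota_{H}(x)$. Consequently $\xi^{+}\circ\iota_{H}$ and $\xi^{-}\circ\iota_{H}$ are continuous $\rho|_{H}$-equivariant maps, $\xi^{+}\circ\iota_{H}$ satisfies the Cartan property with respect to $H$, and so does $\xi^{-}\circ\iota_{H}$. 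We may assume that $H$ is infinite, the finite case being trivial.

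For the forward implication, suppose $\rho|_{H}$ is $P_{\theta}$-Anosov, and let $\zeta^{+}:\partial_{\infty}H\to G/P_{\theta}^{+}$ and $\zeta^{-}:\partial_{\infty}H\to G/P_{\theta}^{-}$ be its Anosov limit maps. These are continuous, $\rho|_{H}$-equivariant, transverse, and satisfy the Cartan property by \cite{GGKW}, and moreover $\rho|_{H}$ is $P_{\theta}$-divergent. Given $x\in\partial_{\infty}H$, choose $h_{n}\in H$ with $h_{n}\to x$; then $P_{\theta}$-divergence makes $\lim_{n}k_{\rho(h_{n})}P_{\theta}^{+}$ well defined and equal to both $\zeta^{+}(x)$ and $\xi^{+}(\iota_{H}(x))$, so $\zeta^{+}=\xi^{+}\circ\iota_{H}$, and similarly $\zeta^{-}=\xi^{-}\circ\iota_{H}$. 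Hence $\xi^{+}\circ\iota_{H}$ and $\xi^{-}\circ\iota_{H}$ are transverse.

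Conversely, assume $\xi^{+}\circ\iota_{H}$ and $\xi^{-}\circ\iota_{H}$ are transverse. By the first paragraph they form a transverse, continuous, $\rho|_{H}$-equivariant pair with $\xi^{+}\circ\iota_{H}$ satisfying the Cartan property, so condition (ii) of Theorem \ref{maintheorem} holds for $\rho|_{H}$; it remains to verify condition (i), that $\rho|_{H}$ is $P_{\theta}$-divergent, which I expect to be the main obstacle. The argument I have in mind is by contradiction. If $\rho|_{H}$ is not $P_{\theta}$-divergent there are $\alpha_{0}\in\theta$ and distinct elements $h_{n}\in H$ with $\langle\alpha_{0},\mu(\rho(h_{n}))\rangle$ bounded; after passing to a subsequence and replacing $(h_{n})$ by $(h_{n}g)$ for a suitable fixed $g\in H$, which changes $\langle\alpha_{0},\mu(\rho(h_{n}))\rangle$ only boundedly and, since $H$ is non-elementary, can be arranged so that $h_{n}\to x^{+}$ and $h_{n}^{-1}\to x^{-}$ with $x^{+}\neq x^{-}$ (the virtually cyclic case, where $\partial_{\infty}H$ has two points, being treated separately), the quasi-north--south behaviour of $(h_{n})$ on $\partial_{\infty}H$ yields $\rho(h_{n})\,\xi^{+}(\iota_{H}(z))\to\xi^{+}(\iota_{H}(x^{+}))$ for every $z\neq x^{-}$, while the Cartan property of $\xi^{-}$ together with $h_{n}^{-1}\to x^{-}$ gives $(k_{\rho(h_{n})}')^{-1}P_{\theta}^{-}\to\xi^{-}(\iota_{H}(x^{-}))$, which by transversality is transverse to $\xi^{+}(\iota_{H}(z))$ for each $z\neq x^{-}$. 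Writing $\rho(h_{n})$ in its Cartan decomposition and conjugating the unipotent coordinate of $k_{\rho(h_{n})}'\xi^{+}(\iota_{H}(z))$ by $\exp(\mu(\rho(h_{n})))$, the boundedness of $\langle\alpha_{0},\mu(\rho(h_{n}))\rangle$ with $\alpha_{0}\in\theta$ prevents the $\mathfrak{g}_{-\alpha_{0}}$-component from contracting to zero, so the limit $\xi^{+}(\iota_{H}(x^{+}))$ would depend on $z$ unless all the flags $\xi^{+}(\iota_{H}(z))$ with $z\neq x^{-}$ already coincide in the $\alpha_{0}$-direction relative to $\xi^{-}(\iota_{H}(x^{-}))$ --- a degeneracy incompatible with the transversality of $\xi^{+}\circ\iota_{H}$ and $\xi^{-}\circ\iota_{H}$ (in the subcase where $(\rho(h_{n}))$ stays bounded in $G$ one argues more directly, since then $\xi^{+}\circ\iota_{H}$ collapses to a constant on $\partial_{\infty}H\setminus\{x^{-}\}$, hence on all of $\partial_{\infty}H$, again contradicting transversality). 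Granting condition (i), Theorem \ref{maintheorem} gives that $\rho|_{H}$ is $P_{\theta}$-Anosov. The delicate step is to make the link between the Cartan projection of $\rho(h_{n})$ and the relative position of the associated Cartan flags quantitative, and to extract from it a genuine failure of transversality.
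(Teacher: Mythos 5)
The forward implication as you sketch it is fine, and your reduction of the Cartan property to the observation that a sequence in $H$ converging in $\partial_{\infty}H$ also converges in $\partial_{\infty}\Gamma$ to $\iota_{H}$ of the same point is exactly the right move (and exactly what the paper does).

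The problem is in the converse. You set up the right framework -- Cartan property for $\xi^{+}\circ\iota_{H}$, transversality, and then Theorem \ref{maintheorem} -- but you then treat the $P_{\theta}$-divergence of $\rho|_{H}$ as the outstanding difficulty, and launch a long contradiction argument that you yourself concede is incomplete (``the delicate step is to make the link $\ldots$ quantitative''). None of that is needed. Corollary \ref{Cartan}(ii) does not merely assert that $\xi^{+}$ satisfies the Cartan property; its statement is that, for a Zariski dense $\rho$ admitting a continuous equivariant map, \emph{$\rho$ itself is $P_{\theta}$-divergent} and $\xi^{+}$ satisfies the Cartan property. Once $\rho$ is $P_{\theta}$-divergent, so is $\rho|_{H}$: by the paper's definition, $P_{\theta}$-divergence requires that $\bigl\langle\alpha,\mu(\rho(\gamma_{n}))\bigr\rangle$ be unbounded for every infinite sequence $(\gamma_{n})$ in the source group, and every infinite sequence in $H$ is already an infinite sequence in $\Gamma$. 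So condition (i) of Theorem \ref{maintheorem} for $\rho|_{H}$ comes for free, and the proof collapses to three lines, which is exactly what the paper writes. You should reread the full statement of Corollary \ref{Cartan}(ii) -- both conclusions are essential here, and the $P_{\theta}$-divergence conclusion is the one you dropped.

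A secondary remark: even if $P_{\theta}$-divergence had genuinely needed an argument, the contraction-from-flags strategy you outline is on shaky ground -- the transversality of $\xi^{+}\circ\iota_{H}$ and $\xi^{-}\circ\iota_{H}$ concerns pairs of \emph{distinct} points of $\partial_{\infty}H$, and your degeneracy argument wants to squeeze information about the $\alpha_{0}$-component of $\xi^{+}(\iota_{H}(z))$ relative to $\xi^{-}(\iota_{H}(x^{-}))$ uniformly over all $z\neq x^{-}$, which is not what transversality gives you directly. Fortunately, the issue is moot.
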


For a matrix $g \in \mathsf{GL}_d(\mathbb{R})$ we denote by $\ell_1(g)\geq \cdots \geq \ell_d(g)$ and $\sigma_1(g)\geq \cdots \geq \sigma_d(g)$ the moduli of eigenvalues and the singular values of $g$ respectively in non-increasing order. Let $\rho_{i}:\Gamma \rightarrow \mathsf{SL}_{m_{i}}(\mathbb{R})$, $i\in \{1,2\},$ be two representations such that $\rho_2$ is $P_1$-Anosov. We recall that the stretch factors associated with the representations $\rho_1$ and $\rho_2$ of $\Gamma$ are: $$\textup{dil}_{-}(\rho_1,\rho_2):=\underset{\gamma \in \Gamma_{\infty}}{\textup{inf}} \frac{\log \ell_1(\rho_1(\gamma))}{\log \ell_1(\rho_2(\gamma))},\ \textup{dil}_{+}(\rho_1,\rho_2):=\underset{\gamma \in \Gamma_{\infty}}{\textup{sup}}\frac{\log \ell_1(\rho_1(\gamma))}{\log \ell_1(\rho_2(\gamma))}$$\\ where $\Gamma_{\infty}$ denotes the set of infinite order elements of $\Gamma$. Observe that since $\rho_{2}$ is a quasi-isometric embedding (see Theorem \ref{mainproperties}(i)), the stretch factors $\textup{dil}_{\pm}(\rho_1,\rho_2)$ are well-defined. As a corollary of Theorem \ref{maintheorem} we obtain the following approximation result  for particular pairs of representations $(\rho_1,\rho_2)$, which refines a consequence of the density result of \hbox{Benoist obtained in \cite{limitcone2} in this case.}

\begin{corollary} \label{interval} Let $\Gamma$ be a word hyperbolic group and fix $|\cdot|_{\Gamma}:\Gamma \rightarrow \mathbb{N}$ a word length function on $\Gamma$. Suppose that $\rho_{1}:\Gamma \rightarrow \mathsf{SL}_{m_1}(\mathbb{R})$ and $\rho_{2}:\Gamma \rightarrow \mathsf{SL}_{m_2}(\mathbb{R})$ are two representations such that $\rho_2$ is $P_1$-Anosov and $\rho_1$ satisfies one of the following conditions:
\begin{enumerate} [label=(\roman*)]
\item $\rho_1$ is $P_1$-Anosov.
\item $\rho_{1}(\Gamma)$ is contained in a semisimple $P_1$-proximal Lie subgroup of $\mathsf{SL}_{m_1}(\mathbb{R})$ of real rank $1$.\end{enumerate}
\noindent Then for every $\epsilon>0$ and $p,q \in \mathbb{N}$ with $\textup{dil}_{-}(\rho_1,\rho_2) \leqslant \frac{p}{q} \leqslant \textup{dil}_{+}(\rho_1,\rho_2)$, there exists an infinite sequence $(\gamma_n)_{n \in \mathbb{N}}$ of elements of $\Gamma$ such that for every $n\in \mathbb{N}$:  $${\Bigg|\frac{p}{q}-\frac{\log \sigma_1(\rho_1(\gamma_n))}{\log \sigma_1(\rho_2(\gamma_n))} \Bigg |\leqslant \frac{\epsilon}{q} \cdot \frac{\log |\gamma_n|_{\Gamma}}{|\gamma_n|_{\Gamma}}}.$$ \end{corollary}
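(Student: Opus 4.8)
The strategy is to clear denominators, reduce to producing a single auxiliary sequence of elements of $\Gamma$, and then build such a sequence by ping-pong. Write $\mu_i(\gamma)=\langle\varepsilon_1,\mu(\rho_i(\gamma))\rangle$ and $\lambda_i(\gamma)=\langle\varepsilon_1,\lambda(\rho_i(\gamma))\rangle$; the asserted inequality is equivalent to $\bigl|q\mu_1(\gamma_n)-p\mu_2(\gamma_n)\bigr|\leqslant\delta\,\tfrac{\log|\gamma_n|_\Gamma}{|\gamma_n|_\Gamma}\,\mu_2(\gamma_n)$. Since $\rho_2$ is $P_1$-Anosov it is a quasi-isometric embedding, so $\mu_2(\gamma)\asymp|\gamma|_\Gamma$ up to additive constants; hence it suffices to produce an infinite sequence with $|\gamma_n|_\Gamma\to\infty$ and $q\mu_1(\gamma_n)-p\mu_2(\gamma_n)=o(\log|\gamma_n|_\Gamma)$, after which the claimed inequality holds for all large $n$ and we re-index. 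Since $\rho_2$ is a quasi-isometric embedding and $\rho_1$ is at worst exponentially Lipschitz, $\lambda_1/\lambda_2$ is bounded on $\Gamma_\infty$, so $\upsilon_\pm$ are finite; degenerate configurations in which $\rho_1(\Gamma)$ is elementary are dealt with separately, so we assume $\Gamma$ non-elementary.

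The basic device is a Schottky pair adapted to both representations. Given infinite-order $a,b\in\Gamma$ with disjoint fixed-point pairs in $\partial_\infty\Gamma$, after passing to powers $a^N,b^N$ (which does not change the ratios $\lambda_i/\lambda_j$) one arranges that $\langle a^N,b^N\rangle$ is a quasiconvex free subgroup on which each $\rho_i$ restricts to a $P_1$-Anosov representation: in case $\textup{(i)}$ this is automatic, while in case $\textup{(ii)}$ one first chooses $a,b$ with $\rho_1(a),\rho_1(b)$ loxodromic in the rank-one factor $H$ with transverse attracting/repelling data, then applies the ping-pong/Tits alternative inside $H$. Since the words $a^{Nk}b^{Nl}$ are cyclically reduced in $\langle a^N,b^N\rangle$ (so $|a^{Nk}b^{Nl}|_{\langle a^N,b^N\rangle}=|a^{Nk}b^{Nl}|_\infty=k+l$ there), combining the near-additivity of the Cartan projection along reduced Schottky words — a standard feature of Anosov restrictions, also readable off Proposition \ref{Gromovproduct1}\textup{(iii)} — with $\mu_i(a^{Nk})=Nk\,\lambda_i(a)+O(1)$ gives, uniformly in $k,l\geqslant 1$,
\[
\mu_i(a^{Nk}b^{Nl}) = Nk\,\lambda_i(a)+Nl\,\lambda_i(b)+O(1),\qquad i=1,2,\qquad\text{and}\qquad |a^{Nk}b^{Nl}|_\Gamma\asymp N(k+l).
\]

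Assume first $p/q\in(\upsilon_-,\upsilon_+)$. By definition of $\upsilon_\pm$ we may choose the Schottky pair so that $\lambda_1(a)/\lambda_2(a)<p/q<\lambda_1(b)/\lambda_2(b)$. With $A_i=N\lambda_i(a)$, $B_i=N\lambda_i(b)$ we have $qA_1-pA_2<0<qB_1-pB_2$, so putting $l_n=n$ and $k_n=\lfloor n(qB_1-pB_2)/(pA_2-qA_1)\rfloor$ makes $|k_n(qA_1-pA_2)+l_n(qB_1-pB_2)|\leqslant|qA_1-pA_2|$; then $\gamma_n=a^{Nk_n}b^{Nl_n}$ satisfies $q\mu_1(\gamma_n)-p\mu_2(\gamma_n)=O(1)$ (constant independent of $n$) and $|\gamma_n|_\Gamma\asymp Nn\to\infty$, which finishes this case. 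If $p/q$ equals $\upsilon_-$ or $\upsilon_+$ (forcing it to be rational) and the extremum is attained at some $g\in\Gamma_\infty$, take $\gamma_n=g^n$: then $\mu_i(g^n)=n\lambda_i(g)+O(1)$ gives $q\mu_1(g^n)-p\mu_2(g^n)=n\bigl(q\lambda_1(g)-p\lambda_2(g)\bigr)+O(1)=O(1)$ and again we are done.

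The remaining case — an endpoint $p/q=\upsilon_-$, say, that is \emph{not} attained — is the main obstacle. Now every $\gamma\in\Gamma_\infty$ has ratio strictly above $p/q$, so no bracketing pair exists and one must take $\gamma_n=g_{j_n}^{e_n}$ for a sequence $(g_j)\subset\Gamma_\infty$ with $r_j:=\lambda_1(g_j)/\lambda_2(g_j)\downarrow p/q$ and with both $j_n,e_n\to\infty$. Here
\[
\bigl|q\mu_1(\gamma_n)-p\mu_2(\gamma_n)\bigr|\ \leqslant\ e_n\,(qr_{j_n}-p)\,\lambda_2(g_{j_n})\ +\ D_{g_{j_n}},\qquad D_g:=\sup_{e\geqslant 1}\bigl(\mu_1(g^e)-e\,\lambda_1(g)\bigr),
\]
where $D_g$ is finite but a priori grows with $|g|_\Gamma$, while $|\gamma_n|_\Gamma\asymp e_n|g_{j_n}|_\infty$. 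One must therefore place $e_n$ in a window: large enough (essentially exponential in $|g_{j_n}|_\Gamma$) that $D_{g_{j_n}}=o(\log|\gamma_n|_\Gamma)$, yet small enough that $e_n(qr_{j_n}-p)\lambda_2(g_{j_n})=o(\log|\gamma_n|_\Gamma)$. Such a window is nonempty exactly when $r_j-p/q$ decays fast enough relative to the growth of $D_{g_j}$ — a quantitative strengthening of Benoist's density result \cite{limitcone2}. I would build $(g_j)$ from a sequence of cyclically reduced near-extremal Schottky blocks, so that $D_{g_j}$ grows at most linearly in $|g_j|_\Gamma$ (again via Proposition \ref{Gromovproduct1}\textup{(iii)}) and $r_j-p/q\asymp 1/|g_j|_\Gamma$, after which $e_n\sim e^{|g_{j_n}|_\Gamma}$ meets both constraints. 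Balancing these two competing rates is the crux, and it is precisely here that the factor $\log|\gamma_n|_\Gamma$ on the right-hand side — rather than a bare constant — is used, since it provides the room to let the block length grow slowly with $n$.
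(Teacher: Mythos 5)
Your proof takes a genuinely different route from the paper. The paper forms the product $\textup{sym}^{q}\rho_1 \times \textup{sym}^{p}\rho_2$, notes that the condition $\upsilon_-\leqslant p/q\leqslant\upsilon_+$ prevents either factor from uniformly dominating the other, concludes from Theorem \ref{directsum} that the product is not $P_1$-Anosov, and then reads off the sequence $(\gamma_n)$ from the failure of the logarithmic singular-value-gap criterion (3) of that theorem. You instead try to build $(\gamma_n)$ by hand via a Schottky construction; what that buys is a more explicit and geometrically transparent sequence. Your interior case $p/q\in(\upsilon_-,\upsilon_+)$ is essentially right, modulo the additivity estimate $\mu_i(a^{Nk}b^{Nl})=Nk\lambda_i(a)+Nl\lambda_i(b)+O(1)$ uniformly in $k,l$: this is a correct and standard Schottky/Abels--Margulis--Soifer estimate for Anosov restrictions, but it is more than Proposition \ref{Gromovproduct1}(iii) literally gives you (that proposition controls $\mu-\lambda$, whereas you also need near-additivity of $\lambda$ itself on reduced Schottky words), so it should be stated as a lemma and proved. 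The attained-endpoint case via $\gamma_n=g^n$ is fine.

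The non-attained endpoint case is a genuine gap, and your proposed fix does not work. With $G_n:=|g_{j_n}|_\Gamma$ and $e_n\sim e^{G_n}$ one has $|\gamma_n|_\Gamma\asymp e^{G_n}G_n$, so $\log|\gamma_n|_\Gamma\asymp G_n$; then your claimed bound $D_{g_{j_n}}=O(G_n)$ is $O(\log|\gamma_n|_\Gamma)$ but \emph{not} $o(\log|\gamma_n|_\Gamma)$, and far worse, with $r_{j_n}-p/q\asymp 1/G_n$ and $\lambda_2(g_{j_n})\asymp G_n$ the quantity $e_n(qr_{j_n}-p)\lambda_2(g_{j_n})\asymp e^{G_n}$, which is exponentially larger than $\log|\gamma_n|_\Gamma\asymp G_n$. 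So the two constraints you wrote down are both violated by $e_n\sim e^{G_n}$; they are in fact antagonistic, and the only viable regime is $e_n$ bounded (e.g.\ $e_n=1$), in which case (since for cyclically reduced $g$ one really has $D_g=O(1)$, not merely $O(|g|_\Gamma)$) the whole burden falls on producing a sequence of cyclically reduced $g_j$ with $|g_j|_\Gamma\to\infty$ and $r_j-p/q=O(\log|g_j|_\Gamma/|g_j|_\Gamma)$. That is a quantitative rate of approach to a \emph{non-attained} extremum of the eigenvalue-ratio function, and it does not follow from Benoist's density result or from the Schottky bracketing used in the interior case (bracketing needs elements on both sides of $p/q$, which are unavailable at a one-sided extremum). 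This is precisely the content that the paper's appeal to Theorem \ref{directsum}(3) supplies in one stroke, and it is the missing ingredient in your argument; without it the endpoint case of the corollary remains unproved.
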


\subsection{Weak uniform gaps in eigenvalues and strong property $\textup{(U)}$.} \label{weak0}
 Kassel--Potrie introduced the following definition in \cite{kassel-potrie}:

\begin{definition} Let $\Gamma$ be a finitely generated group, $\rho:\Gamma \rightarrow \mathsf{GL}_d(\mathbb{R})$ a representation and fix $1\leqslant i \leqslant d-1$. The representation $\rho$ has a weak uniform i-gap in eigenvalues if there exists $\varepsilon>0$ such that for every $\gamma \in \Gamma$ we have $$\log \frac{\ell_i(\rho(\gamma))}{\ell_{i+1}(\rho(\gamma))} \geqslant \varepsilon |\gamma |_{\infty},$$ where $|\gamma|_{\infty}=\lim_{n}\frac{|\gamma^n|_{\Gamma}}{n}$ denotes the stable translation length of $\gamma$. \end{definition}

\par The existence of a uniform $i$-gap in eigenvalues for $\rho$ is not a sufficient condition to guarantee that the representation is Anosov, and it is a natural question to determine additional conditions guaranteeing that this happens. Gu\'eritaud--Guichard--Kassel--Wienhard proved that if $\Gamma$ is word hyperbolic, $\rho$ has a weak uniform $i$-gap in eigenvalues and admits a pair of continuous, $\rho$-equivariant, dynamics preserving and transverse maps $\xi^{+}:\partial_{\infty}\Gamma \rightarrow \mathsf{Gr}_{i}(\mathbb{R}^d)$ and $\xi^{-}:\partial_{\infty}\Gamma \rightarrow \mathsf{Gr}_{d-i}(\mathbb{R}^d)$, then $\rho$ is $P_i$-Anosov (see \cite[Thm. 1.7 (c)]{GGKW}). Kassel--Potrie proved \cite[Prop. 4.12]{kassel-potrie} that if $\Gamma$ satisfies weak property (U) (see Definition \ref{PropU1}) and $\rho$ has a weak uniform $i$-gap in eigenvalues, then $\rho$ has a strong $i$-gap in singular values: there exist $C,c>0$ such that for every $\gamma \in \Gamma$, $$\log \frac{\sigma_i(\rho(\gamma))}{\sigma_{i+1}(\rho(\gamma))}  \geqslant c |\gamma |_{\Gamma}-C,$$ hence $\Gamma$ is hyperbolic and $\rho$ is $P_i$-Anosov by the work of Kapovich--Leeb--Porti \cite{KLP2} and Bochi--Potrie--Sambarino \cite{BPS}. The following theorem, motivated by \cite[Ques. 4.9]{kassel-potrie}, provides further conditions under which a linear representation $\rho:\Gamma \rightarrow \mathsf{GL}_d(\mathbb{R})$ of a finitely generated group $\Gamma$ with a weak uniform $i$-gap in eigenvalues is $P_i$-Anosov and $\Gamma$ is hyperbolic. For the definition of the Floyd boundary we refer the reader to \cite{Floyd}, see also \S \ref{Background}.

\begin{theorem} \label{wg} Let $\Gamma$ be a finitely generated infinite group which is not virtually cyclic and fix $|\cdot|_{\Gamma}:\Gamma \rightarrow \mathbb{N}$ a word length function on $\Gamma$. Suppose that $\rho:\Gamma \rightarrow \mathsf{GL}_d(\mathbb{R})$ is a representation which has a weak uniform $i$-gap in eigenvalues for some $1 \leq i \leq d-1$. Then the following conditions for $\Gamma$ and $\rho$ are equivalent:
\begin{enumerate}[label=(\roman*)]
\item $\Gamma$ is word hyperbolic and $\rho$ is $P_{i}$-Anosov.
\item There exists a Floyd function $f$ such that the Floyd boundary $\partial_{f}\Gamma$ of $\Gamma$  is uncountable.
\item $\Gamma$ admits a representation \hbox{$\rho_1:\Gamma \rightarrow \mathsf{GL}_m(\mathbb{R})$ satisfying the uniform gap summation property.}
\item $\Gamma$ admits a semisimple representation $\rho_2:\Gamma \rightarrow \mathsf{GL}_r(\mathbb{R})$ with the property $$\lim_{|\gamma|_{\Gamma}\rightarrow \infty}\frac{\log\sigma_1(\rho_2(\gamma))-\log \sigma_r(\rho_2(\gamma))}{\log |\gamma |_{\Gamma}}=+\infty.$$\end{enumerate} \end{theorem}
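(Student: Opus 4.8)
The plan is to prove the cycle of implications $\textup{(i)} \Rightarrow \textup{(iii)} \Rightarrow \textup{(iv)} \Rightarrow \textup{(ii)} \Rightarrow \textup{(i)}$, using the weak uniform $i$-gap hypothesis crucially only in the final step. For $\textup{(i)} \Rightarrow \textup{(iii)}$: if $\rho$ is $P_i$-Anosov then by passing to exterior powers $\Lambda^i\rho$ we may assume $i=1$, and then $\rho$ itself (or $\Lambda^i\rho$) is $P_1$-Anosov, hence by the characterization of Anosov representations via the gap summation property in \cite[Theorem 1.3]{GGKW} the representation $\rho_1 = \Lambda^i\rho$ satisfies the uniform gap summation property; alternatively one simply recalls that Anosov representations are exactly those satisfying the uniform gap summation property together with the existence of transverse limit maps, and the summation property passes through. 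For $\textup{(iii)} \Rightarrow \textup{(iv)}$: a representation with the uniform gap summation property has a strong $1$-gap in singular values after an exterior power, so $\langle \varepsilon_1 - \varepsilon_2, \mu(\Lambda^i\rho_1(\gamma))\rangle \geqslant \ell|\gamma|_\Gamma - L$; in particular $\|\mu(\Lambda^i\rho_1(\gamma))\|$ grows at least linearly in $|\gamma|_\Gamma$, which dominates $\log|\gamma|_\Gamma$, and one takes $\rho_2$ to be the semisimplification of $\Lambda^i\rho_1$ (which, by \cite[Proposition 1.8]{GGKW}, has comparable Cartan projection up to a bounded additive error, so the limit is still $+\infty$).

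For $\textup{(iv)} \Rightarrow \textup{(ii)}$: this is where I would invoke the work on Floyd boundaries and super-linear divergence of the Cartan projection. If $\Gamma$ admits a semisimple representation $\rho_2$ with $\|\mu(\rho_2(\gamma))\|/\log|\gamma|_\Gamma \to +\infty$, then in particular $\Gamma$ is discrete in $\mathsf{GL}(n,\mathbb{R})$ in a strong quantitative sense; the point is to produce a nontrivial Floyd boundary. I would use the criterion that a finitely generated group with a "fast-divergent" action on a symmetric space — more precisely, one for which the orbit map distorts distances super-logarithmically — has a nontrivial Floyd boundary for a suitable sub-exponential Floyd function $f$; this is essentially the mechanism behind Karlsson's work and the Floyd-boundary constructions for relatively hyperbolic and convergence groups. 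Concretely, the super-linear lower bound $\|\mu(\rho_2(\gamma))\| \gg \log|\gamma|_\Gamma$ lets one choose $f(n) = \exp(-\epsilon_n n)$ with $\epsilon_n \to 0$ slowly so that Floyd-contracting geodesic rays in the Cayley graph map to genuinely convergent sequences in the Furstenberg boundary / visual boundary of the symmetric space, giving distinct boundary points and hence $|\partial_f\Gamma| > 2$.

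For the final implication $\textup{(ii)} \Rightarrow \textup{(i)}$: assume $\Gamma$ has a nontrivial Floyd boundary and that $\rho$ has a weak uniform $i$-gap in eigenvalues. Nontriviality of the Floyd boundary implies (Karlsson; Gerasimov--Potyagailo) that $\Gamma$ is a non-elementary convergence group acting on $\partial_f\Gamma$, and in particular that $\Gamma$ contains many loxodromic elements and has exponential growth, with good control on $|\gamma^n|_\Gamma$ versus $n|\gamma|_\infty$. I would then combine the weak uniform $i$-gap in eigenvalues with the Floyd structure to upgrade to a weak Property $U$-type statement, so that \cite[Proposition 4.12]{kassel-potrie} applies and yields a strong $i$-gap in singular values $\langle \varepsilon_i - \varepsilon_{i+1}, \mu(\rho(\gamma))\rangle \geqslant \ell|\gamma|_\Gamma - L$; then \cite{KLP2} and \cite{BPS} give that $\Gamma$ is word hyperbolic and $\rho$ is $P_i$-Anosov. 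The main obstacle is precisely this last step: bridging from "nontrivial Floyd boundary" to the summability/Property $U$ input needed by Kassel--Potrie, since a priori a group with nontrivial Floyd boundary need not satisfy weak Property $U$ on the nose. I expect the resolution to go through the fact that, under the weak uniform $i$-gap hypothesis, the relevant quasimorphism $\gamma \mapsto \langle\varepsilon_i - \varepsilon_{i+1}, \lambda(\rho(\gamma))\rangle$ is already comparable to $|\gamma|_\infty$, and that the Floyd boundary provides enough dynamical contraction to promote this eigenvalue control to the required singular value control via a pigeonhole/convergence argument along Floyd-contracting sequences — effectively replaying the Benoist--Quint--type limit cone arguments in the Floyd setting. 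Once that bridge is in place, the cited results of \cite{KLP2,BPS} close the loop back to $\textup{(i)}$.
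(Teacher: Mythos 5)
Your proposal has several genuine gaps, and the structure of the argument itself is fragile in a way that the paper avoids. You attempt a cycle $\textup{(i)} \Rightarrow \textup{(iii)} \Rightarrow \textup{(iv)} \Rightarrow \textup{(ii)} \Rightarrow \textup{(i)}$, whereas the paper proves $\textup{(i)} \Rightarrow \textup{(ii)},\textup{(iii)},\textup{(iv)}$ and then each of $\textup{(ii)}, \textup{(iii)}, \textup{(iv)}$ back to $\textup{(i)}$ separately. The star-shaped structure matters here because the implications among $\textup{(ii)}, \textup{(iii)}, \textup{(iv)}$ directly, without passing through $\textup{(i)}$, are exactly the ones that cause you trouble.

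The concrete errors: in your $\textup{(iii)} \Rightarrow \textup{(iv)}$ step you claim that the uniform gap summation property forces a \emph{linear} lower bound $\langle\varepsilon_1-\varepsilon_2, \mu(\Lambda^i\rho_1(\gamma))\rangle \geqslant \ell|\gamma|_\Gamma - L$. That is false: the uniform gap summation property with Floyd function $f$ only gives $\langle\alpha,\mu(\rho_1(\gamma))\rangle \geqslant -\log f(|\gamma|_\Gamma) - C$, which for a typical Floyd function such as $f(n) = n^{-1-\kappa}$ produces $(1+\kappa)\log|\gamma|_\Gamma$, a logarithmic lower bound. You have conflated the summation property with the Anosov condition (which \emph{would} give linear growth). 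In particular your argument does not yield $\|\mu(\rho_2(\gamma))\|/\log|\gamma|_\Gamma \to +\infty$; at best it yields a finite limit. A direct proof of $\textup{(iii)} \Rightarrow \textup{(iv)}$ that does not route through $\textup{(i)}$ is not obviously available, and the paper never attempts it. Your $\textup{(iv)} \Rightarrow \textup{(ii)}$ is not a proof either: the description of "choosing $f(n) = \exp(-\epsilon_n n)$ with $\epsilon_n \to 0$ slowly" so that Floyd-contracting rays converge in the Furstenberg boundary is a hope rather than an argument, and no mechanism in the cited literature supplies it in the generality claimed. (Again, the paper avoids the issue entirely by not needing this implication.)

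The most serious issue is that you explicitly acknowledge the missing bridge from nontrivial Floyd boundary to weak Property $U$ in $\textup{(ii)} \Rightarrow \textup{(i)}$, and this bridge is precisely the paper's Theorem \ref{nontrivial}: \emph{any finitely generated virtually torsion-free group with nontrivial Floyd boundary satisfies weak Property $U$.} This is the real technical content behind the theorem, proved via Karlsson's estimate $d_f(g, g^+) \leqslant G(\tfrac{1}{2}|g|_\Gamma)$ and a delicate finite-conjugacy argument that moves any $\gamma$ by a controlled element of a finite set $F$ so that $|g\gamma|_\Gamma - |g\gamma|_\infty$ is uniformly bounded. Without proving this you cannot invoke \cite[Proposition 4.12]{kassel-potrie}. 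You also omit the preliminary reduction: the paper first shows $\ker\rho$ is finite (every $g\in\ker\rho$ has $|g|_\infty = 0$, and an infinite normal subgroup would contain, by Karlsson, a free subgroup of rank $\geqslant 2$ with nontrivial limit set, contradiction), and then passes to a torsion-free finite-index subgroup via Selberg's lemma so that Theorem \ref{nontrivial} applies; these steps are also missing from your sketch. Finally, note that the paper's $\textup{(iv)} \Rightarrow \textup{(i)}$ does use the weak $i$-gap in eigenvalues (via Theorem \ref{finitesubset} applied jointly to $\rho^{ss}$ and $\rho_2$ to transfer the $\lambda$-gap into a $\mu$-gap), so your claim that the weak gap hypothesis is needed only in the final step of the cycle is not consistent with how the equivalences are actually established.
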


 We prove that each one of the conditions (ii), (iii) and (iv) implies that $\Gamma$ has strong property (U) (see Definition \ref{PropU1}), so \textup{(i)} will follow by the eigenvalue gap characterization from \cite[Prop. 1.2]{kassel-potrie}. The uniform gap summation property is a summability condition for gaps between singular values, see \cite[Def. 5.2]{GGKW} and Definition \ref{ugspdef} for the precise definitions. For example, condition \textup{(iii)} of the previous theorem is satisfied when there exist $1 \leq j \leq m-1$ and $C,c>1$ \hbox{such that for every $\gamma \in \Gamma$} $$\log \frac{\sigma_j(\rho(\gamma))}{\sigma_{j+1}(\rho(\gamma))} \geq c\log |\gamma |_{\Gamma}-C.$$

For the proof of implication $\textup{(ii)} \Rightarrow \textup{(i)}$ in Theorem \ref{wg} we establish that a torsion-free finitely generated group whose Floyd boundary is uncountable, satisfies strong property (U).

\begin{theorem} \label{nontrivial} Let $\Gamma$ be a finitely generated group and fix $|\cdot|_{\Gamma}:\Gamma\rightarrow \mathbb{N}$ a word length function on $\Gamma$. Suppose that there exists a Floyd function $f:\mathbb{N}\rightarrow (0,\infty)$ such that the Floyd boundary $\partial_{f}\Gamma $ of $\Gamma$ is non-trivial. Let $H$ be a torsion-free subgroup of $\Gamma$ whose limit set $\Lambda(H)$ in $\partial_{f}\Gamma$ contains at least three points. Then there exists a finite subset $F$ of $H$ and $C>0$, depending only on $H$, with the property: for every $\gamma \in H$ there exists $g \in F$ such that $$|g \gamma |_{\Gamma}-|g \gamma |_{\infty} \leqslant C.$$ In particular, if $\Gamma$ is virtually torsion-free then it satisfies strong property (U). \end{theorem} 

As a corollary of the previous theorem we deduce that a non-virtually nilpotent group which admits a representation with the uniform gap summation property admits a non-trivial Floyd boundary.

\begin{corollary} \label{ugspu} Let $\Gamma$ be a finitely generated group which is not virtually nilpotent, $G$ a semisimple Lie group and $\theta \subset \Delta$ a subset of simple restricted roots of $G$. Let $\rho:\Gamma \rightarrow G$ be a representation  which satisfies the uniform gap summation property with respect to $\theta$ and a Floyd function \hbox{$f:\mathbb{N}\rightarrow (0,\infty)$.} Then the Floyd boundary $\partial_{f}\Gamma $ of $\Gamma$ with respect to $f$ is non-trivial. In particular, $\Gamma$ satisfies strong property (U). \end{corollary}

\subsection{Characterizations of strongly convex cocompact groups} \label{scc0} Anosov representations of hyperbolic groups are closely related to real projective geometry and geometric structures. Fix an integer $d \geqslant 3$. A subset $\Omega$ of the projective space $\mathbb{P}(\mathbb{R}^d)$ is called \emph{properly convex} if it is contained in an affine chart on which $\Omega$ is bounded and convex. The domain $\Omega$ is called \emph{strictly convex} if it is properly convex and $\partial \Omega$ does not contain projective line segments. \par Let $\Gamma$ be a discrete subgroup of $\mathsf{PGL}_d(\mathbb{R})$ which preserves a properly convex domain $\Omega$ of $\mathbb{P}(\mathbb{R}^d)$. The full orbital limit set $\Lambda_{\Omega}(\Gamma)$ of $\Gamma$ in $\Omega$ is the set of accumulation points of all $\Gamma$-orbits in $\partial \Omega$ (see \cite[Def. 1.10]{DGK0}). The group $\Gamma$ acts \emph{convex cocompactly on $\Omega$} if the convex hull of $\Lambda_{\Omega}(\Gamma)$ in $\Omega$ is non-empty and has compact quotient by $\Gamma$ (see \cite[Def. 1.11]{DGK0}). The group $\Gamma$ is called {\em strongly convex cocompact in $\mathbb{P}(\mathbb{R}^d)$} if it acts convex cocompactly on some properly convex domain $\Omega$ with strictly convex and $C^1$-boundary. The work of Danciger--Gu\'eritaud--Kassel \cite{DGK0} and independently of Zimmer \cite{Zimmer}, shows that Anosov representations can be essentially (up to composition with a Lie group homomorphism) viewed as convex cocompact actions on properly convex domains in some real projective space. We refer the reader to \cite[Thm. 1.4 \& 1.15]{DGK0} and \cite[Thm. 1.22 \& 1.25]{Zimmer}. There are also related results in the more broad setting of naively convex cocompact groups, \hbox{see \cite[Thm. 1.13]{IZ}.}

\par For the definition of a {\em $P_k$-Anosov} representation $\rho:\Gamma \rightarrow G$, where $G$ is either $\mathsf{PGL}_d(\mathbb{R})$ or $\mathsf{GL}_d(\mathbb{R})$, we refer to Definition \ref{Def-of-Anosov-rep}. The following result from \cite{DGK0} offers a connection between Anosov representations and strongly convex cocompact actions on properly convex domains.

\begin{theorem} \label{sccompact} \textup{(}\cite[Thm. 1.4]{DGK0}\textup{)} Let $\Gamma$ be an infinite discrete subgroup of $\mathsf{PGL}_d(\mathbb{R})$ which preserves a properly convex domain of $\mathbb{P}(\mathbb{R}^d)$. Then $\Gamma$ is strongly convex cocompact in $\mathbb{P}(\mathbb{R}^d)$ if and only if $\Gamma$ is word hyperbolic and the natural inclusion $\Gamma \xhookrightarrow{} \mathsf{PGL}_d(\mathbb{R})$ is $P_{1}$-Anosov. \end{theorem}

For a properly convex domain $\Omega\subset \mathbb{P}(\mathbb{R}^d)$ let $d_{\Omega}$ be the Hilbert metric defined on $\Omega$. As an application of Theorem \ref{maintheorem}, we obtain the following geometric characterization of strongly convex cocompact subgroups of $\mathsf{PGL}_d(\mathbb{R})$ which are semisimple, i.e. their Zariski closure in $\mathsf{PGL}_d(\mathbb{R})$ is a reductive Lie group.

\begin{theorem} \label{stronglyconveccocompact} Let $\Gamma$ be a finitely generated subgroup of $\mathsf{PGL}_d(\mathbb{R})$. Suppose that $\Gamma$ preserves a strictly convex domain of $\mathbb{P}(\mathbb{R}^d)$ with $C^1$-boundary and the natural inclusion $\Gamma \xhookrightarrow{} \mathsf{PGL}_d(\mathbb{R})$ is semisimple. Then the following conditions are equivalent:
\begin{enumerate}[label=(\roman*)]
\item $\Gamma$ is strongly convex cocompact in $\mathbb{P}(\mathbb{R}^d)$.
\item The inclusion $\Gamma \xhookrightarrow{} \mathsf{PGL}_d(\mathbb{R})$ is a quasi-isometric embedding, $\Gamma$ preserves a properly convex domain $\Omega$ of $\mathbb{P}(\mathbb{R}^d)$ and there exists a $\Gamma$-invariant closed convex subset $\mathcal{C}$ of $\Omega$ such that $\big(\mathcal{C},d_{\Omega} \big)$ is Gromov hyperbolic.\end{enumerate}\end{theorem}

The previous theorem generalizes the well-known fact that a discrete subgroup $\Gamma$ of $\mathsf{PO}(d,1)$, $d \geq 2$, is convex cocompact if and only if $\Gamma \xhookrightarrow{} \mathsf{PO}(d,1)$ is a quasi-isometric embedding.

\subsection{Gromov product.} We also intoduce a definition of a Gromov product on $G \times G$ which we use for the proof of Theorem \ref{stronglyconveccocompact} (see Lemma \ref{main}). Let us remark that there are similar notions of Gromov products in \cite[\S 3]{Beyrer} and \cite[\S 8]{BPS} defined on appropriate flag spaces of $G$. The Gromov product from \cite{BPS} is also vector valued into a Cartan subspace of the Lie algebra of $G$.

\begin{definition} \label{defproduct} Let $G$ be a real semisimple Lie group. For every linear form $\varphi \in \mathfrak{a}^{\ast}$, define the Gromov product relative to $\varphi$ to be the map $(\, \cdot \,)_{\varphi}:G\times G \rightarrow \mathbb{R}$ defined as follows: for $g,h \in G$, $$\big(g\cdot h)_{\varphi}:=\frac{1}{4} \varphi\Big(\mu(g)+\mu(g^{-1})+\mu(h)+\mu(h^{-1})-\mu(g^{-1}h)-\mu(h^{-1}g) \Big).$$ \end{definition} 

 We prove that for every $P_{\theta}$-Anosov representation $\rho:\Gamma \rightarrow G$, the restriction of the Gromov product on $\rho(\Gamma)\times \rho(\Gamma)$, with respect to a fundamental weight $\omega_{\alpha}$, $\alpha \in \theta$, grows coarsely as the Gromov product on $\Gamma\times \Gamma$ with respect to a world length function on $\Gamma$.

\begin{proposition} \label{Gromovproduct1} Let $G$ be a real semisimple Lie group, fix $\theta \subset \Delta$ a subset of simple restricted roots of $G$ and let $\{\omega_{\alpha}\}_{\alpha \in \theta}$ be the associated set of fundamental weights. Suppose that $\Gamma$ is a word hyperbolic group and $\rho:\Gamma \rightarrow G$ is a $P_{\theta}$-Anosov representation. There exist $C,c>1$ with the property that for every $\alpha \in \theta$ and $ \gamma_1 ,\gamma_2 \in \Gamma$ we have $$C^{-1}(\gamma_1 \cdot \gamma_2)_{e} -c\leqslant \big(\rho(\gamma_1)\cdot \rho(\gamma_2)\big)_{\omega_{\alpha}} \leqslant C(\gamma_1 \cdot \gamma_2)_{e}+c.$$  \end{proposition}

 We remark that in the case where $\omega_{\alpha}=\varepsilon_1$, where $\varepsilon_1(x_1,\ldots,x_m)=x_1$ is the projection in the first coordinate, the double inequality in the previous proposition is not enough to guarantee that $\rho$ is $P_1$-Anosov (see Example \ref{complex-quaternionic}). However, if $\rho:\Gamma \rightarrow \mathsf{PGL}_d(\mathbb{R})$ preserves a properly convex domain $\Omega$ of $\mathbb{P}(\mathbb{R}^d)$ with strictly convex and $C^1$-boundary and the Gromov product on the Cartan projection of $\rho(\Gamma)$ with respect to $\varepsilon_1\in \mathfrak{a}^{\ast}$ grows coarsely as the Gromov product on $\Gamma$, then $\rho$ is $P_1$-Anosov (see Proposition \ref{main}).
\par We prove Proposition \ref{Gromovproduct1} as follows: by \cite[Prop. 1.8]{GGKW} any semisimplification $\rho^{ss}$ of $\rho$ is $P_{\theta}$-Anosov and hence, by using Lemma \ref{weights}, we may replace $\rho$ with $\rho^{ss}$. Then we compare the Gromov product relative to the fundamental weight $\{\omega_{\alpha}\}_{\alpha \in \theta}$ with the Gromov product with respect to the Hilbert metric $d_{\Omega}$ for some properly convex domain and then use Theorem \ref{sccompact}.
\subsection*{Comparison to previous characterizations and related results.} We first explain how Theorem \ref{maintheorem} is related to the equivalence $(3) \Leftrightarrow (5)$ in \cite[Thm. 1.7]{KLP3}, see also \cite[Thm. 5.47]{KLP1}. A subgroup $\Gamma$ of a real reductive Lie group $G$ is called $\tau_{\textup{mod}}$-asymptotically embedded \cite[Def. 6.12]{KLP3}, if it is \hbox{$\tau_{\textup{mod}}$-regular}, \hbox{$\tau_{\textup{mod}}$-antipodal}, word hyperbolic and there exists a $\Gamma$-equivariant homeomorphism $\nu:\partial_{\infty}\Gamma \rightarrow \Lambda_{\tau_{\textup{mod}}}(\Gamma)$. Here $\tau_{\textup{mod}}$ corresponds to the choice of a subset of simple restricted roots $\eta \subset \Delta$ of $G$, \hbox{$\tau_{\textup{mod}}$-antipodal} means that the map $\nu$ is transverse to itself i.e. for $x \neq y$ the pair $(\nu(x),\nu(y))$ is transverse and \hbox{$\tau_{\textup{mod}}$-regular} corresponds to $P_{\eta}$-divergence. 
\par Theorem \ref{maintheorem} follows from a theorem of Kapovich--Leeb--Porti \cite[Thm. 1.7]{KLP3} in the case where both limit maps \hbox{$\xi^{+}:\partial_{\infty}\Gamma \rightarrow G/P_{\theta}^{+}$} and \hbox{$\xi^{-}:\partial_{\infty}\Gamma \rightarrow G/P_{\theta}^{-}$} satisfy the Cartan property (see Definition \ref{defcartan}). Under this assumption, there exists $\rho$-equivariant embedding $\xi:\partial_{\infty}\Gamma \rightarrow G/P$ with $P=P_{\theta}^{+}\cap P_{\theta^{\star}}^{+}$, where $^{\star}:\Delta \rightarrow \Delta$ denotes the opposition involution and $\theta^{\star}=\{\alpha^{\star}:\alpha \in \theta\}$. Note that the pair of maps $(\xi^{+},\xi^{-})$ is compatible and transverse, hence $\xi$ is injective. The map $\xi$ satisfies the Cartan property, maps onto the $\tau_{\textup{mod}}$-limit set $\Lambda_{\tau_{\textup{mod}}}(\rho(\Gamma))$ hence $\rho(\Gamma)$ is $\tau_{\textup{mod}}$-asymptotically embedded and the assumptions of \cite[Thm. 1.7]{KLP3} are satisfied.
\par We also remark that Guichard-Gu\'eritaud-Kassel-Wienhard proved in \cite[Thm. 1.3, (1)$\Leftrightarrow$(2)]{GGKW} that a representation $\rho:\Gamma \rightarrow G$ is $P_{\theta}$-Anosov if and only if $\rho$ is $P_{\theta}$-divergent and admits a pair of continuous, $\rho$-equivariant, dynamics preserving and transverse maps $\xi^{\pm}:\partial_{\infty}\Gamma \rightarrow G/P_{\theta}^{\pm}$. Theorem \ref{maintheorem} follows by \cite[Thm. 1.3, (1)$\Leftrightarrow$(2)]{GGKW} under the additional assumption that both limit maps are dynamics preserving. 

\subsection*{Organization of the paper.} In \S \ref{Background} we provide the necessary background from Lie theory, hyperbolic groups and the notion of the Floyd boundary and recall Labourie's dynamical definition of Anosov representations. In  \S\ref{contraction} we prove some preliminary results which we use for the proof of Theorem \ref{maintheorem}. In \S \ref{scartan} we define the Cartan property for an equivariant map $\xi:\partial_{\infty}\Gamma \rightarrow G/P_{\theta}^{\pm}$ and discuss the uniform gap summation property of \cite{GGKW} in the more general setting of finitely generated groups. In \S\ref{PropU} we discuss (strong) property (U) and prove Theorem \ref{wg} and Corollary \ref{ugspu}. In \S \ref{Gromovproduct} we define a Gromov product for a representation $\rho$ and prove that is comparable with the usual Gromov product on the domain group when $\rho$ is Anosov. Next, in \S \ref{Main} we prove Theorem \ref{maintheorem} and in \S \ref{Scc} we give the proof of Theorem \ref{stronglyconveccocompact}. In \S \ref{sub} we provide conditions for the direct product of two representations to be Anosov. Finally, in  \S \ref{examples} we provide examples of discrete and faithful representations of surface groups showing that the assumptions of our main results are necessary.

\subsection*{Acknowledgements.} I would like to thank Richard Canary and Fanny Kassel for their support during the course of this work and for their comments in previous versions of this paper. I would also like to thank Sami Douba, Michael Kapovich, Giuseppe Martone, Andr\'es Sambarino and Feng Zhu for helpful discussions. Finally, I would also like to thank the anonymous referee for carefully reading the paper and their comments and suggestions that improved the exposition. The author was partially supported by grants DMS-1564362 and DMS-1906441 from the National Science Foundation as well as from the European Research Council (ERC) under the European's Union Horizon 2020 research and innovation programme (ERC starting grant DiGGeS, grant agreement No 715982).

\section{Background} \label{Background}
In this section, we recall definitions from Lie theory, review several facts for hyperbolic groups, the Floyd boundary, provide Labourie's dynamical definition of Anosov representations and also discuss several facts for semisimple representations. \hbox{We mainly follow the notation from \cite[\S 2]{GGKW}.} 

\subsection*{Conventions} Throughout this paper $\Gamma$ is a finitely generated group equipped with a finite generating subset $S$, inducing a left invariant word metric $d_{\Gamma}$ on the Cayley graph $C_{\Gamma}$ of $\Gamma$. For $\gamma \in \Gamma$ we set $|\gamma|_{\Gamma}:=d_{\Gamma}(\gamma,e)$, where $e\in \Gamma$ is the identity element. A linear representation $\rho:\Gamma \rightarrow \mathsf{GL}_d(\mathbb{R})$, $d \geq 2$, is called {\em irreducible} if $\rho(\Gamma)$ does not preserve any non-trivial proper vector subspace of $\mathbb{R}^d$. The representation $\rho$ is called {\em strongly irreducible} if for every finite-index subgroup $H$ of $\Gamma$ the restriction $\rho|_{H}$ is irreducible. We  equip the vector space $\mathbb{R}^d$ with the canonical basis $(e_1,\ldots,e_d)$, where $e_i$ is the vector with $1$ on the $i^{\textup{th}}$ coordiante and zero everywhere else, and the standard Euclidean inner product $\langle \cdot,\cdot\rangle$. For a subspace $V\subset \mathbb{R}^d$, $V^{\perp}=\{v\in \mathbb{R}^d: \langle v,v'\rangle=0,  \forall v' \in V\}$ is the orthogonal complement of $V$.

\subsection{Lie theory} We will always consider $G$ to be a semisimple Lie subgroup of $\mathsf{SL}_m(\mathbb{R})$, $m \geqslant 2$, of non-compact type with finitely many connected components. The Zariski topology on $G$ is the subspace topology induced from the Zariski topology on $\mathsf{SL}_m(\mathbb{R})$.


\par We fix a maximal compact subgroup $K$ of $G$, unique up to conjugation, a Cartan decomposition $\mathfrak{g}=\mathfrak{t}\oplus \mathfrak{p}$ where $\mathfrak{t}=\textup{Lie}(K)$, $\mathfrak{p}$ is the orthogonal complement of $\mathfrak{t}$ with respect to the Killing form on $\mathfrak{g}$, and the Cartan subspace $\mathfrak{a}\subset \mathfrak{g}$ which is a maximal abelian subalgebra of $\mathfrak{g}$ contained in $\mathfrak{p}$. The {\em real rank} of $G$ is the dimension of $\mathfrak{a}$ as a real vector space. 


There is a decomposition of $\mathfrak{g}$ into the common eigenspaces of the transformations $X\mapsto [H,X],H \in \mathfrak{a}$, called the restricted root decomposition $$\mathfrak{g}=\mathfrak{g}_0 \oplus \bigoplus_{\alpha \in \Sigma} \mathfrak{g}_{\alpha}$$ where $\mathfrak{g}_{\alpha}=\big \{ X \in \mathfrak{g}:[H,X]= \alpha(H) X , \forall H \in \mathfrak{a} \big \}$ and $\Sigma=\big \{\alpha \in \mathfrak{a}^{\ast}: \mathfrak{g}_{\alpha} \neq 0 \big \}$ is the set of restricted roots of $G$. Fix $H_0 \in \mathfrak{a}$ with $\alpha(H_0) \neq 0$ for every $\alpha \in \Sigma$. Denote by $\Sigma^{+}=\big \{\alpha \in \Sigma: \alpha( H_0)>0\big \}$ the set of positive roots and fix $\Delta \subset \Sigma^{+}$ the simple positive roots. For any simple restricted root $\alpha \in \Delta$, denote by $\omega_{\alpha}$ the \emph{fundamental weight} with respect to $\alpha\in \Delta$, see \cite[\S 3.1]{GGKW}.

 \par For every $\theta \subset \Delta$, $\Sigma_{\theta}$ denotes the set of all roots in $\Sigma$ which are linear combinations of elements of $\theta$. We consider the parabolic Lie algebras $$\mathfrak{p}_{\theta}^{\pm}=\mathfrak{g}_{0} \oplus \bigoplus_{\alpha \in \Sigma^{\pm}\cup \Sigma_{\Delta \smallsetminus \theta}} \mathfrak{g}_{\alpha}$$ and denote by $P_{\theta}^{\pm}=N_{G}(\mathfrak{p}_{\theta}^{\pm})$. A subgroup $P$ of $G$ is {\em parabolic} if it normalizes some parabolic subalgebra. A pair of parabolic subgroups $(P^{+},P^{-})$ of $G$ are called {\em opposite} if there $\theta \subset \Delta$ and $g \in G$ such that $(P^{+},P^{-})=(gP_{\theta}^{+}g^{-1},gP_{\theta}^{-}g^{-1})$. 

 Let $\overline{\mathfrak{a}}^{+}:= \big\{ H \in \mathfrak{a}:\alpha (H) \geqslant 0,  \forall \alpha \in \Delta  \big\}$. There exists a decomposition $$G=K\exp(\overline{\mathfrak{a}}^{+})K$$ called the {\em Cartan decomposition} where each element $g \in G$ is written as $$g=k_{g}\exp(\mu(g))k_{g}'\ \ k_{g},k_{g}' \in K,$$ and $\mu(g)\in \overline{\mathfrak{a}}^{+}$ denotes the Cartan projection of $g$. The map $\mu:G \rightarrow \overline{\mathfrak{a}}^{+}$ is called the {\em Cartan projection} and is continuous and proper. The {\em Lyapunov projection} $\lambda:G \rightarrow \overline{\mathfrak{a}}^{+}$ is the map defined as follows for $g\in G$, $$\lambda(g)=\lim_{n \rightarrow \infty} \frac{1}{n}\mu(g^n).$$ 

An element $g \in G$ is called {\em $P_\theta$-proximal} if $\min_{\alpha \in \theta} \alpha( \lambda(g)))>0$. Equivalently, $g$ has two fixed points $x_{g}^{+}\in G/P_{\theta}^{+}$ and \hbox{$V_{g}^{-} \in  G/P_{\theta}^{-}$} such that the pair $(x_{g}^{+},V_{g}^{-})$ is transverse and for every $x \in G/P_{\theta}^{+}$  transverse to $V_{g}^{-}$, we have $\lim_{n}g^n x=x_{g}^{+}$. The element $g$ is called $P_\theta$-\emph{biproximal} if $g$ and $g^{-1}$ are both $P_{\theta}$-proximal and we denote by $x_{g}^{-}$ the attracting fixed point of $g^{-1}$ in $G/P_{\theta}^{-}$. 

\medskip

For a matrix $h=(h_{ij})_{i,j=1}^{d}$ in $\mathsf{GL}_d(\mathbb{R})$ its transpose is $h^{t}:=(h_{ji})_{i,j=1}^{d}$.

\begin{Example} The case of $G=\mathsf{SL}_d(\mathbb{R})$. \normalfont{Recall that $(e_1,\ldots,e_d)$ denotes the canonical basis of $\mathbb{R}^d$ and $e_{j}^{\perp}:=\bigoplus_{j \neq i}\mathbb{R}e_j$. The group $\mathsf{SO}(d)=\{g \in \mathsf{SL}_d(\mathbb{R}):gg^{t}=I_d \big \}$ is the unique, up to conjugation, maximal compact subgroup of $\mathsf{SL}_d(\mathbb{R})$. A Cartan subspace for $\mathfrak{g}$ is the subspace $\mathfrak{a}=\textup{diag}_{0}(d)$ of all diagonal matrices with zero trace. Let $\varepsilon_{i} \in \mathfrak{a}^{\ast}$ be the projection to the $(i,i)$-entry. The closed dominant Weyl chamber of $\mathfrak{a}$ is $\overline{\mathfrak{a}}^{+}:=\big \{\textup{diag}(a_1,\ldots,a_d):a_1 \geqslant \ldots \geqslant a_d, \ \sum_{i=1}^{d}a_{i}=0\big \}$ and we have the Cartan decomposition $\mathsf{SL}_d(\mathbb{R})=\mathsf{SO}(d)\exp(\overline{\mathfrak{a}}^{+})\mathsf{SO}(d)$. The restricted root decomposition is $\mathfrak{sl}_d(\mathbb{R})=\mathfrak{a} \oplus \bigoplus_{i \neq j} \mathbb{R}E_{ij}$, where $E_{ij}$ denotes the $d \times d$ elementary matrix with $1$ at the $(i,j)$ entry and $0$ everywhere else. The set of restricted roots is $\big\{ \varepsilon_{i}-\varepsilon_{j}: i \neq j \big \}$ and of simple positive roots $\big\{\varepsilon_{i}-\varepsilon_{i+1}:i=1,\ldots,d-1 \big\}$. For each $i=1,\ldots,d-1$, the associated fundamental weight is $\omega_{\varepsilon_i-\varepsilon_{i+1}}=\sum_{k=1}^{i}\varepsilon_{k}$. For an element $g \in \mathsf{SL}_d( \mathbb{R})$ we denote by $\sigma_i(g)$ and $\ell_i(g)$ the $i$-th singular value and modulus of eigenvalues of $g$. Recall the connection between moduli of eigenvalues and singular values $\sigma_{i}(g)=\sqrt[]{\ell_i(gg^t)}$. The Cartan and Lyapunov projections of \hbox{$g\in \mathsf{SL}_d(\mathbb{R})$ respectively are} \begin{align*} \mu(g) &=\textup{diag}\big(\log \sigma_1(g),\ldots,\log \sigma_d(g)\big)\\ \lambda(g) & =\textup{diag}\big(\log \ell_1(g),\ldots,\log\ell_d(g)\big).\end{align*}
For any integer $1 \leq i \leq \frac{d}{2}$ we denote by $P_{i}^{+}$ (resp. $P_{i}^{-}$) the stabilizer of the plane $\langle e_1,\ldots, e_i\rangle$ (resp. $\langle e_{i+1},\ldots, e_{d}\rangle$). The pair of parabolic subgroups $(P_{i}^{+},P_{i}^{-})$ is opposite. An element $g \in \mathsf{GL}_d(\mathbb{R})$ is  $P_i$-proximal if and only if $\ell_{i}(g)>\ell_{i+1}(g)$. In this case $g$ admits a unique attracting fixed point in the flag space $G/P_{i}^{+}=\mathsf{Gr}_{i}(\mathbb{R}^d)$.}\end{Example}

\subsection{Gromov hyperbolic spaces.}\label{Grhyp} Let $(X,d)$ be a proper geodesic metric and $x_0 \in X$ a fixed basepoint. For an isometry $\gamma:X\rightarrow X$  define $|\gamma|_{X}:=d(\gamma x_0,x_0)$. The {\em translation length} and the {\em stable translation length} of the isometry $\gamma$ respectively are: $$\ell_{X}(\gamma)=\inf_{x \in X}d(\gamma x,x), \ |\gamma|_{X,\infty}=\lim_{n \rightarrow \infty} \frac{|\gamma^n|_{X}}{n}.$$ The \emph{Gromov product} with respect to $x_0$ is the map $X\times X\rightarrow [0,\infty)$ defined as follows $$(x\cdot y)_{x_0}:=\frac{1}{2}\Big(d(x,x_0)+d(y, x_0)-d(x,y) \Big).$$ 
\par A proper geodesic metric space space $(X,d)$ is called \emph{Gromov hyperbolic} if there exists $\epsilon \geqslant 0$ with the following property: for every $x,y,z \in X$ $$(x \cdot y)_{x_0} \geqslant \min  \big \{(x \cdot z)_{x_0}, (z \cdot y)_{x_0}  \big \}-\epsilon.$$ The {\em Gromov boundary} of $X$ is denoted by $\partial_{\infty}X$.

\par A finitely generated group $\Gamma$ is called {\em word hyperbolic} (or {\em Gromov hyperbolic}) if the Cayley graph of $\Gamma$ equipped with the word metric $d_{\Gamma}$ is a Gromov hyperbolic space. In this case, every infinite order element $\gamma \in \Gamma$ has exactly two fixed points $\gamma^{+}, \gamma^{-}\in \partial_{\infty}\Gamma$, called the attracting and repelling fixed points of $\gamma$ respectively. For more details on Gromov hyperbolic  spaces and their boundaries we refer the reader to \cite[Chap. III.H \& III.$\Gamma$]{BH} and \cite{CDP}.

\subsection{The Floyd boundary.}  A non-increasing function $f:\mathbb{N} \rightarrow (0,\infty)$ is called a {\em Floyd function} if it  satisfies the following two conditions:\\
\noindent \textup{(i)} \hbox{$\sum_{n=1}^{\infty} f(n)<+\infty$}.\\
\noindent \textup{(ii)} there exists $0<\epsilon<1$ such that $\epsilon f(n) \leqslant f(n+1) \leqslant f(n)$ for every $n \in \mathbb{N}$.
\medskip

 Let $\Gamma$ be a finitely generated group. Given a Floyd function $f:\mathbb{N}\rightarrow (0,\infty)$ there exists a metric $d_{f}$ on the Cayley graph of $\Gamma$ with respect to $S$ defined as follows (see \cite{Floyd}): for two adjacent vertices $g, h \in \Gamma$ their distance is defined as $d_{f}(g,h)=f(\max\{|g|_{\Gamma},|h|_{\Gamma}\})$. The length of a finite path $\p$ defined by the sequence of adjacent vertices $\p=\{x_{0},x_{1},\ldots,x_{k}\}$ is $L_{f}(\p)=\sum_{i=0}^{k-1}d_{f}(x_{i},x_{i+1})$. For two arbitrary vertices $g,h \in \Gamma$ their distance is $d_{f}(g,h)=\inf \big \{L_{f}(\p): \p \ \textup{is a path from} \ g \ \textup{to} \ h \big \}$. It is easy to verify that $d_{f}$ defines a metric on $\Gamma$ and let $\overline{\Gamma}$ be the the metric completion of $\Gamma$ with respect to $d_f$. Every two points $x,y \in \overline{\Gamma}$ are represented by Cauchy sequences $(\gamma_n)_{n \in \mathbb{N}}, (\delta_n)_{n \in \mathbb{N}}$ with respect to $d_f$ and their distance is $d_{f}(x,y)=\lim_{n}d_{f}(\gamma_n,\delta_n)$. The \emph{Floyd boundary of $\Gamma$ with respect to $f$} is defined to be the complement $\partial_{f}\Gamma:=\overline{\Gamma} \smallsetminus \Gamma$ equipped with the metric $d_{f}$. The Floyd boundary $\partial_{f}\Gamma$ is called {\em non-trivial} if it contains at least three points. For every infinite order element $\gamma \in \Gamma$  the limit $\lim_{n \rightarrow \infty}\gamma^{n}$ exists (see for example \cite[Prop. 4]{Karlsson}) \hbox{and is denoted by $\gamma^{+}$.}
\par If $\Gamma$ is a word hyperbolic group, there exists $\varepsilon>0$ such that the Floyd boundary of $\Gamma$ with respect to $f(x)=e^{-\varepsilon x}$ is the Gromov boundary of $\Gamma$ equipped with a visual metric (see \cite{Gromov}). For more details and properties of the Floyd boundary we refer the reader to \cite{Floyd, Gromov, Karlsson}.

\subsection{Flow spaces for hyperbolic groups.}\label{Flows} Flow spaces for hyperbolic groups were introduced by Gromov in \cite{Gromov} and further developed by Champetier \cite{Champetier} and Mineyev \cite{Mineyev}. For any word hyperbolic group $\Gamma$ there exists a metric space $\big( \hat{\Gamma}, \varphi_{t} \big)$ equipped with an $\mathbb{R}$-action $\{\varphi_{t}\}_{t \in \mathbb{R}}$ called the geodesic flow with the following properties:\\
\noindent (a) The action of $\Gamma$ commutes with the action of the geodesic flow.

\noindent (b) The group $\Gamma$ acts properly discontinuously and cocompactly with isometries \hbox{on the flow space $\hat{\Gamma}$.}

\noindent (c) There exist $C,c>0$ such that for every $\hat{m} \in \hat{\Gamma}$, the map $t \mapsto\varphi_{t}(\hat{m})$ is a $(C,c)$-quasi-isometric embedding \hbox{$(\mathbb{R},d_{\mathbb{E}}) \rightarrow (\hat{\Gamma},d_{\hat{\Gamma}})$}.

\noindent The last property guarantees that the map $(\tau^{+},\tau^{-}):\hat{\Gamma} \rightarrow \partial_{\infty} \Gamma\times \partial_{\infty}\Gamma \smallsetminus \big\{ (x,x) \ | \ x \in \partial_{\infty}\Gamma \big \}$ $$\big(\tau^{+}(\hat{m}), \tau^{-}(\hat{m})\big)=\Big( \lim_{t \rightarrow \infty} \varphi_{t}(\hat{m}), \ \lim_{t \rightarrow \infty}\varphi_{-t}(\hat{m}) \Big)$$ is well-defined, continuous and equivariant with respect to the action of $\Gamma$. For example, if $(M, g)$ is a closed negatively curved Riemannian manifold, a flow space for $\pi_1(M)$ satisfying the previous conditions is the unit tangent bundle $T^{1}\widetilde{M}$ equipped with the standard geodesic flow. \par  Benoist proved that a torsion-free, discrete subgroup $\Gamma\subset \mathsf{PGL}_d(\mathbb{R})$ acting geometrically on a strictly convex domain $\Omega \subset \mathbb{P}(\mathbb{R}^d)$ is word hyperbolic (see \cite[Thm. 1]{benoist-divisible1}). A choice of a flow space for $\Gamma$ is the manifold $ T^1\Omega$ equipped with the Hilbert geodesic flow.

\subsection{Anosov representations}\label{Anosovdfn}
Let $\rho:\Gamma \rightarrow G$ be a representation and fix $\theta \subset \Delta$ a subset of simple restricted roots of $G$. We denote by $L_{\theta}=P_{\theta}^{+} \cap P_{\theta}^{-}$ the common Levi subgroup of $P_{\theta}^{+},P_{\theta}^{-}$. There exists a $G$-equivariant embedding $G/L_{\theta} \rightarrow G/P_{\theta}^{+} \times G/P_{\theta}^{-}$ mapping the coset $gL_{\theta}$ to the pair $(gP_{\theta}^{+},gP_{\theta}^{-})$. The tangent space of $G/L_{\theta}$ at $(gP_{\theta}^{+},gP_{\theta}^{-})$ splits as the direct sum $\mathsf{T}_{gP_{\theta}^{+}}G/P_{\theta}^{+} \oplus \mathsf{T}_{gP_{\theta}^{-}}G/P_{\theta}^{-}$ and induces a $G$-equivariant splitting of the tangent bundle $\mathsf{T}(G/L_{\theta})=\mathcal{E}\oplus \mathcal{E}^{-}$. We consider the quotient spaces: $$\mathcal{X}_{\rho}=\Gamma \backslash  \big(\hat{\Gamma} \times G/L_{\theta} \big), \   \mathcal{E}_{\rho}^{\pm}=\Gamma \backslash \big( \hat{\Gamma} \times \mathcal{E}^{\pm} \big)$$ where the action of $\gamma \in \Gamma$ on $\mathsf{T}(G/L_{\theta})$ is given by the differential $dL_{\rho(\gamma)}$ of the left translation by $\rho(\gamma)$, denoted $L_{\rho(\gamma)}:G/L_{\theta} \rightarrow G/L_{\theta}$. Let $\pi: \mathcal{X}_{\rho} \rightarrow \Gamma \backslash \hat{\Gamma}$ and $\pi_{\pm}: \mathcal{E}_{\rho}^{\pm} \rightarrow \mathcal{X}_{\rho}$ be the natural projections. The projections $\pi_{\pm}$ define vector bundles over the space $\mathcal{X}_{\rho}$ where the fiber over the point $[\hat{m}, (gP_{\theta}^{+},gP_{\theta}^{-})]_{\Gamma}$ is identified with the vector space $\mathsf{T}_{gP_{\theta}^{\pm}}G/P_{\theta}^{\pm}$. The geodesic flow $\{\varphi_{t}\}_{t \in \mathbb{R}}$ commutes with the action of $\Gamma$ and there exists a lift of the geodesic flow on the quotients $\mathcal{X}_{\rho}$ and $\mathcal{E}_{\rho}^{\pm}$ which we continue to denote by $\{\varphi_{t}\}_{t \in \mathbb{R}}$.

\begin{definition} \textup{(}\cite{GW, labourie}\textup{)}\label{Def-of-Anosov-rep} Let $\Gamma$ be a word hyperbolic group and fix $\theta \subset \Delta$ a subset of restricted roots of $G$. A representation $\rho:\Gamma \rightarrow G$ is called $P_{\theta}$-Anosov if:
\begin{enumerate}  \item There exists a section $\sigma:\Gamma \backslash \hat{\Gamma} \rightarrow \mathcal{X}_{\rho}$ flat along the flow lines.
\item The lift of the geodesic flow  $\{\varphi_t\}_{t \in \mathbb{R}}$ on the pullback bundle $\sigma_{\ast} \mathcal{E}^{+}$ \textup{(}resp. $\sigma_{\ast}\mathcal{E}^{-}$\textup{)} is dilating \textup{(}resp. contracting\textup{)}.\end{enumerate} \end{definition}

Two maps $\xi^{+}:\partial_{\infty}\Gamma \rightarrow G/P_{\theta}^{+}$ and $\xi^{-}:\partial_{\infty}\Gamma \rightarrow G/P_{\theta}^{-}$ are called \emph{transverse} if for any pair of distinct points $(x,y) \in  \partial^{(2)}_{\infty} \Gamma$ there exists $h \in G$ such that $(\xi^{+}(x),\xi^{-}(y))=(hP_{\theta}^{+},hP_{\theta}^{-})$. The previous definition is equivalent to the existence of a pair of continuous  $\rho$-equivariant transverse maps $\xi^{+}:\partial_{\infty} \Gamma \rightarrow  G/P_{\theta}^{+}$ and $\xi^{-}:\partial_{\infty}\Gamma \rightarrow G/P_{\theta}^{-}$ defining the flat section $\sigma: \Gamma \backslash \hat{\Gamma}\rightarrow \mathcal{X}_{\rho}$ $$\sigma([\hat{m}]_{\Gamma}):=\big[\hat{m}, (\xi^{+}(\tau^{+}(\hat{m})), \xi^{-}(\tau^{-}(\hat{m})))\big]_{\Gamma},$$ and a continuous equivariant family of norms $(||\cdot||_{x})_{x \in \Gamma \backslash \hat{\Gamma}}$ with the property that there exist \hbox{$C,a>0$} such that for every $x=[\hat{m}]_{\Gamma} $, $t \geqslant 0$, and $v \in \mathsf{T}_{\xi^{+}(\tau^{+}(\hat{m}))}G/P_{\theta}^{+}$ \textup{(resp. $v \in \mathsf{T}_{\xi^{-}(\tau^{-}(\hat{m}))}G/P_{\theta}^{-}$)}: $$\big|\big|\varphi_{-t} \big(X_{v}^{+} \big)\big|\big|_{\varphi_{-t}(x)} \leqslant Ce^{-at}\big|\big|X_{v}^{+}\big|\big|_{x} \ \ \ \big(\textup{resp.} \ \big|\big|\varphi_{t} \big(X_{v}^{-} \big)\big|\big|_{\varphi_{t}(x)} \leqslant Ce^{-at}\big|\big|X_{v}^{-}\big|\big|_{x}\big)$$ where $X_{v}^{+}$ (resp. $X_{v}^{-}$) denotes the copy of the vector $v\in \pi^{-1}_{+}(x)$ $\big(\textup{resp.} \ v\in \pi^{-1}_{-}(x) \big)$.
\medskip

We recall now some of the key properties of Anosov representations. For more background and for the main properties of Anosov representations  see \cite{Canary-notes, GGKW, GW, KLP1, KLP2, KLP3,labourie}. For a coset $gP_{\theta}^{\pm}$, the stabilizer $\textup{Stab}_{G}(gP_{\theta}^{\pm})$ is the parabolic subgroup $gP_{\theta}^{\pm}g^{-1}$ of $G$. A pair of maps \hbox{$\xi^{+}:\partial_{\infty}\Gamma \rightarrow G/P_{\theta}^{+}$} and $\xi^{-}:\partial_{\infty}\Gamma \rightarrow G/P_{\theta}^{-}$ are called \emph{compatible} if for any $x \in \partial_{\infty}\Gamma$ the intersection $\textup{Stab}_{G}(\xi^{+}(x)) \cap \textup{Stab}_{G}(\xi^{-}(x))$ is a parabolic subgroup of $G$. We also say that $\xi^{+}$ (resp. $\xi^{-}$) is \emph{dynamics preserving} if for every infinite order element $\gamma \in \Gamma$, $\rho(\gamma)$ is proximal in $G/P_{\theta}^{+}$ (resp. $G/P_{\theta}^{-}$) and $\xi^{+}(\gamma^{+})$ (resp. $\xi^{-}(\gamma^{+})$) is the attracting fixed point of $\rho(\gamma)$ in $G/P_{\theta}^{+}$ (resp. $G/P_{\theta}^{-}$). We fix an Euclidean norm $||\cdot||$ on the Cartan subspace $\mathfrak{a} \subset \mathfrak{g}$ and recall that $\mu:G \rightarrow \overline{\mathfrak{a}}^{+}$ denotes the Cartan projection. 

\begin{theorem}\label{mainproperties} \textup{(}\cite{GW,labourie,KLP2}\textup{)} Let $\Gamma$ be a word hyperbolic group and $\theta \subset \Delta$ a subset of simple restricted roots of $G$. Suppose that $\rho:\Gamma \rightarrow G$ is a $P_{\theta}$-Anosov representation.
\begin{enumerate}[label=(\roman*)]
\item There exist $C,c>1$ such that for every $\gamma \in \Gamma$, $$\min_{\alpha \in \theta}\alpha \big(\mu(\rho(\gamma)) \big) \geqslant c^{-1} \big|\big| \mu (\rho(\gamma))\big|\big|-c \geq C^{-1}|\gamma|_{\Gamma}-C.$$ In particular, $\rho$ is a quasi-isometric embedding, $\textup{ker}(\rho)$ is finite and $\rho(\Gamma)$ is discrete in $G$.

\item $\rho$ admits a pair of compatible, continuous, $\rho$-equivariant, dynamics preserving and transverse maps $\xi^{+}:\partial_{\infty}\Gamma \rightarrow G/P_{\theta}^{+}$ and $\xi^{-}:\partial_{\infty}\Gamma \rightarrow G/P_{\theta}^{-}.$
\item The set of $P_{\theta}$-Anosov representations of $\Gamma$ in $G$ is open in $\textup{Hom}(\Gamma,G)$ and the map assigning a $P_{\theta}$-Anosov representation to its Anosov limit maps is continuous.\end{enumerate}
\end{theorem}

\par Let $G$ be a semisimple linear Lie group. A representation \hbox{$\tau: G \rightarrow \mathsf{GL}_d(\mathbb{R})$} is called {\em proximal} if $\tau(G)$ contains a $P_1$-proximal element. For an irreducible and proximal  representation $\tau$ we denote by $\chi_{\tau}$ the highest weight of $\tau$. The functional $\chi_{\tau} \in \mathfrak{a}^{\ast}$ is of the form $\chi_{\tau}=\sum_{\alpha \in \Delta} n_{\alpha}\omega_{\alpha}$ and the representation $\tau$ is called \emph{$\theta$-compatible} if \hbox{$\theta=\big\{ \alpha \in \Delta: n_{\alpha}>0 \big \}$.}
\par The following result is the content of \cite[Prop. 4.3]{GW} and \cite[Lem. 3.7]{GGKW} and is used to reduce statements for $P_{\theta}$-Anosov representations to statements for $P_1$-Anosov representations.

\begin{proposition}\textup{(}\cite{GGKW,GW}\textup{)} \label{higherdimension} Let $G$ a real semisimple Lie group, $\theta \subset \Delta$ a subset of simple restricted roots of $G$. There exists an irreducible, $\theta$-compatible representation $\tau_{\theta}:G \rightarrow \mathsf{GL}_d(\mathbb{R})$, $d=d(G,\theta)$,  such that $\tau_{\theta}(P_{\theta}^{+})$ and $\tau_{\theta}(P_{\theta}^{-})$ stabilize the line $[e_1]$ and the hyperplane $e_{1}^{\perp}=\langle e_1,\ldots ,e_{d-1} \rangle$ respectively, so that there exist continuous and $\tau_{\theta}$-equivariant embeddings $$\iota^{+}:G/P_{\theta}^{+} \xhookrightarrow{} \mathbb{P}(\mathbb{R}^d), \ \iota^{-}:G/P_{\theta}^{-}\xhookrightarrow{} \mathsf{Gr}_{d-1}(\mathbb{R}^d)$$ induced by $\tau$. Moreover, a representation $\rho:\Gamma \rightarrow G$ is $P_{\theta}$-Anosov if and only if $\tau_{\theta} \circ \rho:\Gamma \rightarrow \mathsf{GL}_d(\mathbb{R})$ is $P_1$-Anosov. In this case, the pair of Anosov limit maps of $\tau_{\theta} \circ \rho$ is $(\iota^{+} \circ \xi^{+}, \iota^{-}\circ \xi^{-})$, where $(\xi^{+},\xi^{-})$ is the pair of the limit maps of $\rho$.\end{proposition}

\subsection{Semisimple representations.} Let $G$ be a semisimple Lie subgroup of $\mathsf{SL}_d(\mathbb{R})$ and \hbox{$\rho:\Gamma \rightarrow G$} a representation. The representation $\rho$ is called {\em semisimple} if $\rho$ is a direct sum of irreducible reprrsentations. In this case the Zariski closure of $\rho(\Gamma)$ in $G$ is a reductive algebraic Lie group. 

The following result was proved by Benoist using a result of Abels--Margulis--Soifer \cite{AMS} and allows one to control the Cartan projection of a semisimple representation in terms of its Lyapunov projection. We refer the reader to \cite[Thm. 4.12]{GGKW} for a proof.

\begin{theorem}\label{finitesubset} \textup{(}\cite{AMS} \& \cite{benoist-limitcone}\textup{)} Let $G$ be a real reductive Lie group, $\Gamma$ be a discrete group and \hbox{$\big\{\rho_{i}:\Gamma \rightarrow G\big\}_{i=1}^{s}$} semisimple representations. Then there exists $C>0$ and a finite subset $F$ of $\Gamma$ such that for every $\gamma \in \Gamma$ there exists $f \in F$ with the property: $$\max_{1 \leqslant i\leqslant s} \Big| \Big| \mu \big(\rho_i(\gamma)\big)-\lambda \big(\rho_{i}(\gamma f)\big)  \Big|\Big| \leqslant C$$\end{theorem}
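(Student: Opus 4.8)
The plan is to deduce the statement from the Abels--Margulis--Soifer theorem \cite{AMS}, in the spirit of \cite[Theorem 4.12]{GGKW}: the finite set $F$ serves to ``straighten'' an arbitrary element of $\Gamma$ into one which is loxodromic with attracting and repelling flags in \emph{controlled} general position, and for such elements the Cartan and Lyapunov projections are automatically comparable. I would begin with two routine reductions. First, pass from the tuple $(\rho_i)_i$ to the single product representation $\rho=(\rho_1,\dots,\rho_s)\colon\Gamma\to G^s$; it is again semisimple, because the unipotent radical of the Zariski closure of $\rho(\Gamma)$ projects onto a normal unipotent subgroup of each reductive group $\overline{\rho_i(\Gamma)}$, hence is trivial. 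Equipping $\mathfrak{a}(G^s)=\mathfrak{a}(G)^s$ with the maximum of the chosen norms, a single $f$ and a single constant bounding $\|\lambda(\rho(\gamma f))-\mu(\rho(\gamma))\|$ control all $s$ of the original quantities at once, so we may assume $s=1$. Second, replace $G$ by the reductive group $H:=\overline{\rho(\Gamma)}$, using that (for compatible choices of maximal compact, Cartan subspace and Weyl chamber) the Cartan and Lyapunov projections of $G$ restricted to $H$ differ, up to a uniformly bounded amount, from those of $H$ read through a fixed piecewise-linear identification of the Cartan subspaces. This reduces the statement to the identity inclusion of the Zariski-dense subgroup $\rho(\Gamma)\leq H$, and $F\subset\Gamma$ is then recovered by pulling back along any set-theoretic section of $\rho$.

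For the core argument, since $\rho(\Gamma)$ is Zariski dense in the reductive group $H$, the Abels--Margulis--Soifer theorem \cite{AMS} provides a finite $F_0\subset\rho(\Gamma)$ and a constant $r>0$ such that every $g\in H$ admits $f\in F_0$ with $gf$ loxodromic and with attracting/repelling fixed flags $(\xi^+_{gf},\xi^-_{gf})$ that are $r$-transverse, i.e. at distance at least $r$ from the locus of non-transverse pairs in the product of the two full flag varieties of $H$. The key estimate I would then prove is: there is $C_0=C_0(H,r)$ such that every loxodromic $h\in H$ with $r$-transverse attracting/repelling flags satisfies $\|\mu(h)-\lambda(h)\|\leq C_0$.

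To establish the key estimate, fix a faithful linear embedding $H\hookrightarrow\mathsf{SL}(m,\mathbb{R})$; it suffices to bound $\|\mu(h)-\lambda(h)\|$ computed in $\mathsf{SL}(m,\mathbb{R})$. A loxodromic $h$ is $\mathbb{R}$-diagonalizable with positive eigenvalues, and the direct-sum decomposition of $\mathbb{R}^m$ into its eigenspaces is precisely the one cut out by the transverse pair $(\xi^+_h,\xi^-_h)$. Writing $h=PDP^{-1}$ with $D$ block-scalar and $P$ sending an orthonormal basis adapted to that decomposition to the standard basis, a compactness argument on the compact set of $r$-transverse pairs of flags bounds the conditioning $M:=\|P\|\,\|P^{-1}\|$ in terms of $m$ and $r$ only. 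Taking $k$-th exterior powers then gives $\sigma_1(h)\cdots\sigma_k(h)=\|\wedge^k h\|\leq M^k\,|\ell_1(h)\cdots\ell_k(h)|$ for every $k$, while the reverse domination $\langle\varepsilon_1+\dots+\varepsilon_k,\mu(h)-\lambda(h)\rangle\geq 0$ always holds by Weyl's majorization inequalities; together these give $\|\mu(h)-\lambda(h)\|\leq c_m\log M$, as wanted.

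To finish, for arbitrary $g=\rho(\gamma)$ pick $f\in F_0$ with $gf$ $r$-loxodromic and estimate
\[
\|\lambda(gf)-\mu(g)\|\leq\|\lambda(gf)-\mu(gf)\|+\|\mu(gf)-\mu(g)\|\leq C_0(H,r)+\max_{f\in F_0}\|\mu(f)\|,
\]
using the key estimate and the standard coarse-Lipschitz bound $\|\mu(gf)-\mu(g)\|\leq\|\mu(f)\|$ together with the finiteness of $F_0$; undoing the two reductions only adds further additive constants. The single deep ingredient is the Abels--Margulis--Soifer theorem, which I would simply cite. Granting it, the step needing the most care is the key estimate — converting the quantitative general position of the attracting and repelling flags supplied by AMS into a uniform bound on the conditioning of the associated eigenbasis, which is exactly what forces $\mu(h)$ to remain within bounded distance of $\lambda(h)$. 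The reductive-subgroup comparison of Cartan and Lyapunov projections used in the second reduction is the other point that should be carefully justified or quoted.
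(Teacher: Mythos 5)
The paper itself does not give a proof of this statement: it attributes the result to Benoist and refers to \cite[Theorem 4.12]{GGKW} for the argument, so there is no paper-internal proof to compare against. Your overall architecture -- reduce to a single semisimple representation via the diagonal $\rho=(\rho_1,\dots,\rho_s)$, pass to $H=\overline{\rho(\Gamma)}$, invoke Abels--Margulis--Soifer (in Benoist's refined form, which gives fully loxodromic elements with quantitatively transverse flags, not merely $P_\theta$-proximal ones) to straighten an arbitrary element, bound $\|\mu-\lambda\|$ for such elements, and finish with the coarse-Lipschitz property of $\mu$ -- is exactly the Benoist/GGKW route. Reduction (1) and the concluding sub-additivity estimate are correct, and reduction (2) (replacing $G$ by $H$) is also fine, because the folding map $\overline{\mathfrak{a}_H^+}\hookrightarrow\mathfrak{a}_G\twoheadrightarrow\overline{\mathfrak{a}_G^+}$ is $1$-Lipschitz in the direction you actually use: a bound on $\|\mu_H(h)-\lambda_H(h)\|$ does imply a bound on $\|\mu_G(h)-\lambda_G(h)\|$.

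The step that does not close -- the one you flag yourself -- is transferring the key estimate from $\mathsf{SL}(m,\mathbb{R})$ back to $H$. Your argument bounds $\|\mu_{\mathsf{SL}(m)}(\iota h)-\lambda_{\mathsf{SL}(m)}(\iota h)\|$, but the folding $\overline{\mathfrak{a}_H^+}\to\overline{\mathfrak{a}_{\mathsf{SL}(m)}^+}$, which sorts the values $\chi(\cdot)$ over the weights $\chi$ of $\iota$, is $1$-Lipschitz only in the \emph{unhelpful} direction here; it is generally not injective on $\overline{\mathfrak{a}_H^+}$, and its ``inverse'' is not Lipschitz, because the sorting may assign different weights to the top-$k$ slot for $\mu_H(h)$ than for $\lambda_H(h)$. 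Consequently ``$\|\mu-\lambda\|$ small in $\mathsf{SL}(m)$ via one faithful $\iota$'' does not automatically give ``$\|\mu-\lambda\|$ small in $H$''. The clean fix is to run your key estimate intrinsically in $H$, where no folding is needed: after conjugating $h$ by an element of $K_H$ (which changes neither $\mu_H$ nor $\lambda_H$) so that $\xi^+_h=P^+$, uniqueness of the big-cell representative gives $\xi^-_h=nP^-$ with $n\in N^+$, and compactness of the $r$-transverse locus inside $N^+\cdot(eP^-)$ bounds $\|\mu_H(n)\|$ by a constant $M_r$; then $n^{-1}hn\in P^+\cap P^-=M_HA_H$, so it factors as (compact)$\cdot\exp(\lambda_H(h))$ and has $\mu_H=\lambda_H$, whence two applications of the coarse-Lipschitz inequality give $\|\mu_H(h)-\lambda_H(h)\|\leq 2M_r$. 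This is the same ``bounded conditioning $\Rightarrow$ bounded $\mu-\lambda$'' idea, but carried out where it is unambiguous. Alternatively, as in Benoist and \cite{GGKW}, apply AMS separately in one $\{\alpha\}$-proximal representation $\tau_\alpha$ per $\alpha\in\Delta$ and bound each $\langle\omega_\alpha,\mu_H(h)-\lambda_H(h)\rangle$ (proportional to $\log(\sigma_1/\ell_1)$ of $\tau_\alpha(h)$), which avoids exterior powers and foldings entirely.
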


 Gu\'eritaud--Guichard--Kassel--Wienhard in \cite{GGKW} observe that from $\rho$ one may define the \emph{semisimplification} $\rho^{ss}$ which is a semisimple representation and a limit of conjugates of $\rho$. We shall use the following result for the semisimplification of a representation.

\begin{proposition} \label{semisimplification} \textup{(}\cite[Prop. 1.8]{GGKW}\textup{)} \label{semisimplification} Let $\Gamma$ be a finitely generated group, $G$ a real semisimple Lie group, $\theta \subset \Delta$ a subset of simple restricted roots of $G$ and $\rho:\Gamma \rightarrow G$ be a representation with semisimplification $\rho^{ss}:\Gamma \rightarrow G$. Then for every $\gamma \in \Gamma$, $\lambda(\rho(\gamma))=\lambda(\rho^{ss}(\gamma))$  and $\rho$ is $P_{\theta}$-Anosov if and only if $\rho^{ss}$ is $P_{\theta}$-Anosov. \end{proposition}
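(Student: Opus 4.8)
The plan is to recall the construction of the semisimplification and then dispatch the two assertions in turn. Let $H$ denote the Zariski closure of $\rho(\Gamma)$ in $G$ and fix a Levi decomposition $H = L \ltimes R_u(H)$, where $R_u(H)$ is the unipotent radical and $L$ is a reductive complement; after conjugating $L$ into standard position with respect to $K$ and $\mathfrak{a}$, the representation $\rho^{ss}$ is the composition of $\rho$ with the projection $H \twoheadrightarrow L$ and the inclusion $L \hookrightarrow G$. The one structural fact I would extract from this construction, and which is the whole point of the semisimplification (see \cite[\S 2]{GGKW}), is that there exists a one-parameter subgroup $\{a_t\}_{t \in \mathbb{R}}$ of $G$ whose conjugation action contracts $R_u(H)$ onto $\{e\}$ as $t \to +\infty$ and satisfies $\rho^{ss}(\gamma) = \lim_{t \to +\infty} a_t\,\rho(\gamma)\,a_t^{-1}$ for every $\gamma \in \Gamma$; in particular $\rho^{ss}$ is a pointwise limit of $G$-conjugates of $\rho$.

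For the equality $\lambda(\rho(\gamma)) = \lambda(\rho^{ss}(\gamma))$ I would argue as follows. The coefficients of the characteristic polynomial of an element of $G \subset \mathsf{SL}(m,\mathbb{R})$ are invariant under conjugation and depend continuously on the element, so the displayed limit forces $\rho^{ss}(\gamma)$ and $\rho(\gamma)$ to have the same eigenvalues with multiplicities for every $\gamma \in \Gamma$. By Example 2.1, and because the Lyapunov projection of $G$ is the restriction of the one of $\mathsf{SL}(m,\mathbb{R})$, the vector $\lambda(g)$ is determined by the moduli of the eigenvalues of $g$; hence $\lambda(\rho(\gamma)) = \lambda(\rho^{ss}(\gamma))$. (Equivalently: $\lambda$ is a conjugacy invariant, so $\lambda(\rho(\gamma)) = \lambda(a_t\rho(\gamma)a_t^{-1})$ is constant in $t$, and its common value is $\lambda(\rho^{ss}(\gamma))$ since eigenvalue moduli pass to the limit.)

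It remains to show that $\rho$ is $P_\theta$-Anosov if and only if $\rho^{ss}$ is. By Proposition \ref{higherdimension}(i) I would replace $\rho$ and $\rho^{ss}$ by $\tau\circ\rho$ and $\tau\circ\rho^{ss}$ and so reduce to $G=\mathsf{SL}(d,\mathbb{R})$ and $\theta=\{\varepsilon_1-\varepsilon_2\}$; note that $\tau\circ\rho^{ss} = \lim_{t\to+\infty}\tau(a_t)(\tau\circ\rho)\tau(a_t)^{-1}$ is still a pointwise limit of conjugates of $\tau\circ\rho$ by a one-parameter group contracting the unipotent radical of the Zariski closure. Combining Theorem \ref{mainproperties}(i)--(ii) with its converse (Remark (a) above, i.e. \cite{KLP2,BPS}), a representation $\sigma:\Gamma\to\mathsf{SL}(d,\mathbb{R})$ is $P_1$-Anosov if and only if $\Gamma$ is word hyperbolic and there exist $C,c>0$ with $\langle\varepsilon_1-\varepsilon_2,\mu(\sigma(\gamma))\rangle\geq C|\gamma|_\Gamma-c$ for all $\gamma\in\Gamma$. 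As $\Gamma$ is common to $\rho$ and $\rho^{ss}$, it therefore suffices to transfer this coarse lower bound on the first Cartan gap between $\tau\circ\rho$ and $\tau\circ\rho^{ss}$. This is the technical heart of the statement and the step I expect to be the real obstacle: the Cartan projection is not conjugation invariant, so the limit-of-conjugates description of $\rho^{ss}$ does not suffice on its own, and one must exploit that $\{a_t\}$ contracts only the unipotent directions of the Zariski closure (so, after conjugating $a_t$ into $\exp(\overline{\mathfrak{a}}^+)$, its effect on the relevant singular-value gap is absorbed into the additive constant) — this quantitative comparison being exactly the content of \cite[Proposition 1.8]{GGKW}. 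Granting it, both implications are immediate.
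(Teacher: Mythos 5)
The paper states this result as a direct citation of \cite[Proposition 1.8]{GGKW} and offers no proof of its own, so there is no in-paper argument to compare against; your proposal must be judged on its own terms.

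Your treatment of the first assertion is correct. Writing $\rho^{ss}(\gamma) = \lim_{t\to\infty} a_t\rho(\gamma) a_t^{-1}$, the conjugation-invariance of $\lambda$ together with continuity of eigenvalue moduli immediately gives $\lambda(\rho(\gamma)) = \lambda(\rho^{ss}(\gamma))$, and passing to a linear embedding of $G$ in $\mathsf{SL}(m,\mathbb{R})$ with compatible Cartan data is the standard way to make precise that $\lambda$ is read off from the eigenvalues.

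The Anosov equivalence, however, is not actually proved. You correctly reduce, via Proposition \ref{higherdimension} and the characterization furnished by Theorem \ref{mainproperties}(ii) together with its converse from \cite{BPS,KLP2}, to transferring the coarse lower bound on $\langle\varepsilon_1-\varepsilon_2,\mu(\cdot)\rangle$ between $\tau\circ\rho$ and $\tau\circ\rho^{ss}$, and you then describe that transfer as "exactly the content of \cite[Proposition 1.8]{GGKW}" and proceed by "granting it." That is circular: the transfer is not a technical input to the statement, it essentially \emph{is} the statement. Worse, the heuristic you offer for why it should hold --- that conjugating $a_t$ into $\exp(\overline{\mathfrak{a}}^{+})$ lets its effect on the relevant singular-value gap "be absorbed into the additive constant" --- does not survive scrutiny. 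Subadditivity of $\mu$ gives only $\|\mu(a_t g a_t^{-1}) - \mu(g)\|\leq 2\|\mu(a_t)\|$, and $\|\mu(a_t)\|\to\infty$ as $t\to\infty$; since the $t$ required to bring $a_t\rho(\gamma)a_t^{-1}$ close to $\rho^{ss}(\gamma)$ generally grows with $|\gamma|_\Gamma$, there is no uniform additive constant to absorb it into.

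It is worth noting that one of the two implications does follow cheaply from data you already have in hand: if $\rho^{ss}$ is $P_\theta$-Anosov, then $\Gamma$ is word hyperbolic, so by openness of the Anosov condition (Theorem \ref{mainproperties}(iv)) some conjugate $a_T\rho a_T^{-1}$ lies in an Anosov neighborhood of $\rho^{ss}$, and conjugation-invariance gives $\rho$ Anosov. The genuinely hard direction is $\rho$ Anosov $\Rightarrow\rho^{ss}$ Anosov, where a pointwise limit of Anosov representations need not be Anosov; one must instead put $\rho(\Gamma)$ in block-triangular form with respect to an invariant flag and show that the off-diagonal blocks are dominated by the diagonal ones, a quantitative estimate of the type carried out in Lemma \ref{weights} of this paper (which, however, already assumes the representation is Anosov, so it is available precisely for this implication). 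Your proposal identifies where the difficulty lives but does not contain the argument that resolves it.
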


\subsection{Convex cocompact groups} A subset $\Omega$ of the projective space $\mathbb{P}(\mathbb{R}^d)$ is called \emph{properly convex} if it is contained in an affine chart in which $\Omega$ is bounded and convex. The domain $\Omega$ is called \emph{strictly convex} if it is properly convex and $\partial \Omega$ does not contain projective line segments. Suppose that $\Omega$ is bounded  and convex in some affine chart $A$. We fix an Euclidean metric $d_{\mathbb{E}}$ on $A$. We denote by $d_{\Omega}$ the Hilbert metric on $\Omega$ defined as follows $$d_{\Omega}(x,y)=\frac{1}{2}\log \frac{d_{\mathbb{E}}(y,a) d_{\mathbb{E}}(x,b)}{d_{\mathbb{E}}(a,x) d_{\mathbb{E}}(y,b)},$$ where $a,b$ are the intersection points of the projective line $[x,y]$ with $\partial \Omega$, $x$ is between $a$ and $y$, and $y$ is between $x$ and $b$. The group $$\textup{Aut}(\Omega)=\big \{g \in \mathsf{PGL}_d(\mathbb{R}): g\Omega=\Omega \big \}$$ is a Lie subgroup of $\mathsf{PGL}_d(\mathbb{R})$ and acts by isometries for the Hilbert metric $d_{\Omega}$. Any discrete subgroup of $\textup{Aut}(\Omega)$ acts properly discontinuously on $\Omega$.

We shall use the following estimate obtained by Danciger--Gu\'eritaud--Kassel in \cite{DGK0} showing that the inclusion of a convex cocompact subgroup in $\mathsf{PGL}_d(\mathbb{R})$ is a quasi-isometric embeddeding.

\begin{proposition} \label{control1} \textup{(}\cite[Prop. 10.1]{DGK0}\textup{)} Let $\Omega$ be a properly convex domain of $\mathbb{P}(\mathbb{R}^d)$. For any $x_0 \in \Omega$, there exists $\kappa>0$ such that for any $g \in \textup{Aut}(\Omega)$, $$\frac{1}{2}\log \frac{\sigma_1(g)}{\sigma_d(g)} \geqslant d_{\Omega}(gx_0,x_0)-\kappa.$$ \end{proposition}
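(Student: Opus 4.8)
The plan is to bound the Hilbert displacement $d_{\Omega}(gx_0,x_0)$ from above by the defining cross-ratio, and thereby reduce the statement to a purely Euclidean estimate on how close the point $gx_0$ is allowed to come to $\partial\Omega$.

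First I would fix the ambient picture. Lift $\Omega$ to a sharp open convex cone $C\subset\mathbb{R}^d$ with $\mathbb{P}(C)=\Omega$, and fix a linear functional $\phi\in(\mathbb{R}^d)^{\ast}$ that is positive on $\overline{C}\setminus\{0\}$ (the functional defining the affine chart), so that the hyperplane $\{\phi=1\}$ identifies $\Omega$ with a bounded convex open set on which we take the Euclidean metric $d_{\mathbb{E}}$. Set $D:=\operatorname{diam}_{\mathbb{E}}(\Omega)<+\infty$ and, for $p\in\Omega$, $\delta(p):=d_{\mathbb{E}}(p,\partial\Omega)>0$. For $g\in\mathsf{PGL}(d,\mathbb{R})$ write $\hat g\in\mathsf{GL}(d,\mathbb{R})$ for a linear lift, with singular values $\sigma_1(\hat g)\geqslant\cdots\geqslant\sigma_d(\hat g)>0$; recall $\sigma_1(\hat g)=\|\hat g\|_{\mathrm{op}}$, $\sigma_d(\hat g)^{-1}=\|\hat g^{-1}\|_{\mathrm{op}}$, and $\langle\varepsilon_1-\varepsilon_d,\mu(g)\rangle=\log\big(\sigma_1(\hat g)/\sigma_d(\hat g)\big)$, which is independent of the lift.

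The first step is an upper bound for the Hilbert metric valid for any $p,q\in\Omega$: if $a,b\in\partial\Omega$ are the endpoints of the projective segment through $p,q$, ordered so that $p$ is between $a$ and $q$ and $q$ between $p$ and $b$, then $d_{\Omega}(p,q)=\tfrac12\log\tfrac{d_{\mathbb{E}}(q,a)\,d_{\mathbb{E}}(p,b)}{d_{\mathbb{E}}(a,p)\,d_{\mathbb{E}}(q,b)}$; since $d_{\mathbb{E}}(q,a),d_{\mathbb{E}}(p,b)\leqslant D$ while $d_{\mathbb{E}}(a,p)\geqslant\delta(p)$ and $d_{\mathbb{E}}(q,b)\geqslant\delta(q)$, this gives $d_{\Omega}(p,q)\leqslant\tfrac12\log\tfrac{D^2}{\delta(p)\,\delta(q)}$. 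Taking $p=x_0$ and $q=gx_0$ reduces the proposition to a lower bound for $\delta(gx_0)$. The second and main step is that lower bound. Fix once and for all a lift $v_0\in C$ of $x_0$ and a radius $\rho>0$ with $B_{\mathbb{E}}(v_0,\rho)\subset C$, and for $g\in\textup{Aut}(\Omega)$ choose the lift $\hat g$ with $\hat gC=C$. Then $\hat g\,B_{\mathbb{E}}(v_0,\rho)\supset B_{\mathbb{E}}\big(\hat gv_0,\,\rho/\|\hat g^{-1}\|_{\mathrm{op}}\big)=B_{\mathbb{E}}\big(\hat gv_0,\,\rho\,\sigma_d(\hat g)\big)$, so $B_{\mathbb{E}}\big(\hat gv_0,\rho\,\sigma_d(\hat g)\big)\subset C$. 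Put $t:=\phi(\hat gv_0)>0$, so $gx_0=\hat gv_0/t$ in the chart; for any $u\in\ker\phi$ with $\|u\|<\rho\,\sigma_d(\hat g)/t$ the vector $t\,(gx_0+u)=\hat gv_0+tu$ lies in $B_{\mathbb{E}}(\hat gv_0,\rho\,\sigma_d(\hat g))\subset C$, hence $[gx_0+u]\in\Omega$, and as $\phi(gx_0+u)=1$ this means $gx_0+u\in\Omega$ in the chart. Therefore $\delta(gx_0)\geqslant\rho\,\sigma_d(\hat g)/t$, and since $t=\phi(\hat gv_0)\leqslant\|\phi\|\,\|\hat gv_0\|\leqslant\|\phi\|\,\sigma_1(\hat g)\,\|v_0\|$ we obtain
$$\delta(gx_0)\ \geqslant\ \frac{\rho}{\|\phi\|\,\|v_0\|}\cdot\frac{\sigma_d(\hat g)}{\sigma_1(\hat g)}\ =\ \frac{\rho}{\|\phi\|\,\|v_0\|}\,\exp\!\big(-\langle\varepsilon_1-\varepsilon_d,\mu(g)\rangle\big).$$
Combining with the first step yields $d_{\Omega}(gx_0,x_0)\leqslant\tfrac12\langle\varepsilon_1-\varepsilon_d,\mu(g)\rangle+\tfrac{\kappa}{2}$ with $\kappa=\log\!\big(D^2\|\phi\|\,\|v_0\|/(\rho\,\delta(x_0))\big)$, which is exactly the claimed inequality; note $\kappa$ depends only on $\Omega$, the chart, and $x_0$.

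I expect the only genuine difficulty to be bookkeeping: keeping the linear/cone picture — where the singular values $\sigma_i(\hat g)$ naturally live — cleanly aligned with the affine-chart picture — where the Hilbert cross-ratio, the diameter $D$, and the boundary distance $\delta$ are defined — and verifying that each estimate is insensitive to rescaling the lift $\hat g$. The geometric content is just the two elementary convexity estimates above.
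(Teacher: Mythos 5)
Your proof is correct: the two-step reduction—first bounding $d_\Omega(x_0,gx_0)$ by $\frac12\log\bigl(D^2/(\delta(x_0)\delta(gx_0))\bigr)$ via the cross-ratio, then bounding $\delta(gx_0)$ from below by $\mathrm{const}\cdot\sigma_d(\hat g)/\sigma_1(\hat g)$ by pushing a fixed Euclidean ball around a cone lift of $x_0$ forward by $\hat g$—is exactly the argument used by Danciger–Gu\'eritaud–Kassel for their Proposition~10.1, which is where this paper imports the statement from. The only bookkeeping worth flagging (and you handle it) is that the cone lift $\hat g$ of $g\in\mathsf{PGL}(d,\mathbb{R})$ preserving $C$ is determined only up to a positive scalar, which leaves the ratio $\sigma_1/\sigma_d$ and hence $\langle\varepsilon_1-\varepsilon_d,\mu(g)\rangle$ unambiguous.
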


\noindent Let $\Gamma$ be a subgroup of $\mathsf{PGL}_d(\mathbb{R})$ preserving a properly convex domain $\Omega$. By using the previous proposition we can control the Gromov product with respect to $\varepsilon_1 \in \mathfrak{a}^{\ast}$ as follows.

\begin{lemma}\label{control} Let $\Gamma$ be a subgroup of $\mathsf{PGL}_d(\mathbb{R})$ which preserves a properly convex domain $\Omega$ of $\mathbb{P}(\mathbb{R}^d)$. Suppose that the natural inclusion of $\Gamma \xhookrightarrow{} \mathsf{PGL}_d(\mathbb{R})$ is semisimple. Then for every $x_0 \in \Omega$ there exists $C>0$ such that  for every $\gamma, \delta \in \Gamma$, $$\Big|\frac{1}{2} \log \frac{\sigma_1(g)}{\sigma_d(g)}- d_{\Omega}(\gamma x_0,x_0) \Big| \leqslant C,\  \Big|(\gamma \cdot \delta)_{\varepsilon_1}-(\gamma x_0\cdot \delta x_0)_{x_0} \Big| \leqslant C.$$ \end{lemma}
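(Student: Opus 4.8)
The plan is to deduce both inequalities from Proposition \ref{control1} together with Theorem \ref{finitesubset} applied to a well-chosen finite list of semisimple representations. First I would establish the first inequality. Proposition \ref{control1} already gives the lower bound $\langle \varepsilon_1-\varepsilon_d, \mu(\gamma)\rangle \geqslant 2d_\Omega(\gamma x_0, x_0) - \kappa$ for all $\gamma \in \Gamma$. For the reverse inequality, note that $d_\Omega$ is a genuine metric on $\Omega$ on which $\textup{Aut}(\Omega)$ acts by isometries, so $d_\Omega(\gamma x_0, x_0) = d_\Omega(\gamma^{-1}x_0, x_0)$; applying Proposition \ref{control1} to $\gamma^{-1}$ and using $\langle \varepsilon_1-\varepsilon_d, \mu(\gamma^{-1})\rangle = \langle \varepsilon_1-\varepsilon_d, \mu(\gamma)\rangle$ (since $\varepsilon_1 - \varepsilon_d$ is fixed by the opposition involution, or directly from $\sigma_i(\gamma^{-1}) = \sigma_{d+1-i}(\gamma)^{-1}$) gives nothing new by itself. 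Instead I would use the semisimplicity hypothesis: by the orbit map $\gamma \mapsto \gamma x_0$ being coarsely Lipschitz from $(\Gamma, |\cdot|_\Gamma)$ into $(\Omega, d_\Omega)$ in one direction, and by the fact that for a semisimple inclusion the Cartan projection is coarsely a length function, one compares $\langle \varepsilon_1 - \varepsilon_d, \mu(\gamma)\rangle$ with $|\gamma|_\Gamma$. Actually the cleanest route is: the inclusion is an isometric action of $\Gamma$ on $(\Omega, d_\Omega)$, and by Proposition \ref{control1} the orbit map $\gamma x_0 \mapsto \gamma$ satisfies $2 d_\Omega(\gamma x_0, x_0) \leqslant \langle \varepsilon_1 - \varepsilon_d, \mu(\gamma)\rangle + \kappa$. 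For the upper bound on $\langle \varepsilon_1 - \varepsilon_d, \mu(\gamma)\rangle$ in terms of $d_\Omega(\gamma x_0, x_0)$ I would use that $d_\Omega$ restricted to a single orbit is, up to additive error, the stable length, combined with Theorem \ref{finitesubset}: for each $\gamma$ there is $f \in F$ (a fixed finite set) with $\|\lambda(\gamma f) - \mu(\gamma)\| \leqslant C_0$, and for the infinite-order element $\gamma f$ one has $\langle \varepsilon_1 - \varepsilon_d, \lambda(\gamma f)\rangle = \lim_n \tfrac{1}{n}\langle \varepsilon_1-\varepsilon_d,\mu((\gamma f)^n)\rangle$, which by Proposition \ref{control1} (lower bound) and the fact that $\ell_\Omega((\gamma f)) = \lim_n \tfrac{1}{n} d_\Omega((\gamma f)^n x_0, x_0) \leqslant d_\Omega((\gamma f) x_0, x_0)$ is controlled above by $2 d_\Omega(\gamma x_0, x_0) + C_1$ after absorbing the finitely many translation lengths $d_\Omega(f x_0, x_0)$, $f \in F$, into the constant. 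Reassembling, $|\langle \varepsilon_1-\varepsilon_d,\mu(\gamma)\rangle - 2 d_\Omega(\gamma x_0, x_0)| \leqslant C$ for all infinite-order $\gamma$.

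For the second inequality I would simply unwind both Gromov products. By Definition \ref{defproduct} with $\varphi = \varepsilon_1$,
$$
(\gamma\cdot\delta)_{\varepsilon_1} = \tfrac14\big\langle \varepsilon_1, \mu(\gamma)+\mu(\gamma^{-1})+\mu(\delta)+\mu(\delta^{-1})-\mu(\gamma^{-1}\delta)-\mu(\delta^{-1}\gamma)\big\rangle.
$$
Using $\langle \varepsilon_1, \mu(g)\rangle + \langle\varepsilon_1,\mu(g^{-1})\rangle = \langle \varepsilon_1 - \varepsilon_d, \mu(g)\rangle$ for $g \in \mathsf{PGL}(d,\mathbb R)$ (because $\langle \varepsilon_1, \mu(g^{-1})\rangle = -\langle \varepsilon_d, \mu(g)\rangle$), and that $\mu(\delta^{-1}\gamma)$ and $\mu(\gamma^{-1}\delta)$ are related by the same involution so that $\langle\varepsilon_1,\mu(\gamma^{-1}\delta)\rangle + \langle\varepsilon_1,\mu(\delta^{-1}\gamma)\rangle = \langle\varepsilon_1-\varepsilon_d,\mu(\gamma^{-1}\delta)\rangle$, one gets
$$
(\gamma\cdot\delta)_{\varepsilon_1} = \tfrac14\Big(\langle\varepsilon_1-\varepsilon_d,\mu(\gamma)\rangle + \langle\varepsilon_1-\varepsilon_d,\mu(\delta)\rangle - \langle\varepsilon_1-\varepsilon_d,\mu(\gamma^{-1}\delta)\rangle\Big).
$$
Now apply the first inequality of the lemma to each of $\gamma$, $\delta$ and $\gamma^{-1}\delta$ (all of infinite order, or else absorb the finitely many exceptions into the constant by enlarging $C$), obtaining
$$
\Big|(\gamma\cdot\delta)_{\varepsilon_1} - \tfrac12\big(d_\Omega(\gamma x_0,x_0)+d_\Omega(\delta x_0,x_0)-d_\Omega(\gamma^{-1}\delta x_0,x_0)\big)\Big| \leqslant \tfrac{3C}{4}.
$$
Since $\Gamma$ acts by isometries, $d_\Omega(\gamma^{-1}\delta x_0, x_0) = d_\Omega(\delta x_0, \gamma x_0)$, so the bracketed quantity is exactly $(\gamma x_0 \cdot \delta x_0)_{x_0}$ computed in $(\Omega, d_\Omega)$. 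This yields the second claimed estimate.

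The main obstacle, and the step deserving the most care, is the upper bound $\langle \varepsilon_1-\varepsilon_d,\mu(\gamma)\rangle \leqslant 2 d_\Omega(\gamma x_0,x_0) + C$, since Proposition \ref{control1} only supplies the inequality in the other direction; this is exactly where the semisimplicity hypothesis enters. I would handle it by noting that semisimplicity of the inclusion, via Theorem \ref{finitesubset}, lets one pass from the Cartan projection of $\gamma$ to the Lyapunov projection of a bounded perturbation $\gamma f$, and the Lyapunov projection is automatically controlled above by the orbit displacement because $\langle\varepsilon_1-\varepsilon_d,\lambda(g)\rangle = 2\,\ell_\Omega(g) + O(1)$ — the latter following from applying Proposition \ref{control1} to powers of $g$ and dividing by $n$, together with the standard fact $\ell_\Omega(g) = \lim_n \tfrac1n d_\Omega(g^n x_0, x_0) \leqslant d_\Omega(g x_0, x_0)$. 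One must also verify that the reverse bound $\langle \varepsilon_1 - \varepsilon_d, \lambda(g)\rangle \geqslant 2 \ell_\Omega(g) - O(1)$ holds, which again is immediate from Proposition \ref{control1} applied to $g^n$. Everything else is routine bookkeeping with additive constants and the opposition involution fixing $\varepsilon_1 - \varepsilon_d$.
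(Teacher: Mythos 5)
Your overall strategy matches the paper's: use Proposition \ref{control1} for the lower bound, Theorem \ref{finitesubset} to pass from $\mu(\gamma)$ to $\lambda(\gamma f)$, and the relation between $\langle\varepsilon_1-\varepsilon_d,\lambda(g)\rangle$ and the Hilbert translation length $\ell_\Omega(g)$ to close the circle. Your unwinding of the second Gromov-product estimate via the opposition involution identity $\langle\varepsilon_1,\mu(g^{-1})\rangle = -\langle\varepsilon_d,\mu(g)\rangle$ is also correct.

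However, there is a genuine gap at the central step of the upper bound. You claim that $\langle\varepsilon_1-\varepsilon_d,\lambda(g)\rangle \leqslant 2\ell_\Omega(g)+O(1)$ follows from applying Proposition \ref{control1} to the powers $g^n$ and dividing by $n$. It does not. Proposition \ref{control1} gives $\langle\varepsilon_1-\varepsilon_d,\mu(g^n)\rangle \geqslant 2d_\Omega(g^nx_0,x_0)-\kappa$; dividing by $n$ and letting $n\to\infty$ yields only $\langle\varepsilon_1-\varepsilon_d,\lambda(g)\rangle \geqslant 2\lim_n\tfrac1n d_\Omega(g^nx_0,x_0)$, which is a \emph{lower} bound on the Lyapunov projection. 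Both occurrences of "$\geqslant$" in your last paragraph are the same direction, and it is the direction you do not need: to bound $\mu(\gamma)$ from above via $\lambda(\gamma f)$ you must control $\lambda(\gamma f)$ from \emph{above} by the orbit displacement, which requires the inequality $\langle\varepsilon_1-\varepsilon_d,\lambda(g)\rangle \leqslant 2\ell_\Omega(g)$. That inequality is a substantive fact of Hilbert geometry (exhibit an invariant line through the extreme eigendirections of $g$ on which the translation is exactly $\tfrac12\log(\ell_1(g)/\ell_d(g))$); the paper handles it by citing the exact identity $\ell_\Omega(g)=\tfrac12\langle\varepsilon_1-\varepsilon_d,\lambda(g)\rangle$ from Cooper--Long--Tillmann, \cite[Proposition 2.1]{CLT}. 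You need to invoke this (or prove the invariant-line computation) rather than deriving it from Proposition \ref{control1}.

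A smaller point: you write $\ell_\Omega(g)=\lim_n\tfrac1n d_\Omega(g^nx_0,x_0)$, which conflates the translation length $\inf_x d_\Omega(gx,x)$ with the stable translation length. In general one only has $\lim_n\tfrac1n d_\Omega(g^nx_0,x_0)\leqslant\ell_\Omega(g)$; the equality in Hilbert geometry is again a consequence of the [CLT] identity rather than a generic metric fact. Since the inequality $\leqslant d_\Omega(gx_0,x_0)$ you actually use runs in the harmless direction, this does not break the argument once the previous gap is repaired, but it is worth stating precisely.
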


\begin{proof} By Theorem \ref{finitesubset} there exists a finite subset $F$ of $\Gamma$  and $M>0$ such that for every $\gamma \in \Gamma$ there exists $f \in F$ such that $\log \frac{\ell_1(\gamma f)}{\ell_d(\gamma f)}\geq \log \frac{\sigma_1(\gamma))}{\sigma_d(\gamma)}-M$. The translation length of an isometry $g \in \textup{Aut}(\Omega)$ is exactly $\frac{1}{2}\log\frac{\ell_1(g)}{\ell_d(g)}$, see \cite[Prop. 2.1]{CLT}. In particular, if $\gamma \in \Gamma$ and $f \in F$ are as previously, we have that 
\begin{align}\label{control-eq1}\nonumber 2d_{\Omega}(\gamma x_0, x_0) & \geqslant 2d_{\Omega}(\gamma fx_0,x_0)-2d_{\Omega}(fx_0,x_0) \\
\nonumber & \geqslant \log \frac{\ell_1(\gamma f)}{\ell_d(\gamma f)}-2d_{\Omega}(fx_0,x_0) \\
 &  \geqslant \log \frac{\sigma_1(\gamma)}{\sigma_d(\gamma)}-M-2d_{\Omega}(fx_0,x_0).
\end{align} Then, by Proposition \ref{control1} and (\ref{control-eq1}), we obtain $L>0$ such that $$ \ \Big |\log \frac{\sigma_1(\rho(\gamma))}{\sigma_d(\rho(\gamma))}-2d_{\Omega}(\gamma x_0,x_0) \Big| \leqslant L$$ for every $\gamma \in \Gamma$. The conclusion follows. \end{proof}

\begin{definitions} \textup{(}\cite{DGK0}\textup{)} Let $\Gamma$ be an infinite discrete subgroup of $\mathsf{PGL}_d(\mathbb{R})$ preserving a properly convex domain $\Omega$ of $\mathbb{P}(\mathbb{R}^d)$ and $\Lambda_{\Omega}(\Gamma)\subset \partial \Omega$ be the set of accumulation points of all $\Gamma$-orbits. The group $\Gamma$ acts convex cocompactly on $\Omega$, if the convex hull of $\Lambda_{\Omega}(\Gamma)$ in $\Omega$ is non-empty and acted on cocompactly by $\Gamma$. The group $\Gamma$ is called strongly convex cocompact in $\mathbb{P}(\mathbb{R}^d)$ if $\Gamma$ acts convex cocompactly on some strictly convex domain $\Omega$ with $C^1$-boundary. \end{definitions}

The following lemma follows immediately from \cite[Thm. 1.4]{DGK0} and \cite[Thm. 1.27]{Zimmer} and is used to pass from a $P_1$-Anosov representation to a convex cocompact action in some  projective space.

\begin{lemma}  \label{replace} Let $V_{d}$ be the vector space of $d\times d$-symmetric matrices and $\mathsf{S}_{d}:\mathsf{GL}_d(\mathbb{R}) \rightarrow \mathsf{GL}(V_{d})$ be the representation defined as follows $\mathsf{S}_d (g)X=gXg^t$ for $g\in \mathsf{GL}_d(\mathbb{R})$ and $X \in V_d$. For every $P_1$-Anosov representation $\rho:\Gamma \rightarrow \mathsf{GL}_d(\mathbb{R})$, the representation $\mathsf{S}_{d}\circ \rho$ is $P_1$-Anosov and $\mathsf{S}_d (\rho(\Gamma))$ is a strongly convex cocompact subgroup of $\mathsf{GL}(V_d)$. \end{lemma}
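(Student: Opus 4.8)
The plan is to treat the two assertions separately: first that $S_d\circ\rho$ is $P_1$-Anosov, and then that $S_d(\rho(\Gamma))$ preserves a properly convex domain, after which Theorem \ref{sccompact} finishes the argument. The only computational ingredient is the relation between the singular values of $g$ and those of $S_d(g)$.

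First I would compute $\mu(S_d(g))$. Write $g=k\exp(\mu(g))k'$ with $k,k'\in\mathsf{O}(d)$ and $\exp(\mu(g))=\mathrm{diag}(\sigma_1(g),\dots,\sigma_d(g))$, and equip $V_d$ with the inner product $\langle X,Y\rangle=\mathrm{tr}(XY)$. Then $S_d(k)$ acts by $X\mapsto kXk^{-1}$ and is therefore orthogonal for this inner product, while $S_d(\exp\mu(g))$ is diagonal in the orthogonal basis $\{E_{ii}\}\cup\{E_{ij}+E_{ji}\}_{i<j}$ with eigenvalues $\sigma_i(g)^2$ and $\sigma_i(g)\sigma_j(g)$. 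Hence $S_d(g)=S_d(k)\,S_d(\exp\mu(g))\,S_d(k')$ is a singular value decomposition of $S_d(g)$, the singular values of $S_d(g)$ are exactly the products $\sigma_i(g)\sigma_j(g)$ with $1\le i\le j\le d$, and since $\sigma_1(g)\ge\sigma_2(g)\ge\cdots$ the two largest are $\sigma_1(g)^2$ and $\sigma_1(g)\sigma_2(g)$. Therefore
\[
\big\langle\varepsilon_1-\varepsilon_2,\mu(S_d(g))\big\rangle=\big\langle\varepsilon_1-\varepsilon_2,\mu(g)\big\rangle\qquad\text{for every }g\in\mathsf{GL}(d,\mathbb{R}).
\]

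Applying this with $g=\rho(\gamma)$ and invoking Theorem \ref{mainproperties}(ii) for the $P_1$-Anosov representation $\rho$ gives constants $C,c>0$ with $\langle\varepsilon_1-\varepsilon_2,\mu(S_d(\rho(\gamma)))\rangle\ge C|\gamma|_\Gamma-c$ for every $\gamma\in\Gamma$. Since $\Gamma$ is word hyperbolic (being the domain of a $P_1$-Anosov representation), the converse theorem of Kapovich--Leeb--Porti and Bochi--Potrie--Sambarino recalled in Remark (a) after Theorem \ref{mainproperties} shows that $S_d\circ\rho\colon\Gamma\to\mathsf{GL}(V_d)$ is $P_1$-Anosov; by Theorem \ref{mainproperties}(i) its kernel is finite and its image is discrete. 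Passing to $\mathsf{PGL}(V_d)$ does not change the gap $\sigma_1/\sigma_2$, so $S_d(\rho(\Gamma))$ is an infinite discrete subgroup of $\mathsf{PGL}(V_d)$, it is word hyperbolic, and the natural inclusion $S_d(\rho(\Gamma))\hookrightarrow\mathsf{PGL}(V_d)$ is $P_1$-Anosov.

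It then remains to produce an invariant properly convex domain: the open cone $\mathcal P_d\subset V_d$ of positive definite symmetric matrices is a sharp open convex cone and $S_d(g)\mathcal P_d=\mathcal P_d$ for every $g\in\mathsf{GL}(d,\mathbb{R})$, so $\Omega:=\mathbb P(\mathcal P_d)$ is a properly convex domain of $\mathbb P(V_d)$ preserved by $S_d(\rho(\Gamma))$. Now Theorem \ref{sccompact}, applied to the infinite discrete subgroup $S_d(\rho(\Gamma))\le\mathsf{PGL}(V_d)\cong\mathsf{PGL}(\dim V_d,\mathbb{R})$ — which preserves $\Omega$, is word hyperbolic, and whose inclusion into $\mathsf{PGL}(V_d)$ is $P_1$-Anosov — yields that $S_d(\rho(\Gamma))$ is strongly convex cocompact in $\mathbb P(V_d)$. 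The substantive step is the singular value identity above; I expect the only delicate points to be bookkeeping, namely that $\Omega$ itself need not be strictly convex when $d\ge 3$ (but strong convex cocompactness furnishes a strictly convex $C^1$ domain in the \emph{conclusion} of Theorem \ref{sccompact}, not as a hypothesis), and that being $P_1$-Anosov, discrete and word hyperbolic all survive quotienting by the finite kernel and projectivizing.
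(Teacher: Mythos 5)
Your proof is correct, and it is more self-contained than the paper's, which simply cites \cite[Theorem 1.4]{DGK0} and \cite[Theorem 1.27]{Zimmer} and gives no argument. The heart of your argument is the clean singular-value identity for $S_d$: with the trace inner product on $V_d$, the maps $X\mapsto kXk^{-1}$ for $k\in\mathsf{O}(d)$ are orthogonal, $S_d(\exp\mu(g))$ is positive diagonal in the basis $\{E_{ii}\}\cup\{E_{ij}+E_{ji}\}$, hence the singular values of $S_d(g)$ are the pairwise products $\sigma_i(g)\sigma_j(g)$ ($i\le j$), so $\langle\varepsilon_1-\varepsilon_2,\mu(S_d(g))\rangle=\langle\varepsilon_1-\varepsilon_2,\mu(g)\rangle$. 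Combined with the uniform gap from Theorem~\ref{mainproperties}(ii) and the converse of Kapovich--Leeb--Porti / Bochi--Potrie--Sambarino, this gives that $S_d\circ\rho$ is $P_1$-Anosov without needing to say anything about limit maps. Your use of the positive-definite cone $\mathcal P_d$ and Theorem~\ref{sccompact} for the second assertion is exactly the right move; as you note, it is irrelevant that $\mathbb P(\mathcal P_d)$ is not strictly convex for $d\ge 3$, because Theorem~\ref{sccompact} requires only \emph{some} invariant properly convex domain. Compared with the paper, you have unpacked what is implicit in Zimmer's theorem (namely, that $S_d$ is a $\theta$-compatible representation converting projective Anosov into convex cocompact); this trades a citation for a short, transparent computation, which is arguably preferable given that Lemma~\ref{weights} later in the paper actually uses the equality $\langle\varepsilon_1,\mu(S_d(g))\rangle=2\langle\varepsilon_1,\mu(g)\rangle$ directly.

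One minor point worth saying aloud, since you already flag the ``bookkeeping'' caveat: passing from $\mathsf{GL}(V_d)$ to $\mathsf{PGL}(V_d)$ kills only a finite central subgroup of $S_d(\rho(\Gamma))$, because the gap estimate forces any $\gamma$ with $S_d(\rho(\gamma))$ scalar to lie in a bounded (hence finite) subset of $\Gamma$. This keeps the image infinite, discrete, and word hyperbolic, so Theorem~\ref{sccompact} applies verbatim.
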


\par Given two representations \hbox{$\rho_1:\Gamma \rightarrow \mathsf{GL}_m(\mathbb{R})$} and $\rho_2:\Gamma \rightarrow \mathsf{GL}_d(\mathbb{R})$, we say that {\em $\rho_1$ uniformly dominates $\rho_2$} if there is $0<\epsilon<1$ with the property that for every $\gamma \in \Gamma$,
 $$(1-\epsilon)\log \ell_1 (\rho_1(\gamma))  \geqslant \log \ell_1(\rho_2(\gamma)).$$

We will also need the following lemma for the proof of Proposition \ref{Gromovproduct1}, which allows us to control the Cartan projection of an Anosov representation $\rho$ in terms of the Cartan projection of a semisimplification $\rho^{ss}$ (of $\rho$). We expect that this fact follows by the techniques of Guichard--Wienhard in \cite[\S 5]{GW} showing that Anosov representations have strong proximality properties.

\begin{lemma} \label{weights} Let $\Gamma$ be a word hyperbolic group, $G$ a real semisimple Lie group and $\theta \subset \Delta$ a subset of simple restricted roots of $G$. Suppose $\psi:\Gamma \rightarrow G$ is a $P_{\theta}$-Anosov representation with semisimplification $\psi^{ss}:\Gamma \rightarrow G$. There is a constant $C_{\psi}>0$, depending only on $\psi$, such that for every $\gamma \in \Gamma$, $$ \max_{\alpha \in \theta}\big| \omega_{\alpha} \big( \mu(\psi(\gamma))-\mu(\psi^{ss}(\gamma)) \big) \big| \leqslant C_{\psi}.$$ \end{lemma}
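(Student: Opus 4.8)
The plan is to reduce the statement to $G=\mathsf{SL}(d,\mathbb{R})$ with $\theta=\{\varepsilon_1-\varepsilon_2\}$ and then exploit the strong proximality of Anosov representations. Fix $\alpha\in\theta$; since $\{\alpha\}\subseteq\theta$ the representation $\psi$ is also $P_{\{\alpha\}}$-Anosov, so by Proposition \ref{higherdimension} applied with $\theta=\{\alpha\}$ there is an irreducible $\{\alpha\}$-proximal representation $\tau:G\to\mathsf{SL}(d,\mathbb{R})$ with $\tau(K)\subseteq\mathsf{O}(d)$ and highest weight $\chi_{\tau}=n_{\alpha}\omega_{\alpha}$ for an integer $n_{\alpha}\geqslant 1$; then $\phi:=\tau\circ\psi$ is $P_1$-Anosov and $\langle\chi_{\tau},\mu(g')\rangle=\log\sigma_1(\tau(g'))$ for every $g'\in G$. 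Writing $\psi^{ss}=\lim_n g_n\psi g_n^{-1}$, the semisimple representation $\tau\circ\psi^{ss}=\lim_n\tau(g_n)\phi\,\tau(g_n)^{-1}$ lies in the closure of the $\mathsf{SL}(d,\mathbb{R})$-conjugacy orbit of $\phi$, hence is conjugate to $\phi^{ss}$ (the semisimple representative of that closure being unique up to conjugacy), and since conjugation by a fixed element changes $\mu$ by a bounded amount we get $|\langle\varepsilon_1,\mu(\tau\psi^{ss}(\gamma))-\mu(\phi^{ss}(\gamma))\rangle|=O(1)$. It therefore suffices to produce $C>0$ with $|\log\sigma_1(\phi(\gamma))-\log\sigma_1(\phi^{ss}(\gamma))|\leqslant C$ for all $\gamma\in\Gamma$: dividing by $n_\alpha$ and taking the maximum over the finite set $\theta$ then gives the lemma.

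One inequality is elementary. Fix a $\phi$-invariant composition series $0=W_0\subsetneq W_1\subsetneq\cdots\subsetneq W_k=\mathbb{R}^d$ with irreducible graded pieces $\phi_1,\dots,\phi_k$, so that $\phi^{ss}\cong\bigoplus_i\phi_i$ up to conjugacy. For each $i$ the operator $\phi_i(\gamma)$ is induced on the quotient $W_i/W_{i-1}$ by $\phi(\gamma)|_{W_i}$, hence $\sigma_1(\phi_i(\gamma))\leqslant\|\phi(\gamma)|_{W_i}\|\leqslant\sigma_1(\phi(\gamma))$, and taking the maximum over $i$ gives $\log\sigma_1(\phi^{ss}(\gamma))\leqslant\log\sigma_1(\phi(\gamma))+O(1)$.

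For the reverse inequality the plan is the following. By the analysis of Anosov direct sums (cf. \cite{GGKW,DGK0}) exactly one graded piece $\phi_{i_0}$, with $i_0$ independent of $\gamma$, is itself $P_1$-Anosov and Cartan-dominates all the others, so that $\sigma_1(\phi^{ss}(\gamma))\asymp\sigma_1(\phi_{i_0}(\gamma))$; since $\phi_{i_0}$ is the $i_0$-th graded piece of $\phi$, the easy direction already gives $\sigma_1(\phi_{i_0}(\gamma))\leqslant\sigma_1(\phi(\gamma))$, so everything reduces to the ``reverse subquotient'' estimate $\sigma_1(\phi(\gamma))\leqslant C\,\sigma_1(\phi_{i_0}(\gamma))$ for all $\gamma$. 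Here I would invoke the strong proximality of Anosov representations, in the spirit of \cite[Section 5]{GW}: using the dynamics-preserving property one checks that for every infinite-order $\delta$ the attracting fixed point of $\phi(\delta)$ lies in $\mathbb{P}(W_{i_0})\setminus\mathbb{P}(W_{i_0-1})$ and that its repelling hyperplane contains $W_{i_0-1}$ but not $W_{i_0}$; by density and compactness of the limit sets, the Anosov limit maps $\xi^{\pm}$ of $\phi$ are then uniformly transverse to the pair of subspaces $W_{i_0-1}\subseteq W_{i_0}$ coming from the composition series. Feeding this back through the Cartan estimates of Proposition \ref{inequalities} — which, for $|\gamma|_\Gamma$ large, place the singular directions of $\phi(\gamma)$ near these limit sets — one concludes that $\|\phi(\gamma)\|$ is carried, up to a uniform multiplicative factor, by the $(i_0,i_0)$-block $\phi_{i_0}(\gamma)$. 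This last step uses only the Cartan property and not proximality of $\gamma$ itself, so it applies equally to torsion elements of large word length, and the finitely many remaining word lengths are trivial; combining with the second paragraph yields $|\log\sigma_1(\phi(\gamma))-\log\sigma_1(\phi^{ss}(\gamma))|=O(1)$ for all $\gamma$.

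The hard part will be exactly this reverse subquotient estimate: one must control the distortion of angles inside the limit set of $\phi$ under the projection modulo $W_{i_0-1}$, and it is here that the quantitative strong-proximality input of \cite[Section 5]{GW} — uniform transversality of the limit maps to $\phi$-invariant subspaces, together with the ``uniform domination'' phenomenon — is indispensable. The remaining steps (the reduction via $\tau$, the subquotient bound, and the bookkeeping with the composition series) are routine.
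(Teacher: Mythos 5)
Your reduction to the scalar claim $\bigl|\log\sigma_1(\phi(\gamma))-\log\sigma_1(\phi^{ss}(\gamma))\bigr|=O(1)$ for a single $P_1$-Anosov $\phi=\tau\circ\psi$ is correct and matches the paper's first step, and your easy direction $\sigma_1(\phi^{ss})\leqslant\sigma_1(\phi)$ is fine. The interesting part is that your route to the reverse inequality is genuinely different from the paper's. The paper passes through Lemma~\ref{replace} to make $\phi(\Gamma)$ preserve a properly convex domain, conjugates (and possibly dualizes) so the dominant Anosov piece $\phi_1$ is the \emph{first} block acting on an invariant subspace $V_1$, and then controls the off-diagonal block $u(g)$ by expanding it along a geodesic chain via the cocycle identity $u(gh)=\phi_1(g)u(h)+u(g)\phi_2(h)$, summing a geometric series whose convergence is guaranteed by uniform domination and the Hilbert-metric estimates of Lemma~\ref{control}. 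You bypass the convex domain and the cocycle expansion entirely, using instead the Cartan property of the limit map: this is a real simplification if carried out, but you have left precisely the step that does the work as a one-line ``one concludes''.

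Here is the gap and how to close it. First, make the normalization explicit: as the paper does, conjugate $\phi$ (and replace it by its dual if necessary) so that $i_0=1$, i.e.\ the Anosov graded piece $\phi_1$ is the restriction $\phi|_{W_1}$ to an invariant subspace $W_1$; without this normalization the quantity $\phi_{i_0}$ is a sub\emph{quotient} rather than a restriction, and your estimate would have to be run twice (once for the right singular direction of $\phi(\gamma)$ to descend to $\phi|_{W_{i_0}}$, once for the left singular direction of $\phi|_{W_{i_0}}(\gamma)$ to descend to the quotient $\phi_{i_0}$), which you do not signal. Second, with $i_0=1$, the actual linear-algebra estimate is short but must be written: let $v_\gamma$ be a unit top right-singular vector of $\phi(\gamma)$ and write $v_\gamma=w_\gamma+\epsilon_\gamma$ with $w_\gamma\in W_1$ and $\epsilon_\gamma\perp W_1$; since $W_1$ is $\phi$-invariant, $\phi(\gamma)w_\gamma\in W_1$ and
$$\sigma_1(\phi(\gamma))=\|\phi(\gamma)v_\gamma\|\leqslant\|\phi(\gamma)w_\gamma\|+\|\phi(\gamma)\epsilon_\gamma\|\leqslant\sigma_1(\phi_1(\gamma))+\sigma_1(\phi(\gamma))\,\|\epsilon_\gamma\|,$$
hence $\sigma_1(\phi(\gamma))\bigl(1-\|\epsilon_\gamma\|\bigr)\leqslant\sigma_1(\phi_1(\gamma))$. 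The Cartan property and compactness of $\partial_\infty\Gamma$ give $\|\epsilon_\gamma\|\leqslant\tfrac12$ for all but finitely many $\gamma$ (since $\Xi_1^+(\phi(\gamma))\to\xi^+(\partial_\infty\Gamma)\subseteq\mathbb{P}(W_1)$), and the remaining $\gamma$ are handled trivially. This closes the estimate you left open, and once inserted your proof is complete, self-contained, and somewhat lighter than the paper's since it avoids the convex-domain apparatus.

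Two smaller remarks: the identification $(\tau\circ\psi)^{ss}\simeq\tau\circ\psi^{ss}$ that you argue via closures of conjugacy orbits is fine, though the paper simply asserts it. And the statement ``the attracting fixed point of $\phi(\delta)$ lies in $\mathbb{P}(W_{i_0})\setminus\mathbb{P}(W_{i_0-1})$ and its repelling hyperplane contains $W_{i_0-1}$ but not $W_{i_0}$'' is correct and is the right input, but it only establishes that the limit sets avoid the bad subspaces; the quantitative consequence for $\sigma_1(\phi(\gamma))$ still needs the displayed computation, which is the whole point of the lemma.
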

\begin{proof} 
Let us first observe that for any linear representation $\phi:\Gamma \rightarrow \mathsf{GL}_m(\mathbb{R})$ and any semisimplification $\phi^{ss}$ of $\phi$, by Theorem \ref{finitesubset}, there exists a constant $M>0$, \hbox{depending only on $\phi$, such that} $$\log \sigma_1(\phi(\gamma))\geq \log \sigma_1(\phi^{ss}(\gamma))-M$$ for every $\gamma \in \Gamma$. Moreover, by \cite[Thm. 7.2]{Tits}, for every $\alpha \in \theta$, there exists $N_{\alpha}>0$ such that $N_{\alpha}\omega_{\alpha}$ is the highest weight of an irreducible proximal representation $\tau_{\alpha}:G \rightarrow \mathsf{GL}_m(\mathbb{R})$. In particular, by the definition of the highest weight, there exists $M'>0$, depending only on $\tau_{\alpha}$, such that $$\forall h \in G, \  \big|\log \sigma_1(\tau_{\alpha}(h))-N_{\alpha} \omega_{\alpha}(\mu(h)) \big|\leq M'.$$ 

Given the representation $\psi:\Gamma \rightarrow G$, by the previous two facts, there is $C_{1}>0$, depending only on $\psi$ and $G$, such that \hbox{for every $\gamma \in \Gamma$,} \begin{align}\label{upper-semisimple}\omega_{\alpha}\big(\mu(\psi(\gamma))-\mu(\psi^{ss}(\gamma))\big) \geq -C_{1}.\end{align} Now we prove that there is $D>0$ such that for every $\gamma \in \Gamma$,  \begin{align*} \omega_{\alpha}\big(\mu(\psi(\gamma))- \mu(\psi^{ss}(\gamma))\big) \leqslant D.\end{align*} 

By Proposition \ref{higherdimension}, we may compose $\psi$ with an irreducible representation \hbox{$\tau_{\theta}:G \rightarrow \mathsf{GL}_n(\mathbb{R})$} such that $\rho:=\tau_{\theta} \circ \psi$ and its semisimplification $\rho^{ss}=\tau_{\theta} \circ \psi^{ss}$ are $P_1$-Anosov. Clearly if $\rho$ is semisimple then the bound follows by Theorem \ref{finitesubset}. Hence, we continue by assuming that $\rho$ is not semisimple (hence non irreducible) and preserves some proper subspace of $\mathbb{R}^n$. Up to composing $\rho$ with the representation $\mathsf{S}_{n}$ from Lemma \ref{replace}, we may further assume that $\rho(\Gamma)$ and the dual $\rho^{\ast}(\Gamma)$ preserve (possibly different) properly convex domains in $\mathbb{P}(\mathbb{R}^n)$. Moreover, up to conjugating $\rho$, and possibly considering the dual representation of this conjugate, we may assume $\rho(\Gamma)$ preserves a properly convex domain $\Omega_0\subset \mathbb{P}(\mathbb{R}^n)$ and there is a decomposition $\mathbb{R}^n=V_1 \oplus \cdots \oplus V_{\ell}$ \hbox{such that} $$\rho=\begin{pmatrix}
\rho_1 &  \ast& \ast \\ 
0  & \ddots  & \ast \\ 
 0 & 0  & \rho_{\ell}
\end{pmatrix}, \   \rho^{ss}=\begin{pmatrix}
\rho_1 &  0& 0 \\ 
0  & \ddots  & 0  \\ 
 0 & 0  & \rho_{\ell}
\end{pmatrix}$$ where $\{\rho_i:\Gamma \rightarrow \mathsf{GL}(V_i)\}_{i=1}^{\ell}$ are irreducible representations and $\rho_1$ is the restriction of $\rho^{ss}$ on the image of its limit map $\xi_{\rho^{ss}}^{+}:\partial_{\infty}\Gamma \rightarrow \mathbb{P}(\mathbb{R}^n)$. By the definition of $V_1$, since the attracting fixed point of $\rho^{ss}(\gamma)$ lies in $V_1$ for every infinite order element $\gamma \in \Gamma$, the restriction of $\rho_1$ of $\rho^{ss}$ on $V_1$ uniformly dominates $\rho_i$ for every $2 \leqslant i \leqslant \ell.$ \par By using induction, it is enough to consider the case when $\ell=2$ and $$\rho(\gamma)=\begin{pmatrix}
\rho_1(\gamma) & u(\gamma)  \\ 
0 & \rho_2(\gamma) \\ 
\end{pmatrix},\ \gamma \in \Gamma$$ where $u:\Gamma \rightarrow \textup{Hom}(V_2,V_1)$ is an appropriate  matrix valued function. The group $\rho_1(\Gamma)$ preserves the properly convex domain $\Omega_{0}\cap \mathbb{P}(V_1)$ of $\mathbb{P}(V_1)$. By \cite{DGK0, Zimmer}, there exists a closed $\rho_1(\Gamma)$-invariant properly convex domain $\Omega_1 \subset \mathbb{P}(V_1)$ and a $\rho_1(\Gamma)$-invariant closed convex subset $\mathcal{C} \subset \Omega_1$ such that $\rho_1(\Gamma)\backslash \mathcal{C}$ is compact. We fix a basepoint $x_0 \in \mathcal{C}$ such that every point of $\mathcal{C}$ is within $d_{\Omega_1}$-distance $M>0$ from the orbit $\rho_1(\Gamma)\cdot x_0$. Let $g \in \Gamma$ and consider $x_0,x_1,\ldots,x_k \in [x_0,gx_0]$ with $\frac{1}{2}\leqslant d_{\Omega}(x_i,x_{i+1}) \leqslant 1$. For every $0 \leq i \leq k$, choose $g_i \in \Gamma$ such that $d_{\Omega}(\rho_1(g_i)x_0,x_0) \leqslant M$, where $g_0=e$ and $g_k=g$. Now we define $\{h_i\}_{i=0}^{k+1}$ as follows: $h_0=e$, $h_{i}=g_{i-1}^{-1}g_{i}$, $1 \leqslant i \leqslant k$ and $h_{k+1}=e$. Observe that $g=h_1\cdot\cdot\cdot h_k$ and a straightforward computation shows that \begin{align}\label{eq1-semisimple}u(g)=u\big(h_1 \cdot \cdot \cdot h_{k}\big)=\sum_{i=0}^{k-1}  \rho_1\big(h_{0} \cdot \cdot \cdot h_{i}) u(h_{i+1}\big) \rho_2\big(h_{i+2}\cdot \cdot \cdot h_{k+1}\big).\end{align} By using Theorem \ref{finitesubset} and the fact that $\rho_1$ is semisimple, $P_1$-Anosov and uniformly dominates $\rho_2$, we can find constants $A,E,a,b, \varepsilon>0$ such that for every $\gamma \in \Gamma$: \begin{align}\label{eq2-semisimple}b\frac{\sigma_1(\rho_1(\gamma))^{1-\varepsilon}}{\sigma_1(\rho_2(\gamma))}\geq 1, \ \sigma_1(\rho_1(\gamma)) \geqslant Ae^{ad_{\Omega_1}(\rho_{1}(\gamma)x_0,x_0)}, \ \Bigg|\log\frac{\sigma_1(\rho_1(\gamma))}{\sigma_{d_1}(\rho_1(\gamma))} -2d_{\Omega_1}(\gamma x_0,x_0) \Bigg| \leqslant E.\end{align} To simplify notation, for $i=1,\ldots,k$, set $w_i:=h_1 \cdot \cdot \cdot h_{i}$ and the triangle inequality shows \begin{align}\label{eq3-semisimple}\Big|d_{\Omega_1}\big(\rho_1(w_i)x_0,x_0\big)-d_{\Omega_1}\big(x_i,x_0\big)\Big| \leqslant M, \ \Big|d_{\Omega_1}\big(\rho_1(w_i)x_0,gx_0\big)-d_{\Omega_1}\big(x_i,\rho_1(g)x_0\big)\Big| \leqslant M.\end{align} Note that there exists $R>0$, independent of $g\in \Gamma$, such that $h_{i} \in \Gamma$ lie in a metric ball of radius $R>0$ of $\Gamma$. Therefore, by (\ref{eq1-semisimple}), (\ref{eq2-semisimple}) and (\ref{eq3-semisimple}), there exists $C_{R}>0$ independent of $g \in \Gamma$ such that: 
\begin{align*}\big|\big|u(g)\big|\big| & \leqslant C_{R}\sum_{i=1}^{k-1} \sigma_1\big(\rho_1(h_{1}\cdot \cdot \cdot h_{i})\big) \cdot \sigma_1\big(\rho_2(h_{i+1}\cdot \cdot \cdot h_{k})\big)\leqslant b C_{R}\sum_{i=0}^{k-1} \frac{\sigma_1(\rho_1(w_{i}^{-1}g)) \sigma_1(\rho_1(w_i))}{\sigma_1(\rho_1(w_{i}^{-1}g))^{\varepsilon}} \\   & =bC_{R} \sum_{i=0}^{k-1}\Bigg( \frac{1}{\sigma_1(\rho_1(g^{-1}w_i)) \sigma_1(\rho_1(w_{i}^{-1}))} \cdot \frac{\sigma_1 (\rho_1(w_{i}^{-1}g) )}{\sigma_{d_1}(\rho_1(w_{i}^{-1}g) )}\cdot  \frac{\sigma_1(\rho_1(w_{i})) }{\sigma_{d_1}(\rho_1(w_{i})) }\cdot \frac{1}{\sigma_1(\rho_1(w_{i}^{-1}g))^{\varepsilon}}\Bigg)\\ & \leqslant bC_R \sum_{i=0}^{k-1} \Bigg(\frac{e^{2E}}{\sigma_{1}(\rho_1(g^{-1}))}\cdot e^{2d_{\Omega_1}(\rho_1(w_i^{-1}g) x_0,x_0)} \cdot e^{2d_{\Omega_1}(\rho_1(w_i)x_0, x_0)}\cdot A^{-\varepsilon}e^{-a\varepsilon |w_{i}^{-1} g|_{\Gamma}}\Bigg)\\ & =\frac{bC_{R}e^{2E}}{\sigma_{1}(\rho_{1}(g^{-1}))A^{\varepsilon}} \sum_{i=0}^{k-1} \Bigg(e^{2d_{\Omega}(\rho(w_{i})x_0,\rho(g)x_0)}\cdot e^{2d_{\Omega}(\rho(x_{i})x_0,x_0)}\cdot e^{-a\varepsilon d_{\Omega}(\rho_1(w_{i}^{-1}g)x_0,x_0)}\Bigg)\\ & \leqslant \frac{bC_R e^{2E+2M+2Ma \varepsilon}}{A^{\varepsilon}\sigma_{1}(\rho_1(g^{-1}))} e^{2d_{\Omega_1}(\rho_1(g)x_0,x_0)} \Bigg( \sum_{i=0}^{k-1} e^{-\varepsilon a (k-i)}\Bigg)\\ &\leqslant \frac{bC_R e^{2E+2M+2Ma\varepsilon}}{A^{\varepsilon} (1-e^{-a \varepsilon})}\sigma_1(\rho_1(g)).\end{align*} 
We conclude that there exists $L>0$, depending only on $\rho$, such that for every $g\in \Gamma$, \begin{align}\label{semisimple-tau}\sigma_1(\rho^{ss}(g))\leqslant \sigma_1( \rho(g)) \leqslant L \sigma_1(\rho^{ss}(g)).\end{align} 

Now recall that $\rho=\tau_{\theta}\circ \psi$ and $\rho^{ss}=\tau_{\theta}\circ \psi^{ss}$, where $\psi:\Gamma \rightarrow G$ is a $\theta$-Anosov representation, $\psi^{ss}$ is a semisimplification of $\psi$ and $\tau_{\theta}:G\rightarrow \mathsf{GL}_n(\mathbb{R})$ is a $\theta$-compatible representation. The highest weight of $\tau_{\theta}$ is of the form $\chi_{\tau_{\theta}}=\sum_{\alpha \in \theta}n_a \omega_{\alpha}$, where $n_{\alpha}>0$ and $\omega_{\alpha}$ is the fundamental weight with respect to $\alpha$ (e.g. see the discussion in \cite[Subsec. 3.2]{GGKW}). By the definition of $\chi_{\tau_{\theta}}$, we may choose $D_1>0$, depending only on $\tau_{\theta}$, with the property for every $h\in G$, $$ \Big|\log\sigma_1(\tau_{\theta}(h))- \chi_{\tau_{\theta}}(\mu(h)) \Big|\leq D_1.$$ Since $\rho^{ss}=\tau_{\theta}\circ \psi^{ss}$, by  (\ref{semisimple-tau}), there is $D>0$ such that for every $g\in \Gamma$, $$\Big|\chi_{\tau_{\theta}}\big( \mu(\psi(\gamma))-\mu(\psi^{ss}(\gamma))\big)\Big|\leq D.$$

By (\ref{upper-semisimple}) there is $C_1>0$, depending only on $\psi$, with $\omega_{\alpha}\big(\mu(\psi(\gamma))-\mu(\psi^{ss}(\gamma))\big) \geq -C_1$ for every $\gamma \in \Gamma$, thus, for every $\alpha \in \theta$ $$ \chi_{\tau_{\theta}}\big(\mu(\psi(\gamma))-\mu(\psi^{ss}(\gamma))\big) \geq n_{\alpha} \omega_{\alpha}\big(\mu(\psi(\gamma))-\mu(\psi^{ss}(\gamma))\big)-C_1\sum_{\beta \in \theta \smallsetminus \{\alpha\}} n_{\beta}.$$ In particular, for every $\gamma \in \Gamma$ we conclude that $$\omega_{\alpha}\big(\mu(\rho(\gamma))-\mu(\rho^{ss}(\gamma))\big) \leq \frac{1}{n_{\alpha}}\Bigg(D+C\sum_{\beta \in \theta \smallsetminus \{\alpha\}}n_{\beta}\Bigg).$$  This concludes the proof of the lemma. \end{proof} 

\section{The contraction property} \label{contraction}
Let $\Gamma$ be a word hyperbolic group. Fix $\big(\hat{\Gamma}, \varphi_{t} \big)$ a flow space on which $\Gamma$ acts properly discontinuously and cocompactly. Fix also $\mathcal{F} \subset \hat{\Gamma}$ a compact subset of $\hat{\Gamma}$ whose $\Gamma$-translates cover $\hat{\Gamma}$.  Let \hbox{$\rho:\Gamma \rightarrow \mathsf{GL}_d(\mathbb{R})$} be a representation admitting a pair of transverse, $\rho$-equivariant maps \hbox{$\xi^{+}:\partial_{\infty} \Gamma \rightarrow \mathbb{P}(\mathbb{R}^d)$} and \hbox{$\xi^{-}: \partial_{\infty}\Gamma \rightarrow \mathsf{Gr}_{d-1}(\mathbb{R}^d)$} defining the flat section $\sigma: \Gamma \backslash \hat{\Gamma} \rightarrow \mathcal{X}_{\rho}$ of the fiber bundle $\pi:\mathcal{X}_{\rho} \rightarrow \Gamma \backslash \hat{\Gamma}$. We fix an equivariant family of norms $\big(||\cdot||_{x} \big)_{x \in \Gamma \backslash \hat{\Gamma}}$ on the fibers of the bundle \hbox{$\pi_{\pm}:\mathcal{E}_{\rho}^{\pm}\rightarrow \Gamma \backslash \hat{\Gamma}$}. Recall also the maps $\tau^{\pm}:\hat{\Gamma}\rightarrow \partial_{\infty}\Gamma$ defined in Subsection \ref{Flows}. For a given point $\hat{m} \in \hat{\Gamma}$, choose $h \in G$ so that $\xi^{+}(\tau^{+}(\hat{m}))=hP_1^{+}$  and $\xi^{-}(\tau^{-}(\hat{m}))=hP_1^{-}$ and denote by $L_h:G \rightarrow G$ the left translation by $h \in G$. Then consider the tangent spaces \begin{align*}\mathsf{T}_{hP_1^{+}}\mathbb{P}(\mathbb{R}^d)&=\Big\{ dL_{h}d\pi^{+}\left (X \right ):X \in \bigoplus_{i=2}^{d}\mathbb{R}E_{i1} \Big \}\\ \mathsf{T}_{hP_1^{-}}\mathsf{Gr}_{d-1}(\mathbb{R}^d)&=\Big \{dL_{h}d\pi^{-}\left (X\right ): X \in \bigoplus_{i=2}^{d}\mathbb{R}E_{1i} \Big \}\end{align*} where $E_{ij}$ is the $d\times d$ matrix whose $(i,j)$ entry is $1$ and all the others zero. For $u \in \{0\}\times \mathbb{R}^{d-1}$ we denote by $X_{u}^{+}\in \mathsf{T}_{hP_1^{+}}\mathbb{P}(\mathbb{R}^d) $ and $X_{u}^{-}\in  \mathsf{T}_{hP_1^{-}}\mathsf{Gr}_{d-1}(\mathbb{R}^d) $ the tangent vectors \begin{align*} X_{u}^{+} &=\Bigg[ \hat{m}, \big( \xi^{+}(\tau^{+}(\hat{m})), \xi^{-}(\tau^{-}(\hat{m})) \big), dL_{h}d \pi^{+}\left ( \begin{pmatrix} 
0 & 0\\ 
u & 0
\end{pmatrix} \right )\Bigg]_{\Gamma}\\ X_{u}^{-} &=\Bigg[\hat{m}, \big(\xi^{+}(\tau^{+}(\hat{m})), \xi^{-}(\tau^{-}(\hat{m})) \big), dL_{h}d \pi^{-}\left (\begin{pmatrix}
0 & u\\ 
0 & 0
\end{pmatrix} \right ) \Bigg]_{\Gamma} \end{align*} in the fibers of the bundles $\sigma_{\ast}\mathcal{E}^{\pm} \rightarrow \Gamma \backslash \hat{\Gamma}$ over $x=[\hat{m}]_{\Gamma}$ and $(\pi^{+},\pi^-):\mathsf{SL}_d(\mathbb{R})\rightarrow \mathbb{P}(\mathbb{R}^d)\times \mathsf{Gr}_{d-1}(\mathbb{R}^d)$ are the natural projections.

The following lemma shows that when the geodesic flow on $\sigma_{\ast}\mathcal{E}^{-}$ is weakly contracting then the geodesic flow on $\sigma_{\ast}\mathcal{E}^{+}$ is weakly dilating. Recall that $\langle \cdot,\cdot\rangle$ is the standard Euclidean inner product on $\mathbb{R}^d$.

\begin{lemma} \label{contractiondilation} Let $\rho:\Gamma \rightarrow \mathsf{GL}_d(\mathbb{R})$ be a representation. Suppose there exists a pair of continuous, \hbox{$\rho$-equivariant} transvserse maps $\xi^{+}:\partial_{\infty}\Gamma \rightarrow \mathbb{P}(\mathbb{R}^d)$ and $\xi^{-}:\partial_{\infty}\Gamma \rightarrow \mathsf{Gr}_{d-1}(\mathbb{R}^d)$. Then for any $x=[\hat{m}]_{\Gamma} \in \hat{\Gamma}$ and $u \in \{0\} \times \mathbb{R}^{d-1}$ we have: $$\varliminf_{t \rightarrow \infty}\big|\big|\varphi_{t}(X_{u}^{+})\big|\big|_{\varphi_t(x)}\cdot \big|\big|\varphi_{t} \big(X_{u}^{-} \big)\big|\big|_{\varphi_{t}(x)} >0.$$   \end{lemma}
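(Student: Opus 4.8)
The plan is to produce a canonical $\mathsf{GL}(d,\mathbb{R})$-invariant bilinear pairing between the two tangent sub-bundles, to observe that the lifted geodesic flow preserves it, and then to bound $\|\varphi_t(X_u^+)\|\cdot\|\varphi_t(X_u^-)\|$ below by a constant multiple of the (flow-invariant, hence constant) value of this pairing on $(\varphi_t(X_u^+),\varphi_t(X_u^-))$. Concretely, for a transverse pair $([v],W)\in\mathbb{P}(\mathbb{R}^d)\times\mathsf{Gr}_{d-1}(\mathbb{R}^d)$ we have $\mathbb{R}^d=\mathbb{R}v\oplus W$, so the canonical identifications $T_{[v]}\mathbb{P}(\mathbb{R}^d)\cong\textup{Hom}(\mathbb{R}v,\mathbb{R}^d/\mathbb{R}v)\cong\textup{Hom}(\mathbb{R}v,W)$ and $T_W\mathsf{Gr}_{d-1}(\mathbb{R}^d)\cong\textup{Hom}(W,\mathbb{R}^d/W)\cong\textup{Hom}(W,\mathbb{R}v)$ hold. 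I would set $B(\phi,\psi)=\textup{tr}(\psi\circ\phi)$ for $\phi\in\textup{Hom}(\mathbb{R}v,W)$ and $\psi\in\textup{Hom}(W,\mathbb{R}v)$; equivalently, $B$ is the natural contraction $(\mathbb{R}v)^{\ast}\otimes W\otimes W^{\ast}\otimes\mathbb{R}v\to\mathbb{R}$. It is bilinear, depends continuously on the transverse pair $([v],W)$, and is invariant under $\mathsf{GL}(d,\mathbb{R})$ acting diagonally through the differentials of the left translations. Consequently $B$ induces a well-defined, fibrewise bilinear, continuous pairing $\widetilde B$ on the Whitney sum $\sigma_{\ast}\mathcal{E}^{+}\oplus\sigma_{\ast}\mathcal{E}^{-}$ over $\Gamma\backslash\hat\Gamma$.

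The key observation is that $\widetilde B$ is invariant under the lifted geodesic flow. Indeed, the lift of $\{\varphi_t\}$ to $\mathcal{E}_{\rho}^{\pm}$ descends from $\varphi_t\times\textup{id}$ on $\hat\Gamma\times\mathcal{E}^{\pm}$, hence acts trivially on the $\mathcal{E}^{\pm}$-factor; since $\tau^{\pm}(\varphi_t\hat m)=\tau^{\pm}(\hat m)$, we get $\varphi_t(X_u^{\pm})=\big[\varphi_t(\hat m),\xi^{+}(\tau^{+}(\hat m)),\xi^{-}(\tau^{-}(\hat m)),dL_h\,d\pi^{\pm}(\cdots)\big]_{\Gamma}$, with the flag data and the tangent vectors unchanged and only the basepoint $\hat m$ moved, and both $\varphi_t(X_u^{+})$ and $\varphi_t(X_u^{-})$ lie over $[\varphi_t\hat m]_{\Gamma}$. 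Therefore $\widetilde B(\varphi_t(X_u^{+}),\varphi_t(X_u^{-}))$ is independent of $t$. Applying invariance of $B$ under left translation by $h^{-1}$ reduces its evaluation to the pair $([e_1],e_1^{\perp})$: there $d\pi^{+}\big(\begin{bmatrix}0&0\\ u&0\end{bmatrix}\big)$ is the map $e_1\mapsto u\in e_1^{\perp}$, and $d\pi^{-}\big(\begin{bmatrix}0&u\\ 0&0\end{bmatrix}\big)$ is the map $w\mapsto\langle u,w\rangle e_1$, so composing and taking the trace gives $\widetilde B(\varphi_t(X_u^{+}),\varphi_t(X_u^{-}))=\|u\|^2$ for every $t$.

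Finally, since $\Gamma\backslash\hat\Gamma$ is compact and $\widetilde B$, the fixed equivariant family of norms, the maps $\tau^{\pm},\xi^{\pm}$ and the section $\sigma$ are all continuous, restricting $|\widetilde B|$ to the compact subset $\{\|v^{+}\|=\|v^{-}\|=1\}$ of $\sigma_{\ast}\mathcal{E}^{+}\oplus\sigma_{\ast}\mathcal{E}^{-}$ and using bilinearity yields a constant $C>0$, independent of $u$ and $t$, with $|\widetilde B(v^{+},v^{-})|\le C\,\|v^{+}\|_y\,\|v^{-}\|_y$ for all $y\in\Gamma\backslash\hat\Gamma$ and $v^{\pm}$ in the fibre over $y$. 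Hence $\|\varphi_t(X_u^{+})\|_{\varphi_t(x)}\cdot\|\varphi_t(X_u^{-})\|_{\varphi_t(x)}\ge\|u\|^2/C>0$ for every $t\ge0$ (when $u\neq0$), and taking $\varliminf_{t\to\infty}$ gives the claim. The one place that requires care is the bookkeeping in the second step: checking that the lifted flow genuinely fixes the $\mathcal{E}^{\pm}$-data of $X_u^{\pm}$, that $\varphi_t(X_u^{+})$ and $\varphi_t(X_u^{-})$ sit over the same point, and that the descent of $B$ to $\widetilde B$ is consistent with the $\Gamma$-quotient; once the invariant pairing $B$ is isolated, the remainder is a routine compactness argument.
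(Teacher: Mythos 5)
Your proof is correct, and it takes a genuinely different route from the paper's. The paper argues sequentially: it fixes an unbounded sequence $(t_n)$, translates $\varphi_{t_n}(\hat m)$ back into a compact fundamental domain by elements $\gamma_n$, writes $\rho(\gamma_n)h$ simultaneously as $k_{1n}\bigl(\begin{smallmatrix}\lambda_n & * \\ 0 & A_n\end{smallmatrix}\bigr)$ and $k_{2n}\bigl(\begin{smallmatrix}s_n & 0 \\ * & B_n\end{smallmatrix}\bigr)$, computes $\|\varphi_{t_n}(X_u^\pm)\|$ up to uniform constants in terms of $A_n,B_n,\lambda_n,s_n$, and then extracts from transversality that $|s_n/\lambda_n|$ and $\|A_n^{-t}B_n^t\|$ stay bounded, finishing with the Cauchy--Schwarz inequality $\|A_nu\|\,\|A_n^{-t}u\|\geqslant\|u\|^2$. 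You instead isolate the intrinsic object that this computation is secretly tracking: the canonical $\mathsf{GL}(d,\mathbb{R})$-invariant pairing $B(\phi,\psi)=\mathrm{tr}(\psi\circ\phi)$ between $T_{[v]}\mathbb{P}(\mathbb{R}^d)\cong\mathrm{Hom}(\mathbb{R}v,W)$ and $T_W\mathsf{Gr}_{d-1}(\mathbb{R}^d)\cong\mathrm{Hom}(W,\mathbb{R}v)$ for transverse $([v],W)$. Since the lift of the flow only moves the $\hat\Gamma$-coordinate and $\tau^\pm(\varphi_t\hat m)=\tau^\pm(\hat m)$, the induced pairing $\widetilde B$ on $\sigma_*\mathcal{E}^+\oplus\sigma_*\mathcal{E}^-$ is flow-invariant, and it evaluates to $\|u\|^2$ on $(X_u^+,X_u^-)$; compactness of $\Gamma\backslash\hat\Gamma$ and continuity of $(\xi^+,\xi^-)$ and the norms then give $|\widetilde B(v^+,v^-)|\leqslant C\|v^+\|_y\|v^-\|_y$ uniformly. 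Your Cauchy--Schwarz step is exactly the paper's $\|A_nu\|\,\|A_n^{-t}u\|\geqslant\|u\|^2$, but packaged intrinsically; what your phrasing buys is (a) no choice of subsequence, so you obtain the stronger statement that $\|\varphi_t(X_u^+)\|\,\|\varphi_t(X_u^-)\|\geqslant\|u\|^2/C$ uniformly in $t$, not just a positive $\varliminf$, and (b) the role of transversality is transparent: it is exactly what ensures $B$ is defined and, together with compactness, bounded. One small point worth stating explicitly in a final write-up: the transverse pairs $(\xi^+(\tau^+(\hat m)),\xi^-(\tau^-(\hat m)))$ range over a compact subset of the open locus of transverse pairs, which is what keeps the operator norm of $B$ (with respect to the chosen norms) finite; and, as you note, the lemma as stated implicitly assumes $u\neq 0$.
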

\begin{proof}  For two sequences of positive real numbers $(a_n)_{n \in \mathbb{N}}$, $(b_n)_{n \in \mathbb{N}}$ we write $a_{n} \asymp b_{n}$ if there exists $R>0$ such that $R^{-1}a_{n} \leqslant b_{n} \leqslant Ra_{n} $ for every $n \in \mathbb{N}$. We may assume that $\rho(\Gamma)$ is contained in $\mathsf{SL}^{\pm}_{d}(\mathbb{R})$, otherwise we may replace $\rho$ with $\hat{\rho}(\gamma)=|\textup{det}(\rho(\gamma))|^{-1/d}\rho(\gamma)$, $\gamma \in \Gamma$ since $\xi^{\pm}$ are also $\hat{\rho}$-equivariant. \par Let $(t_n)_{n \in \mathbb{R}}$ be an increasing unbounded sequence. For each $n \in \mathbb{N}$, we may choose $\gamma_n \in \Gamma$ such that $\gamma_{n} \varphi_{t_n}(\hat{m})$ lies in the compact fundamental domain $\mathcal{F}$. There also exist $k_{1n},k_{2n}\in K$ so that $${\rho(\gamma_n)h=k_{1n}\begin{pmatrix}
\lambda_{n} & \ast \\ 
 0& A_{n} 
\end{pmatrix}=k_{2n}\begin{pmatrix}
s_{n} & 0\\ 
 \ast & B_{n} 
\end{pmatrix}}.$$ Notice that for $g \in P_1^{\pm}$ we have $dL_{g}\circ d\pi^{\pm}=d\pi^{\pm} \circ \textup{Ad}(g)$ and an elementary calculation \hbox{shows that} \begin{align*} dL_{\rho(\gamma_n)h} d\pi^{+}\left ( \begin{pmatrix}
0 & 0\\ 
u& 0
\end{pmatrix} \right )&=dL_{k_{1n}} \left (d\pi^{+}\left ( \textup{Ad}\left (\begin{pmatrix}
\lambda_n & \ast \\ 
0 & A_{n}
\end{pmatrix}  \right ) \begin{pmatrix}
0 & 0\\ 
u & 0
\end{pmatrix} \right) \right )\\ &=dL_{k_{1n}}\left (d\pi^{+}\left(\begin{pmatrix}
0 & 0 \\ 
\frac{1}{\lambda_n}A_{n}u& 0
\end{pmatrix} \right)  \right ). \end{align*} Similarly, we check that $${dL_{\rho(\gamma_n)h}d\pi^{-}\left ( \begin{pmatrix}
0 & u\\ 
0& 0
\end{pmatrix} \right )=dL_{k_{2n}}\left (d\pi^{-}\left(\begin{pmatrix}
0 & s_{n} B_{n}^{-t}u \\ 
0& 0
\end{pmatrix} \right)  \right )}.$$

\noindent By the continuity of the family of norms $\big(||\cdot||_{x}\big)_{x \in \Gamma \backslash\hat{\Gamma}}$ and since $k_{1n},k_{2n}\in K$ lie in a compact group, we deduce that $$\left \| \varphi_{t_n} \big(X_{u}^{+} \big) \right \|_{\varphi_{t_n}(x)} \asymp \frac{\left \| A_{n}u \right \|}{|\lambda_{n}|} \ \  \mathrm{and} \ \ \left \| \varphi_{t_n} \big(X_{u}^{-}  \big)\right \|_{\varphi_{t_n}(x)} \asymp |s_{n}|\cdot \left \| B_{n}^{-t}u \right \|,$$ where $||\cdot||$ denotes the usual Euclidean norm on $\mathbb{R}^{d-1}$. Up to passing to a subsequence, we may assume that  $\lim_{n}\gamma_{n}\varphi_{t_n}(\hat{m})=\hat{m}'$. Since the maps $\tau^{\pm}$ are continuous, we conclude, up to passing to a subsequence, that $(\gamma_n \tau^{+}(\hat{m}))_{n \in \mathbb{N}}$ and $(\gamma_n \tau^{-}(\hat{m}))_{n \in \mathbb{N}}$ converge to $\tau^{+}(\hat{m}')\in \partial_{\infty}\Gamma$ and $\tau^{-}(\hat{m}')\in \partial_{\infty}\Gamma$ respectively. We have $\xi^{+}(\tau^{+}(\gamma_n \hat{m}))=k_{1n}P_{1}^{+}$ and $\xi^{-}(\tau^{+}(\gamma_n \hat{m}))=k_{2n}P_{1}^{-}$ and by transversality, there exist $p_{n} \in P_{1}^{+}, q_{n} \in P_{1}^{-}$ and $g \in G$ such that $\lim_{n}k_{1n}p_{n}=\lim_{n} k_{2n}q_{n}=g$. Then there exist $z_{n},z_{n}' \in \mathbb{R}$ so that $\lim_{n} z_{n}k_{1n}e_1=ge_{1}$ and $\lim_{n} z_{n}'k_{2n}e_1= g^{-t}e_1$ and we observe that $|z_n|,|z_n'|$ converge respectively to $||ge_1||$ and $||g^{-t}e_1||$. Notice that $\lim_{n}z_nz_n'\langle k_{1n}e_1,k_{2n}e_1 \rangle=|\langle g^{-t}e_1,ge_1 \rangle|=1$ and so $\lim_{n} \langle k_{1n}e_1,k_{2n}e_1 \rangle=\frac{1}{||ge_1||\cdot ||g^{-t}e_1||}$. Recall that $k_{2n}^{-1}k_{1n}\begin{pmatrix}[0.8]
\lambda_{n} & \ast \\ 
 0& A_{n} 
\end{pmatrix}=\begin{pmatrix}[0.8]
s_{n} & 0\\ 
 \ast & B_{n} 
\end{pmatrix} $, hence, by looking at the $(1,1)$ entry of both sides, we obtain ${\left |\frac{s_{n}}{\lambda_{n}} \right |=\left | \left \langle k_{1n}e_1, k_{2n}e_1\right \rangle \right |}$ and so $L:=\inf_{n \in \mathbb{N}}\left |\frac{s_{n}}{\lambda_{n}} \right |>0$. Furthermore, we observe that $\begin{pmatrix}[0.6]
\lambda_n & 0 \\ 
\ast & A_{n}^{t}
\end{pmatrix}k_{1n}^{t}=\begin{pmatrix}[0.7]
s_{n} & \ast \\ 
0 & B_{n}^{t}
\end{pmatrix}k_{2n}^{t}$ and hence ${\begin{pmatrix}[0.7]
\ast & \ast \\ 
\ast & B_{n}^{-t}A_{n}^{t}
\end{pmatrix}=k_{2n}^{-1}k_{1n}}$ since $k_{1n}k_{1n}^t=k_{2n}k_{2n}^t=I_{d}$. Up to passing to a subsequence, we may assume that \hbox{${\lim_{n} B_{n}^{-t}A_{n}^{t}}=Q$} exists. Since \hbox{$|\lambda_n|\cdot |\det(A_n)|=|s_n|\cdot |\det(B_n)|$} we have ${\left|\det\left (B_{n}^{-t}A_{n}^{t} \right )\right |=\left | \frac{s_{n}}{\lambda_{n}} \right |}\geqslant L>0$. In particular, $Q$ is invertible and there is $M>0$ with \hbox{$\frac{1}{M} \leqslant \max\big(||B_{n}^{-t}A_{n}^{t}||, ||A_{n}^{-t}B_{n}^{t}||\big)  \leqslant M$} for every $n \in \mathbb{N}$. Therefore, for every $n\in \mathbb{N}$ we have $${\frac{\left \| A_{n}u \right \|}{|\lambda_{n}|}\geqslant \frac{\left \|u  \right \|^2}{|\lambda_{n}| \left \| A_{n}^{-t}u \right \|} = \frac{\left \|u  \right \|^2}{|\lambda_{n}|\left \| A_{n}^{-t}B_{n}^{t}(B_{n}^{-t}u) \right \|} \geqslant \frac{\left \|u  \right \|^2}{|\lambda_{n}| \left| \left| A_{n}^{-t}B_{n}^{t} \right| \right|\cdot \left \|B_{n}^{-t}u \right \|}\geqslant \frac{L\left \|u  \right \|^2}{M |s_{n}|\left \| B_{n}^{-t}u \right \|}},$$ since $||A_{n}u||\cdot ||A_{n}^{-t}u|| \geqslant ||u||^2$. Finally, $${\varliminf_{n \rightarrow \infty} \left \|  \varphi_{t_n}\big(X_{u}^{+} \big) \right \|_{\varphi_{t_n}(x)}\cdot \left \| \varphi_{t_n} \big(X_{u}^{-} \big) \right \|_{\varphi_{t_n}(x)} >0}$$ and since the sequence $(t_n)_{n \in \mathbb{N}}$ was arbitrary the conclusion follows.  \end{proof}

\begin{proposition}  \label{contractiondilation2} Let $\rho:\Gamma \rightarrow \mathsf{GL}_d(\mathbb{R})$ be a representation which admits a pair of continuous $\rho$-equivariant transverse maps $\xi^{+}:\partial_{\infty}\Gamma \rightarrow \mathbb{P}(\mathbb{R}^d)$ and $\xi^{-}:\partial_{\infty}\Gamma \rightarrow \mathsf{Gr}_{d-1}(\mathbb{R}^d)$. Fix $x=[\hat{m}]_{\Gamma}$, \hbox{$u \in \{0\} \times \mathbb{R}^{d-1}$} and suppose $\xi^{+}(\tau^{+}(\hat{m}))=hP_1^{+}$ and $\xi^{-}(\tau^{-}(\hat{m}))=hP_1^{-}$. Let $(\gamma_n)_{n \in \mathbb{N}}$ be a sequence of elements of $\Gamma$ such that $\big(\gamma_n \varphi_{t_n}(\hat{m})\big)_{n \in \mathbb{N}}$ lies in a compact subset of $\hat{\Gamma}$.
\medskip

\begin{enumerate}[label=(\roman*)]

\item $\underset{n \rightarrow \infty}{\lim}||\varphi_{t_n} \big(X_{u}^{+}\big)||_{\varphi_{t_n}(x)} =+\infty$ if and only if $${\lim_{n \rightarrow \infty}\frac{\left \| \rho(\gamma_n)hu \right \|}{\left \| \rho(\gamma_n)he_1 \right \|}=+\infty}.$$ 
\item $\underset{n \rightarrow \infty}{\lim}||\varphi_{t_n} \big(X_{u}^{-}\big)||_{\varphi_{t_n}(x)} =0$ if and only if $${\lim_{n \rightarrow \infty}\frac{\left \| \rho^{\ast}(\gamma_n)h^{-t}u \right \|}{\left \| \rho^{\ast}(\gamma_n)h^{-t}e_1 \right \|}=0}.$$ \end{enumerate} \end{proposition}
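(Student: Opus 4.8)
The plan is to recycle the setup and the asymptotic estimates established in the proof of Lemma \ref{contractiondilation}. Set $M_n := \rho(\gamma_n)h$ and, after the harmless normalization reducing to $\rho(\Gamma)\subset\mathsf{SL}^{\pm}(d,\mathbb{R})$ used there, write the two decompositions
$$M_n = k_{1n}R_n = k_{2n}S_n,\qquad R_n = \begin{bmatrix}\lambda_n & b_n\\ 0 & A_n\end{bmatrix},\qquad S_n = \begin{bmatrix}s_n & 0\\ c_n & B_n\end{bmatrix},$$
with $k_{1n},k_{2n}$ orthogonal and $R_n$, $S_n$ stabilizing the line $[e_1]$ and the hyperplane $e_1^{\perp}$ respectively. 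Since $\big(\gamma_n\varphi_{t_n}(\hat{m})\big)_{n}$ lies in a compact subset of $\hat{\Gamma}$ and the family of norms is continuous on the compact quotient $\Gamma\backslash\hat{\Gamma}$, the computation of Lemma \ref{contractiondilation} yields, with comparison constants independent of $n$,
$$\big\|\varphi_{t_n}\big(X_{u}^{+}\big)\big\|_{\varphi_{t_n}(x)}\asymp\frac{\|A_n u\|}{|\lambda_n|},\qquad \big\|\varphi_{t_n}\big(X_{u}^{-}\big)\big\|_{\varphi_{t_n}(x)}\asymp |s_n|\,\|B_n^{-t}u\|.$$
Since $\rho^{\ast}(\gamma_n)h^{-t} = \big(\rho(\gamma_n)h\big)^{-t} = M_n^{-t}$, and since $a_n\asymp b_n$ for sequences of positive reals preserves both divergence to $+\infty$ and convergence to $0$, the statement reduces to the uniform equivalences
$$\frac{\|A_n u\|}{|\lambda_n|}\asymp\frac{\|M_n u\|}{\|M_n e_1\|}\quad\text{for (i)}\qquad\text{and}\qquad |s_n|\,\|B_n^{-t}u\|\asymp\frac{\|M_n^{-t}u\|}{\|M_n^{-t}e_1\|}\quad\text{for (ii)}.$$

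Both equivalences rest on a single uniform transversality bound. As in Lemma \ref{contractiondilation}, the map $\big(\xi^{+}\circ\tau^{+},\xi^{-}\circ\tau^{-}\big):\hat{\Gamma}\to G/P_{1}^{+}\times G/P_{1}^{-}$ is continuous, $\rho$-equivariant and valued in transverse pairs; the well defined function $(k_1P_1^{+},k_2P_1^{-})\mapsto|\langle k_1 e_1,k_2 e_1\rangle|$ is continuous and positive on transverse pairs, hence bounded below on the image of the compact set containing the $\gamma_n\varphi_{t_n}(\hat{m})$, and as $\big(\xi^{+}(\tau^{+}(\gamma_n\hat{m})),\xi^{-}(\tau^{-}(\gamma_n\hat{m}))\big) = (k_{1n}P_1^{+},k_{2n}P_1^{-})$ we obtain $\delta>0$ with $|\langle k_{1n}e_1,k_{2n}e_1\rangle|\geq\delta$ for every $n$; the $(1,1)$-entry of the identity $k_{2n}^{-1}k_{1n}R_n = S_n$ then reads $\langle k_{1n}e_1,k_{2n}e_1\rangle\lambda_n = s_n$, so $|s_n/\lambda_n|\geq\delta$. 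For (i): the first decomposition gives $\|M_n e_1\| = |\lambda_n|$ and $\|M_n u\|^2 = (b_n u)^2 + \|A_n u\|^2$, while the second gives $M_n u\in k_{2n}e_1^{\perp}$, hence $M_n u\perp k_{2n}e_1$. Writing $k_{1n}e_1 = \langle k_{1n}e_1,k_{2n}e_1\rangle\,k_{2n}e_1 + w_n$ with $w_n\perp k_{2n}e_1$ and $\|w_n\|^2 = 1-(s_n/\lambda_n)^2\leq 1-\delta^2$, one gets $|b_n u| = |\langle M_n u,w_n\rangle|\leq\sqrt{1-\delta^2}\,\|M_n u\|$, whence $\|M_n u\|^2\leq\delta^{-2}\|A_n u\|^2$ and
$$\frac{\|A_n u\|}{|\lambda_n|}\leq\frac{\|M_n u\|}{\|M_n e_1\|}\leq\delta^{-1}\,\frac{\|A_n u\|}{|\lambda_n|}.$$

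For (ii): inverting $S_n$ explicitly gives $M_n^{-t} = k_{2n}(S_n^{-1})^{t}$, with $\|M_n^{-t}e_1\| = |s_n|^{-1}$ and $\|M_n^{-t}u\|^2 = s_n^{-2}(c_n^{t}B_n^{-t}u)^2 + \|B_n^{-t}u\|^2$, so $\frac{\|M_n^{-t}u\|}{\|M_n^{-t}e_1\|} = \sqrt{(c_n^{t}B_n^{-t}u)^2 + s_n^2\|B_n^{-t}u\|^2}\geq |s_n|\,\|B_n^{-t}u\|$. On the other hand, comparing $\|M_n e_1\|^2 = \lambda_n^2$ with $\|M_n e_1\|^2 = s_n^2 + \|c_n\|^2$ gives $\|c_n\|^2 = \lambda_n^2-s_n^2\leq(\delta^{-2}-1)\,s_n^2$, so $|c_n^{t}B_n^{-t}u|\leq\|c_n\|\,\|B_n^{-t}u\|\leq\sqrt{\delta^{-2}-1}\,|s_n|\,\|B_n^{-t}u\|$ and therefore $\frac{\|M_n^{-t}u\|}{\|M_n^{-t}e_1\|}\leq\delta^{-1}\,|s_n|\,\|B_n^{-t}u\|$, which proves (ii).

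I expect the only genuine difficulty to be the bookkeeping of uniformity: upgrading the transversality lower bound $|s_n/\lambda_n|\geq\delta$ from the subsequential form in which it appears in Lemma \ref{contractiondilation} to a bound along the whole sequence — which is exactly where the hypothesis that $\big(\gamma_n\varphi_{t_n}(\hat{m})\big)_n$ stays in a fixed compact set of $\hat{\Gamma}$ is used — and checking that the comparison constants in the two norm asymptotics depend only on that compact set, and not on $n$. Everything else is the elementary linear algebra indicated above.
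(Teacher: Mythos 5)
Your proof is correct and follows essentially the same line as the paper: reduce to the ratios $\|A_n u\|/|\lambda_n|$ and $|s_n|\,\|B_n^{-t}u\|$ via the two block decompositions $\rho(\gamma_n)h = k_{1n}R_n = k_{2n}S_n$ inherited from Lemma \ref{contractiondilation}, then use compactness plus transversality of the limit maps to control the "off-block" contributions uniformly. The one place where you genuinely streamline is the uniformity step: the paper fixes an arbitrary subsequence, passes to a further convergent subsequence, shows $\sin\measuredangle\bigl(\rho(\gamma_{r_n})he_1,\,\rho(\gamma_{r_n})hu\bigr)$ stays bounded away from $0$ along it, and concludes by the "arbitrary subsequence" device; you instead observe directly that $\bigl(\xi^{+}\circ\tau^{+},\,\xi^{-}\circ\tau^{-}\bigr)$ maps the given compact set into the open set of transverse pairs, on which the continuous positive function $|\langle k_1e_1,k_2e_1\rangle|$ is bounded below, giving $|s_n/\lambda_n|\geq\delta$ at once. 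The remaining linear algebra (the $(1,1)$-entry identity $\langle k_{1n}e_1,k_{2n}e_1\rangle\lambda_n=s_n$, the orthogonal decomposition of $k_{1n}e_1$ against $k_{2n}e_1$, and the explicit inversion of $S_n$) is equivalent to the paper's use of the exact identity $\tfrac{\|A_n u\|}{|\lambda_n|}=\tfrac{\|\rho(\gamma_n)hu\|}{\|\rho(\gamma_n)he_1\|}\sin\measuredangle$, just unfolded, and you also write out part (ii), which the paper leaves as "similar." No gaps.
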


\begin{proof} Suppose that ${\rho(\gamma_n)h=k_{1n}\begin{pmatrix}[0.8]
\lambda_{n} & \ast \\ 
 0& A_{n} 
\end{pmatrix}=k_{2n}\begin{pmatrix}[0.8]
s_{n} & 0\\ 
 \ast & B_{n} 
\end{pmatrix}}$. Let $(\gamma_{r_n})_{n \in \mathbb{N}}$ be a subsequence of $(\gamma_n)_{n \in \mathbb{N}}$. A straightforward calculation shows that $${\frac{\left \| A_{r_n}u \right \|}{|\lambda_{r_n}|}=\frac{\left \| \rho(\gamma_{r_n})hu \right \|}{\left \| \rho(\gamma_{r_n})he_1 \right \|} \sin \measuredangle  \big (\rho(\gamma_{r_n})he_1, \rho(\gamma_{r_n})hu \big)}$$ where $\xi^{+}(x)=hP_{1}^{+}$ and $hu \in \xi^{-}(y)$. Up to passing to subsequence, we may assume that $\lim_{n} \gamma_{r_n}\varphi_{t_{r_n}}(\hat{m})$ exists and so $\lim_{n}\gamma_{r_n} \tau^{+}(\hat{m}) \neq \lim_{n}\gamma_{r_n} \tau^{-}(\hat{m})$. The maps $\xi^{+}$ and $\xi^{-}$ are transverse, hence there exists $g \in G$ and $p_n \in P_1^{+}$, $q_n \in P_1^{-}$ such that $\lim_{n} \rho(\gamma_{r_n})hp_n=\lim_{n}\rho(\gamma_{r_n})hq_n=g$. Let $v_{\infty} \in e_1^{\perp}$ be a limit point of the sequence $\Big(\frac{q_{n}^{-1}u}{||q_{n}^{-1}u||}\Big)_{n \in \mathbb{N}}$. Then, \hbox{$\lim_{n} \frac{1}{||q_{n}^{-1}u||}\rho(\gamma_{r_n})hu=gv_{\infty}$} and hence $ \lim_{n} \sin \measuredangle  \big (\rho(\gamma_{r_n})he_1, \rho(\gamma_{r_n})hu\big)=\sin \measuredangle \big(gv_{\infty}, ge_1 \big)>0$. Since we started with an arbitrary subsequence, there exists $\varepsilon>0$ with \hbox{$\big|\sin \measuredangle  \big (\rho(\gamma_{r_n})he_1, \rho(\gamma_{r_n})hu \big)\big|\geqslant \varepsilon$} for every $n \in \mathbb{N}$. Therefore, $\frac{\left \| A_{n}u \right \|}{|\lambda_{n}|} \asymp \frac{\left \| \rho(\gamma_{n})hu \right \|}{\left \| \rho(\gamma_{n})he_1 \right \|}$. By Proposition \ref{contractiondilation} we have that $$\big|\big|\varphi_{t_{n}} (X_{u}^{+})\big|\big|_{\varphi_{t_n}(x)}\asymp \frac{\left \| A_{n}u \right \|}{|\lambda_{n}|}, \ \ n \rightarrow \infty$$ and hence part \textup{(i)} follows. The argument for part \textup(ii) is similar. \end{proof}

\section{The Cartan property and the uniform gap summation property} \label{scartan} 

Let $G$ be a real semisimple Lie group of non-compact type, $K$ a maximal compact subgroup of $G$ and $\mu:G \rightarrow \overline{\mathfrak{a}}^{+}$ the associated Cartan projection. The \emph{restricted Weyl group} of $\mathfrak{a}$ in $\mathfrak{g}$ is the group $W=N_{K}(\mathfrak{a})/Z_{K}(\mathfrak{a})$, where $N_{K}(\mathfrak{a})$ (resp. $Z_{K}(\mathfrak{a})$) is the normalizer (resp. centralizer) of $\mathfrak{a}$ in $K$. The group $W$ is finite, acts simply transitively on the set of Weyl chambers of $\mathfrak{a}$ and contains a unique order two element $w_0Z_{K}(\mathfrak{a}) \in W$ such that $\textup{Ad}(w_0)\overline{\mathfrak{a}}^{+}=-\overline{\mathfrak{a}}^{+}$. The element $w_0 \in K$ defines the opposition involution $^{\star}:\Delta \rightarrow \Delta$ on the set of simple restricted roots $\Delta$ as follows: if $\alpha \in \Delta$ then $\alpha^{\star} \in \Delta$ is the unique root with $\alpha^{\star}(H)=-\alpha(\textup{Ad}(w_0)H)$ for every $H \in \mathfrak{a}$. 

\par Let $\Gamma$ be an infinite, finitely generated group. A representation $\rho:\Gamma \rightarrow G$  is $P_{\theta}${\em-divergent} if $$ \lim_{|\gamma|_{\Gamma} \rightarrow \infty} \alpha \big(\mu(\rho(\gamma)) \big)=+\infty$$ for every $\alpha \in \theta$. Notice that the representation $\rho$ is $P_{\theta}$-divergent if and only if $\rho$ is $P_{\theta^{\star}}$-divergent. For an element $g=k_{g}\exp(\mu(g))k_{g}'$ written in the Cartan decomposition of $G$, define $$\Xi_{\theta}^{+}(g):=k_{g}P_{\theta}^{+} \ \ \textup{and} \ \ \Xi_{\theta}^{-}(g):=k_{g}w_0P_{\theta}^{-}.$$ For a $\rho$-equivariant map $\xi^{-}:\partial_{\infty}\Gamma \rightarrow G/P_{\theta}^{-}$, the map $\xi^{\ast}:\partial_{\infty}\Gamma \rightarrow G/P_{\theta^{\ast}}^{+}$ is defined as follows $$\xi^{\ast}(x)=k_{x}w_0P_{\theta^{\ast}}^{+},$$ where $\xi^{-}(x)=k_{x}P_{\theta}^{-}$ and $k_{x}\in K$.

\begin{definition} \label{defcartan} Let $G$ be a real semisimple Lie group, $\Gamma$ a word hyperbolic group and $\rho:\Gamma \rightarrow G$ a \hbox{$P_{\theta}$-divergent} representation.

\begin{enumerate}
\item Suppose that $\rho$ admits a continuous $\rho$-equivariant map $\xi^{+}:\partial_{\infty}\Gamma \rightarrow G/P_{\theta}^{+}$. The map $\xi^{+}$ satisfies the Cartan property if for any $x \in \partial_{\infty}\Gamma$ and every infinite sequence $(\gamma_n)_{n \in \mathbb{N}}$ of elements of $\Gamma$ with $\lim_{n}\gamma_n=x$, $$\xi^{+}(x)=\lim_{n \rightarrow \infty}\Xi^{+}_{\theta}\big(\rho(\gamma_{n})\big)$$ 

\item Suppose that $\rho$ admits a continuous $\rho$-equivariant map $\xi^{-}:\partial_{\infty}\Gamma \rightarrow G/P_{\theta}^{-}$. The map $\xi^{-}$ satisfies the Cartan property if the map $\xi^{\ast}: \partial_{\infty}\Gamma \rightarrow G/P_{\theta^{\star}}^{+}$ satisfies the Cartan property. In other words, for every $x \in \partial_{\infty}\Gamma$ and every infinite sequence $(\gamma_n)_{n \in \mathbb{N}}$ of elements of $\Gamma$ with $\lim_{n}\gamma_n=x$,  $$\xi^{-}(x)=\lim_{n \rightarrow \infty}\Xi^{-}_{\theta}\big(\rho(\gamma_{n})\big).$$ \end{enumerate}  \end{definition}

\begin{remarks}\normalfont{ \noindent \textup{(i)} Let $\rho:\Gamma \rightarrow G$ be a $P_{\theta}$-divergent representation. The Cartan property for a continuous \hbox{$\rho$-equivariant map} $\xi^{+}:\partial_{\infty}\Gamma \rightarrow G/P_{\theta}^{+}$ (resp. $\xi^{-}$) is independent of the choice of the Cartan decomposition of $G$. This follows by the fact that all Cartan subspaces of $G$ are conjugate under the adjoint action of $G$ and the second part of \cite[Cor. 5.9]{GGKW}.\\
\noindent \textup{(ii)} For a  $\theta$-divergent sequence $(g_n)_{n\in \mathbb{N}}\subset G$, written in the Cartan decomposition of $G$ as $g_n=k_n\exp(\mu(g_n))k_n'$, the condition of $\lim_n k_nP_{\theta}^{+}=\lim_n \Xi_{\theta}^{+}(g_n)$ to exist, implies that $(g_n)_{n\in \mathbb{N}}$ $\tau_{\textup{mod}}$-flag converges to $x$, in the definition of Kapovich--Leeb--Porti \cite[Subsec. 4.5]{KLP1}. \par Given a $P_{\theta}$-divergent representation $\rho$ and a $\rho$-equivariant continuous map $\xi:\partial_{\infty}\Gamma \rightarrow G/P_{\theta}^{+}$ with the Cartan property, the map $\Xi_{\theta}^{+}:\Gamma \rightarrow G/P_{\theta}^{+}$, $\gamma \mapsto \Xi_{\theta}^{+}(\rho(\gamma))$ extends continuously to a map $\Gamma \cup \partial_{\infty}\Gamma\rightarrow G/P_{\theta}^{+}$  restricting to $\xi$ on $\partial_{\infty}\Gamma$.

} \end{remarks}

The following fact is immediate from the definition of the Cartan property:

\begin{fact} \label{Cartan2} Suppose that $\rho, \Gamma, G$ and $\theta$ are defined as in Definition \ref{defcartan} and let $\xi:\partial _{\infty}\Gamma \rightarrow G/P_{\theta}^{+}$ be a continuous $\rho$-equivariant map. Suppose that $\tau_{\theta}:G \rightarrow \mathsf{GL}_d(\mathbb{R})$ is an irreducible $\theta$-compatible proximal representation as in Proposition \ref{higherdimension} so that $\tau_{\theta}(P_{\theta}^{+})$ stabilizes a line in $\mathbb{R}^d$ and induces a $\tau_{\theta}$-equivariant embedding \hbox{$\iota^{+}: G/P_{\theta}^{+} \xhookrightarrow{} \mathbb{P}(\mathbb{R}^d)$}. The map $\xi$ satisfies the Cartan property if and only if $\iota^{+} \circ \xi$ satisfies the Cartan property.\end{fact}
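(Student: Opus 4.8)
The plan is to reduce the stated equivalence to a single pointwise identity between the two ``Cartan attracting coset'' maps, after which both implications follow by a soft limiting argument using continuity and injectivity of $\iota^{+}$ together with compactness of $G/P_{\theta}^{+}$. Concretely, I will first prove:
\[
\iota^{+}\big(\Xi_{\theta}^{+}(g)\big)=\Xi_{1}^{+}\big(\tau(g)\big)\ \ \text{in}\ \mathbb{P}(\mathbb{R}^d),\qquad\text{for every }g\in G\text{ with }\langle\alpha,\mu(g)\rangle>0\ \forall\alpha\in\theta,
\]
where $\Xi_{1}^{+}(\tau(g))$ denotes, as in the proof of Proposition \ref{inequalities}, the top left singular line of $\tau(g)$ (defined whenever that singular value is simple). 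Granting this, the rest is immediate. If $(\gamma_n)_{n\in\mathbb{N}}$ is a sequence in $\Gamma$ converging to $x\in\partial_{\infty}\Gamma$, then $|\gamma_n|_{\Gamma}\rightarrow\infty$, so the $P_{\theta}$-divergence of $\rho$ (part of the standing hypotheses, see Definition \ref{defcartan}) gives $\langle\alpha,\mu(\rho(\gamma_n))\rangle>0$ for all $\alpha\in\theta$ and all large $n$; moreover $\tau\circ\rho$ is $P_1$-divergent by Proposition \ref{higherdimension}(iv), so the Cartan property for $\iota^{+}\circ\xi$ is meaningful.

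For the key identity, write $g=k_{g}\exp(\mu(g))k_{g}'$ in the Cartan decomposition of $G$ and take $\tau$ as fixed in Section \ref{Background} and Proposition \ref{higherdimension}, so that $\tau(K)\subset\mathsf{O}(d)$, $\tau(P_{\theta}^{+})$ fixes the highest weight line $\mathbb{R}e_1=V^{\chi_{\tau}}$, and $\iota^{+}$ is the induced $\tau$-equivariant embedding with $\iota^{+}(hP_{\theta}^{+})=\tau(h)[e_1]$. Since $\tau(K)\subset\mathsf{O}(d)$, the homomorphism $\tau$ intertwines the Cartan involutions, so $d\tau$ maps $\mathfrak{a}\subset\mathfrak{p}$ into symmetric matrices; hence $\tau(\exp\mu(g))=\exp(d\tau(\mu(g)))$ is symmetric positive definite, block-scalar with factor $e^{\chi_i(\mu(g))}$ on each weight space $V^{\chi_i}$ of $\mathbb{R}^d=V^{\chi_{\tau}}\oplus V^{\chi_1}\oplus\cdots\oplus V^{\chi_k}$, and with mutually orthogonal eigenspaces. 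By Proposition \ref{higherdimension}(ii)--(iii), for each $i$ one has $\chi_{\tau}-\chi_i=\sum_{\beta\in\Sigma_{\theta}^{+}}n_{\beta,i}\beta$ with $n_{\beta,i}\in\mathbb{N}$ not all zero, and every $\beta\in\Sigma_{\theta}^{+}$ is a nonnegative integer combination of $\Delta$ with a strictly positive coefficient on some root of $\theta$; combined with $\mu(g)\in\overline{\mathfrak{a}}^{+}$ and $\langle\alpha,\mu(g)\rangle>0$ on $\theta$, this yields $\chi_{\tau}(\mu(g))-\chi_i(\mu(g))>0$ for every $i$. Thus $e^{\chi_{\tau}(\mu(g))}$ is the \emph{strict} largest singular value of $\tau(\exp\mu(g))$, with singular line $\mathbb{R}e_1$; since pre- and post-composing with $\tau(k_g),\tau(k_g')\in\mathsf{O}(d)$ preserves singular values and carries the top left singular line to $\tau(k_g)\mathbb{R}e_1$, we get $\Xi_{1}^{+}(\tau(g))=[\tau(k_g)e_1]$. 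On the other hand $\Xi_{\theta}^{+}(g)=k_gP_{\theta}^{+}$ is well defined since $\langle\alpha,\mu(g)\rangle>0$ on $\theta$, and $\iota^{+}(k_gP_{\theta}^{+})=\tau(k_g)[e_1]=[\tau(k_g)e_1]$ by $\tau$-equivariance, proving the identity.

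It remains to run the limits. If $\xi$ has the Cartan property and $\gamma_n\rightarrow x$, then $\Xi_{\theta}^{+}(\rho(\gamma_n))\rightarrow\xi(x)$, so by the identity and continuity of $\iota^{+}$ we get $\Xi_{1}^{+}(\tau(\rho(\gamma_n)))=\iota^{+}(\Xi_{\theta}^{+}(\rho(\gamma_n)))\rightarrow\iota^{+}(\xi(x))$, i.e.\ $\iota^{+}\circ\xi$ has the Cartan property. Conversely, assume $\iota^{+}\circ\xi$ has the Cartan property and fix $\gamma_n\rightarrow x$; by compactness of $G/P_{\theta}^{+}$ it suffices to check that any subsequential limit $\Xi_{\theta}^{+}(\rho(\gamma_{n_j}))\rightarrow\ell$ equals $\xi(x)$. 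Applying $\iota^{+}$ and the identity gives $\iota^{+}(\ell)=\lim_j\Xi_{1}^{+}(\tau(\rho(\gamma_{n_j})))=\iota^{+}(\xi(x))$ by hypothesis, and injectivity of $\iota^{+}$ forces $\ell=\xi(x)$. The only genuinely non-formal point is the strict singular-value gap in the key identity; this is precisely where the $\theta$-proximality of $\tau$ (Proposition \ref{higherdimension}(ii)--(iii)) and the $P_{\theta}$-divergence of $\rho$ enter, and everything else is bookkeeping.
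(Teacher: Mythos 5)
Your proof is correct and supplies exactly the argument the paper treats as immediate (the statement is labeled a Fact and given no proof, with the preceding sentence reading ``The following fact is immediate from the definition of the Cartan property''). The pointwise compatibility identity $\iota^{+}\big(\Xi_{\theta}^{+}(g)\big)=\Xi_{1}^{+}\big(\tau(g)\big)$, valid whenever $\langle\alpha,\mu(g)\rangle>0$ for all $\alpha\in\theta$, is precisely the mechanism the paper uses implicitly each time it reduces a $P_{\theta}$-statement to a $P_1$-statement via $\tau$ (e.g.\ in the proofs of Proposition \ref{inequalities}, Corollary \ref{Cartan}, and Lemma \ref{existence}), and your derivation of it from the weight decomposition of Proposition \ref{higherdimension}(ii)--(iii) together with $\tau(K)\subset\mathsf{O}(d)$ is the standard one.
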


We need the following estimates which help us verify, in several cases, the Cartan property of limit maps into the homogeneous spaces $G/P_{\theta}^{+}$ and $G/P_{\theta}^{-}$. The second part of the following proposition has been established in \cite[Lem. A4]{BPS} and \cite[Lem. 5.8 (i)]{GGKW} but for completeness we give a short proof.

\begin{proposition} \label{inequalities} Let $G$ be a real semisimple Lie group, $\theta \subset \Delta$ a subset of simple restricted roots of $G$ and \hbox{$\tau_{\theta}:G \rightarrow \mathsf{GL}_d(\mathbb{R})$} an irreducible, $\theta$-proximal representation such that $\tau_{\theta}(P_{\theta}^{+})$ stabilizes the line $[e_1]$ in $\mathbb{P}(\mathbb{R}^d)$. Let $\chi_{\tau_{\theta}}\in \mathfrak{a}^{\ast}$ be the highest weight of $\tau_{\theta}$ and $g,r\in G$.
\begin{enumerate}[label=(\roman*)]
\item If $g$ is $P_\theta$-proximal in $G/P_{\theta}^{+}$ with attracting fixed point $x_{g}^{+}\in G/P_{\theta}^{+}$, then \begin{align*} d_{G/P_{\theta}^{+}} \big(x_{g}^{+},\Xi_{\theta}^{+}(g) \big)& \leqslant \exp \Big(-\min_{\alpha \in \theta}\alpha(\mu(g))+ \chi_{\tau_{\theta}}(\mu(g)-\lambda(g))\Big)\end{align*}

\item If $\min_{\alpha \in \theta}\langle \alpha, \mu(g)\rangle>0$ and $\min_{\alpha \in \theta}\langle \alpha, \mu(gr)\rangle>0$, then $$ d_{G/P_{\theta}^{+}} \big(\Xi_{\theta}^{+}(gr),\Xi_{\theta}^{+}(g) \big) \leqslant C_{d,r} \exp\Big(-\min_{\alpha \in \theta} \alpha( \mu(g)) \Big)$$ where ${C_{d,r}=\sigma_{1}(\tau_{\theta}(r))\sigma_{1}(\tau_{\theta}(r^{-1})) \sqrt{d-1}}$.\end{enumerate} \end{proposition}

\begin{proof} By the definition of the metric $d_{G/P_{\theta}^{+}}$ and Proposition \ref{higherdimension} we may assume that $G=\mathsf{SL}_d(\mathbb{R})$, \hbox{$\theta=\{\varepsilon_1-\varepsilon_2\}$} and $G/P_{\theta}^{+}=\mathbb{P}(\mathbb{R}^d)$.
\medskip

\noindent \textup{(i)} Since $g$ is proximal there exist $h \in \mathsf{GL}_d(\mathbb{R})$, $A_{g} \in \mathsf{GL}_{d-1}(\mathbb{R})$ and $k_{g},k_{g}' \in \mathsf{O}(d)$ such that $$g=h\begin{pmatrix}
\ell'_1(g) & 0   \\ 
0 & A_g\\ \end{pmatrix}h^{-1}=k_{g}\exp(\mu(g))k_{g}', \ \left|\ell_1'(g)\right|=\ell_1(g).$$ We can write $\Xi_{1}^{+}(g)=k_{g}P_1^{+}$ and $x_{g}^{+}=hP_{1}^{+}=w_1P_1^{+}$ for some $w_1 \in \mathsf{O}(d)$. Note that $$h \begin{pmatrix}
\ell_1'(g) & 0\\ 
 0 & A_g
\end{pmatrix}h^{-1}=w_1 \begin{pmatrix}
\ell_1'(g) & \ast\\ 
 0 & \ast
\end{pmatrix}w_{1}^{-1}$$ hence ${k_{g}^{-1}w_{1}\begin{pmatrix}[0.7]
\ell_1'(g) & \ast\\ 
0 & \ast
\end{pmatrix}=}$ ${\exp(\mu(g))k_{g}'w_{1}}$ and $\left|\left \langle k_{g}^{-1}w_1e_1,e_i \right \rangle \right|=\frac{\sigma_{i}(g)}{\ell_1(g)}\left|\left \langle k_{g}'w_1e_1,e_i \right \rangle \right|$ for $i>1$. Therefore, $$d_{\mathbb{P}}\big(x_{g}^{+},\Xi_{1}^{+}(g)\big)^2=\sum_{i=2}^{d}\left \langle k_{g}^{-1}w_1e_1,e_i \right \rangle^2=\sum_{i=2}^{d}\frac{\sigma_{i}(g)^2}{\ell_{1}(g)^2}\left \langle k_{g}'w_1e_1,e_i \right \rangle^2\leqslant \frac{\sigma_{2}(g)^2}{\ell_1(g)^2}.$$ Since $\min_{\alpha \in \theta} \alpha\big(\mu(\rho(\gamma))\big)=\log\frac{\sigma_1(\tau_{\theta}(g))}{\sigma_2(\tau_{\theta}(g))}$ and $\chi_{\tau_{\theta}}(\lambda(g))=\log\ell_1(\tau_{\theta}(g))$ for $g\in G$, part (i) follows.
\medskip

\noindent \textup{(ii)} We have $k_{gr}\exp(\mu(gr))k_{gr}'=k_{g}\exp(\mu(g))k_{g}'r$, $k_{gr},k_{gr}'\in K$, and in particular $${\left \langle k_{g}^{-1}k_{gr}e_1,e_i \right \rangle=\frac{\sigma_{i}(g)}{\sigma_{1}(gr)}\left \langle k_{g}'r(k_{gr}')^{-1}e_1,e_i \right \rangle}$$ for every $2 \leqslant i \leqslant d$. Note that since $\sigma_{1}(gr) \geqslant \frac{\sigma_1(g)}{\sigma_{1}(r^{-1})}$ and $\left|\left \langle k_{g}'r(k_{gr}')^{-1}e_1,e_i \right \rangle \right| \leqslant \sigma_1(r)$, we have $$\left|\left \langle k_{g}^{-1}k_{gr}e_1,e_i \right \rangle \right|\leqslant \frac{\sigma_{i}(g)}{\sigma_{1}(g)}\sigma_{1}(r)\sigma_{1}(r^{-1}).$$ Finally, we obtain $$d_{\mathbb{P}}\big(\Xi_{1}^{+}(gr),\Xi_{1}^{+}(g)\big)^2=\sum_{i=2}^{d}\langle k_{g}^{-1}k_{gr}e_1,e_{i} \rangle^2=\sum_{i=2}^{d}\frac{\sigma_{i}(g)^2}{\sigma_{1}(gr)^2}\left \langle k_{g}'r(k_{gr}')^{-1}e_1,e_i \right \rangle^2\leqslant C_{d,r}^2 \frac{\sigma_{2}(g)^2}{\sigma_1(g)^2}.$$ This finishes the proof of the lemma. \end{proof}

Let $\mathcal{M}$ be a compact metrizable space and $\Gamma$ a group acting on $\mathcal{M}$ by homeomorphisms. The action is called a \emph{convergence group action} if for any infinite sequence $(\gamma_n)_{n \in \mathbb{N}}$ of elements of $\Gamma$ there exists a subsequence $(\gamma_{k_n})_{n \in \mathbb{N}}$ and $x,y \in \mathcal{M}$ such that for every compact subset $\mathcal{C} \subset \mathcal{M} \smallsetminus\{x\}$, $\gamma_{k_n}|_{\mathcal{C}}$ converges uniformly to the constant map $y$. For an infinite order element $\gamma \in \Gamma$, we denote by $\gamma^{\pm}$ the local uniform limit of the sequence $(\gamma^{\pm n})_{n \in \mathbb{N}}$. Examples of convergence group actions include the action of a non-elementary word hyperbolic group on its Gromov boundary (see \cite{Gromov}) and the action of a finitely generated group $\Gamma$ on its Floyd boundary $\partial_{f} \Gamma$ (see \cite{Gromov2} and \hbox{\cite[Thm. 2]{Karlsson}).}

We prove a version of \cite[Lem. 9.2]{Canary} which shows, in many cases, that a representation $\rho$ admitting a continuous $\rho$-equivariant limit map in the flag space $G/P_{\theta}^{+}$  is $\theta$-divergent.  For a subset $\mathcal{C} \subset \mathbb{P}(\mathbb{R}^d)$ define $\langle \mathcal{C} \rangle:=\textup{span}\big\{v\in \mathbb{R}^d\smallsetminus \{{\bf 0}\}: [v]\in \mathcal{C} \big\}$. We shall prove first the following lemma.

\begin{lemma} \label{mainlemma} Let $\mathcal{M}$ be a compact metrizable perfect space, $\Gamma$ a torsion-free group acting on $\mathcal{M}$ by homeomorphisms and $\rho:\Gamma \rightarrow \mathsf{GL}_d(\mathbb{R})$ a representation. Suppose that $\Gamma$ acts on $\mathcal{M}$ as a convergence group and there exists a continuous $\rho$-equivariant non-constant map $\xi: \mathcal{M} \rightarrow \mathbb{P}(\mathbb{R}^d)$. Then for every infinite sequence $(\gamma_n)_{n\in \mathbb{N}}$ of elements of $\Gamma$ we have $$\lim_{n \rightarrow \infty}\frac{\sigma_1(\rho(\gamma_n))}{\sigma_{d-p+2}(\rho(\gamma_n))}=+\infty$$ \textup{where} $p=\textup{dim}_{\mathbb{R}}\left \langle \xi\left(\mathcal{M}\right) \right \rangle$. \end{lemma}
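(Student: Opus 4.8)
The plan is to argue by contradiction. If the statement fails there are an infinite sequence $(\gamma_n)_{n\in\mathbb{N}}$, which we may assume to consist of pairwise distinct elements, and a constant $C>0$ with $\langle\varepsilon_1-\varepsilon_{d-p+2},\mu(\rho(\gamma_n))\rangle\leqslant C$, equivalently $\sigma_1(\rho(\gamma_n))\leqslant e^{C}\sigma_{d-p+2}(\rho(\gamma_n))$, for all $n$; note that $p\geqslant 2$ because $\xi$ is non-constant, so the index $d-p+2$ lies in $\{2,\dots,d\}$. Since $\Gamma$ acts on $M$ as a convergence group, after passing to a subsequence there exist $a,b\in M$ so that $\gamma_n x\to b$ for every $x\in M\setminus\{a\}$. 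Because $M$ is perfect, $M\setminus\{a\}$ is dense in $M$, and by continuity of $\xi$ the linear span $V:=\langle\xi(M)\rangle$, of dimension $p$, equals $\langle\xi(M\setminus\{a\})\rangle$.

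First I would renormalize the matrices. Write $\rho(\gamma_n)=k_{1n}\exp(\mu(\rho(\gamma_n)))k_{2n}$ with $k_{1n},k_{2n}\in\mathsf{O}(d)$, and put $g_n:=\sigma_1(\rho(\gamma_n))^{-1}\rho(\gamma_n)$; this acts on $\mathbb{P}(\mathbb{R}^d)$ exactly as $\rho(\gamma_n)$ does. Passing to a further subsequence we may assume $k_{1n}\to k_1$, $k_{2n}\to k_2$ in $\mathsf{O}(d)$ and $\sigma_1(\rho(\gamma_n))^{-1}\exp(\mu(\rho(\gamma_n)))\to D=\operatorname{diag}(\mu_1,\dots,\mu_d)$ with $1=\mu_1\geqslant\cdots\geqslant\mu_d\geqslant 0$. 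The bound $\sigma_1\leqslant e^{C}\sigma_{d-p+2}$ gives $\mu_{d-p+2}\geqslant e^{-C}>0$, so $g_n\to g_\infty:=k_1Dk_2$ with $\operatorname{rank}(g_\infty)\geqslant d-p+2$, that is, $\dim_{\mathbb{R}}\ker g_\infty\leqslant p-2$.

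Next I would feed the dynamics through $\xi$. Fix $x\in M\setminus\{a\}$ and a lift $v_x$ of $\xi(x)$; by equivariance and continuity $[\rho(\gamma_n)v_x]=\xi(\gamma_n x)\to\xi(b)$. If $v_x\notin\ker g_\infty$, then $g_nv_x\to g_\infty v_x\neq 0$, so $[g_nv_x]\to[g_\infty v_x]$, and since $[g_nv_x]=[\rho(\gamma_n)v_x]\to\xi(b)$ we obtain $[g_\infty v_x]=\xi(b)$; if $v_x\in\ker g_\infty$ then $g_\infty v_x=0$. In both cases $g_\infty v_x$ lies on the line represented by $\xi(b)$. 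Since such vectors span $V$, we conclude that $g_\infty(V)$ is contained in that line, so $\dim g_\infty(V)\leqslant 1$; hence by rank--nullity applied to $g_\infty|_V$ we get $\dim(V\cap\ker g_\infty)\geqslant p-1$, contradicting $\dim\ker g_\infty\leqslant p-2$. This finishes the proof.

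The main obstacle is the middle step: one must normalise by $\sigma_1$ to obtain a matrix limit $g_\infty$ at all, and then squeeze from the single singular-value inequality exactly the estimate $\dim\ker g_\infty\leqslant p-2$, so that it clashes with the dimension count $\dim(V\cap\ker g_\infty)\geqslant p-1$ produced by the convergence-group dynamics. The index $d-p+2$ is precisely the one for which these two bounds are incompatible, and the hypothesis that $M$ is perfect is what permits replacing $\langle\xi(M\setminus\{a\})\rangle$ by the full span $V=\langle\xi(M)\rangle$.
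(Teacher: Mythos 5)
Your proof is correct, and it is cleaner in one respect than the paper's argument, though both rest on the same two pillars: the convergence-group dynamics forcing $\rho(\gamma_n)\xi(x)\to\xi(b)$ for all $x$ outside a single excluded point, and perfectness of $M$ guaranteeing that these $x$ still span $V=\langle\xi(M)\rangle$.

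Where you differ is in the linear-algebra bookkeeping. The paper first treats the spanning case $p=d$ directly: it writes each $\rho(\gamma_n)$ in a Cartan decomposition $k_n\exp(\mu(\rho(\gamma_n)))k_n'$, passes to limits $k_n\to k$, $k_n'\to k'$, $\sigma_2/\sigma_1\to C>0$, and tracks only the first two coordinates of the normalized image vectors to conclude that $k\,\xi(M\setminus\{\eta'\})$ lies in the proper subspace $\langle kk_{x_0}e_1\rangle+e_1^{\perp}\cap e_2^{\perp}$. It then handles $p<d$ by restricting $\rho$ to $V$ and invoking the singular-value interlacing inequality $\sigma_1(\hat\rho)/\sigma_2(\hat\rho)\leqslant\sigma_1(\rho)/\sigma_{d-p+2}(\rho)$. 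You instead pass to the full normalized matrix limit $g_\infty=\lim\sigma_1(\rho(\gamma_n))^{-1}\rho(\gamma_n)$ and run a single rank--nullity count: the bounded gap forces $\dim\ker g_\infty\leqslant p-2$, while the convergence dynamics forces $g_\infty(V)$ into the line $\xi(b)$ and hence $\dim(V\cap\ker g_\infty)\geqslant p-1$. This treats all $p$ at once, avoids the coordinate computations entirely, and makes the role of the index $d-p+2$ transparent; the paper's reduction step is traded for the (routine) observation that $g_n\to g_\infty$ after passing to a subsequence. Both proofs are valid; yours is arguably the more conceptual presentation of the same idea.
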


\begin{proof} \noindent We first prove the statement when $p=d$. 

If the result does not hold, then there exists $\varepsilon>0$ and a subsequence, which we continue to denote by $(\gamma_n)_{n \in \mathbb{N}}$, such that $\frac{\sigma_2(\rho(\gamma_n))}{\sigma_{1}(\rho(\gamma_n))} \geqslant \varepsilon$. We may write $\rho(\gamma_n)=k_{n}\exp(\mu(\rho(\gamma_n))k_{n}'$, where $k_{n},k_{n}' \in \mathsf{O}(d)$. Up to passing to a subsequence, there exist $\eta,\eta' \in \mathcal{M}$ such that if $x \neq \eta'$ then $\lim_{n}\gamma_n x=\eta$ and  we may also assume that the sequences $(k_n)_{n \in \mathbb{N}}, (k_{n}')_{n \in \mathbb{N}}$ converge to $k,k' \in \mathsf{O}(d)$ respectively, $\lim_n\frac{\sigma_2(\rho(\gamma_n))}{\sigma_{1}(\rho(\gamma_n))}=C>0$  and for every $i>1$ the limit $\lim_n\frac{\sigma_i(\rho(\gamma_n))}{\sigma_1(\rho(\gamma_n))}$ exists. \par  For $z\in \partial_{\infty}\Gamma$ write $\xi(x)=k_{z}P_1^{+}$ for some $k_{z} \in \mathsf{O}(d)$.  Now let $x \in \partial_{\infty}\Gamma\smallsetminus \{\eta\}$. Since $\lim_{n}\rho(\gamma_n)\xi(x)=\xi(\eta)$, up to passing to a further subsequence, we may assume that \begin{equation}\label{limit-gamma}\lim_{n \rightarrow \infty}\frac{\exp \big(\mu(\rho(\gamma_n))\big)k_{n}'k_{x}e_1}{\big|\big|\exp \big(\mu(\rho(\gamma_n))\big)k_{n}'k_{x}e_1\big|\big|}=\epsilon_x k^{-1}k_{\eta}e_1\end{equation} where $\xi(\eta)=k_{\eta}P_1^{+}$, $\epsilon_x \in \{-1,1\}$. Since for every $i>1$, $\lim_n\frac{\sigma_i(\rho(\gamma_n))}{\sigma_1(\rho(\gamma_n))}$ exists, the limit $\lambda_{x}:=\epsilon_x \lim_{n}\frac{||\exp (\mu(\rho(\gamma_n)))k_{n}'k_{x}e_1||}{\sigma_1(\rho(\gamma_n))}$ also exists. By (\ref{limit-gamma}), for every $x \in \mathcal{M}$, we have that \begin{align*}\big \langle k'k_{x}e_1,e_1\big \rangle&=\lambda_{x}\big \langle k^{-1}k_{\eta}e_1,e_1 \big \rangle\\ \big \langle k'k_{x}e_1,e_2\big \rangle&=\lambda_{x}C^{-1}\big \langle k^{-1}k_{\eta}e_1,e_2 \big \rangle.\end{align*} Since  $\mathcal{M}$ is perfect we have $\big \langle \xi \left(\mathcal{M} \smallsetminus \{\eta' \right \}) \big\rangle =\mathbb{R}^d$ and also there exists $x_{0}\neq \eta'$ with $\lambda_{x_0} \neq 0$. Then for every $x\neq \eta'$ we observe that \begin{align*}\big \langle k'k_{x}e_1,e_1\big \rangle&=\frac{\lambda_{x}}{\lambda_{x_0}}\big \langle k k_{x_0}e_1,e_1 \big \rangle\\ \big \langle k'k_{x}e_1,e_2\big \rangle&=\frac{\lambda_{x}}{\lambda_{x_0}}\big \langle k k_{x_0}e_1,e_2 \big \rangle.\end{align*} Therefore, for every $x \neq \eta'$, $k\xi(x)$ lies in the subspace $V=\left \langle kk_{x_0}e_1 \right \rangle+e_{1}^{\perp}\cap e_{2}^{\perp}$, a contradiction since $\dim(V)\leqslant d-1$. This completets the proof when $p=d$. \par In the case where $p<d$, consider the subspace $V=\langle \xi(\mathcal{M}) \rangle$ and the restriction $\hat{\rho}:\Gamma \rightarrow \mathsf{GL}(V)$ of $\rho$. The map $\xi$ is $\hat{\rho}$-equivariant and a spanning map for $\hat{\rho}$. The conclusion follows by observing that for any $\gamma \in \Gamma$  we have ${\frac{\sigma_1(\hat{\rho}(\gamma))}{\sigma_{2}(\hat{\rho}(\gamma))} \leqslant \frac{\sigma_1(\rho(\gamma))}{\sigma_{d-p+2}(\rho(\gamma))}}$.\end{proof}

\begin{corollary} \label{Cartan} Let $\Gamma$ be a word hyperbolic group, $G$ a real semisimple Lie group and $\theta \subset \Delta$ a subset of simple restricted roots of $G$.
\begin{enumerate}[label=(\roman*)]
\item Suppose that $\rho:\Gamma \rightarrow \mathsf{SL}_d(\mathbb{R})$ is an irreducible representation admitting a continuous $\rho$-equivariant map \hbox{$\xi: \partial_{\infty}\Gamma \rightarrow \mathbb{P}(\mathbb{R}^d)$}. Then $\rho$ is $P_1$-divergent and $\xi$ satisfies the Cartan property.
\item Suppose that $\rho':\Gamma \rightarrow G$ is a Zariski dense representation admitting a continuous $\rho'$-equivariant map \hbox{$\xi': \partial_{\infty}\Gamma \rightarrow G/P_{\theta}^{+}$.} Then $\rho'$ is $P_{\theta}$-divergent and $\xi'$ satisfies the Cartan property.\end{enumerate} \end{corollary}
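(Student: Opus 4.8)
The plan is to establish (i) first and then to reduce (ii) to it via a $\theta$-proximal plethysm. At the outset I would reduce to the case where $\Gamma$ is infinite and non-elementary (the virtually cyclic case being routine to treat directly), so that $M := \partial_{\infty}\Gamma$ is a compact metrizable perfect space on which $\Gamma$ acts as a convergence group.

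For (i), $P_1$-divergence follows at once from Lemma \ref{mainlemma}: since $\rho$ is irreducible the map $\xi$ is non-constant and $\langle\xi(\partial_{\infty}\Gamma)\rangle = \mathbb{R}^d$, so $p = d$ there, and the lemma gives $\varlimsup_n\langle\varepsilon_1-\varepsilon_2,\mu(\rho(\gamma_n))\rangle = +\infty$ for every infinite sequence $(\gamma_n)$; applying this to arbitrary subsequences forces $\langle\varepsilon_1-\varepsilon_2,\mu(\rho(\gamma))\rangle\to+\infty$ as $|\gamma|_{\Gamma}\to\infty$, i.e. $\rho$ is $P_1$-divergent. For the Cartan property I would fix $x\in\partial_{\infty}\Gamma$ and a sequence $\gamma_n\to x$ and, since the flag space is compact, show that every subsequential limit of $\big(\Xi_1^+(\rho(\gamma_n))\big)$ equals $\xi(x)$. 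After passing to a subsequence, write $\rho(\gamma_n) = k_n\exp(\mu(\rho(\gamma_n)))k_n'$ with $k_n\to k$ and $k_n'\to k'$ in $\mathsf{O}(d)$, so that $\Xi_1^+(\rho(\gamma_n)) = k_n[e_1]\to k[e_1]$; after a further extraction arrange $\gamma_n^{-1}\to\eta^-$, so that $\gamma_n|_{\partial_{\infty}\Gamma\setminus\{\eta^-\}}\to x$ uniformly on compacta. Now I would pick $y\neq\eta^-$ with $\langle k'v,e_1\rangle\neq0$, where $\xi(y) = [v]$; such a $y$ exists, since otherwise $\xi$ would send $\partial_{\infty}\Gamma\setminus\{\eta^-\}$, hence (by perfectness and continuity) all of $\partial_{\infty}\Gamma$, into the hyperplane $(k')^{-1}e_1^{\perp}$, making $\langle\xi(\partial_{\infty}\Gamma)\rangle$ a proper nonzero $\rho(\Gamma)$-invariant subspace, against irreducibility. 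For such a $y$, $P_1$-divergence forces $\exp(\mu(\rho(\gamma_n)))k_n'v / \big\|\exp(\mu(\rho(\gamma_n)))k_n'v\big\|\to\pm e_1$, hence $\rho(\gamma_n)\xi(y) = \big[k_n\exp(\mu(\rho(\gamma_n)))k_n'v\big]\to k[e_1]$, while $\rho(\gamma_n)\xi(y) = \xi(\gamma_n y)\to\xi(x)$ by equivariance and continuity; so $\xi(x) = k[e_1]$, as required.

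For (ii) I would apply Proposition \ref{higherdimension} to obtain an irreducible $\theta$-proximal representation $\tau: G\to\mathsf{GL}(d,\mathbb{R})$ with $\tau$-equivariant embedding $\iota^+: G/P_{\theta}^+\hookrightarrow\mathbb{P}(\mathbb{R}^d)$, and set $\rho := \tau\circ\rho'$ and $\xi := \iota^+\circ\xi'$. Since $\rho'(\Gamma)$ is Zariski dense in $G$, any $\rho(\Gamma)$-invariant subspace of $\mathbb{R}^d$ is $\tau(G)$-invariant, hence trivial, so $\rho$ is irreducible; as $G$ is semisimple $\rho(\Gamma)\subset\mathsf{SL}^{\pm}(d,\mathbb{R})$, and the proof of (i) applies unchanged (or one normalizes determinants). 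By (i), $\rho$ is $P_1$-divergent and $\xi$ satisfies the Cartan property. Proposition \ref{higherdimension}(iv) gives $\langle\varepsilon_1-\varepsilon_2,\mu(\tau(g))\rangle = \min_{\alpha\in\theta}\langle\alpha,\mu(g)\rangle$ for all $g\in G$, so $P_1$-divergence of $\rho$ is exactly $P_{\theta}$-divergence of $\rho'$; and Fact \ref{Cartan2} transfers the Cartan property of $\xi = \iota^+\circ\xi'$ to $\xi'$.

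The delicate step is the one in (i) where I must exclude $k'\xi(y)\subseteq e_1^{\perp}$ for every $y$ away from the source of $(\gamma_n)$: this is precisely where perfectness of $\partial_{\infty}\Gamma$ and irreducibility of $\rho$ are jointly essential, and it is the same mechanism that underlies Lemma \ref{mainlemma}. The remaining points — the convergence-group fact that the attracting point of a sequence $\gamma_n\to x$ is $x$, the passage from $\mathsf{SL}^{\pm}(d,\mathbb{R})$ back to $\mathsf{SL}(d,\mathbb{R})$, and the elementary-group reduction — are routine.
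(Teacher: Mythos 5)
Your proof is correct, and for part (i) it takes a genuinely different route from the paper's. The paper establishes the Cartan property via the Abels--Margulis--Soifer type result (Theorem~\ref{finitesubset}): it first shows $\xi$ sends attracting fixed points of proximal elements to attracting eigenlines, then uses the finite set $F$ to replace each $\gamma_n$ by a nearby proximal $\gamma_n f_n$ whose Cartan and Lyapunov projections are uniformly close, and finally invokes the explicit estimates of Proposition~\ref{inequalities} to conclude $\Xi_1^+(\rho(\gamma_n)) \to \xi(x)$. Your argument is a softer direct limit: pass to a subsequence where the $KAK$ factors $k_n, k_n'$ converge and $\gamma_n^{-1}\to\eta^-$, choose $y\neq\eta^-$ with $\langle k'v_y,e_1\rangle\neq 0$ (possible exactly because $\partial_\infty\Gamma$ is perfect, $\xi$ is continuous, and $\langle\xi(\partial_\infty\Gamma)\rangle=\mathbb{R}^d$ by irreducibility), and then compare the two ways of computing $\lim_n\rho(\gamma_n)\xi(y)$ --- one via $P_1$-divergence of the $\exp(\mu)$ factor, the other via equivariance and continuity of $\xi$. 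This is essentially the same mechanism that drives the paper's Lemma~\ref{mainlemma}, pushed further to deliver the Cartan property itself rather than just divergence, and it avoids the detour through Theorem~\ref{finitesubset} and Proposition~\ref{inequalities} entirely. The tradeoff is that the paper's quantitative route yields an explicit rate of convergence $d_{\mathbb{P}}(\Xi_1^+(\rho(\gamma_n)),\xi(x)) \lesssim \sigma_2/\sigma_1(\rho(\gamma_n))$, which your qualitative compactness argument does not produce; that rate is not needed here but is used elsewhere in the paper. Part (ii) matches the paper's reduction via Proposition~\ref{higherdimension} and Fact~\ref{Cartan2}.
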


\begin{proof} \textup{(i)} We first claim that if $\rho(\gamma)$ is $P_1$-proximal, then $\xi(\gamma^{+})$ is the attracting fixed point in $\mathbb{P}(\mathbb{R}^d)$. Indeed, since $\rho$ is irreducible we have \hbox{$ \langle \xi (\partial_{\infty}\Gamma) \rangle=\mathbb{R}^d$}. If $\rho(\gamma)$ is $P_1$-proximal, we can find $x \in \partial_{\infty}\Gamma\smallsetminus \{\gamma^{-}\}$ such that $\xi(x)$ is not in the repelling hyperplane $V_{\gamma}^{-}$. Since $\lim_{n}\gamma^n x=\gamma^{+}$, we have $\xi(\gamma^{+})=x_{\rho(\gamma)}^{+}$. \par Since $\rho$ is irreducible it follows by Lemma \ref{mainlemma} that $\rho$ is $P_1$-divergent. Let $(\gamma_n)_{n \in \mathbb{N}}$ be an infinite sequence of elements of $\Gamma$ such that $\lim_{n}\gamma_n=x$. By the sub-additivity of the Cartan projection $\mu$ (see \cite[Fact 2.18]{GGKW}) and Theorem \ref{finitesubset}, there exists a finite subset $F$ and $C>0$ such that for every $\gamma \in \Gamma$, there exists $f \in F$ with $\big|\big|\lambda(\rho(\gamma f))-\mu(\rho(\gamma f))\big|\big| \leqslant C$. Then for large $n \in \mathbb{N}$, there exist $f_n \in F$ such that $\rho(\gamma_n f_n)$ is $P_1$-proximal and $$\log\ell_1(\rho(\gamma_nf_n))-\log\sigma_1(\rho(\gamma_nf_n))  \geqslant-C.$$ Notice also $\lim_{n}\gamma_n=\lim_{n}\gamma_nf_n=\lim_{n}(\gamma_nf_n)^{+}=x$ in the compactification $\Gamma \cup \partial_{\infty}\Gamma$ and so $\lim_{n}x_{\rho(\gamma_n f_n)}^{+}=\lim_{n}\xi((\gamma_n f_n)^{+})=\xi(x)$. Then, by using Proposition \ref{inequalities}, for every $n \in \mathbb{N}$ we obtain the estimate: \begin{align*} d_{\mathbb{P}} \big( x_{\rho(\gamma_n f_n)}^{+}, \Xi_{1}^{+}\big(\rho (\gamma_n)\big) \big)& \leqslant  d_{\mathbb{P}} \big( x_{\rho (\gamma_n f_n)}^{+}, \Xi_{1}^{+}\big(\rho (\gamma_n f_n)\big) \big)+d_{\mathbb{P}} \big( \Xi_{1}^{+}\big(\rho (\gamma_n f_n)\big), \Xi_{1}^{+}\big(\rho (\gamma_n)\big) \big)\\ &\leqslant \Big(e^{C}+\sup_{f \in F}C_{d, f} \Big) \frac{\sigma_2(\rho(\gamma_n))}{\sigma_1(\rho (\gamma_n))}\end{align*} where $C_{d,f}>0$ is defined as in Proposition \ref{inequalities} (ii). This shows \hbox{$\xi (x)=\lim_{n}\Xi_{1}^{+}\big(\rho (\gamma_n)\big)$} and finally that $\xi$ satisfies the Cartan property.
\medskip

\noindent \textup{(ii)} Let $\tau_{\theta}:G\rightarrow \mathsf{GL}_d(\mathbb{R})$ and $(\iota^{+},\iota^{-})$ be as in Proposition \ref{higherdimension}. Since $\rho'$ is Zariski dense, the representation $\tau_{\theta} \circ \rho'$ is irreducible. By Lemma \ref{mainlemma} the representation $\tau_{\theta} \circ \rho'$ is $P_1$-divergent and hence $\rho'$ is $P_{\theta}$-divergent. By part \textup{(i)}, the $\tau_{\theta} \circ \rho'$-equivariant map $\iota^{+} \circ \xi'$ satisfies the Cartan property. It follows by Fact \ref{Cartan2} that $\xi'$ satisfies the Cartan property.  \end{proof}

\par We are now aiming to generalize the uniform gap summation property \cite[Def. 5.2]{GGKW} for representations of arbitrary finitely generated groups. 

\begin{definition} \label{ugspdef} Let $\Gamma$ be a finitely generated group, $\rho:\Gamma \rightarrow G$ a representation and $\theta \subset \Delta$ a finite subset of restricted roots of $G$. We say that $\rho$ satisfies the uniform gap summation property with respect to $\theta$ and the Floyd function $f:\mathbb{N}\rightarrow (0,+\infty)$, if there exists $C>0$ such that $$\alpha\big(\mu(\rho(\gamma)) \big) \geqslant -\log f(|\gamma|_{\Gamma})-C$$ for every $\gamma \in \Gamma$ and $\alpha \in \theta$. We say that the representation $\rho$ satisfies the uniform gap summation property if there exists a Floyd function $f$, a subset of simple roots $\theta \subset \Delta$ and $C>0$ with the previous properties. \end{definition}  

\par Let $\rho:\Gamma \rightarrow G$ be a representation. If $\Gamma$ is word hyperbolic group and $\rho$ satisfies the uniform gap summation property, then it admits a pair of $\rho$-equivariant, continuous limit maps which satisfy the Cartan property (see \cite[Thm. 5.3 (3)]{GGKW}). If $\Gamma$ is not word hyperbolic, we may similarly construct a pair of $\rho$-equivariant continuous maps from a Floyd boundary of $\Gamma$, $\partial_{f}\Gamma$, into the flag spaces $G/P_{\theta}^{+}$ and $G/P_{\theta}^{-}$. Note that when $\partial_{f}\Gamma$ is non-trivial, the action of $\Gamma$ on $\partial_{f}\Gamma$ is a convergence group action (see \cite[Thm. 2]{Karlsson}) so we obtain additional information for the action of $\rho(\Gamma)$ on its limit set in $G/P_{\theta}^{+}$.  For a subgroup $H$ of $G$ containing a $P_{\theta}$-proximal element, its limit set in $G/P_{\theta}^{+}$ is the closure of attracting fixed points of $\theta$-proximal elements of $H$. In the case where $H\subset \mathsf{SL}_d(\mathbb{R})$ and $G/P_{\theta}^{+}=\mathbb{P}(\mathbb{R}^d)$, $P_{\theta}^{+}=P_1$, we denote by $\Lambda_H$ its proximal limit set in $\mathbb{P}(\mathbb{R}^d)$. If $H$ is in addition an irreducible subgroup of $\mathsf{GL}_d(\mathbb{R})$, then $H$ acts minimally on $\Lambda_H$, see \cite[Lem. 2.5]{Benoist-convexcones}.

We prove the following lemma that we use in the following section \hbox{for the proof of Theorem \ref{wg}.}

\begin{lemma} \label{existence} Let $\Gamma$ be a finitely generated group, $G$ a real semsimiple Lie group, $\theta \subset \Delta$ a subset of simple restricted roots of $G$ and $\rho:\Gamma \rightarrow G$ a representation. Suppose that $\rho$ satisfies the uniform gap summation property with respect to $\theta$ and the Floyd function $f:\mathbb{N}\rightarrow (0,\infty)$. There exists a constant $C>0$, depending only on $\rho$, such that $$d_{G/P_{\theta}^{\pm}}\big(\Xi_{\theta}^{\pm}\big(\rho(g)\big),\Xi_{\theta}^{\pm}\big(\rho(h)\big) \big) \leqslant C  d_{f}(g,h)$$ for all but finitely many $g,h \in \Gamma$. In particular, \hbox{there exists a pair of continuous $\rho$-equivariant maps} $$\xi_{f}^{+}:\partial_{f}\Gamma \rightarrow G/P_{\theta}^{+} \ \ and \ \ \xi_{f}^{-}:\partial_{f}\Gamma \rightarrow G/P_{\theta}^{-}.$$ Moreover, if $\rho(\Gamma)$ contains a $P_{\theta}$-proximal element, then $\xi_{f}^{+}(\partial_{f}\Gamma)$ maps onto the proximal limit set of $\rho(\Gamma)$ in $G/P_{\theta}^{+}$. \end{lemma}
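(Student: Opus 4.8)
The plan is to establish the displayed Lipschitz estimate first for \emph{adjacent} vertices of the Cayley graph of $\Gamma$ and then to telescope. By the uniform gap summation property there is $C_0>0$ with $\langle\alpha,\mu(\rho(\gamma))\rangle\geq -\log f(|\gamma|_{\Gamma})-C_0$ for all $\gamma\in\Gamma$ and $\alpha\in\theta$, and since $\langle\alpha,\mu(g)\rangle=\langle\alpha^{\star},\mu(g^{-1})\rangle$ and $|\gamma^{-1}|_{\Gamma}=|\gamma|_{\Gamma}$ the same bound holds for every $\alpha\in\theta\cup\theta^{\star}$. As $\sum_n f(n)<\infty$ forces $f(n)\to 0$, hence $-\log f(n)\to+\infty$, there is $N_0$ with $\langle\alpha,\mu(\rho(\gamma))\rangle>0$ for all $\alpha\in\theta\cup\theta^{\star}$ whenever $|\gamma|_{\Gamma}\geq N_0$; set $E=\{\gamma\in\Gamma:|\gamma|_{\Gamma}<N_0\}$, a finite set, so that $\Xi_{\theta}^{\pm}(\rho(\gamma))$ is well defined for $\gamma\notin E$. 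For $g\notin E$ and a generator $s$ with $gs\notin E$, Proposition \ref{inequalities}(ii) together with its evident analogue for $G/P_{\theta}^{-}$ and the gap bound give $d_{G/P_{\theta}^{\pm}}(\Xi_{\theta}^{\pm}(\rho(gs)),\Xi_{\theta}^{\pm}(\rho(g)))\leq C_{d,\rho(s)}\,e^{C_0}f(|g|_{\Gamma})$; since $\max\{|g|_{\Gamma},|gs|_{\Gamma}\}\in\{|g|_{\Gamma},|g|_{\Gamma}+1\}$ and $f(k+1)\geq m f(k)$ for all $k$ (a defining property of the Floyd function $f$) we have $d_f(g,gs)\geq m f(|g|_{\Gamma})$, whence $d_{G/P_{\theta}^{\pm}}(\Xi_{\theta}^{\pm}(\rho(g)),\Xi_{\theta}^{\pm}(\rho(gs)))\leq\widetilde C\, d_f(g,gs)$ with $\widetilde C=m^{-1}e^{C_0}\max_{s\in S}C_{d,\rho(s)}$, $S$ being the generating set.

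For arbitrary $g,h\notin E$ I would argue as follows. Because the word length changes by at most $1$ along each edge, every edge path from a vertex of word length $\geq N_0$ to a vertex of word length $<N_0$ contains an edge between vertices of word lengths $N_0$ and $N_0-1$, and so has $d_f$-length at least $f(N_0)$. Hence if $d_f(g,h)<f(N_0)/2$, any edge path $\mathbf p$ from $g$ to $h$ with $L_f(\mathbf p)\leq 2 d_f(g,h)$ avoids $E$ entirely, and summing the adjacent estimate along its edges yields $d_{G/P_{\theta}^{\pm}}(\Xi_{\theta}^{\pm}(\rho(g)),\Xi_{\theta}^{\pm}(\rho(h)))\leq\widetilde C\,L_f(\mathbf p)\leq 2\widetilde C\, d_f(g,h)$. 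When $d_f(g,h)\geq f(N_0)/2$ the same inequality is immediate from the compactness of $G/P_{\theta}^{\pm}$. Choosing $C$ to be the larger of $2\widetilde C$ and $2\,\mathrm{diam}(G/P_{\theta}^{\pm})/f(N_0)$ proves the first assertion.

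It follows that $\gamma\mapsto\Xi_{\theta}^{\pm}(\rho(\gamma))$ is uniformly continuous on $\Gamma\setminus E$ for $d_f$; as $G/P_{\theta}^{\pm}$ is compact and every point of $\partial_f\Gamma=\overline{\Gamma}\setminus\Gamma$ is a $d_f$-limit of elements of $\Gamma\setminus E$ (any approximating sequence has $|\gamma_n|_{\Gamma}\to\infty$), this map extends to a continuous map $\xi_f^{\pm}:\partial_f\Gamma\to G/P_{\theta}^{\pm}$. The $\Gamma$-action on $\partial_f\Gamma$ is by homeomorphisms (\cite[Theorem 2]{Karlsson}), and $\rho$-equivariance of $\xi_f^{\pm}$ follows from the fact that for fixed $\gamma_0\in\Gamma$ one has $d_{G/P_{\theta}^{+}}(\Xi_{\theta}^{+}(\rho(\gamma_0)\rho(\gamma)),\rho(\gamma_0)\,\Xi_{\theta}^{+}(\rho(\gamma)))\to 0$ as $\min_{\alpha\in\theta}\langle\alpha,\mu(\rho(\gamma))\rangle\to+\infty$, a statement proved exactly like Proposition \ref{inequalities} and contained in \cite[Section 5]{GGKW}. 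Finally, suppose $\rho(\gamma)$ is $P_{\theta}$-proximal; then $\gamma$ has infinite order, so $\gamma^{+}=\lim_n\gamma^n$ exists in $\partial_f\Gamma$, and Proposition \ref{inequalities}(i) applied to $g=\rho(\gamma)^n$ gives $\Xi_{\theta}^{+}(\rho(\gamma^n))\to x_{\rho(\gamma)}^{+}$, because $\min_{\alpha\in\theta}\langle\alpha,\mu(\rho(\gamma)^n)\rangle\to+\infty$ while $\langle\chi_{\tau},\mu(\rho(\gamma)^n)-\lambda(\rho(\gamma)^n)\rangle$ grows sublinearly in $n$. Therefore $\xi_f^{+}(\gamma^{+})=x_{\rho(\gamma)}^{+}$, and since $\xi_f^{+}(\partial_f\Gamma)$ is closed we obtain $\Lambda_{\rho(\Gamma),\theta}\subseteq\xi_f^{+}(\partial_f\Gamma)$. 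The reverse inclusion follows from the Abels--Margulis--Soifer theorem \cite{AMS}, arguing as in the proof of \cite[Theorem 5.3]{GGKW}: a fixed finite subset $F\subseteq\Gamma$ corrects each word $\gamma_n$ (with $\gamma_n\to\eta$) to some $P_{\theta}$-proximal $\rho(\gamma_nf)$ whose attracting flag lies within $o(1)$ of $\Xi_{\theta}^{+}(\rho(\gamma_n))$, so the limit $\xi_f^{+}(\eta)$ lies in the closed set $\Lambda_{\rho(\Gamma),\theta}$.

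I expect the main obstacle to be the telescoping step, i.e. converting the pointwise estimate of Proposition \ref{inequalities}(ii) into an honest $d_f$-Lipschitz bound: one must prevent near-minimizing Floyd paths from entering the finite set $E$ on which $\Xi_{\theta}^{\pm}\circ\rho$ is undefined, and the level-crossing observation above is exactly what guarantees this once $d_f(g,h)$ is small. A secondary technical point is the Abels--Margulis--Soifer correction underlying the surjectivity onto the proximal limit set.
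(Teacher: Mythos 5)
Your argument is correct and tracks the paper's own proof quite closely: both telescope the pointwise estimate of Proposition~\ref{inequalities}(ii) along a near-minimizing Floyd path, obtain a Lipschitz bound for $\gamma\mapsto\Xi_{\theta}^{\pm}(\rho(\gamma))$ with respect to $d_f$, and then extend by uniform continuity to $\partial_f\Gamma$. The paper carries out the reduction to $G=\mathsf{SL}(d,\mathbb{R})$, $\theta=\{\varepsilon_1-\varepsilon_2\}$ first via Proposition~\ref{higherdimension}, whereas you work intrinsically in $G/P_\theta^{\pm}$; this is purely cosmetic.

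Where you add value is in handling the finite exceptional set $E$ on which $\Xi_\theta^\pm\circ\rho$ may be undefined. The paper silently picks a path realizing $d_f(g,h)$ and telescopes; you observe that a near-minimizing path (with $L_f(\mathbf p)\le 2d_f(g,h)$) cannot enter $E$ when $d_f(g,h)<f(N_0)/2$ because each level crossing from $\{|w|\ge N_0\}$ to $\{|w|<N_0\}$ costs at least $f(N_0)$, and you dispose of the complementary case via compactness of $G/P_\theta^\pm$. This is exactly the justification for the phrase ``for all but finitely many $g,h$'' in the statement, which the paper's proof does not explicitly supply. Similarly, you correctly flag that $d_f(g,h)$ may only be an infimum over path lengths, and use a $2$-near-minimizer rather than assuming the infimum is attained.

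For the ``moreover'' clause, the routes diverge. The paper notes that $\xi_f^+(\gamma_0^+)=x_{\rho(\gamma_0)}^+$ for a proximal $\rho(\gamma_0)$ and then invokes minimality of $\Gamma\curvearrowright\partial_f\Gamma$ to conclude $\xi_f^+(\partial_f\Gamma)=\Lambda_{\rho(\Gamma),\theta}$; note that minimality gives the inclusion $\xi_f^+(\partial_f\Gamma)\subseteq\Lambda_{\rho(\Gamma),\theta}$ almost immediately, since $\xi_f^+(\partial_f\Gamma)=\overline{\rho(\Gamma)x_{\rho(\gamma_0)}^+}$ and $\rho(\Gamma)x_{\rho(\gamma_0)}^+$ consists of attracting fixed points of conjugates of $\rho(\gamma_0)$. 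You instead prove $\Lambda_{\rho(\Gamma),\theta}\subseteq\xi_f^+(\partial_f\Gamma)$ directly by applying Proposition~\ref{inequalities}(i) to powers $\rho(\gamma)^n$ of each proximal element (which is cleaner than the paper for this inclusion), and prove the reverse inclusion via Abels--Margulis--Soifer. Be aware that the Benoist--Abels--Margulis--Soifer correction in the form cited as Theorem~\ref{finitesubset} assumes semisimplicity, which is not a hypothesis of the lemma, so it would be tidier to replace the AMS step by the minimality observation above (or to pass to a semisimplification $\rho^{ss}$, noting $\lambda(\rho)=\lambda(\rho^{ss})$, when the proximal elements are concerned). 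This is a small inefficiency, not an error, and overall your proof is complete and slightly more scrupulous than the paper's.
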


\begin{proof} As in the proof of Proposition \ref{higherdimension}, we may assume that $\theta=\{\varepsilon_1-\varepsilon_2\}$ and $G=\mathsf{SL}_d(\mathbb{R})$ and $G/P_{\theta}^{+}=\mathbb{P}(\mathbb{R}^d)$. By definition, there exists a constant $C>0$ such that for every $\gamma\in \Gamma$, $$\frac{\sigma_2(\rho(\gamma)}{\sigma_1(\rho(\gamma))} \leqslant C f(|\gamma|_{\Gamma}).$$ Let $\p\subset C_{\Gamma}$ be a path in the Cayley graph of $\Gamma$ defined by the sequence of adjacent vertices $g_0=g,\ldots ,h=g_{n}$ with $L_{f}(\p)=d_{f}(g,h)$. Since for every $i$, $g_{i}^{-1}g_{i+1}$ lies in a fixed generating subset of $\Gamma$, by Proposition \ref{inequalities}, there is $C'>0$, depending only on $\rho$, such that: \begin{align}\label{existence-eq1} \nonumber d_{\mathbb{P}}\big(\Xi_{1}^{\pm }\big(\rho(g)\big),\Xi_{1}^{\pm}\big(\rho(h)\big) \big)& \leqslant \sum_{i=0}^{n-1}d_{\mathbb{P}}\big(\Xi_{1}^{\pm }\big(\rho(g_i)\big),\Xi_{1}^{\pm}\big(\rho(g_{i+1}\big) \big) \leqslant C'\sum_{i=0}^{n-1}\frac{\sigma_2(\rho(g_i)^{\pm 1})}{\sigma_1(\rho(g_{i})^{\pm 1})}\\ &\leqslant C'C\sum_{i=0}^{n-1}f(|g_{i}|_{\Gamma})=C'Kd_{f}(g,h).\end{align}
\par  Now define the maps $\xi_{f}^{+}:\partial_{f}\Gamma \rightarrow \mathbb{P}(\mathbb{R}^d)$ and $\xi_{f}^{-}:\partial_{f}\Gamma \rightarrow \mathsf{Gr}_{d-1}(\mathbb{R}^d)$ as follows: for a point $x \in \partial_{f}\Gamma$ represented by a Cauchy sequence $(\gamma_n)_{n \in \mathbb{N}}$ with respect to the metric $d_{f}$, define $\xi_{f}^{\pm}(x)$, $$\xi^{\pm}_{f}(x):=\lim_{n \rightarrow \infty} \Xi_{1}^{\pm}\big(\rho(\gamma_n)\big)$$ The bound (\ref{existence-eq1}) shows that the limit $\lim_{n}\Xi_{1}^{+}\big(\rho(\gamma_n)\big)$ is independent of the choice of the sequence $(\gamma_n)_{n \in \mathbb{N}}$ representing $x$, since for any other sequence $(\gamma_{n}')_{n \in \mathbb{N}}$ with $x=\lim_{n}\gamma_n'$, we have $\lim_{n}d_{f}(\gamma_n,\gamma_n')=0$. Finally, $\xi^{+}_{f}$ is well-defined and Lipschitz by (\ref{existence-eq1}) and hence continuous. By identifying $G/P_{\theta}^{-}$ with $G/P_{\theta^{\star}}^{+}$, we similarly obtain deduce that the limit map $\xi_{f}^{-}$ is well-defined and continuous. \par Suppose that $\rho(\Gamma)$ is $P_1$-proximal. By the definition of the map $\xi_{f}^{+}$ (resp. $\xi_{f}^{-}$), if $\rho(\gamma_0)$ is $P_1$-proximal (resp. $P_{d-1}$-proximal), then $\xi_{f}^{+}(\gamma_{0}^{+})$ (resp. $\xi_{f}^{-}(\gamma_{0}^{+})$) is the unique attracting fixed point of $\rho(\gamma_0)$ in $\mathbb{P}(\mathbb{R}^d)$ (resp. $\mathsf{Gr}_{d-1}(\mathbb{R}^d)$). Since $\Gamma$ acts minimally (e.g. see \cite{Karlsson}) on $\partial_{f}\Gamma$ we deduce that $\xi_{f}^{+}(\partial_{f}\Gamma)$ is the proximal  limit set of $\Gamma$ in $\mathbb{P}(\mathbb{R}^d)$.  \end{proof}

\section{Property (U), weak eigenvalue gaps and the uniform gap summation property} \label{PropU}

In this section, we prove Theorem \ref{wg} providing conditions under which a representation with a weak uniform gap in eigenvalues is Anosov. We also discuss (strong) property (U) and its relation with the uniform gap summation property. \par Property (U) and strong property (U) were introduced by Delzant--Guichard--Labourie--Mozes \cite{DGLM} and Kassel--Potrie \cite{kassel-potrie} respectively and are related to the growth of the translation length and stable translation length of group elements in terms of their word length. 

\begin{definition}\label{PropU1} Let $\Gamma$ be a finitely generated group and fix $|\cdot|_{\Gamma}:\Gamma \rightarrow \mathbb{N}$ a word length function on $\Gamma$. The group $\Gamma$ satisfies property (U) \textup{(}resp. strong property (U)\textup{)} if there exists a finite subset $F$ of $\Gamma$ and $C,c>0$ with the following property: for every $\gamma \in \Gamma$ there exists $w \in F$ such that $$ \ell_{\Gamma}(w \gamma)\geqslant c|\gamma|_{\Gamma}-C \ \ \big(\textup{resp.} \ |w \gamma|_{\infty}\geqslant c|\gamma|_{\Gamma}-C\big).$$\end{definition}

Note that (strong) property (U) is independent of the choice of the left invariant word metric on $\Gamma$ since any two such metrics on $\Gamma$ are quasi-isometric.
\par Delzant--Guichard--Labourie--Mozes \cite{DGLM} proved that every finitely generated group admitting a semisimple quasi-isometric embedding into a reductive Lie group satisfies (strong) property (U). We now prove Theorem \ref{nontrivial} which implies that virtually torsion-free finitely generated groups with non-trivial Floyd boundary\footnote{Kassel--Potrie established an analogue of the Abels--Margulis--Soifer lemma \cite[Thm. 5.17]{AMS} simultaneously for a linear representation $\rho:\Gamma \rightarrow \mathsf{GL}_d(\mathbb{R})$ of a word hyperbolic group and the abstract group $\Gamma$ equipped with a left invariant word metric (see \cite[Cor. 1.8]{kassel-potrie-AMSlemma}).} satisfy strong property (U) (and hence property (U)).

 Let us recall that the Floyd boundary $\partial_{f}\Gamma$ of $\Gamma$ with respect to a Floyd function $f$ is called non-trivial if $|\partial_{f}\Gamma|\geq 3$. For a subgroup $H$ of $\Gamma$, its limit set $\Lambda(H)\subset \partial_{f}\Gamma$ is the set of accumulation points of infinite sequences of elements of $H$ in $\partial_{f}\Gamma$.

\begin{proof}[Proof of Theorem \ref{nontrivial}.] Let $G:[1,\infty)\rightarrow (0,\infty)$ be the function $G(x):=10\sum_{k = \left \lfloor x/2 \right \rfloor}^{\infty} f(k)$. Note that $G$ is decreasing and $\underset{x \rightarrow \infty}{\lim}G(x)=0$. By Karlsson's estimate, see \cite[Lem. 1]{Karlsson}, we have $$d_{f}\big(g,h\big) \leqslant G\big((g \cdot h)_{e}\big),  \ d_{f}\big(g, g^{+}\big) \leqslant G\Big(\frac{1}{2}|g|_{\Gamma}\Big)$$ for every $g,h \in \Gamma$, where $g$ has infinite order.  Since $|\Lambda(H)|\geq 3$, by \cite[Prop. 5]{Karlsson}, we may find $f_1,f_2 \in H$ infinite order elements such that $\{ f_{1}^{+},f_{1}^{-}\}\cap \{f_{2}^{+},f_{2}^{-}\}=\emptyset$. \hbox{Let us set} $$\varepsilon:=10^{-2}\min \big\{ d_{f}(f_{1}^{+},f_{2}^{+}), d_{f}(f_{1}^{+},f_{2}^{-}), d_{f}\big(f_{1}^{-},f_{2}^{+}), d_{f}(f_{1}^{-},f_{2}^{-})\big\}>0$$ and make the following three choices of constants $M,R,N>0$ as follows:
\medskip

\noindent \textup{(i)} $M>0$ is chosen such that $G(x) \geqslant \frac{\varepsilon}{100}$ if and only if $x \leq M$,\\
\noindent \textup{(ii)} $R>0$ is chosen such that $G(x) \leq \frac{\varepsilon}{100}$ for every $x \geqslant R$,\\
\noindent \textup{(iii)} $N>0$ is chosen such that $\min \big \{\big|f_{1}^N\big|_{\Gamma}, \big|f_{2}^N\big|_{\Gamma}\big \} \geqslant 10(M+R)$.

Now we prove the folllowing claim: 
\medskip

\noindent {\em Claim. 1 Let $F:=\big \{ f_{1}^{N},f_{2}^{N},e \big \}$. For every non-trivial $\gamma \in H$ there exists $g \in F$ such that $d_{f}\big(g \gamma^{+},\gamma^{-}\big)\geqslant \varepsilon$.}

\begin{proof}[Proof of Claim 1] If $d_{f}(\gamma^{+},\gamma^{-}) \geqslant \varepsilon$ we choose $g=e$. So we may assume that $d_{f}(\gamma^{+},\gamma^{-})\leqslant \varepsilon$. We can choose $n_0 \in \mathbb{N}$ such that $G\big(\frac{1}{2}|\gamma^n|_{\Gamma}\big)<\varepsilon$ for $n \geqslant n_0$. Notice that we can find $i \in \{1,2\}$ such that $d_{f}(\gamma^{+}, f_{i}^{+}) \geqslant 50 \varepsilon$ and $d_{f}(\gamma^{+}, f_{i}^{-}) \geqslant 50 \varepsilon$. Indeed, if we assume that $\textup{dist}\big(\gamma^{+}, \big \{f_1^{+},f_{1}^{-}\big\}\big)<50 \varepsilon$ then $d_{f}(\gamma^{+}, f_{2}^{\pm}) \geqslant \textup{dist}(f_{2}^{\pm}, \{f_{1}^{+},f_{1}^{-}\})-50\varepsilon \geqslant 50 \varepsilon$. Without loss of generality we may assume $d_{f}(\gamma^{+}, f_{1}^{+}) \geqslant 50 \varepsilon$ and $d_{f}(\gamma^{+}, f_{1}^{-}) \geqslant 50 \varepsilon$. By our choices of $N,n_0>0$ we have \begin{align*} d_{f}\big(\gamma^{n}, f_{1}^{-N}\big)  &\geqslant d_{f}\big(\gamma^{+}, f_{1}^{-}\big)-d_{f}\big(f_{1}^{-},f_{1}^{-N}\big)-d_{f}\big(\gamma^{+},\gamma^{n}\big)\\ &\geqslant 50\varepsilon -G\Big(\frac{1}{2}|f_1^N|_{\Gamma}\Big)-G\Big(\frac{1}{2}|\gamma^n|_{\Gamma}\Big)\geqslant 48\varepsilon,\end{align*} hence $G\big(\big(\gamma^{n}\cdot f_{1}^{-N}\big)_{e}\big) \geq \varepsilon$ for $n \geqslant n_0$. By the choice of $M>0$ we have that $\big(\gamma^{n}\cdot f_{1}^{-N}\big)_{e} \leqslant M$ for $n \geqslant n_0$. Then, we choose an infinite sequence $(k_n)_{n \in \mathbb{N}}$ such that $\big |f_{1}^{k_n-N}\big |_{\Gamma}<\big|f_{1}^{k_n}\big|_{\Gamma}$ for every $n \in \mathbb{N}$. For $n \geqslant n_0$ we have \begin{align*} 2\big(f_{1}^{N}\gamma^{n}\cdot f_{1}^{k_n}\big)_{e}& =\big|f_{1}^{N}\gamma^n \big|_{\Gamma}+\big|f_{1}^{k_n}\big|_{\Gamma}-\big|f_{1}^{N-k_n}\gamma^n \big|_{\Gamma}\\  &=\big|\gamma^n \big|_{\Gamma}+\big|f_1^N \big|_{\Gamma}-2\big(\gamma^n \cdot f_{1}^{-N}\big)_{e}+\big|f_{1}^{k_n} \big|_{\Gamma}-\big|f_{1}^{N-k_n}\gamma^n \big|_{\Gamma}\\ &\geqslant -2M+\big|f_1^N \big|_{\Gamma}+\big|f_{1}^{k_n} \big|_{\Gamma}-\big|f_1^{N-k_n} \big|_{\Gamma}\\ & \geqslant \big|f_1^N\big|_{\Gamma}-2M \geqslant \frac{\big|f_1^N \big|_{\Gamma}}{2} \geqslant 2R. \end{align*} Thus, by the choice of $R>0$ we have $G\big(\big( f_1^N \gamma^{n}\cdot f_{1}^{k_n}\big)_{e}\big) \leq \varepsilon$, $n \geqslant n_0$. \hbox{It follows that $d_{f}\big(f_{1}^{N}\gamma^{+},f_{1}^{+}\big)\leqslant \varepsilon$ so} $$d_{f}\big(f_{1}^{N}\gamma^{+}, \gamma^{-}\big) \geqslant d_{f}(\gamma^{+},f_{1}^{+})-d_{f}\big(f_{1}^{N}\gamma^{+}, f_{1}^{+})-d_{f}\big(\gamma^{+}, \gamma^{-}) \geqslant 48 \varepsilon$$ and Claim 1 follows.\end{proof}
 \par Now, let $L_0:=10\max_{g \in F}|g|_{\Gamma}+2R$. If $\gamma\in  H$ and $|\gamma|_{\Gamma}<L_0$, then we choose $g =e$ and obviously $|\gamma|_{\Gamma}-|\gamma|_{\infty}\leq L_0$. Suppose that $\gamma \in H$ and $|\gamma|_{\Gamma}\geq L_0$. We may choose $g \in F$ such that $d_{f}\big((g\gamma g^{-1})^{+}, \gamma^{-} \big) \geqslant \varepsilon$, where $(g \gamma g^{-1})^{+}=g \gamma^{+}$ in $\partial_{f}\Gamma$. We observe that \begin{align*} d_{f}\big( (g \gamma g^{-1})^{+}, (g \gamma)^{+} \big)& \leqslant d_{f}\big( (g \gamma g^{-1})^{+}, g \gamma g^{-1} \big)+d_{f}\big(g\gamma g^{-1}, g \gamma \big)+d_{f} \big( (g \gamma)^{+}, g \gamma \big)\\ &\leqslant G\Big(\frac{1}{2}\big| g \gamma g^{-1}\big |_{\Gamma}\Big)+G\big((g\gamma g^{-1} \cdot g\gamma)_{e}\big)+G\Big(\frac{1}{2}\big| g \gamma \big |_{\Gamma}\Big)\\ &\leqslant 3 G\Big(\frac{1}{2}|\gamma|_{\Gamma}-2|g|_{\Gamma}\Big)\leqslant \frac{3\varepsilon}{100},\\ d_{f}\big(\gamma^{-}, \gamma^{-1} g^{-1} \big)& \leqslant d_{f}\big( \gamma^{-}, \gamma^{-1} \big) +d_{f}\big( \gamma^{-1}, \gamma^{-1} g^{-1} \big)\\ & \leqslant G\Big(\frac{1}{2}|  \gamma|_{\Gamma}\Big)+G\big((\gamma^{-1} \cdot \gamma^{-1}g^{-1})_{e}\big) \leqslant 2G\Big(\frac{1}{2}|\gamma|_{\Gamma}-2|g|_{\Gamma}\Big) \leqslant \frac{\varepsilon}{50},\end{align*} since $|\gamma|_{\Gamma}-2|g|_{\Gamma}>R$. Therefore, by the previous bounds we have $$d_{f}\big ((g \gamma)^{+}, \gamma^{-1} g^{-1} \big) \geqslant d_{f}\big (g \gamma^{+},\gamma^{-}\big)-d_{f}(g\gamma^{+}, (g\gamma)^{+}\big)-d_{f}\big(\gamma^{-1},\gamma^{-1}g^{-1}\big)\geqslant \frac{\varepsilon}{2}.$$ This shows that there is $n_1>0$ with $G\big( \big((g \gamma)^{n}\cdot (g \gamma)^{-1}\big)_{e}\big) \geqslant \frac{\varepsilon}{3}$ and $\big((g \gamma)^{n}\cdot (g \gamma)^{-1}\big)_{e}\leqslant M$ for every $n \geqslant n_1$. We can find a sequence $(m_n)_{n \in \mathbb{N}}$ such that $$\lim_{n \rightarrow \infty} \big( \big|(g \gamma)^{m_n+1}\big|_{\Gamma}-\big|(g \gamma)^{m_n}\big|_{\Gamma}\big)\leqslant |g \gamma|_{\infty}$$ so $\lim_{n} 2\big ((g \gamma)^{m_n}\cdot (g \gamma)^{-1} \big)_{e} \geqslant |g\gamma|_{\Gamma}-|g \gamma|_{\infty} $. Finally, since $R>M$, we conclude that $$|\gamma|_{\Gamma}-|g\gamma|_{\infty}-\big(\max_{g\in F}|g|_{\Gamma}\big)\leq |g \gamma|_{\Gamma}-|g \gamma|_{\infty} \leqslant 2M \leqslant L_0.$$ In particular, we conclude that $\Gamma$ has strong property (U) and this completes the proof of the theorem.\end{proof}

\subsection{Weak uniform gaps in eigenvalues.}  Recall for a  matrix $g\in \mathsf{GL}_d(\mathbb{R})$, $\ell_1(g)\geq \cdots \geq \ell_d(g)$ are the moduli of the eigenvalues of $g$. Recall also that a linear representation \hbox{$\rho:\Gamma \rightarrow \mathsf{GL}_d(\mathbb{R})$} has a weak uniform $i$-gap in eigenvalues if there exists $\varepsilon>0$ such that for every $\gamma \in \Gamma$, $$\log\frac{\ell_i(\rho(\gamma))}{\ell_{i+1}(\rho(\gamma))} \geq \varepsilon |\gamma |_{\infty}.$$

\par  For a group $\Gamma$ the {\em lower central series} $$\cdot \cdot \cdot \trianglelefteq \mathfrak{g}_3(\Gamma) \trianglelefteq \mathfrak{g}_2(\Gamma) \trianglelefteq \mathfrak{g}_1(\Gamma) \trianglelefteq \mathfrak{g}_{0}(\Gamma):=\Gamma$$ is inductively defined as $\mathfrak{g}_{k+1}(\Gamma)=\big[\Gamma, \mathfrak{g}_{k}(\Gamma)\big]$ for $k \geq 1$. For every $k$, $\mathfrak{g}_{k}(\Gamma)$ is a characteristic subgroup of $\Gamma$ and the quotient $\mathfrak{g}_{k}(\Gamma)/\mathfrak{g}_{k+1}(\Gamma)$ is a central subgroup of $\Gamma/\mathfrak{g}_{k+1}(\Gamma)$. The group $\Gamma$ is {\em nilpotent} if there exists $m \geqslant 0$ with $\mathfrak{g}_{m}(\Gamma)=1$.

\par First, we prove the following technical lemma showing that a nilpotent group $\Gamma$ which admits a representation with weak uniform eigenvalue $i$-gap has to be virtually cyclic.\footnote{This is not true when $\Gamma$ is assumed to be solvable. The Baumslag--Solitar group $\mathsf{BS}(1,2)$ admits a faithful representation into $\mathsf{GL}_2(\mathbb{R})$ with a weak uniform $1$-gap (see \cite[Ex. 4.8]{kassel-potrie}).}

\begin{lemma} \label{nil1} Let $\Gamma$ be a finitely generated nilpotent group. Suppose that $\rho:\Gamma \rightarrow \mathsf{GL}_d(\mathbb{R})$ has a weak uniform $i$-gap in eigenvalues for some $1 \leq i \leq d-1$. Then $\Gamma$ is virtually cyclic. \end{lemma}

\begin{proof} We need the following elementary observation: for a group $G_1$ and a central subgroup $G_2\subset Z(G_1)$ of $G_1$, if the quotient $G_1/G_2$ is virtually cyclic, then $G_1$ is virtually abelian.
\par Let $G$ be the Zariski closure of $\rho(\Gamma)$ in $\mathsf{GL}_d(\mathbb{R})$. We consider the Levi decomposition $G=L \ltimes U$, where $U$ is a connected normal unipotent subgroup of $G$ and $L$ is a reductive Lie group. The projection $\pi \circ \rho:\Gamma \rightarrow L$ is Zariski dense and $\lambda(\pi(\rho(\gamma)))=\lambda(\rho(\gamma))$ for every $\gamma \in \Gamma$. The Lie group $L$ is reductive and $\pi(\rho(\Gamma))$ is solvable, so $L$ has to be virtually abelian since it has finitely many connected components. We may find a finite-index subgroup $H$ of $\Gamma$ such that $\mathfrak{g}_1(H)=[H,H]$ is a subgroup of $\textup{ker}(\pi\circ \rho)$. Therefore, for $k\geqslant 1$ we obtain a well-defined representation $\rho_k:H/\mathfrak{g}_k(H) \rightarrow \mathsf{GL}_d(\mathbb{R})$ such that $\rho_k\circ \pi_k=\pi \circ \rho$, where $\pi_k:H \rightarrow H/\mathfrak{g}_k(H)$ is the quotient map. Note that for every $k \geq 1$ there exists $c_k\geq 1$ such that for every $h\in H$, $$|\pi_k(h)|_{H/\mathfrak{g}_k(H),\infty} \leqslant c_k |h |_{H,\infty}.$$ Since $\lambda(\rho_k(h))=\lambda(\rho(h))$ for every $h \in H$, $\rho_k$ has a weak uniform $i$-gap in eigenvalues for every $k \geq 1$. We may use induction on $k\in \mathbb{N}$ to see that $H/\mathfrak{g}_k(H)$ is virtually cyclic. The group $H/\mathfrak{g}_1(H)$ is abelian and satisfies strong property (U), so $\rho_1$ is $P_i$-Anosov by \cite[Prop. 4.12]{kassel-potrie} and $H/\mathfrak{g}_1(H)$ has to be virtually cyclic. Now suppose that $H/\mathfrak{g}_k(H)$ is virtually cyclic. Note that $\mathfrak{g}_k(H)/\gamma_{k+1}(H)$ is a central subgroup of $H/\mathfrak{g}_{k+1}(H)$ with virtually cyclic quotient $H/\mathfrak{g}_k(H)$. It follows that $H/\mathfrak{g}_{k+1}(H)$ is virtually abelian. In particular, $H/\mathfrak{g}_{k+1}(H)$ satisfies strong property (U), so $\rho_{k+1}$ is $P_i$-Anosov and $H/\mathfrak{g}_{k+1}(H)$ is virtually cyclic. Therefore, $H/\mathfrak{g}_k(H)$ has to be virtually cylic for every $k \geq 1$ and $H$ is virtually cyclic since $\mathfrak{g}_m(H)=1$ for some $m \geq 1$.
\end{proof}

 As a corollary of Theorem \ref{nontrivial}, we obtain Corollary \ref{ugspu} which shows that a non-virtually nilpotent group $\Gamma$ which admits a representation with the uniform gap summation property satisfies strong Property (U).

\begin{proof}[Proof of Corollary \ref{ugspu}.] By Proposition \ref{higherdimension} we may assume that $G=\mathsf{SL}_d(\mathbb{R})$ and $\theta=\{\varepsilon_1-\varepsilon_2\}$. Since $\rho$ satisfies the uniform gap summation property $\textup{ker}(\rho)$ is finite. It suffices to prove that a finite-index subgroup of $\Gamma'=\Gamma/\textup{ker}(\rho)$ satisfies strong property (U). By Selberg's lemma \cite{Selberg}, $\Gamma'$ is virtually torsion-free, so we may assume that $\Gamma$ is torsion-free and $\rho$ is faithful. By Lemma \ref{existence} there exists a continuous $\rho$-equivariant map \hbox{$\xi_{f}:\partial_{f}\Gamma \rightarrow \mathbb{P}(\mathbb{R}^d)$} for some Floyd function $f$.  We first prove that $\partial_{f}\Gamma$ is not a singleton.\par Suppose that $|\partial_{f}\Gamma|=1$. By the definition of the map $\xi_{f}$, the image $\xi_{f}(\partial_f \Gamma)$ is the $\tau_{\textup{mod}}$-limit set of $\Gamma$ in $\mathbb{P}(\mathbb{R}^d)$. Since $\Gamma$ is not virtually nilpotent, we may use \cite[Cor. 5.10]{KLrel} to check that $\partial_f \Gamma$ contains at least two points. We provide here the following different argument that we also use to show that $|\partial_f\Gamma|\neq 2$. 

Now assume that $\partial_{f}\Gamma$ is a singleton. We shall obtain a contradition. Up to conjugation, we may assume that $\xi_{f}(\partial_{f}\Gamma)=[e_1]$ and find a group homomorphism $\alpha:\Gamma \rightarrow \mathbb{R}^{\ast}$ such that for every $\gamma \in \Gamma$, $$\rho(\gamma)e_1=\alpha(\gamma)e_1.$$ We consider the representation $\hat{\rho}(\gamma)=\frac{1}{\alpha(\gamma)}\rho(\gamma)$. Note that $\hat{\rho}$ satisfies the uniform gap summation property (since $\rho$ does), $\xi_{f}$ is $\hat{\rho}$-equivariant and we can write $$\hat{\rho}(\gamma)=\begin{pmatrix}
1 & u(\gamma) \\ 
0 & \rho_0(\gamma)
\end{pmatrix}$$ for some group homomorphism $\rho_0:\Gamma \rightarrow \mathsf{GL}_{d-1}(\mathbb{R})$. Let $g \in \Gamma \smallsetminus \{e\}$. Since $\xi_{f}$ is constant we have \hbox{$\lim_{n}\Xi_{1}^{+}(\hat{\rho}(g^n))=\lim_{n}\Xi_{1}^{+}(\hat{\rho}(g^{-n}))=[e_1]$}. Let us write $\hat{\rho}(g^n)=k_{n}\exp \big(\mu(\hat{\rho}(g^n))\big)k_{n}'$ in the Cartan decomposition of $G$, and up to passing to a subsequence, we may assume $\lim_nk_{n}=k_{\infty}$ and $\lim_{n}k_{n}'=k_{\infty}'$. Then $k_{\infty}'P_{1}^{+}=wP_{1}^{+}$, $\langle k_{\infty}'e_1,e_1 \rangle=0$ and $|\langle k_{\infty}e_1,e_1 \rangle|=1$, so $$\lim_{n \rightarrow \infty}\frac{\hat{\rho}(g^n)}{\sigma_1(\hat{\rho}(g^n))}=k_{\infty}E_{11}k_{\infty}' \in \bigoplus_{i=2}^{d}\mathbb{R}E_{1i}.$$ If $\ell_1(\hat{\rho}(g))>1$, then $\ell_1(\rho_0(g))=\ell_1(\hat{\rho}(g))$. Let $p_1\in \mathbb{N}$ and $p_2\in \mathbb{N}$ be the largest possible dimension of a Jordan block for an eigenvalue of maximum modulus of $\hat{\rho}(g)$ and $\rho_0(g)$ respectively. A straightforward calculation shows that $$\sigma_1(\hat{\rho}(g^n)) \asymp n^{p_1-1}\ell_1(\hat{\rho}(g^n)),\ \sigma_1(\rho_0(g^n)) \asymp n^{p_2-1}\ell_1(\hat{\rho}(g^n)), \ \ n\rightarrow \infty$$ and $p_1>p_2$ since $\lim_{n}\frac{\rho_0(g^n)}{\sigma_1(\hat{\rho}(g^n))}=0$. In particular, there exists $C>0$ such that $$\big|\big|u(g^n)\big|\big| =\Big| \Big|\sum_{i=0}^{n} \rho_0(g^i)^{t} u(g)\Big| \Big| \leqslant \big|\big|u(g)\big|\big| \sum_{i=0}^{n} i^{p_2-1} \ell_1(\hat{\rho}(g))^{i} \leq Cn^{p_2-1}\ell_1(\hat{\rho}(g))^n$$ for every $n \in \mathbb{N}$. Since $p_1>p_2$ and $\ell_1(\rho_0(g))>1$ we have $$\lim_{n \rightarrow \infty} \frac{\sum _{i=0}^{n} i^{p_2-1} \ell_1(\hat{\rho}(g))^{i}}{n^{p_1-1}\ell_1(\hat{\rho}(g^n))}=0.$$ Therefore, $\lim_{n}\frac{||u(g^n)||}{\sigma_1(\hat{\rho}(g^n))}=0$ which is impossible since $\lim_{n}\frac{\hat{\rho}(g^n)}{\sigma_1(\hat{\rho}(g^n))}$ has at least one of its $(1,2),\ldots,(1,d)$ entries non-zero. It follows that $\ell_1(\hat{\rho}(g))\leq 1$ and $\ell_1(\rho(g)) \leq |\alpha(g)|$. Similarly, we obtain $\ell_d(\rho(g))^{-1}=\ell_{1}(\rho(g^{-1}))\leq |\alpha(g^{-1})|$. It follows that all the eigenvalues of $\rho(g)$ have modulus equal to $1$. Therefore, by Theorem \ref{finitesubset},  any semisimplification of $\rho$ has compact Zariski closure. Then, by using \cite[Thm. 3]{Auslander} and \cite[Thm. 10.1]{KLrel}, we conclude that $\rho(\Gamma)$ (and hence $\Gamma$) is virtually nilpotent. We have reached a contradiction, therefore, $\xi_{f}$ is non-constant and $\partial_{f}\Gamma$ contains at least two points. 

\par Now we conclude that $\Gamma$ has strong property (U) by showing that $|\partial_f\Gamma|\geq 3$.  If $|\partial_{f}\Gamma|=2$, consider the restriction \hbox{$\rho_{V}:\Gamma \rightarrow \mathsf{GL}(V)$} where $V=\langle \xi_{f}(\partial_{f}\Gamma) \rangle$ and $\dim(V)=2$. We show that all elements of $\rho(\textup{ker}(\rho_V))$ have all of their eigenvalues of modulus $1$. For this, since $\xi_{f}(\partial_{f}\Gamma)$ contains two points, up to passing to a finite-index subgroup of $\Gamma$ and conjugating $\rho_V$ by an element of $\mathsf{GL}(V)$, we may assume that $\rho_{V}(\Gamma)$ lies in the diagonal subgroup $\mathsf{GL}(V)$. Let $g \in \textup{ker}(\rho_{V})$. We may write $\rho(g^n)=w_{n}\exp(\mu(g^n))w_{n}'$ and assume, up to conjugating $\rho$, that, $\lim_n w_n=w_{\infty}, \lim_nw_n'=w_{\infty}'$, where $w_{\infty}P_{1}^{+}=P_{1}^{+}$. We see that $\lim_{n}\frac{\rho(g)^n}{||\rho(g)^n||}=w_{\infty}E_{11}w_{\infty}' \in \bigoplus_{i=1}^d\mathbb{R}E_{1i}$ and we may write for $n\in \mathbb{N}$, $$\rho(g^n)=\begin{pmatrix}
I_2 & \big(\sum_{i=0}^n A^{i} \big)^t B \\ 
 0 & A^n
\end{pmatrix}$$ such that $\lim_{n}\frac{1}{||\rho(g^n)||}A^n$ is the zero matrix. If $A$ has an eigenvalue of modulus greater than $1$, then $\ell_1(A)=\ell_1(\rho(g))$. By working similarly as in the previous case, we have $\lim_{n}\frac{1}{||\rho(g^n)||}\sum_{i=0}^{n}||A^{i}||=0$ and $\lim_{n}\frac{1}{||\rho(g^n)||}\rho(g^n)$ has all of its $(1,i)$ entries equal to zero, which is absurd. This shows that $\rho(g^{\pm 1})$ has all of its eigenvalues of modulus at most $1$ for $g\in \textup{ker}(\rho_V)$.

Similarly as in the previous case, we deduce that $\rho(\textup{ker}(\rho_{V}))$ (and hence $\textup{ker}(\rho_V)$) is virtually nilpotent and finitely generated. The quotient $\Gamma/\textup{ker}(\rho_V)$ is abelian, so $\Gamma$ has to be virtually polycyclic. Since $|\partial_{f}\Gamma|>1$, a theorem of Floyd \cite[p. 211]{Floyd} implies that $\Gamma$ has two ends, so $\Gamma$ is virtually cyclic. Since $\Gamma$ is assumed not to be virtually nilpotent, this is again a contradiction, hence $\partial_f\Gamma$ cannot contain two points. 

It follows that $|\partial_{f}\Gamma|\geq 3$. Therefore, Theorem \ref{nontrivial} shows that $\Gamma$ satisfies strong property (U). \end{proof}

\begin{proof}[Proof of Theorem \ref{wg}.] Suppose that \textup{(i)} holds, i.e $\rho$ is $P_i$-Anosov. Then \textup{(ii)} holds since the Floyd boundary identifies with the Gromov boundary of $\Gamma$. Moreover, by Theorem \ref{mainproperties} and Proposition \ref{semisimplification}, \textup{(iii)} and \textup{(iv)} hold true for any semisimplification $\rho^{ss}$ of the $P_i$-Anosov representation $\rho$. Now let us prove the other implications.
 We assume that there exists $\varepsilon >0$ such that  for every $\gamma \in \Gamma$, $$\log \frac{\ell_i(\rho(\gamma))}{\ell_{i+1}(\rho(\gamma))}  \geqslant \varepsilon |\gamma |_{\infty}.$$ By \cite[Prop. 4.12]{kassel-potrie} it is enough to prove that $\Gamma$ satisfies strong property (U).
\medskip

\noindent $\textup{(ii)} \Rightarrow \textup{(i)}$. We first observe that for every element $g \in \textup{ker}(\rho)$ we have $|g|_{\infty}=0$. We next show that $N:=\textup{ker}\rho$ is finite. If not, $N$ is an infinite normal subgroup of $\Gamma$ and $\Lambda(N)=\partial_{f}\Gamma$ since $\Gamma$ acts minimally on $\partial_{f}\Gamma$. By \cite[Thm. 1]{Karlsson} there exists a non cyclic free subgroup $H$ of $N$ with $|\Lambda(H)|\geq3$. In particular, by Theorem \ref{nontrivial} we can find $\gamma \in H$ such that $|\gamma|_{\infty}>0$. This is a contradiction since $\gamma \in N$. It follows that $N$ is finite. \par The Floyd boundary of $\Gamma'=\Gamma/N$ is non-trivial since $\Gamma'$ is quasi-isometric to $\Gamma$. Note that the representation $\rho$ induces a faithful representation $\rho':\Gamma'\rightarrow \mathsf{GL}_d(\mathbb{R})$ which also has a weak uniform $i$-gap in eigenvalues. Selberg's lemma \cite{Selberg} implies that $\Gamma'$ is virtually torsion-free, thus, by Theorem  \ref{nontrivial}, $\Gamma'$ satisfies strong property (U). We conclude that $\Gamma'$ and $\Gamma$ are word hyperbolic and $\rho$ is $P_i$-Anosov.
\medskip

\noindent $\textup{(iii)} \Rightarrow \textup{(i)}$. If $\Gamma$ is virtually nilpotent, Lemma \ref{nil1} implies that $\Gamma$ is virtually cyclic, contradicting our assumption. Since $\Gamma$ is not virtually nilpotent and $\rho_1$ satisfies the uniform gap summation property, $\Gamma$ has to satisfy strong property (U) by Corollary \ref{ugspu}. Therefore, \textup{(i)} holds. 

\medskip
\noindent $\textup{(iv)} \Rightarrow \textup{(i)}$. Let $\rho^{ss}$ be a semisimplification of $\rho$. By Proposition \ref{semisimplification}, $\lambda(\rho(g))=\lambda(\rho^{ss}(g))$ for every $g \in \Gamma$, hence there exists $c_{2}>0$, depending only on $\rho_2$, such that $$\log \frac{\ell_i(\rho^{ss}(\gamma))}{\ell_{i+1}(\rho^{ss}(\gamma))} \geqslant \varepsilon |\gamma |_{\infty} \geqslant \varepsilon c_2 \big| \big|\lambda(\rho_2(\gamma))\big| \big|$$ for every $\gamma \in \Gamma$. By Theorem \ref{finitesubset} there exists a finite subset $F$ of $\Gamma$ and $C>0$ such that for every $\gamma \in \Gamma$ there is $w \in F$ with $$\big| \big| \mu(\rho^{ss}(\gamma))-\lambda(\rho(\gamma w)) \big| \big| \leqslant C, \  \big| \big| \mu(\rho_2(\gamma))-\lambda(\rho_2(\gamma w)) \big| \big| \leqslant C.$$ In particular, we may choose $R>0$ such that  for every $\gamma \in \Gamma$, $$\log \frac{\sigma_i(\rho^{ss}(\gamma))}{\sigma_{i+1}(\rho^{ss}(\gamma))} \geqslant \varepsilon c_2  \big| \big|\mu(\rho_2(\gamma))\big| \big|-R.$$ By assumption, for all but finitely many $\gamma \in \Gamma$ we have $$\big| \big| \mu(\rho_2(\gamma))\big|\big| \geqslant \frac{2}{\varepsilon c_2}  \log|\gamma|_{\Gamma},$$ so there exists $R'>0$ such that $$\log \frac{\sigma_i(\rho^{ss}(\gamma))}{\sigma_{i+1}(\rho^{ss}(\gamma))} \geqslant 2 \log |\gamma |_{\Gamma}-R'$$ for all $\gamma \in \Gamma$ non-trivial. In particular, the semisimplification $\rho^{ss}$ of $\rho$ satisfies the uniform gap summation property. Therefore, since $\rho^{ss}$ has a weak uniform $i$-gap in eigenvalues, by implication $\textup{(iii)} \Rightarrow \textup{(i)}$, $\rho^{ss}$ is $P_i$-Anosov and $\Gamma$ is word hyperbolic. In particular, $\rho$ is $P_i$-Anosov. \end{proof}

\section{Gromov products} \label{Gromovproduct}
In this section, we recall the definition of the Gromov product (see Definition \ref{defproduct}) associated to an Anosov representation and prove Proposition \ref{Gromovproduct1}, and we show that it is comparable with the Gromov product on the domain hyperbolic group with respect to a fix word metric.

\begin{definition} Let $G$ be a real semisimple Lie group. For every linear form $\varphi \in \mathfrak{a}^{\ast}$, define the Gromov product relative to $\varphi$ to be the map $(\, \cdot \,)_{\varphi}:G\times G \rightarrow \mathbb{R}$ defined as follows: for $g,h \in G$, $$\big(g\cdot h)_{\varphi}:=\frac{1}{4}\varphi\Big(\mu(g)+\mu(g^{-1})+\mu(h)+\mu(h^{-1})-\mu(g^{-1}h)-\mu(h^{-1}g) \Big).$$ \end{definition} 

\par For a line $\ell \in \mathbb{P}(\mathbb{R}^d)$ and a hyperplane $V \in \mathsf{Gr}_{d-1}(\mathbb{R}^d)$, the distance $\textup{dist}(\ell, V)$ is computed by the formula $$\textup{dist}(\ell,V)=\big | \big \langle k_{\ell}e_1, k_{V}e_d \big \rangle \big|,$$ where $\ell=[k_{\ell}e_1]$, $V=[k_{V}e_{d}^{\perp}]$, $k_V, k_{\ell} \in \mathsf{O}(d)$ and $\langle \cdot,\cdot\rangle$ is the standard inner product. Recall that a representation $\rho:\Gamma \rightarrow \mathsf{PGL}_d(\mathbb{R})$ is called $P_1$-divergent if $\lim \frac{\sigma_1(\rho(\gamma_n))}{\sigma_2(\rho(\gamma_n))}=\infty$ as $|\gamma|_{\Gamma}\rightarrow \infty$.

 The following proposition relates the Gromov product with the limit maps of a representation $\rho$ and will be used in the following sections.

\begin{proposition}\label{calc} Let $\Gamma$ be a word hyperbolic group and $\rho:\Gamma \rightarrow \mathsf{PGL}_d(\mathbb{R})$ a representation. Suppose $\rho$ is $P_1$-divergent and there are continuous $\rho$-equivariant maps $\xi:\partial_{\infty}\Gamma \rightarrow \mathbb{P}(\mathbb{R}^d)$ and $\xi^{-}:\partial_{\infty}\Gamma \rightarrow \mathsf{Gr}_{d-1}(\mathbb{R}^d)$ satisfying the Cartan property. Then for $x,y \in \partial_{\infty}\Gamma$ and two sequences $(\gamma_n)_{n \in \mathbb{N}}$, $(\delta_n)_{n \in \mathbb{N}}$ of elements of $\Gamma$ with $\lim_{n}\gamma_n=x$ and $\lim_{n}\delta_n=y$ we have $$\lim_{n \rightarrow \infty}\exp\Big(-4\big(\rho(\gamma_n)\cdot \rho(\delta_n)\big)_{\varepsilon_1}\Big)=\textup{dist}\big(\xi(x),\xi^{-}(y)\big)\cdot \textup{dist}\big(\xi(y),\xi^{-}(x)\big).$$ \end{proposition}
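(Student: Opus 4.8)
The plan is to unwind the definition of the Gromov product relative to $\varepsilon_1$ and translate each Cartan projection term into singular-value data, then use the Cartan property of $\xi$ and $\xi^-$ to identify the limiting directions. Writing $\rho(\gamma)=k_{\rho(\gamma)}\exp(\mu(\rho(\gamma)))k_{\rho(\gamma)}'$, recall that $\langle \varepsilon_1,\mu(\rho(\gamma))\rangle=\log\sigma_1(\rho(\gamma))$ and $\langle \varepsilon_1,\mu(\rho(\gamma^{-1}))\rangle=\log\sigma_1(\rho(\gamma)^{-1})=-\log\sigma_d(\rho(\gamma))$. Thus
\[
4\big(\rho(\gamma_n)\cdot\rho(\delta_n)\big)_{\varepsilon_1}
=\log\frac{\sigma_1(\rho(\gamma_n))\sigma_1(\rho(\delta_n))}{\sigma_d(\rho(\gamma_n))\sigma_d(\rho(\delta_n))}
-\log\frac{\sigma_1(\rho(\gamma_n^{-1}\delta_n))\sigma_1(\rho(\delta_n^{-1}\gamma_n))}{\sigma_d(\rho(\gamma_n^{-1}\delta_n))\sigma_d(\rho(\delta_n^{-1}\gamma_n))}.
\]
So $\exp(-4(\cdot)_{\varepsilon_1})$ is the ratio of the two quantities $\frac{\sigma_d}{\sigma_1}(\rho(\gamma_n^{-1}\delta_n))\cdot\frac{\sigma_d}{\sigma_1}(\rho(\delta_n^{-1}\gamma_n))$ and $\frac{\sigma_d}{\sigma_1}(\rho(\gamma_n))\cdot\frac{\sigma_d}{\sigma_1}(\rho(\delta_n))$; equivalently, after using $\sigma_1(g^{-1})=\sigma_d(g)^{-1}$ and the identity $\det$-normalization in $\mathsf{PGL}$, I would rearrange to express $\exp(-4(\rho(\gamma_n)\cdot\rho(\delta_n))_{\varepsilon_1})$ directly as a product of two ``angle-type'' quantities.

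The main step is a singular-value estimate of the following shape: for $g,h\in\mathsf{GL}(d,\mathbb{R})$ with $\frac{\sigma_2}{\sigma_1}(g),\frac{\sigma_2}{\sigma_1}(h)$ small, one has
\[
\frac{\sigma_1(g^{-1}h)}{\sigma_1(g^{-1})\sigma_1(h)}\;\asymp\;\mathrm{dist}\big(\Xi_1^+(h),\Xi_1^-(g)\big)\qquad\text{(up to }1+o(1)\text{)},
\]
where $\Xi_1^+(h)=k_h P_1^+$ and $\Xi_1^-(g)=k_g w P_1^-$ are the partial flags read off from the Cartan decompositions, and $\mathrm{dist}$ is the line--hyperplane distance $|\langle k_h e_1, k_g e_d\rangle|$ appearing just before the statement. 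This is a standard Cartan-projection computation: writing $g^{-1}h=\exp(\mu(g^{-1}))k'_{g^{-1}}k_h\exp(\mu(h))$ and taking the top singular value, the leading term is $\sigma_1(g^{-1})\sigma_1(h)\,|\langle k'_{g^{-1}}k_h e_1,\,e_1\rangle|$, and $|\langle k'_{g^{-1}}k_h e_1,e_1\rangle|$ differs from $|\langle k_h e_1, k_g e_d\rangle|$ by controlled factors once one matches up the rows/columns of $k_{g^{-1}}$ with those of $k_g$ (here one uses $g^{-1}=k'_g{}^{-1}\exp(-\mu(g))k_g^{-1}$ so that, up to Weyl-group bookkeeping, $k_{g^{-1}}$ and $k_g'$ agree in the relevant coordinate and $k'_{g^{-1}}$ relates to $k_g$ through the longest element $w$). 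Applying this to $g=\rho(\gamma_n)$, $h=\rho(\delta_n)$ and separately to $g=\rho(\delta_n)$, $h=\rho(\gamma_n)$, and using $P_1$-divergence (so $\frac{\sigma_2}{\sigma_1}(\rho(\gamma_n)),\frac{\sigma_2}{\sigma_1}(\rho(\delta_n))\to 0$) to make the error factors tend to $1$, gives
\[
\exp\big(-4(\rho(\gamma_n)\cdot\rho(\delta_n))_{\varepsilon_1}\big)=\big(1+o(1)\big)\,\mathrm{dist}\big(\Xi_1^+(\rho(\delta_n)),\Xi_1^-(\rho(\gamma_n))\big)\cdot\mathrm{dist}\big(\Xi_1^+(\rho(\gamma_n)),\Xi_1^-(\rho(\delta_n))\big).
\]

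Finally I would pass to the limit using the Cartan property: since $\gamma_n\to x$ and $\delta_n\to y$ in $\partial_\infty\Gamma$, the Cartan property of $\xi$ gives $\Xi_1^+(\rho(\gamma_n))\to\xi(x)$, $\Xi_1^+(\rho(\delta_n))\to\xi(y)$, and the Cartan property of $\xi^-$ gives $\Xi_1^-(\rho(\gamma_n))\to\xi^-(x)$, $\Xi_1^-(\rho(\delta_n))\to\xi^-(y)$. Since $\mathrm{dist}(\cdot,\cdot)$ is continuous on $\mathbb{P}(\mathbb{R}^d)\times\mathsf{Gr}_{d-1}(\mathbb{R}^d)$, the right-hand side converges to $\mathrm{dist}(\xi(y),\xi^-(x))\cdot\mathrm{dist}(\xi(x),\xi^-(y))$, which is the claimed formula. (For the general $\mathsf{PGL}(d,\mathbb{R})$ statement one works with lifts to $\mathsf{GL}(d,\mathbb{R})$, normalizing $|\det|=1$; this is harmless because the Gromov product relative to $\varepsilon_1$ and the distances are well defined on $\mathsf{PGL}$.) The one delicate point I expect to be the main obstacle is the bookkeeping in the singular-value estimate above — keeping track of how the $K$-parts $k_g,k_g'$ of $g$ and of $g^{-1}$ are related through the longest Weyl element $w$, so that the quantity $|\langle k'_{g^{-1}}k_h e_1,e_1\rangle|$ is identified with the line--hyperplane distance $|\langle k_h e_1, k_g e_d\rangle|$ rather than some other pairing; once the normalization of $\Xi_1^-$ is pinned down consistently with the definition of $\mathrm{dist}$, the rest is routine asymptotics controlled by $P_1$-divergence.
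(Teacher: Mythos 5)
Your plan is essentially the paper's own argument: unwind $\exp\big(-4(\rho(\gamma_n)\cdot\rho(\delta_n))_{\varepsilon_1}\big)$ into the singular-value ratio
\[
\frac{\sigma_1(\rho(\gamma_n^{-1}\delta_n))\,\sigma_1(\rho(\delta_n^{-1}\gamma_n))}{\sigma_1(\rho(\gamma_n))\,\sigma_1(\rho(\gamma_n^{-1}))\,\sigma_1(\rho(\delta_n))\,\sigma_1(\rho(\delta_n^{-1}))}
=\frac{\sigma_1(\rho(\gamma_n^{-1}\delta_n))}{\sigma_1(\rho(\gamma_n^{-1}))\sigma_1(\rho(\delta_n))}\cdot\frac{\sigma_1(\rho(\delta_n^{-1}\gamma_n))}{\sigma_1(\rho(\delta_n^{-1}))\sigma_1(\rho(\gamma_n))},
\]
identify each factor asymptotically with a line--hyperplane distance via the Cartan decomposition, and then pass to the limit using $P_1$-divergence and the Cartan property. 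This is exactly what the paper does, just repackaged into a stand-alone ``key singular-value estimate.''

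Two small corrections. First, your displayed rewriting of $4(\cdot)_{\varepsilon_1}$ is off: the second $\log$ should contain only $\sigma_1(\rho(\gamma_n^{-1}\delta_n))\sigma_1(\rho(\delta_n^{-1}\gamma_n))$ in the numerator with no $\sigma_d$'s in the denominator, since $\sigma_d(g^{-1}h)=\sigma_1(h^{-1}g)^{-1}$ makes your version double-count the cross terms. Your factorization immediately afterward is the correct one, so this does not derail the plan, but it should be fixed. Second, the key estimate should be phrased with an \emph{additive} error, $\frac{\sigma_1(g^{-1}h)}{\sigma_1(g^{-1})\sigma_1(h)}=\mathrm{dist}\big(\Xi_1^+(h),\Xi_1^-(g)\big)+O\!\left(\tfrac{\sigma_2}{\sigma_1}(g^{-1})+\tfrac{\sigma_2}{\sigma_1}(h)\right)$, rather than a multiplicative $1+o(1)$: the right-hand side may tend to $0$, in which case the multiplicative form fails, but the additive form (together with the bound $|\langle k'_{g^{-1}}k_h e_1,e_1\rangle|\leqslant 1$) still gives convergence of the product. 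Finally, you worry about the Weyl bookkeeping relating $k'_{g^{-1}}$ to $k_g$; in fact $k'_{g^{-1}}=w k_g^{-1}$, so $|\langle k'_{g^{-1}}k_h e_1,e_1\rangle|=|\langle k_h e_1, k_g e_d\rangle|$ holds \emph{exactly} (no extra controlled factors), which is what cleanly identifies the angle term with $\mathrm{dist}\big(\Xi_1^+(h),\Xi_1^-(g)\big)$.
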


\begin{proof} We may write $\rho(\gamma_{n})=w_{n}\exp(\mu(\rho(\gamma_{n})))w_{n}'$ and $\rho(\delta_{n})=k_n\exp(\mu(\rho(\delta_{n})))k_n'$ where $w_{n}, w_{n}',$ $ k_n,k_n' \in \mathsf{PO}(d)$. Since $\rho$ is $P_1$-divergent, $\lim_{n}\frac{\sigma_{d}(\rho(\gamma_n))}{\sigma_{j}(\rho(\gamma_n))}=\lim_{n}\frac{\sigma_{d}(\rho(\delta_n))}{\sigma_{j}(\rho(\delta_n))}=0$ for $1 \leq j \leq d-1$. Recall that $E_{ij}$ denotes the $d\times d$ elementary matrix with $1$ on the $(i,j)$-entry. \hbox{Then we notice that} $$\lim_{n \rightarrow \infty}\exp\Big(-4\Big(\rho(\gamma_{n}) \cdot \rho(\delta_{n}) \Big)_{\varepsilon_1} \Big)= \lim_{n \rightarrow \infty}\frac{\sigma_1(\rho(\gamma_{n}^{-1}\delta_{n}))\sigma_1(\rho(\delta_{n}^{-1}\gamma_{n}))}{\sigma_1(\rho(\gamma_{n}))\sigma_1(\rho(\gamma_{n}^{-1}))\sigma_1(\rho(\delta_{n}))\sigma_1(\rho(\delta_{n}^{-1}))}$$ \begin{align*}&=\lim_{n \rightarrow \infty} \Bigg( \Bigg|\Bigg|(k_n')^{-1} \textup{diag}\Bigg(\frac{\sigma_d(\rho(\delta_{n}))}{\sigma_1(\rho(\delta_{n}))},\ldots,1 \Bigg)k_n^{-1}w_{n} \textup{diag}\Bigg(1,\ldots,\frac{\sigma_d(\rho(\gamma_{n}))}{\sigma_1(\rho(\gamma_{n}))}\Bigg)w_{n}' \Bigg|\Bigg| \cdot\\ &\Bigg|\Bigg|(w_{{n}}')^{-1} \textup{diag}\Bigg(\frac{\sigma_d(\rho(\gamma_{n}))}{\sigma_1(\rho(\gamma_{n}))},\ldots,1\Bigg)w_{{n}}^{-1}k_{{n}} \textup{diag}\Bigg(1,\ldots,\frac{\sigma_d(\rho(\delta_{n}))}{\sigma_1(\rho(\delta_{n}))}\Bigg)k_{{n}}' \Bigg|\Bigg| \Bigg)\\ & =\lim_{n \rightarrow \infty} \big|\big| E_{1d}w_{n}^{-1}k_{n}E_{11}\big|\big| \cdot \big|\big| E_{1d}k_{n}^{-1}w_{n}E_{11}\big|\big| \\ & =\lim_{n \rightarrow \infty}\big|\langle w_{{n}}^{-1}k_{{n}}e_1,e_d \rangle \cdot \langle k_{n}^{-1}w_{n}e_1,e_d \rangle \big|\\ &=\lim_{n \rightarrow \infty} \textup{dist}\big(\Xi_{1}^{+}\big(\rho(\gamma_{{n}})\big),\Xi_{1}^{-}\big(\rho(\delta_{{n}})\big) \big) \cdot  \textup{dist}\big(\Xi_{1}^{+}\big(\rho(\delta_{{n}})\big),\Xi_{1}^{-}\big(\rho(\gamma_{n})\big) \big)\\ &= \textup{dist}\big(\xi(x),\xi^{-}(y)\big)\cdot \textup{dist}\big(\xi(y),\xi^{-}(x)\big),\end{align*} since $\xi$ and $\xi^{-}$ satisfy the Cartan property. This finishes the proof of the propositon. \end{proof}

\begin{proof}[Proof of Proposition \ref{Gromovproduct1}] Fix $\alpha \in \theta$. By \cite[Thm. 7.2]{Tits}, there exists $N_{\alpha}>0$ and an irreducible $\theta$-proximal representation $\tau_{\alpha}:G \rightarrow \mathsf{GL}_d(\mathbb{R})$ whose highest weight is $N_{\alpha}\omega_{\alpha}$, $N_{\alpha}\in \mathbb{N}$. Since $\rho$ is $P_{\{\alpha \}}$-Anosov, the representation $\tau_{\alpha} \circ \rho$ is $P_1$-Anosov. There exists $C_1>0$, depending only on $\tau_{\alpha}$, such that $$\Big|\log \sigma_1(\tau_{\alpha}(g))-N_{\alpha} \omega_{\alpha}(\mu(g))\Big|\leq C_1$$ for every $g \in G$. In particular, there exists $C_2>0$, depending only on $\tau$, such that \begin{align}\label{product-1} \Big|N_{\alpha}\big(\rho(\gamma)\cdot \rho(\delta) \big)_{\omega_{\alpha}}-\big(\tau_{\alpha}(\rho(\gamma))\cdot \tau_{\alpha}(\rho(\delta)) \big)_{\varepsilon_1}\Big| \leq C_2\end{align} for every $\gamma, \delta\in \Gamma$. Since $\rho$ is $P_{\{\alpha\}}$-Anosov, by Lemma \ref{weights}, we may replace $\rho$ with a semisimplification $\rho^{ss}$ such that there exists $C_3>0$ with $$\Big|\big(\rho(\gamma))\cdot \rho(\delta)\big)_{\omega_{\alpha}}-\big(\rho^{ss}(\gamma))\cdot \rho^{ss}(\delta))\big)_{\omega_{\alpha}}\Big|\leq C_3$$ for every $\gamma, \delta \in \Gamma$. Therefore, we may continue by assuming that $\rho$ is semisimple. By using Lemma \ref{replace}, we may further assume that $\tau_{\alpha} \big(\rho(\Gamma)\big)$ has reductive Zariski closure in $\mathsf{GL}_{d}(\mathbb{R})$ and preserves a properly convex open domain $\Omega$ of $\mathbb{P}(\mathbb{R}^d)$. Let us fix $x_0 \in \Omega$. By Lemma \ref{control} we can find $C_4>0$ such that for every $\gamma,\delta \in \Gamma$, \begin{align}\label{product-2}\Big| \big(\tau_{\alpha}(\rho(\gamma))\cdot \tau_{\alpha}(\rho(\delta))\big)_{\varepsilon_1} -\big(\tau_{\alpha}\big(\rho(\gamma)\big)x_0\cdot \tau_{\alpha}\big(\rho(\delta)\big)x_0\big)_{x_0} \Big| \leqslant C_4.\end{align} By \cite{DGK0} and \cite{Zimmer}, since $\tau_{\alpha} \circ \rho$ is $P_1$-Anosov, $\tau_{\alpha}\big(\rho(\Gamma)\big)$ acts cocompactly on a closed convex subset $\mathcal{C} \subset \Omega$. Fix $x_0 \in \mathcal{C}$. The Svarc--Milnor lemma implies that the orbit map $\gamma \mapsto \tau_{\alpha}(\rho(\gamma))x_0$ is a quasi-isometry between the Gromov hyperbolic spaces $(\Gamma,d_{\Gamma})$ and $(\mathcal{C},d_{\Omega})$. In particular, there exist $C_5,c_5>0$ such that for every $\gamma,\delta \in \Gamma$, \begin{align}\label{product-3}C_{5}^{-1}(\gamma \cdot \delta)_{e}-c_5 \leqslant \big(\tau_{\alpha}\big(\rho(\gamma)\big)x_0\cdot \tau_{\alpha}\big(\rho(\delta)\big)x_0\big)_{x_0} \leqslant C_5 (\gamma \cdot \delta)_{e}+c_5.\end{align} Therefore, by (\ref{product-1}), (\ref{product-2}) and (\ref{product-3}) we obtain the conclusion.\end{proof}

\section{Characterizations of Anosov representations} \label{Main}
This section is devoted to the proof of Theorems \ref{maintheorem} and \ref{Zariskidense} and Corollary \ref{CCartan}. Note that in Theorem \ref{maintheorem} we do not assume that the group $\rho(\Gamma)$ contains a $P_{\theta}$-proximal element, the pair of limit maps $(\xi^{+},\xi^{-})$ is compatible or the map $\xi^{-}$ satisfies the Cartan property.

\begin{proof}[Proof of Theorem \ref{maintheorem}] If $\rho$ is $P_{\theta}$-Anosov, the Anosov limit maps of $\rho$ are transverse and dynamics preserving and $\rho$ is $P_{\theta}$-divergent (see Theorem \ref{mainproperties}). Also, the fact that the Anosov limit maps satisfy the Cartan property is contained in \cite[Thm. 1.3 (4) \& 5.3 (4)]{GGKW}.
\par Now we assume that $\rho$ satisfies \textup{(i) and (ii)}. We first reduce to the case where $\Gamma$ is torsion-free. Since $\rho$ is $P_{\theta}$-divergent, every element of the kernel $\textup{ker}(\rho)$ has finite order, hence $\textup{ker}(\rho)$ is finite. The quotient group $\Gamma_1=\Gamma /\textup{ker}(\rho)$ is quasi-isometric to $\Gamma$ and by Selberg's lemma \cite{Selberg} $\Gamma_1$ contains a torsion-free and finite-index subgroup $\Gamma_2$. It is enough to prove that the induced representation $\hat{\rho}:\Gamma_2 \rightarrow G$ is $P_{\theta}$-Anosov. Notice that $\hat{\rho}$ satisfies the same assumptions as $\rho$ and the source group is torsion-free.
\par Thanks to Proposition \ref{higherdimension}, we may assume that $G=\mathsf{SL}_d(\mathbb{R})$, $\theta=\{ \varepsilon_1-\varepsilon_2 \}$, $P_{\theta}^{+}=\textup{Stab}_G(\mathbb{R}e_1)$ and \hbox{$P_{\theta}^{-}=\textup{Stab}_G(e_1^{\perp})$}. Recall the definition of the bundle $\mathcal{X}_{\rho}$ over the flow space $\Gamma \backslash \hat{\Gamma}$ as in sub-section \ref{Anosovdfn}. The pair of transverse maps $(\xi^{+},\xi^{-})$ defines the section $\sigma:\Gamma \backslash \hat{\Gamma} \rightarrow \mathcal{X}_{\rho}$,
$$\sigma([\hat{m}]_{\Gamma})=\big[\hat{m}, (\xi^{+}(\tau^{+}(\hat{m})), \xi^{-}(\tau^{-}(\hat{m})))\big]_{\Gamma}$$ inducing the splitting \hbox{$\sigma_{\ast}\mathcal{E}=\sigma_{\ast}\mathcal{E}^{+} \oplus \sigma_{\ast}\mathcal{E}^{-}$}, where $\mathcal{E}^{\pm}\subset \mathsf{T}(G/L_{\theta})$ are the sub-bundles defined in subsection \ref{Anosovdfn}. Then we fix $x=[\hat{m}]_{\Gamma}$ and choose an element $h\in G$ so that $\xi^{+}(\tau^{+}(\hat{m}))=hP_1^{+}$ and $\xi^{-}(\tau^{-}(\hat{m}))=hP_1^{-}$. Let $(t_n)_{n \in \mathbb{N}}$ be an increasing unbounded sequence and consider a sequence $(\gamma_n)_{n \in \mathbb{N}}$ of elements of $\Gamma$ such that $(\gamma_n \varphi_{t_n}(\hat{m}))_{n \in \mathbb{N}}$ lies in a compact subset of $\hat{\Gamma}$. We observe that $\lim_{n}\gamma_{n}^{-1}=\tau^{+}(\hat{m})$ in the bordification $\Gamma \cup \partial_{\infty}\Gamma$. Moreover, observe that we can write $\rho(\gamma_{n}^{-1})=(k_{n}')^{-1}w \exp\big(\mu(\rho(\gamma_{n}^{-1}))\big)wk_{n}^{-1}$, where \hbox{$w=\sum_{i=1}^{d}E_{i(d+1-i)} \in \mathsf{O}(d)$.} Since $\xi^{+}$ is assumed to satisfy the Cartan property and $(\gamma_n)_{n \in \mathbb{N}}$ is $P_{\theta}$-divergent, up to subsequence, we may assume that  $\lim_{n}\Xi_{1}^{+}\big(\rho(\gamma_{n}^{-1})\big)=\lim_{n} (k_{{n}}')^{-1}wP_{\theta}^{+}=hP_{\theta}^{+}$. Equivalently, if $k'=\lim_{n}k_{n}'$ then $k'h=w\begin{pmatrix}[0.7]
s & \ast\\ 
0 & B
\end{pmatrix}$ for some $B\in \mathsf{GL}_{d-1}(\mathbb{R})$. Fix $u \in \{0\} \times \mathbb{R}^{d-1}$. Then, since $k'(k')^{t}=I_{d}$, we observe $$k'h^{-t}u=w_{d-1}B^{-t}u +0e_{d},\ k'h^{-t}e_1=\frac{1}{s}e_{d}+\sum_{i=1}^{d-1} \zeta_{i}e_{i}$$ for some $s \neq 0$, $\zeta_{1},\ldots,\zeta_{d-1}\in \mathbb{R}$ and $w_{d-1} \in \mathsf{O}(d-1)$ is a permutation matrix with $w_{d-1}e_1=e_{d-1}$ and $w_{d-1}e_{d-1}=e_1$. \hbox{Equivalently, we write:}
$$ k_{n}'h^{-t}u=\sum_{i=1}^{d}\chi_{i,n}e_{i}, \ k_{n}'h^{-t}e_1=\sum_{i=1}^{d} \zeta_{i,n}e_{i} $$ and we have that $\lim_{n}\chi_{d,n} =0$, $\lim_{n}\zeta_{d,n}=\frac{1}{s}$. A computation shows that \begin{align*} \frac{\left \| \rho^{\ast}(\gamma_{n})h^{-t}u \right \|^2}{\left \| \rho^{\ast}(\gamma_{n})h^{-t}e_1 \right \|^2}&=\frac{\sum_{i=1}^{d}\chi_{i,n} \sigma_i(\rho(\gamma_n))^{-2}}{\sum_{i=1}^{d}\zeta_{i,n}^2\sigma_i(\rho(\gamma_n))^{-2}}=\frac{\sum_{i=1}^{d-1} \chi_{i,n}^2 \frac{\sigma_d(\rho(\gamma_n))^2}{\sigma_i(\rho(\gamma_n))^2}+\chi_{d,n}^2}{\sum_{i=1}^{d-1}\zeta_{i,n}^2\frac{\sigma_d(\rho(\gamma_n))^2}{\sigma_i(\rho(\gamma_n))^2}+\zeta_{d,n}^2}. \end{align*} We deduce that $\lim_{n}\frac{||\rho(\gamma_{n})^{\ast}h^{-t}u||}{||\rho(\gamma_{n})^{\ast}h^{-t}e_1||}=0$ and hence by Proposition \ref{contractiondilation2} \textup({ii)} we conclude that $$\lim_{n \rightarrow \infty} \big|\big|\varphi_{t_{n}}(X_{u}^{-})\big|\big|_{\varphi_{t_{n}}(x)}=0.$$ The sequence we started with was arbitrary, therefore the (lift of the) geodesic flow (see Def. \ref{Def-of-Anosov-rep}) on $\sigma_{\ast}\mathcal{E}^{-}$ is weakly contracting. By Lemma \ref{contractiondilation} we conclude that the flow on $\sigma_{\ast}\mathcal{E}^{+}$ is weakly dilating. The compactness of $\Gamma \backslash \hat{\Gamma}$ implies that the geodesic flow on $\sigma_{\ast}\mathcal{E}^{+}$ (resp. $\sigma_{\ast}\mathcal{E}^{-}$) is uniformly dilating (resp. contracting). Finally, we conclude that $\rho$ is $P_{\theta}$-Anosov with Anosov limit maps $\xi^{+}$ and $\xi^{-}$.\end{proof}

\begin{proof}[Proof of Corollary \ref{CCartan}] Assume that conditions (i) and (ii) hold. Let $\tau_{\theta}: G \rightarrow \mathsf{GL}_d(\mathbb{R})$ be an irreducible and $\theta$-proximal representation as in Proposition \ref{higherdimension}. In order to show that $\rho$ is $\theta$-Anosov, it suffices to check that $\rho'=\tau_{\theta} \circ \rho$ is $P_1$-Anosov. By using \cite[Thm. 5.3 (1)]{GGKW} (see also Lemma \ref{existence}), there exists a pair of continuous, $\rho'$-equivariant maps $\xi^{+}:\partial_{\infty}\Gamma \rightarrow \mathbb{P}(\mathbb{R}^d)$ and $\xi^{-}:\partial_{\infty}\Gamma \rightarrow \mathsf{Gr}_{d-1}(\mathbb{R}^d)$ satisfying the Cartan property. Let $x,y \in \partial_{\infty}\Gamma$ be two distinct points and $(\gamma_n)_{n \in \mathbb{N}}$ a sequence of elements of $\Gamma$ with $x=\lim_{n}\gamma_{n} $ and $y=\lim_{n} \gamma_{n}^{-1}$. \hbox{Condition (ii), shows that} $$\sup_{n \in \mathbb{N}}\Big(2 \log \sigma_1(\rho'(\gamma_n))-\log \sigma_1(\rho'(\gamma_n^2)) \Big)<+\infty.$$ By Proposition \ref{calc} we have that $\textup{dist}(\xi^{+}(x),\xi^{-}(y))\cdot \textup{dist}(\xi^{+}(y),\xi^{-}(y))>0$ so the pair $(\xi^{+}(x),\xi^{-}(y))$ is transverse. The maps $\xi^{+}$ and $\xi^{-}$ are transverse, $\rho'$ is $P_1$-divergent by \textup{(i)}, hence, it follows by Theorem \ref{maintheorem} that $\rho'$ is $P_1$-Anosov.

\par Conversely, part \textup{(i)} follows immediately by  Theorem \ref{mainproperties} (i). Note that there is $N_{\alpha}\geq 1$ such that $N_{\alpha}\omega_{\alpha}$ is the highest weight of an irreducible proximal representation $\tau_{\alpha}:G \rightarrow \mathsf{GL}_d(\mathbb{R})$. There is a constant $C_0>0$, depending only on $\tau_{\alpha}$ such that $\big|N_{\alpha} \omega_{\alpha}( \mu(h) )-\log \sigma_1 (\tau_{\alpha}(h))\big|\leq C_0$ for every $h\in G$. By Proposition \ref{Gromovproduct1} (i) and using the fact that  for every $h\in G$, $\omega_{\alpha}(2N_{\alpha}\mu(h)-N_{\alpha}\mu(h^2))\geq 2\log\sigma_1(\tau_{\alpha}(h))-\log\sigma_1(\tau_{\alpha}(h^2))-3C_0\geq -3C_0$, we can find $B,b>0$ such that for every $\alpha \in \theta$ and $\gamma \in \Gamma$ we have \begin{align*} \omega_{\alpha}\big( 2\mu (\rho(\gamma))-\mu (\rho(\gamma^2))\big) &\leq 3C_0 N_{\alpha}^{-1}+  \omega_{\alpha}\big(2\mu (\rho(\gamma))+2\mu (\rho(\gamma^{-1}))-\mu (\rho(\gamma^2))-\mu (\rho(\gamma^{-2})) \big)\\ &  \leqslant B(\gamma \cdot \gamma^{-1})_{e}+b.\end{align*} This concludes the proof of the corollary. \end{proof}

Let $\Gamma$ be a word hyperbolic group and $H$ be a subgroup of $\Gamma$. The group $H$ is {\em quasiconvex} in $\Gamma$ if and only if $H$ is finitely generated and quasi-isometrically embedded in $\Gamma$. In this case, there exists a continuous injective $H$-equivariant map $\iota_{H}:\partial_{\infty}H \xhookrightarrow{} \partial_{\infty}\Gamma$ called the \emph{Cannon-Thurston map} extending the inclusion $H \xhookrightarrow{} \Gamma$. 

\begin{proof}[Proof of Theorem \ref{Zariskidense}] Corollary \ref{Cartan} shows that the representation $\rho$ is $P_{\theta}$-divergent and $\xi^{+}$ satisfies the Cartan property. Since $\iota_{H}$ is an $H$-equivariant embedding, the map $\xi^{+}\circ \iota_{H}$ also satisfies the Cartan property. Theorem \ref{maintheorem} shows that the representation $\rho|_{H}$ is $P_{\theta}$-Anosov. \end{proof}

Example \ref{nontransverse} provides a Zariski dense surface group representation \hbox{$\rho_1:\pi_1(S_g) \rightarrow \mathsf{PSL}_4(\mathbb{R})$} which is not $P_1$-Anosov and admits a pair of continuous $\rho_1$-equivariant maps $(\xi^{+}, \xi^{-})$. The representation $\rho_1$ is $P_1$-divergent and $\rho_1(\gamma)$ is $P_1$-proximal for every $\gamma \in \pi_1(S_g)$ non-trivial. However, for every finitely generated free subgroup $F$ of $\pi_1(S_g)$, the maps $\xi^{+}\circ \iota_F$ and $\xi^{-}\circ \iota_F$ are transverse and $\rho_1 |_{F}$ is $P_1$-Anosov.

\section{Strongly convex cocompact subgroups of $ \mathsf{PGL}_d(\mathbb{R})$} \label{Scc}
In this section, we prove Theorem \ref{stronglyconveccocompact}. For our proof we need the following proposition characterizing $P_1$-Anosov representations in terms of the Gromov product under the assumption that the group preserves a properly convex domain with strictly convex and $C^1$-boundary.

\begin{proposition} \label{main} Let $\Gamma$ be a word hyperbolic subgroup of $\mathsf{PGL}_d(\mathbb{R})$ which preserves a strictly convex domain $\Omega$ of $\mathbb{P}(\mathbb{R}^d)$ with $C^1$-boundary. Then the following are equivalent.\\
\noindent $\textup{(i)}$ The natural inclusion $\Gamma \xhookrightarrow{} \mathsf{PGL}_d(\mathbb{R})$ is $P_1$-Anosov.\\
\noindent $\textup{(ii)}$ There exist constants $J,k>0$ such that for every $\gamma, \delta \in \Gamma$, $$J^{-1}(\gamma \cdot \delta)_e-k \leqslant (\gamma \cdot \delta)_{\varepsilon_1} \leqslant J(\gamma \cdot \delta)_e+k.$$\end{proposition}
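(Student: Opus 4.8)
\textbf{Proof proposal for Proposition \ref{main}.}

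The plan is to prove both implications by comparing the Gromov product relative to $\varepsilon_1$ with the Gromov product for the Hilbert metric $d_\Omega$, and then invoking the characterization of strong convex cocompactness (Theorem \ref{sccompact}) together with Theorem \ref{maintheorem} or Corollary \ref{CCartan}. For the direction $\textup{(i)}\Rightarrow\textup{(ii)}$, assume the inclusion $\Gamma\hookrightarrow\mathsf{PGL}(d,\mathbb{R})$ is $P_1$-Anosov. First I would reduce to the case where the inclusion is semisimple: by Proposition \ref{semisimplification} the semisimplification $\rho^{ss}$ of the inclusion is still $P_1$-Anosov, and by Lemma \ref{weights} the quantities $\langle\omega_{\varepsilon_1-\varepsilon_2},\mu(\gamma)\rangle$ differ from those of $\rho^{ss}$ by a bounded amount; since the Gromov product $(\gamma\cdot\delta)_{\varepsilon_1}$ is built from Cartan projections of $\gamma,\delta,\gamma^{-1}\delta,\delta^{-1}\gamma$ evaluated against $\varepsilon_1$ (and $\varepsilon_1 = \omega_{\varepsilon_1-\varepsilon_2}$ in $\mathsf{SL}(d,\mathbb{R})$), this changes $(\gamma\cdot\delta)_{\varepsilon_1}$ only by a bounded additive constant. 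Actually the cleanest route is just to invoke Proposition \ref{Gromovproduct1}(i) directly, with $\theta=\{\varepsilon_1-\varepsilon_2\}$ and $\omega_\alpha=\varepsilon_1$: that proposition already gives constants $C,c>0$ with $\frac1C(\gamma\cdot\delta)_e-c\leqslant(\rho(\gamma)\cdot\rho(\delta))_{\varepsilon_1}\leqslant C(\gamma\cdot\delta)_e+c$ for the $P_1$-Anosov inclusion, which is exactly statement \textup{(ii)} with $J=C$, $k=c$.

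For the harder direction $\textup{(ii)}\Rightarrow\textup{(i)}$, suppose the two-sided comparison holds. The strategy is to produce the limit maps and verify the Cartan property, then apply Theorem \ref{maintheorem} (or Corollary \ref{CCartan}). Since $\Gamma$ preserves the properly convex domain $\Omega$, by Proposition \ref{control1} we have $\langle\varepsilon_1-\varepsilon_d,\mu(\gamma)\rangle\geqslant 2d_\Omega(\gamma x_0,x_0)-\kappa$, and since the inclusion is semisimple, Lemma \ref{control} upgrades this to a genuine two-sided comparison $|\langle\varepsilon_1-\varepsilon_d,\mu(\gamma)\rangle - 2d_\Omega(\gamma x_0,x_0)|\leqslant C$ and $|(\gamma\cdot\delta)_{\varepsilon_1}-(\gamma x_0\cdot\delta x_0)_{x_0}|\leqslant C$ on infinite-order elements. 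Feeding this into hypothesis \textup{(ii)} gives $\frac1J(\gamma\cdot\delta)_e-k'\leqslant(\gamma x_0\cdot\delta x_0)_{x_0}\leqslant J(\gamma\cdot\delta)_e+k'$; in particular $\gamma\mapsto\gamma x_0$ is a quasi-isometric embedding of the word-hyperbolic group $\Gamma$ into $(\Omega,d_\Omega)$. Now $\Gamma$ acts properly discontinuously on $\Omega$, and a quasi-isometrically embedded orbit of a hyperbolic group has a quasiconvex hull; restricting the Hilbert geodesic flow, $\Gamma$ acts cocompactly on the closed convex hull $\mathcal{C}$ of the orbit limit set $\Lambda_\Omega(\Gamma)$. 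Since $\Omega$ is strictly convex with $C^1$ boundary, the limit set $\Lambda_\Omega(\Gamma)\subset\partial\Omega$ consists of $C^1$ extreme points, so $\Gamma$ acts convex cocompactly on $\Omega$ and $\Omega$ can be taken strictly convex with $C^1$ boundary; that is, $\Gamma$ is strongly convex cocompact in $\mathbb{P}(\mathbb{R}^d)$, whence by Theorem \ref{sccompact} the inclusion is $P_1$-Anosov.

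The main obstacle is making the last step rigorous: deducing convex cocompactness of the action on $\Omega$ from the quasi-isometric embedding of the orbit into $(\Omega,d_\Omega)$. The subtlety is that quasi-isometric embedding of the orbit does not by itself immediately give that the convex hull of $\Lambda_\Omega(\Gamma)$ has compact quotient — one needs to know that the orbit map extends continuously to an embedding $\partial_\infty\Gamma\to\Lambda_\Omega(\Gamma)$ and that no geodesic ray in $\mathcal{C}$ escapes to a non-$C^1$ or non-extreme point. Here the hypothesis that $\Omega$ is \emph{a priori} strictly convex with $C^1$ boundary is exactly what rules out these pathologies: every point of $\partial\Omega$ is already extreme and $C^1$, so the boundary map is automatically a homeomorphism onto its image, and the quasiconvex hull in $(\Omega,d_\Omega)$ projects to a compact subset of $\Gamma\backslash\Omega$ by the Švarc–Milnor argument applied to $(\mathcal{C},d_\Omega)$. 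If one prefers to avoid re-deriving this, an alternative is to argue as in the proof of Proposition \ref{Gromovproduct1}: use hypothesis \textup{(ii)} plus Lemma \ref{control} to show the orbit map $(\Gamma,d_\Gamma)\to(\mathcal{C},d_\Omega)$ is a quasi-isometry, so $(\mathcal{C},d_\Omega)$ is Gromov hyperbolic and $\Gamma$-cocompact, giving condition \textup{(ii)} of Theorem \ref{stronglyconveccocompact}, and then cite Theorem \ref{stronglyconveccocompact} itself (whose proof is independent, appearing in Section \ref{Scc}) to conclude — this shortcut, however, risks circularity if Theorem \ref{stronglyconveccocompact} uses Proposition \ref{main}, so the self-contained convex-cocompactness argument above is the safer choice.
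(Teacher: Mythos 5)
Your direction $(i)\Rightarrow(ii)$ is correct and is exactly the paper's argument: one simply invokes Proposition~\ref{Gromovproduct1}(i) with $\theta=\{\varepsilon_1-\varepsilon_2\}$, $\omega_\alpha=\varepsilon_1$.

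The direction $(ii)\Rightarrow(i)$ is where you diverge, and there are two genuine gaps. First, you invoke Lemma~\ref{control} "since the inclusion is semisimple," but Proposition~\ref{main} makes no semisimplicity hypothesis: it is stated for an arbitrary word hyperbolic subgroup of $\mathsf{PGL}(d,\mathbb{R})$ preserving a strictly convex domain with $C^1$ boundary. Lemma~\ref{control} genuinely needs semisimplicity because its proof runs through Theorem~\ref{finitesubset} (Benoist's comparison of Cartan and Lyapunov projections for semisimple representations). The only one-sided estimate available without semisimplicity is Proposition~\ref{control1}, which gives $\langle\varepsilon_1-\varepsilon_d,\mu(\gamma)\rangle\geqslant 2d_\Omega(\gamma x_0,x_0)-\kappa$ and does not let you feed hypothesis $(ii)$ into a quasi-isometric embedding of the orbit. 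Note also that Lemma~\ref{control} is stated only for infinite-order elements, so even with semisimplicity there is a (minor) bookkeeping step missing.

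Second, even granting the quasi-isometric orbit embedding, the leap to convex cocompactness is not justified. You say the "quasiconvex hull in $(\Omega,d_\Omega)$ projects to a compact subset of $\Gamma\backslash\Omega$ by the Švarc--Milnor argument," but Švarc--Milnor takes cocompactness as input, not output; a quasi-isometrically embedded orbit does not automatically give a cocompact action on the convex hull of the full orbital limit set, and the $C^1$/strict-convexity hypotheses on $\partial\Omega$ do not by themselves close this gap. You correctly flag the alternative shortcut via Theorem~\ref{stronglyconveccocompact} as circular — it is: the paper proves Theorem~\ref{stronglyconveccocompact} \emph{from} Proposition~\ref{main}.

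The paper's own proof of $(ii)\Rightarrow(i)$ avoids both issues by never going through convex cocompactness or semisimplicity. It uses strict convexity of $\partial\Omega$ to show $\Gamma$ is $P_1$-divergent (as in Lemma~\ref{mainlemma}), then shows the limit $\lim_n\gamma_nx_0$ depends only on $\lim_n\gamma_n\in\partial_\infty\Gamma$: for two sequences $\gamma_n,\delta_n\to x$, hypothesis $(ii)$ (lower bound) forces $(\gamma_n\cdot\delta_n)_{\varepsilon_1}\to\infty$, and Proposition~\ref{calc} then shows the subsequential limits $a_1,a_2$ of $\Xi^+_1(\gamma_n),\Xi^+_1(\delta_n)$ lie on a common supporting hyperplane $a_1^-\in\partial\Omega^*$; strict convexity of $\partial\Omega$ forces $a_1=a_2$. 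This defines a continuous equivariant limit map $\xi$ satisfying the Cartan property by construction; the $C^1$ hypothesis on $\partial\Omega$ gives a strictly convex dual $\Omega^*$, yielding $\xi^-$ by the same argument applied to $\Gamma^*$. Finally, hypothesis $(ii)$ (upper bound) and Proposition~\ref{calc} give $\operatorname{dist}(\xi(x),\xi^-(y))\cdot\operatorname{dist}(\xi(y),\xi^-(x))\geqslant e^{-4J(x\cdot y)_e-4k}>0$ for distinct $x,y$, so the maps are transverse, and Theorem~\ref{maintheorem} concludes. This direct construction of the limit maps from the Gromov product comparison is the essential technique you are missing; your route through convex cocompactness would need either the semisimplicity assumption added to the statement or a self-contained proof of cocompactness that the paper does not supply.
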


\begin{proof} \textup{(ii)} $\Rightarrow$ \textup{(i)}. We observe that $\Gamma$ is a discrete subgroup of $\mathsf{PGL}_d(\mathbb{R})$. Let $(\gamma_n)_{n \in \mathbb{N}}$ be an infinite sequence of elements of $\Gamma$ and $x_0 \in \Omega$. We may pass to a subsequence such that $\lim_{n}\gamma_{k_n}x_0 \in \partial \Omega$ exists. Since $\partial \Omega$ is strictly convex we conclude that $\lim_{n}\gamma_{k_n}x_0$ is independent of the basepoint $x_0$. Therefore, as in \cite[Lem. 7.5]{DGK0} or Lemma \ref{mainlemma}, we conclude that $\lim_{n}\frac{\sigma_2}{\sigma_1}(\gamma_{k_n})=0$ and $\Gamma$ has to be $P_1$-divergent.
\par Now let $(\gamma_n)_{n \in \mathbb{N}}, (\delta_n)_{n \in \mathbb{N}}$ be two sequences of elements of $\Gamma$ converging to $x \in \partial_{\infty}\Gamma$. We claim that the limits $\lim_{n}\gamma_n x_0$, $\lim_{n}\delta_n x_0$ exist and are equal. Note that the limits will be independent of the choice of $x_0$. We may write $$\gamma_{n}=w_{\gamma_n}\exp(\mu(\gamma_{n}))w_{\gamma_n}' \ \ \ \textup{and} \ \ \ \delta_{n}=w_{\delta_n}\exp(\mu(\delta_{n}))w_{\delta_n}' $$ where $w_{\gamma_n}, w_{\gamma_n}',w_{\delta_n}, w_{\delta_n}' \in \mathsf{PO}(d)$. Since $\Gamma$ is $P_1$-divergent, there exist subsequences $(\gamma_{k_{n}})_{n \in \mathbb{N}}$, $(\delta_{s_{n}})_{n \in \mathbb{N}}$ such that $a_{1}=\lim_{n}\gamma_{k_{n}}x_0=\lim_{n}\Xi_{1}^{+}(\gamma_{k_n})$, $a_2=\lim_{n} \delta_{s_n}x_0=\lim_{n}\Xi_{1}^{+}(\delta_{s_n})$, $\lim_{n}\Xi_{1}^{-}(\gamma_{k_n})\\=a_{1}^{-}$ and $\lim_{n}\Xi_{1}^{-}(\delta_{s_n})=a_{2}^{-}$, where $\Xi_{1}^{+}(\gamma_{k_{n}})=[w_{\gamma_{k_n}}e_1]$ and $\Xi_{1}^{-}(\gamma_{k_{n}})=[w_{\gamma_{k_n}}e_{d}^{\perp}]$. Proposition \ref{calc} and the fact that $(\gamma_{k_{n}}\cdot \delta_{s_{n}})_{\varepsilon_1} \rightarrow +\infty$ show $$\lim_{n \rightarrow \infty} \textup{dist}\big(\Xi_{1}^{+}(\gamma_{k_{n}}),\Xi_{1}^{-}(\delta_{s_{n}}) \big) \cdot \textup{dist}\big(\Xi_{1}^{+}(\delta_{s_{n}}),\Xi_{1}^{-}(\gamma_{k_{n}}) \big)=0$$ so either $a_1\in a_2^{-}$ or $a_2 \in a_1^{-}$. Using the same argument, we see that $$\lim_{n \rightarrow \infty} \textup{dist}\big(\Xi_{1}^{+}(\gamma_{k_{n}}),\Xi_{1}^{-}(\gamma_{k_{n}}) \big)=\lim_{n \rightarrow \infty} \textup{dist}\big(\Xi_{1}^{+}(\delta_{s_{n}}),\Xi_{1}^{-}(\delta_{s_{n}}) \big)=0$$ so $a_{i} \in a_{i}^{-}$ for $i=1,2$. In each case, the previous calculation shows that $a_1,a_2 \in a_{1}^{-}$ or $a_1,a_2 \in a_{2}^{-}$. Without loss of generality, assume that $a_2 \in a_{1}^{-}$, so the projective line segment $[a_1,a_2]$ is contained in the projective hyperplane $a_{1}^{-}$ and $\overline{\Omega}$. Since $\Gamma$ is $P_1$-divergent, there exist $x_{0}^{\ast} \in \Omega^{\ast}$ such that $\lim_{n}\Xi_{1}^{-}(\gamma_{k_{n}})=\lim_{n}\gamma_{k_n}x_{0}^{\ast}$ and $a_{1}^{-} \in \partial \Omega^{\ast}$. Therefore, $a_{1}^{-}$ avoids $\Omega$. We conclude that $[a_1,a_2]$ is contained in $\partial \Omega$ and $a_1=a_2$. 
\par The previous discussion shows that for any two sequences of $(\gamma_n)_{n \in \mathbb{N}}$ and $(\delta_n)_{n \in \mathbb{N}}$ converging to $x \in \partial_{\infty}\Gamma$ the limits $\lim_{n}\gamma_nx_0$ and $\lim_{n}\delta_nx_0$ exist and are equal. We obtain a $\Gamma$-equivariant map $\xi:\partial_{\infty}\Gamma \rightarrow \mathbb{P}(\mathbb{R}^d)$ defined by the formula $\xi(\lim_{n}\gamma_n)=\lim_{n}\gamma_n x_0$. Let $x=\lim_{n}\delta_n$ and suppose $\lim_{n}x_n=x$ in $\partial_{\infty}\Gamma$. We may write $x_{n}=\lim_{m}\gamma_{n,m}$. For every $n\in \mathbb{N}$ there are $k_{n},m_n \in \mathbb{N}$, such that $(\gamma_{n,k_{n}} \cdot \delta_{m_n})_{e}>n$ and $d_{\mathbb{P}}\big(\gamma_{n,k_{n}}x_0,\xi(x_{n}) \big) \leqslant \frac{1}{n}$. Then, $\lim_{n}\gamma_{n,k_{n}}x_0$ exists and is equal to $\xi(x)=\lim_{n}\delta_{n}x_0$. It follows, that $\lim_{n}\xi(x_n)=\xi(x)$. So the map $\xi$ is continuous. By definition $\xi$ has the Cartan property.
\par The dual convex set $\Omega^{\ast}$ has strictly convex boundary since the boundary of $\Omega$ is of class $C^1$. By considering the standard identification of $\mathbb{P}((\mathbb{R}^d)^{\ast})$ with $\mathbb{P}(\mathbb{R}^d)$, we obtain a properly convex domain $\Omega'$ of $\mathbb{P}(\mathbb{R}^d)$ which is $\Gamma^{\ast}$-invariant and has strictly convex boundary. Since $(\gamma^{-t}\cdot \delta^{-t})_{\varepsilon_1}=(\gamma \cdot \delta)_{\varepsilon_1}$, we obtain a continuous \hbox{$\Gamma^{\ast}$-equivariant} limit map $\xi^{\ast}:\partial_{\infty}\Gamma \rightarrow \mathbb{P}(\mathbb{R}^d)$ satisfying the Cartan property. From $\xi^{\ast}$ we obtain a $\Gamma$-equivariant continuous map $\xi^{-}:\partial_{\infty}\Gamma \rightarrow \mathsf{Gr}_{d-1}(\mathbb{R}^d)$ as follows: if $\xi^{\ast}(x)=[k_{x}e_1]$ where $k_{x} \in \mathsf{PO}(d)$ then $\xi^{-}(x)=[k_{x}e_{1}^{\perp}]$. \par For two distinct boundary points $x,y \in \partial_{\infty}\Gamma$ denote by $(x\cdot y)_{e}$ their Gromov product. By definition, we may choose sequences $(\alpha_n)_{n \in  \mathbb{N}}, (\beta_n)_{n \in \mathbb{N}}$ in $\Gamma$ with $x=\lim_{n}\alpha_n$, $y=\lim_{n}\beta_n$ and $(x\cdot y)_{e}=\lim_{n}(\alpha_n\cdot \beta_n)_{e}$. By assumption we have that $\varliminf_n\big(\rho(\alpha_n)\cdot \rho(\beta_n)\big)_{e}\geq J^{-1}(x\cdot y)_{e}-k$ and hence by Proposition \ref{calc} we obtain the lower bound $$\textup{dist}\big(\xi(x),\xi^{-}(y)\big)\cdot \textup{dist}\big(\xi(y),\xi^{-}(x) \big) \geqslant e^{-4J (x\cdot y)_{e}-4k}>0.$$ Therefore, the pair of maps $(\xi,\xi^{-})$ is transverse. Finally, the inclusion $\Gamma \xhookrightarrow{} \mathsf{PGL}_d(\mathbb{R})$ is $P_1$-divergent, admits a pair $(\xi,\xi^{-})$  of $\Gamma$-equivariant, continuous transverse maps with the Cartan property, so Theorem \ref{maintheorem} shows that the inclusion $\Gamma  \xhookrightarrow{} \mathsf{PGL}_d(\mathbb{R})$ is $P_1$-Anosov. \par The converse is a direct consequence of Proposition \ref{Gromovproduct1}.   \end{proof}

\begin{proof}[Proof of Theorem \ref{stronglyconveccocompact}] The implication \textup{(i)} $\Rightarrow$ \textup{(ii)} follows immediately by the Svarc--Milnor lemma. Now assume that \textup{(ii)} holds. By \cite[Thm. 1.4]{DGK0} it is enough to prove that $\Gamma \xhookrightarrow{} \mathsf{PGL}_d(\mathbb{R})$ is $P_1$-Anosov. Let $x_{0} \in \mathcal{C}$. Lemma \ref{control} shows that  the orbit map $x_0 \mapsto \gamma x_0$ is a quasi-isometric embedding of $\Gamma$ into $(\mathcal{C},d_{\Omega})$, hence $\Gamma$ is word hyperbolic. By using Lemma \ref{control} we deduce that there exist constants $J,k>0$ such that for every $\gamma_1, \gamma_2 \in \Gamma$, $$J^{-1}(\gamma_1 \cdot \gamma_2)_{e}-k \leqslant \big(\rho(\gamma_1) \cdot \rho(\gamma_2)  \big)_{\varepsilon_1} \leqslant J(\gamma_1 \cdot \gamma_2)_{e}+k.$$ Proposition \ref{main} then finishes the proof.\end{proof}

\section{Distribution of singular values} \label{sub} Recall for $d\geq 2$, $(e_1,\ldots,e_d)$ denotes the canonical basis of $\mathbb{R}^d$. For $q \in \mathbb{N}$ consider $$\textup{Sym}^q\mathbb{R}^d:=\bigoplus_{k_1+\cdots+k_d=q}\mathbb{R}e_{1}^{k_1}e_2^{k_2}\cdots e_{d}^{k_d}$$ the symmetric power of $\mathbb{R}^d$. The $q$-symmetric power $\textup{sym}^{q}:\mathsf{GL}_d(\mathbb{R}) \rightarrow \mathsf{GL}(\textup{Sym}^{q}\mathbb{R}^d\big)$ is the representation defined as follows: for $g=(g_{ij})_{ij=1}^{n}\in \mathsf{GL}_d(\mathbb{R})$, define $\textup{sym}^q(g)(e_1^{k_1}\cdots e_{d}^{k_d}):=(ge_1)^{k_1}\cdots (ge_d)^{k_d}=\prod_{j=1}^{d}(\sum_{i}g_{ij}e_i)^{k_j}$ for any basis vector $e_1^{k_1}\cdots e_d^{k_d}$ of $\textup{Sym}^q\mathbb{R}^d$. 

\begin{remark}\normalfont{ For $q\in \mathbb{N}$, note that respect to the standard Cartan decomposition of $\mathsf{GL}(\textup{Sym}^q\mathbb{R}^d)$, for every $g\in \mathsf{GL}_d(\mathbb{R})$ we have that  $\sigma_{1}(\textup{sym}^q g)=(\sigma_{1}((g))^q$, $\ell_1(\textup{sym}^q g)=\ell_1(g)^q$ and $\sigma_2(\textup{sym}^2g)=\sigma_1(g)^{q-1}\sigma_2(g)$, $\ell_2(\textup{sym}^qg)=\ell_1(g)^{q-1}\ell_2(g)$. In particular, by the characterizations of Anosov representations in terms of singular value (resp. eigenvalue) gaps \cite{KLP2, BPS} (resp. \cite{kassel-potrie}), a representation $\rho:\Gamma \rightarrow \mathsf{GL}_d(\mathbb{R})$ is $P_1$-Anosov if and only if $\textup{sym}^q\rho:\Gamma \rightarrow \mathsf{GL}(\textup{Sym}^q \mathbb{R}^d))$ is $P_1$-Anosov.}\end{remark}

\par  By using Theorem \ref{maintheorem} we exhibit conditions guaranteeing that the product of two linear representations of a hyperbolic group is $P_1$-Anosov. 

\begin{theorem} \label{directsum}  Let $\Gamma$ be a word hyperbolic group and $\rho_{L}:\Gamma \rightarrow \mathsf{SL}_m(\mathbb{R})$, $\rho_{R}:\Gamma \rightarrow \mathsf{SL}_d(\mathbb{R})$ two representations. Suppose there is an infinite order element $\gamma_0 \in \Gamma$ with \hbox{$\ell_1(\rho_{L}(\gamma_0)) > \ell_1(\rho_{R}(\gamma_0))$.} Furthermore, suppose that $\rho_{L}$ is $P_1$-Anosov and $\rho_R$ satisfies one of the following conditions:
\begin{enumerate}[label=(\roman*)]
\item $\rho_{R}$ is $P_1$-Anosov.
\item $\rho_{R}(\Gamma)$ is contained in a semisimple proximal Lie subgroup of $\mathsf{SL}_d(\mathbb{R})$ of real rank $1$.\end{enumerate}

\noindent Then, the following conditions are equivalent:

\begin{enumerate}

\item The representation $\rho_{L}\times \rho_{R}:\Gamma \rightarrow \mathsf{SL}_{m+d}(\mathbb{R})$ is $P_1$-Anosov and $\rho_L$ uniformly dominates $\rho_R$.

\item $\underset{|\gamma|_{\Gamma} \rightarrow \infty}{\lim} \frac{\sigma_1(\rho_{L}(\gamma))}{\sigma_1(\rho_R(\gamma))} =+\infty$.

\item $\underset{|\gamma|_{\infty} \rightarrow \infty}{\lim}  \frac{\ell_1(\rho_{L}(\gamma))}{\ell_1(\rho_R(\gamma))} =+\infty$.

\item There exist $C,c>0$ such that for every $\gamma \in \Gamma$ non-trivial, $$ \Big|\log \sigma_1(\rho_{L}(\gamma))-\log \sigma_1(\rho_R(\gamma)) \Big|\geqslant c\log|\gamma|_{\Gamma}-C.$$

\item There exist $C,c>0$ such that  for every $\gamma \in \Gamma$ of infinite order $$ \Big|\log \ell_1(\rho_{L}(\gamma))-\log \ell_1(\rho_R(\gamma)) \Big|\geqslant c\log|\gamma|_{\infty}-C.$$\end{enumerate}\end{theorem}

\begin{proof} Let $G$ be a $P_1$-proximal Lie subgroup of $\mathsf{SL}_d(\mathbb{R})$ of real rank $1$ with Cartan projection \hbox{$\mu_{G}:G \rightarrow \mathbb{R}_{+}$}. Up to conjugation by an element of $\mathsf{GL}_d(\mathbb{R})$, we may write $G=K_{G} \exp \big(\mathbb{R}_{+}X_0 \big)K_{G}$, \hbox{$K_{G} \subset h\mathsf{SO}(d)h^{-1}$} for some \hbox{$h \in \mathsf{SL}_d(\mathbb{R})$} and $\exp(tX_0)=\textup{diag}(e^{ta_1},\ldots,e^{ta_k})$ with $a_1>a_2\geq \ldots \geq a_{d-1}>a_{d}$. The sub-additivity of the Cartan projection shows that there exists $M>0$ such that $$\Big|\log \sigma_i(g)-a_{i}\mu_{G}(g) \Big| \leqslant M$$ for every $g \in G$ and $1 \leq i \leq d$. In particular, there exists $M'>0$ such that $$\log \frac{\sigma_1(g)}{\sigma_2(g)} \geq \frac{a_1-a_2}{a_1} \log \sigma_1(g) -M'$$ for every $g \in G$. Since either (i) or (ii) holds true for $\rho_R$, we may find \hbox{$A,a>0$ such that for every $\gamma\in \Gamma$,} \begin{align*}\log \frac{\ell_1(\rho_{R}(\gamma))}{\ell_2(\rho_R(\gamma))}  \geqslant  a\log \ell_1(\rho_{R}(\gamma)), \ \log \frac{\sigma_1(\rho_{R}(\gamma))}{\sigma_2(\rho_R(\gamma))}  \geqslant  a\log \sigma_1(\rho_{R}(\gamma))-A .\end{align*} 

 Let $\rho:=\rho_{L} \times \rho_{R}$. We obtain continuous, $\rho$-equivariant and transverse maps $\xi_{LR}^{+}:\partial_{\infty}\Gamma \rightarrow \mathbb{P}(\mathbb{R}^{m+d})$ and $\xi_{LR}^{+}:\partial_{\infty}\Gamma \rightarrow \mathsf{Gr}_{m+d-1}(\mathbb{R}^{m+d})$ defined as follows: $$\xi^{+}_{LR}(x)=\xi_{L}^{+}(x), \ \xi^{-}_{LR}(x)=\xi^{-}_{L}(x) \oplus \mathbb{R}^d$$ where $\xi_{L}^{+}$ and $\xi_{L}^{-}$ are the Anosov limit maps of $\rho_L$. For every element $\gamma \in \Gamma$ we observe that the following estimates hold: \begin{align} \label{bound-a} \Bigg|\log \frac{\sigma_1(\rho_L(\gamma))}{\sigma_1(\rho_R(\gamma))} \Bigg| & \geqslant  \log \frac{\sigma_1(\rho(\gamma))}{\sigma_2(\rho(\gamma))}, \ \Bigg|\log \frac{\ell_1(\rho_L(\gamma))}{\ell_1(\rho_R(\gamma))} \Bigg|  \geqslant \log \frac{\ell_1(\rho(\gamma))}{\ell_2(\rho(\gamma))},\end{align} \begin{align}
\label{bound-b} \log \frac{\sigma_1(\rho(\gamma))}{\sigma_2(\rho(\gamma))} &\geqslant \min \Bigg( \Bigg|\log \frac{\sigma_1(\rho_L(\gamma))}{\sigma_1(\rho_R(\gamma))} \Bigg|, \log \frac{\sigma_1(\rho_L(\gamma))}{\sigma_2(\rho_L(\gamma))}, \log \frac{\sigma_1(\rho_R(\gamma))}{\sigma_2(\rho_R(\gamma))}\Bigg), \\ \nonumber \ \log \frac{\ell_1(\rho(\gamma))}{\ell_2(\rho(\gamma))} &\geqslant \min \Bigg( \Bigg|\log \frac{\ell_1(\rho_L(\gamma))}{\ell_1(\rho_R(\gamma))} \Bigg|, \log \frac{\ell_1(\rho_L(\gamma))}{\ell_2(\rho_L(\gamma))}, \log \frac{\ell_1(\rho_R(\gamma))}{\ell_2(\rho_R(\gamma))}\Bigg). \end{align}

\noindent $\textup{(2)} \Rightarrow \textup{(1)}$. We observe that condition (2) and estimate (\ref{bound-b}) together show that $\rho$ is $P_1$-divergent. Since $\xi_{L}^{+}$ satisfies the Cartan property and $\sigma_1(\rho_L(\gamma))>\sigma_1(\rho_R(\gamma))$ as $|\gamma|_{\Gamma} \rightarrow \infty$,  the map $\xi_{LR}^{+}$ has the Cartan property. The maps $\xi_{LR}^{+}$ and $\xi_{LR}^{-}$ are transverse, hence Theorem \ref{maintheorem} shows that $\rho_L \times \rho_R$ is $P_1$-Anosov. $\qed$
\medskip

\noindent $\textup{(3)} \Rightarrow \textup{(1)}$. We are proving that $\textup{(3)} \Rightarrow \textup{(2)}$. Let $\rho_{L}^{ss},\rho_{R}^{ss}$ be semisimplifications of $\rho_{L},\rho_{R}$ respectively. By Proposition \ref{semisimplification}, it is enough to show that $\rho_{L}^{ss}\times \rho_{R}^{ss}$ is $P_1$-Anosov. By Theorem \ref{finitesubset} there exists $C>0$ and a finite subset $F$ of $\Gamma$ such that for every $\gamma \in \Gamma$, there exists $f \in F$ such that $|\log \ell_1(\rho_{L}(\gamma f))-\log \sigma_1(\rho_{L}^{ss}(\gamma))| \leqslant C$ and \hbox{$|\log \ell_1(\rho_{R}(\gamma f))-\log\sigma_1(\rho_{R}^{ss}(\gamma))| \leqslant C$}. 

Let $(\gamma_n)_{n \in \mathbb{N}}$ be an infinite sequence of elements of $\Gamma$. For every $n$ we choose $f_n \in F$ satisfying the previous bounds. The triangle inequality shows \hbox{$||\lambda(\rho_{L}(\gamma_n f_n)||\geqslant ||\mu(\rho_{L}(\gamma_n))||-C$}, hence $\lim_{n}|\gamma_n f_n|_{\infty}=+\infty$. Therefore, $\lim_{n} \big(\log \ell_1(\rho_{L}^{ss}(\gamma_n f_n))-\log \ell_1(\rho_{R}^{ss}(\gamma_n f_n)) \big)=+\infty$ so $\lim_{n}\big(\log \sigma_1(\rho^{ss}_{L}(\gamma_n))-\log \sigma_1(\rho_{R}^{ss}(\gamma_n))\big)=+\infty$. The claim now follows by $\textup{(2)} \Rightarrow \textup{(1)}$. $\qed$
\medskip

\noindent $\textup{(4)} \Rightarrow \textup{(1)}$. We first assume that $c>1$. By estimate (\ref{bound-b}), there exists a constant $C_1>0$ such that $$\log \frac{\sigma_1(\rho(\gamma))}{\sigma_2(\rho(\gamma))} \geqslant c\log|\gamma|_{\Gamma}-C_1$$ for every $\gamma \in \Gamma$. Therefore, by \cite[Thm. 5.3]{GGKW}, we obtain a $\rho$-equivariant map \hbox{$\xi:\partial_{\infty} \Gamma \rightarrow \mathbb{P}(\mathbb{R}^{m+d})$} which satisfies the Cartan property. Then, since $\rho(\gamma_0)$ is $P_1$-proximal, we have $\xi(\gamma_0^{+})=\xi_{LR}^{+}(\gamma_0^{+})$. The minimality of the action of $\Gamma$ on $\partial_{\infty}\Gamma$ shows that $\xi=\xi_{LR}^{+}$. Then $\xi_{LR}^{+}$ satisfies the Cartan property, $\xi_{LR}^{-}$ and $\xi_{LR}^{+}$ are transverse and $\rho$ is $P_1$-divergent. Theorem \ref{maintheorem} shows that $\rho$ is $P_1$-Anosov. \par Now suppose $c\leqslant 1$. We choose $n \in \mathbb{N}$ large enough and consider the symmetric powers $\textup{sym}^{n} \rho_L, \textup{sym}^{n} \rho_{R}$ of $\rho_L, \rho_R$ respectively. Then $\textup{sym}^{n} \rho_L$ is $P_1$-Anosov and $\textup{sym}^{n} \rho_{R}$ satisfies either \textup{(i) or (ii)}. Since $\log \sigma_1(\textup{sym}^{n}\rho_{R}(\gamma))=n\log\sigma_1(\rho_{R}(\gamma))$ for $\gamma \in \Gamma$, the representation $\textup{sym}^{n}\rho_L \times \textup{sym}^{n}\rho_R$ satisfies condition (3) for $c>1$. Therefore, the previous argument implies that the representation $\textup{sym}^{n} \rho_L \times \textup{sym}^{n} \rho_R$ is $P_1$-Anosov. Therefore, by estimate (\ref{bound-a}), we obtain constants $R,k>0$ with $${\Big|\log \sigma_1(\rho_{L}(\gamma))-\log \sigma_1(\rho_R(\gamma))  \Big| \geqslant k|\gamma|_{\Gamma}-R\geq 2\log |\gamma|_{\Gamma}-R }$$ for all but finitely many $\gamma \in \Gamma$. Again, by the argument of the previous paaragraph, we verify that $\rho$ is $P_1$-Anosov. $\qed$

\medskip

\noindent $\textup{(5)} \Rightarrow \textup{(1)}$. It is enough to prove that the semisimplification $\rho_{L}^{ss} \times \rho_{R}^{ss}$ of $\rho$ is $P_1$-Anosov. Note that the representation $\rho_{L}^{ss}$ is $P_1$-Anosov and $\rho_{R}^{ss}$ satisfies either \textup{(i)} or \textup{(ii)}. By Theorem \ref{finitesubset} there exists $L>0$ and a finite subset $F$ of $\Gamma$ such that for every $\gamma \in \Gamma$ there exists $w \in F$ with $||\lambda(\rho_L(\gamma w))-\mu(\rho_{L}^{ss}(\gamma))||\leqslant L$ and \hbox{$||\lambda(\rho_R(\gamma w))-\mu(\rho_{R}^{ss}(\gamma))||\leqslant L$}. Since $\rho_{L}$ is a quasi-isometric embedding, by using the previous inequality, we may find $M>0$ such that $|\gamma w|_{\infty} \geqslant \frac{1}{M}|\gamma|_{\Gamma}-M$, where $\gamma \in \Gamma$ and $w \in F$ are as previously. Finally, we obtain $L',c>0$ such that for every $\gamma \in \Gamma$ non-trivial we have $$\Big|\log \sigma_1(\rho^{ss}_{L}(\gamma))-\log \sigma_1(\rho^{ss}_R(\gamma)) \Big|\geqslant  c\log|\gamma|_{\Gamma}-L'.$$ Therefore, $\rho_{L}^{ss}\times \rho_{R}^{ss}$ is $P_1$-Anosov from $\textup{(4)} \Rightarrow \textup{(1)}$.  $\qed$

\medskip

\noindent $\textup{(1)} \Rightarrow \textup{(2),(3),(4),(5)}$. Since $\ell_1(\rho_L(\gamma_0))>\ell_1(\rho_R(\gamma_0))$, $\xi_{LR}^{+}(\gamma_{0}^{+})$ is the attracting fixed point of $\rho(\gamma_0)$ in $\mathbb{P}(\mathbb{R}^{m+d})$. The action of $\Gamma$ on $\partial_{\infty}\Gamma$ is minimal, hence $\xi_{LR}^{+}$ has to be the Anosov limit map of $\rho$ in $\mathbb{P}(\mathbb{R}^{m+d})$. In particular, $\xi_{LR}^{+}$ satisfies the Cartan property. This shows that for any sequence $(\gamma_n)_{n \in \mathbb{N}}$ of elements of $\Gamma$ we have $\lim_{n}\big(\log \sigma_1(\rho_L(\gamma_n))-\log \sigma_1(\rho_R(\gamma_n))\big)=+\infty$. In particular, there exists $\varepsilon>0$ such that $(1-\varepsilon)\log \ell_1(\rho_L(\gamma))  \geqslant \log \ell_1(\rho_{R}(\gamma))$ for every $\gamma \in \Gamma$. By estimates (\ref{bound-a}), (\ref{bound-b}) and Theorem \ref{mainproperties} (ii) we deduce that \textup{(3), (4), (5)} hold. \end{proof}

\noindent {\em Proof of Corollary \ref{interval}.} Given $p,q\in \mathbb{N}$ with $\textup{dil}_{-}(\rho_1,\rho_2)\leq \frac{p}{q}\leq \textup{dil}_{+}(\rho_1,\rho_2)$, consider the representation $\rho_{p,q}:=\textup{sym}^{q}\rho_1  \times \textup{sym}^{p}\rho_2$. The representation $\textup{sym}^{p}\rho_2$ is $P_1$-Anosov and $\textup{sym}^{q}\rho_1$ satisfies either condition \textup{(i) or (ii)} of Theorem \ref{directsum}. The choice of $p,q\in \mathbb{N}$ shows that the representation $\textup{sym}^{p}\rho_2$ cannot uniformly dominate $\textup{sym}^{q}\rho_1$, so $\rho_{p,q}$ cannot be $P_1$-Anosov. Then, Theorem \ref{directsum} (3) shows that for given $\epsilon>0$ and every $n \in \mathbb{N}$, we can find an element $\gamma_n \in \Gamma$ with $|\gamma_n|_{\Gamma}>n$ and $\big|q\mu_1(\rho_1(\gamma_n))-p\mu_{1}(\rho_2(\gamma_n)) \big| \leqslant \epsilon \log (\mu_1(\rho_1(\gamma_n)))$. The conclusion follows.$\qed$

\begin{remarks}\normalfont{ \textup{(i)} In Theorem \ref{directsum}, in the particular case where both $\rho_L(\Gamma)$ and $\rho_R(\Gamma)$ are contained in a proximal real rank $1$ Lie subgroup of $\mathsf{SL}_m(\mathbb{R})$ and $\mathsf{SL}_d(\mathbb{R})$ respectively, the equivalences $(1)\Leftrightarrow (2) \Leftrightarrow (3)$ are contained in \cite[Thm. 1.14]{GGKW}. In the case where $\rho_L$ and $\rho_R$ take values in $\textup{Aut}_{{\mathbb{K}}}(\mathsf{B})$ (${\mathbb{K}}=\mathbb{R}, \mathbb{C}, \mathbb{H}$) for some bilinear form $\mathsf{B}$ (see \cite[\S 7]{GGKW} for background), the implications $(1) \Leftrightarrow (2) \Leftrightarrow (3) \Rightarrow (5) \Rightarrow (4)$ of Theorem \ref{directsum} are contained in \cite[Prop. 7.13 \& Lem. 7.11 \& Thm. 1.3]{GGKW}.\\
\noindent \textup{(ii)} By Theorem \ref{finitesubset} and Corollary \ref{interval} we deduce that the closure of the set of ratios $$\Bigg\{ \frac{\log \ell_1(\rho_1(\gamma))}{\log \ell_1(\rho_2(\gamma))}:\gamma \in \Gamma_{\infty} \Bigg \}$$ is the closed interval $\big[\textup{dil}_{-}(\rho_1,\rho_2), \textup{dil}_{+}(\rho_1,\rho_2)\big]$. We may replace both $\rho_1$ and $\rho_2$ with their semisimplifications, and this fact also follows by the limit cone theorem of Benoist in \cite{benoist-limitcone, limitcone2}. In the case where $\rho_1$ and $\rho_2$ are convex cocompact into a rank 1 Lie group, the previous fact also follows by \cite[Thm. 2]{Burger}.}\end{remarks}

\section{Examples and counterexamples} \label{examples}

In this section, we discuss examples of representations of surface groups enjoying some of the properties of Anosov representations  which are not $P_1$-Anosov. The examples show that the assumptions of the main results of this paper are necessary. Throughout this section, $g\in \mathbb{N}$ denotes the genus of a surface and $S_g$ denotes \hbox{the (topological) closed orientable surface of genus $g \geq 2$.}

Recall that for a subgroup $H$ of $\mathsf{GL}_d(\mathbb{R})$, containing a $P_1$-proximal element, we denote by $\Lambda_H$ its $P_1$-proximal limit set in $\mathbb{P}(\mathbb{R}^d)$.

\begin{Example} \label{ex1}There exists a strongly irreducible representation $\rho:\pi_1(S_g)\rightarrow \mathsf{SL}_{12}(\mathbb{R})$ which satisfies the following properties: 
\begin{enumerate}
\item $\rho$ is a quasi-isometric embedding, $P_1$-divergent and preserves a properly convex domain $\Omega$ of $\mathbb{P}(\mathbb{R}^{12})$.
\item $\rho$ admits continuous, injective, $\rho$-equivariant maps $$(\xi_1,\xi_{11}): \partial_{\infty}\pi_1(S_g) \rightarrow \mathbb{P}(\mathbb{R}^{12})\times \mathsf{Gr}_{11}(\mathbb{R}^{12})$$ satisfying the Cartan property. The proximal limit set of $\rho(\pi_1(S_g))$ in $\mathbb{P}(\mathbb{R}^{12})$ is $\xi_{1}(\partial_{\infty}\pi_1(S_g))$ and does not contain projective line segments.
\item $\rho$ admits continuous, $\rho$-equivariant  maps $$(\xi_4,\xi_8): \partial_{\infty}\pi_1(S_g) \rightarrow \mathsf{Gr}_4(\mathbb{R}^{12})\times \mathsf{Gr}_{8}(\mathbb{R}^{12})$$ which are transverse.
\item  $\rho$ is not $P_k$-Anosov for any $k=1,\ldots,11$.
\end{enumerate}
\medskip

\noindent The previous example shows that the assumption of transversality in Theorem \ref{maintheorem} is necessary. Moreover, the maps $\xi_4$ and $\xi_8$ are transverse although $\rho$ is not $P_4$-Anosov, therefore the Zariski density assumption  in Theorem \ref{Zariskidense} cannot be dropped.

\begin{proof} Let $g\geq 2$ and $\phi:S_g \rightarrow S_g$ a pseudo-Anosov homeomorphism. The mapping torus $M_{\phi}$ of $S_g$ with respect to $\phi$ is a closed $3$-manifold whose fundamental group is isomorphic to the HNN extension $$\pi_1(M_{\phi})=\Big \langle \pi_1(S_g),t \ \Big| \ tht^{-1}=\phi_{\ast}(h), \ h \in \pi_1(S_g) \Big \rangle$$ where $\phi_{\ast}$ is a representative of the well-defined outer automorphism of $\pi_1(S_g)$, induced by $\phi$. Thurston in \cite{Thurston} (see also Otal \cite{Otal}) proved that there exists a convex cocompact representation $\rho_0:\pi_1(M_{\phi}) \rightarrow \mathsf{PO}(3,1)$. The representation $\rho_0$ lifts to a $P_1$-Anosov representation in $\mathsf{SL}_4(\mathbb{R})$ which we continue to denote by $\rho_0$ and let $\rho_{\textup{Fiber}}:=\rho_{0}|_{\pi_1(S_g)}$. By a result of Cannon-Thurston \cite{CannonThurston}, there exists a continuous $\pi_1(S_g)$-equivariant surjection \hbox{$\theta: \partial_{\infty}\pi_1(S_g) \twoheadrightarrow \partial_{\infty}\pi_1(M_{\phi})$}. By precomposing $\theta$ with the Anosov limit map of $\rho_0$ in $\mathbb{P}(\mathbb{R}^4)$, we obtain a $\rho_{\textup{Fiber}}$-equivariant continuous map $\xi_{\textup{Fiber}}:\partial_{\infty}\pi_1(S_g) \rightarrow \mathbb{P}(\mathbb{R}^4)$. 

Fix a pants decomposition of $S_g$ and let $\gamma_0 \in \pi_1(S_g)$ be an element representing a separating simple closed curve on this decomposition. We claim that there is a Zariski dense, Hitchin representation \hbox{$\rho_{\textup{H}}:\pi_1(S_g) \rightarrow \mathsf{SL}_3(\mathbb{R})$} with $\ell_1(\rho_{\textup{H}}(\gamma_0))=\lambda^2$, $\rho_{\textup{H}}(\gamma_0)=\textup{diag}(\lambda^2,1,\lambda^{-2})$ and $\lambda:= \ell_1 (\rho_{\textup{Fiber}}(\gamma_0))$. To see this, using the fixed pants decomposition of $S_g$, we can fix a discrete faithful representation $j_0:\pi_1(S_g)\rightarrow \mathsf{SL}_2(\mathbb{R})$ such that the modulus of the first eigenvalue of $j_0(\gamma_0)$ is equal to $\lambda$. By composing $j_0$ with the irreducible representation $\textup{sym}^2:\mathsf{SL}_2(\mathbb{R})\rightarrow \mathsf{SL}_3(\mathbb{R})$, we obtain the Fuchsian representation $\textup{sym}^2j_0$ such that $\textup{sym}^2j_0(\gamma_0)$ is conjugate to the matrix $\textup{diag}(\lambda^2,1,\lambda^{-2})$. Then bending along the curve representing $\gamma_0$, gives a Zariski dense Hitchin representation $\rho_{\textup{H}}:\pi_1(S_g)\rightarrow \mathsf{SL}_3(\mathbb{R})$, arbitrarily close to $\textup{sym}^2j_0$, with $\rho_{\textup{H}}(\gamma_0)=\textup{sym}^2j_0(\gamma_0)$.

We claim that $\rho= \rho_{\textup{Fiber}}\otimes \rho_{\textup{H}}: \pi_1(S_g) \rightarrow \mathsf{SL}_{12}(\mathbb{R})$ satisfies the required properties. Consider $\otimes:\mathsf{SO}(3,1) \times \mathsf{SL}_3(\mathbb{R}) \rightarrow \mathsf{SL}_{12}(\mathbb{R})$ the irreducible tensor product representation $(g_1,g_2)\mapsto g_1 \otimes g_2$. Let $G$ be the Zariski closure of $\rho_{\textup{Fiber}} \times \rho_{\textup{H}}$ into $\mathsf{SO}(3,1) \times \mathsf{SL}_3(\mathbb{R})$. Note that the projection of the identity component $G^0$ into $\mathsf{SO}(3,1)$ (resp. $\mathsf{SL}_3(\mathbb{R})$) is normalized by $\rho_{\textup{Fiber}}(\pi_1(S_g))$ (resp. $\rho_{\textup{H}}(\pi_1(S_g))$), so it has to be surjective. Since the Zariski closures of $\rho_{\textup{Fiber}}$ and $\rho_{\textup{H}}$ are simple and not locally isomorphic, it follows by Goursat's lemma that $G=\mathsf{SO}(3,1)\times \mathsf{SL}_3(\mathbb{R})$. We conclude that $\rho$ is strongly irreducible.
\par We obtain a properly convex domain $\Omega$ of $\mathbb{P}(\mathbb{R}^{12})$ preserved by $\rho(\pi_1(S_g))$ as follows. Let $\Omega_1$ and $\Omega_2$ be properly convex domains of $\mathbb{P}(\mathbb{R}^4)$ and $\mathbb{P}(\mathbb{R}^3)$ preserved by $\rho_{\textup{Fiber}}(\pi_1(S_g))$ and $\rho_{\textup{H}}(\pi_1(S_g))$ respectively, and $\Omega_{i}'$ a properly convex cone lifting $\Omega_{i}$ for $i=1,2$. The compact set $$\mathcal{C}:=\Big\{ [u_1 \otimes u_2]\in  \mathbb{P}(\mathbb{R}^{4}\otimes \mathbb{R}^3)  : u_1 \in \overline{\Omega_{1}'},  u_2 \in \overline{\Omega_{2}'} \Big\}$$ is connected, spans $\mathbb{R}^{12}$ and is contained in an affine chart $\mathbb{A}\subset \mathbb{P}(\mathbb{R}^{4}\otimes \mathbb{R}^3)\simeq \mathbb{P}(\mathbb{R}^{12})$. We finally take $\Omega$ to be the interior of the convex hull of $\mathcal{C}$ in $\mathbb{A}$. 
\par  The representations $\rho_{\textup{Fiber}}$ and $\rho_{\textup{H}}$ are $P_1$-divergent hence $\rho$ is also $P_1$-divergent as \begin{align*}\sigma_1(\rho(\gamma))&=\sigma_1(\rho_{\textup{Fiber}}(\gamma))\sigma_1(\rho_{\textup{H}}(\gamma)) \ \ \forall \gamma \in \Gamma \\ \sigma_2(\rho(\gamma))&=\max\big\{  \sigma_1(\rho_{\textup{Fiber}}(\gamma))\sigma_2(\rho_{\textup{H}}(\gamma)), \sigma_1(\rho_{\textup{H}}(\gamma))   \big\}  \ \ \forall \gamma \in \Gamma. \end{align*} In addition, since $\rho_{\textup{H}}$ is a quasi-isometric embedding, we deduce that  $\rho$ is also a quasi-isometric embedding. Let \hbox{$\xi_{\textup{H}}: \partial_{\infty}\pi_1(S_g) \rightarrow \mathbb{P}(\mathbb{R}^3)$} and \hbox{$\xi_{\textup{H}}^{-}: \partial_{\infty}\pi_1(S_g) \rightarrow \mathsf{Gr}_2(\mathbb{R}^3)$} be the Anosov limit maps of $\rho_{\textup{H}}$. The map \hbox{$\xi_1:\partial_{\infty}\pi_1(S_g)\rightarrow \mathbb{P}(\mathbb{R}^{12})$} defined as $$\xi_1(x)=\big[k_{x}e_1 \otimes k_{x}'e_1\big]$$ where $\xi_{\textup{Fiber}}(x)=[k_{x}e_1]$ and $\xi_{\textup{H}}(x)=[k_{x}'e_1]$, is continuous and $\rho$-equivariant. Since $\rho$ is strongly irreducible, the proof of Corollary \ref{Cartan} shows that the map $\xi_1$ satisfies the Cartan property. The image of $\xi_1$ is the $P_1$-proximal limit set $\Lambda_{\rho(\pi_1(S_g))}$ of $\rho(\pi_1(S_g))$ in $\mathbb{P}(\mathbb{R}^{12})$, since $\Gamma$ acts minimally on $\partial_{\infty}\pi_1(S_g)$. Similarly, the dual reprsentation $\rho^{\ast}=\rho_{\textup{Fiber}}^{\ast}\otimes \rho_{\textup{H}}^{\ast}$ admits a $\rho^{\ast}$-equivariant map $\xi_{1}^{\ast}:\partial_{\infty}\pi_1(S_g) \rightarrow \mathbb{P}(\mathbb{R}^{12})$, so we obtain the $\rho$-equivariant map $\xi_{11}$.
\par The maps $\xi_{4}:\partial_{\infty}\pi_1(S_g) \rightarrow \mathsf{Gr}_{4}(\mathbb{R}^{12})$ and $\xi_{8}:\partial_{\infty}\pi_1(S_g) \rightarrow \mathsf{Gr}_{8}(\mathbb{R}^{12})$ defined as $$\xi_{4}(x)=\mathbb{R}^4\otimes \xi_{\textup{H}}(x), \ \xi_{8}(x)=\mathbb{R}^4\otimes \xi_{\textup{H}}^{-}(x) \ \ x\in \partial_{\infty}\pi_1(S_g),$$ are, by their definition, $\rho$-equivariant, continuous and transverse. Also for every $x \in \partial_{\infty}\pi_1(S_g)$ we have $\xi_1(x) \in \xi_4(x)$, hence $\xi_1$ is injective. It follows that $\xi_1(\partial_{\infty}\pi_1(S_g))=\Lambda_{\rho(\pi_1(S_g))} \cong S^1$. For $x \neq y$ the projective line segment $[\xi_{\textup{H}}(x),\xi_{\textup{H}}(y)]$ intersects $\Lambda_{\rho_{\textup{H}}(\Gamma)}$ at the set $\{\xi_{\textup{H}}(x),\xi_{\textup{H}}(y)\}$, hence $[\xi_{1}(x),\xi_{1}(y)]\cap \Lambda_{\rho(\Gamma)}=\{\xi_{1}(x),\xi_{1}(y)\}$. To see this, assume for $x_1,x_2,x_3\in \partial_{\infty}\pi_1(S_g)$, $x_2\neq x_3$, and $\xi_{\textup{H}}(x_i)=[u_i]$ and $\xi_{\textup{Fiber}}(x_i)=[v_i]$. If $v_{1}\otimes u_1\in \mathbb{R} v_2 \otimes u_2+\mathbb{R} v_3\otimes u_3$,  then $e_i \otimes u_1 \in \mathbb{R} e_i\otimes u_2+\mathbb{R} e_i\otimes u_3$, where $i\in \{1,\ldots,4\}$ is any index such that $\langle v_1,e_i\rangle \neq 0$. This implies $\xi_{\textup{H}}(x_1)\in \xi_{\textup{H}}(x_2)\oplus \xi_{\textup{H}}(x_3)$, hence $x_1=x_2$ or $x_1=x_3$.
\par The choice of the element $\gamma_0 \in \pi_1(S_g)$ such that $\ell_1(\rho_{\textup{H}}(\gamma_0))=\lambda^2$, $\lambda=\ell_1(\rho_{\textup{Fiber}}(\gamma_0))$, shows that the moduli of eigenvalues of $\rho(\gamma_0)=\rho_{\textup{Fiber}}(\gamma_0)\otimes \rho_{\textup{H}}(\gamma_0)$ in non-increasing order are $$\lambda^3,\lambda^2, \lambda^2,\lambda, \lambda ,1,1, \lambda^{-1},\lambda^{-1}, \lambda^{-2}, \lambda^{-2},\lambda^{-3}.$$ Thus, $\rho(\gamma_0)$ is not $P_k$-proximal for $k=2,4,6$ and $\rho$ is not $P_k$-Anosov for $k=2,4,6$. Let $\delta \in \pi_1(S_g)$ be a non-trivial element. Since $\phi$ is pseudo-Anosov, the infinite sequence of elements $(\phi_{\ast}^{(n)}(\delta))_{n \in \mathbb{N}}\subset \pi_1(S_g)$ has the property that $(|\phi_{\ast}^{(n)}(\delta)|_{\infty})_{n \in \mathbb{N}}$ is unbounded (where $|\cdot|_{\infty}$ is the stable translation length with respect to a fixed word metric on $\pi_1(S_g)$). By the definition of $\rho_{\textup{Fiber}}$, as $\rho_{\textup{Fiber}}(\phi_{\ast}^{(n)}(\delta))$ is conjugate to $\rho_{0}(\delta)$ for every $n$, there is $M>0$ such that $\frac{\ell_1(\rho_{\textup{Fiber}}(\phi_{\ast}^{(n)}(\delta)))}{\ell_2(\rho_{\textup{Fiber}}(\phi_{\ast}^{(n)}(\delta)))} \leqslant M$ for every $n \in \mathbb{N}$. Then, it is straightforward to check that the ratios $\Big(\frac{\ell_i(\rho(\phi_{\ast}^{(n)}(\delta)))}{\ell_{i+1}(\rho(\phi_{\ast}^{(n)}(\delta)))}\Big)_{n\in \mathbb{N}}$ are uniformly bounded for $i=1,3,5$, so $\rho$ is not $P_k$-Anosov for $k=1,3,5$. \end{proof} \end{Example}

\begin{Example}\normalfont {\em Necessity of the Cartan property.} \label{cp-necessity}The representation $\rho \times \rho_{\textup{H}}: \pi_1(S_g) \rightarrow \mathsf{SL}_{15}(\mathbb{R})$ (where $\rho$ and $\rho_{\textup{H}}$ are from Example \ref{ex1}) is $P_1$-divergent since $$\frac{\sigma_1((\rho\times \rho_{\textup{H}})(\gamma))}{\sigma_2((\rho\times \rho_{\textup{H}})(\gamma))}=\frac{\sigma_1(\rho_{\textup{Fiber}}(\gamma))\sigma_1(\rho_{\textup{H}}(\gamma))}{\max\big\{  \sigma_1(\rho_{\textup{Fiber}}(\gamma))\sigma_2(\rho_{\textup{H}}(\gamma)), \sigma_1(\rho_{\textup{H}}(\gamma))   \big\}} \longrightarrow +\infty$$ as $|\gamma|_{\pi_1(S_g)}\rightarrow +\infty$.

In addition, the product $\rho\times \rho_{\textup{H}}$ admits a pair of continuous, $(\rho\times \rho_{\textup{H}})$-equivariant, compatible and transverse maps $\xi^{+}:\partial_{\infty} \pi_1(S_g) \rightarrow \mathbb{P}(\mathbb{R}^{15})$ and \hbox{$\xi^{-}:\partial_{\infty}\pi_1(S_g) \rightarrow \mathsf{Gr}_{14}(\mathbb{R}^{15})$}, induced from the Anosov limit maps of $\rho_{\textup{H}}$, i.e. $\xi^{+}(x)=\{0\}\times \xi_{\textup{H}}(x)$ and $\xi^{-}(x)=\mathbb{R}^{12}\times \xi_{\textup{H}}^{-}(x)$, $x\in \partial_{\infty}\pi_1(S_g)$. However, $\rho \times \rho_{\textup{H}}$ is not $P_1$-Anosov since $\rho$ cannot uniformly dominate $\rho_{\textup{H}}$. This shows that the assumption of the Cartan property for the map $\xi^{+}$ in Theorem \ref{maintheorem} is necessary. \end{Example}

\begin{Example} \label{complex-quaternionic}\normalfont {\em Necessity of regularity of $\partial \Omega$ in Proposition \ref{main}}. Let $n \geqslant 2$ and $\Gamma$ be a convex cocompact subgroup of $\mathsf{SU}(n,1)\subset \mathsf{SL}_{n+1}(\mathbb{C})$. Let \hbox{$\tau_{2}:\mathsf{GL}_{n+1}(\mathbb{C}) \xhookrightarrow{} \mathsf{GL}_{2n+2}(\mathbb{R})$} be the standard inclusion defined as $$\tau_{2}(h):=\begin{pmatrix}
\textup{Re}(h) &-\textup{Im}(h) \\ 
\textup{Im}(h) & \textup{Re}(h)
\end{pmatrix}, \ \ h \in \mathsf{GL}_{n+1}(\mathbb{C}).$$

Note that for every $h\in \mathsf{SU}(n,1)$, since $\sigma_1(\tau_2(h))=\sigma_2(\tau_2(h))=\sigma_1(h)$ and $\sigma_{i}(h)=1$ for $i=3,\ldots,2n$, the subgroup $\tau_2(\Gamma) \subset \mathsf{SL}_{2n+2}(\mathbb{R})$ is $P_2$-Anosov but not $P_1$-Anosov (in particular not $P_1$-divergent). In addition, since $\sigma_1(\textup{sym}^2(\tau_2(\gamma)))=\sigma_1(\tau_2(\gamma))^2=\sigma_1(\gamma)^2$ for every $\gamma \in \Gamma\subset \mathsf{SU}(n,1)$, we conclude that there exist $J,k>0$ such that $$J^{-1}(\gamma_1 \cdot \gamma_2)_{e}-k \leq \Big(\textup{sym}^2(\tau_2(\gamma_1)) \cdot \textup{sym}^2(\tau_2(\gamma_2)) \Big)_{\varepsilon_1}\leq J(\gamma_1 \cdot \gamma_2)_e+k$$ for every $\gamma_1, \gamma_2 \in \Gamma$. Moreover, $\textup{sym}^2\big(\tau_2(\Gamma) \big)$ preserves a properly convex domain in $\mathbb{P}\big(\textup{Sym}^2 \mathbb{R}^{2n+2}\big)$ but it cannot preserve a strictly convex domain since $\textup{sym}^2(\tau_2(\Gamma))\subset \mathsf{SL}(\textup{Sym}^2(\mathbb{R}^d))$ is not  $P_1$-divergent.

Similar examples are given by convex cocompact subgroups of the rank 1 Lie group $\mathsf{Sp}(n,1) \subset \mathsf{GL}_{n+1}(\mathbb{H})$, where $\mathbb{H}=\mathbb{C}\oplus \mathbb{C}j$ are Hamilton's quaternions. By using the standard embedding $\tau_4:\mathsf{GL}_{n+1}(\mathbb{H})\xhookrightarrow{} \mathsf{GL}_{4n+4}(\mathbb{R})$, $\tau_4(C+Dj)=\tau_2\Big(\begin{pmatrix}[0.3] C & -D \\ \overline{D} & \overline{C} \end{pmatrix}\Big)$, where $C+Dj\in \mathsf{GL}_{n+1}(\mathbb{H})$, $C,D\in \mathsf{Mat}_{n+1}(\mathbb{C})$, for any convex cocompact subgroup $\Delta\subset \mathsf{Sp}(n,1)$, $\tau_4|_{\Delta}$ is $P_4$-Anosov but not $P_1$-Anosov. In addition, there are $R, r>1$ such that for every $h_1,h_2 \in \Delta$,  $$R^{-1}(h_1 \cdot h_2)_{e}-r \leq \Big(\textup{sym}^2(\tau_4(h_1)) \cdot \textup{sym}^2(\tau_4(h_2)) \Big)_{\varepsilon_1}\leq R(h_1 \cdot h_2)_e+r.$$ \end{Example}

\begin{Example}\label{nontransverse}\normalfont {\em Necessity of transversality in Theorem \ref{maintheorem} in the Zariski dense case.} There exists a Zariski dense representation $\rho_1 :\pi_1(S_g) \rightarrow \mathsf{PSL}_{4}( \mathbb{R})$ which is not $P_1$-Anosov but it admits a pair of continuous $\rho_1$-equivariant maps $\xi^{+}:\partial_{\infty}\pi_1(S_g) \rightarrow \mathbb{P}(\mathbb{R}^4)$ and $\xi^{-}:\partial_{\infty}\pi_1(S_g) \rightarrow \mathsf{Gr}_{3}(\mathbb{R}^4)$. \par Let $M^3$ be a closed hyperbolic $3$-manifold which contains a totally geodesic surface. By \cite{Agol}, up to replacing $M^3$ with a finite cover, we may also assume that $M^3$ fibers over the circle (with fiber $S_g$). By \cite{johnson-millson} the natural inclusion $j:\pi_1(M^3) \xhookrightarrow{} \mathsf{PO}(3,1)$ admits a non-trivial Zariski dense deformation $j':\pi_1(M^3) \rightarrow \mathsf{PSL}_4(\mathbb{R})$ which can be chosen to be $P_1$-Anosov, thanks to the openess of Anosov representations (see \cite{labourie, GW}). Let $\xi_{1}^{+}$ and $\xi_{1}^{-}$ be the Anosov limit maps of $j'$ into $\mathbb{P}(\mathbb{R}^4)$ and $\mathsf{Gr}_{3}(\mathbb{R}^4)$ respectively. By the theorem of Cannon-Thurston \cite{CannonThurston} there exists a continuous, $\pi_1(S_g)$-equivariant map $\theta: \partial_{\infty}\pi_1(S_g) \rightarrow \partial_{\infty}\pi_1(M^3)$. The restriction $\rho_1:=j'|_{\pi_1(S_g)}$ is Zariski dense, not a quasi-isometric embedding and $\xi_{1}^{+}\circ \theta $ and $\xi_{1}^{-} \circ \theta$ are continuous, non-transverse and $\rho_1$-equivariant maps. In addition, by \cite{Canary-covering}, every finitely generated free subgroup $F$ of $\pi_1(S_g)$ is a quasiconvex subgroup of $\pi_1(M^3)$. Hence, $\iota'|_{F}$ is $P_1$-Anosov and $\xi^{+}\circ \iota_{F}$ and $\xi^{-}\circ \iota_{F}$ are transverse.  

By \cite[Thm. 7.5]{LLS}, there are also examples of Zariski dense representations $\psi:\mathsf{\Delta}\rightarrow \mathsf{SL}_3(\mathbb{R})$ of triangle reflection groups $\mathsf{\Delta}$, which admit continuous, $\psi$-equivariant, injective maps $\xi^1:\partial_{\infty}\mathsf{\Delta}\rightarrow \mathbb{P}(\mathbb{R}^3)$, $\xi^{2}:\partial_{\infty}\mathsf{\Delta}\rightarrow \mathsf{Gr}_2(\mathbb{R}^3)$ (hence $\psi$ is discrete and faithful), but $\psi$ is not $P_1$-Anosov. \end{Example}


\begin{thebibliography}{100}

\bibitem{AMS} H. Abels, G. Margulis and G. {\em Soifer, Semigroups containing proximal linear maps}, Israel J. Math. {\bf 91} (1995), 1--30.

\bibitem{Agol} I. Agol, {\em The virtual Haken conjecture} (with an appendix by Ian Agol, Daniel Groves and Jason Manning), Doc. Math. {\bf 18} (2013), 1045-1087.

\bibitem{Auslander} L. Auslander, {\em Bierbach's theorem on space of groups and discrete uniform subgroups of Lie groups II}, Amer. J. Math. {\bf 83} (1961), 276--280.



\bibitem{benoist-limitcone} Y. Benoist, {\em Propri\'et\'es asymptotiques des groupes lin\'eaires}, Geom.  and Funct. Anal. {\bf 7} (1997), 1--47.

\bibitem{limitcone2} Y. Benoist, {\em Propri\'et\'es asymptotiques des groupes lin\'eaires II}, Advanced Studies Pure Math. {\bf 26} (2000), 33--48.


\bibitem{Benoist-convexcones} Y. Benoist, {\em Automorphismes des cones convexes}, Invent. Math. {\bf 141} (2000), 149--193.


\bibitem{benoist-divisible1} Y. Benoist, {\em Convexes divisibles I}, in Algebraic groups and arithmetic,
Tata Inst. Fund. Res. Stud. Math. {\bf 17} (2004), 339--374.

\bibitem{Beyrer} J. Beyrer, {\em Cross ratios on boundaries of symmetric spaces and Euclidean buildings}, Transform. Groups, {\bf 26}(1) (2021), 31--68.

\bibitem{BPS} J. Bochi, R. Potrie and A. Sambarino, {\em Anosov representations and dominated splittings}, J. Eur. Math. Soc., {\bf 21} (2019), 3343--3414.

\bibitem{BH} M. Bridson and A. Haefliger, Metric spaces of non-positive curvature, Grundlehren der
Mathematischen Wissenschaften, 319. Springer-Verlag, Berlin, 1999.


\bibitem{Burger} M. Burger, {\em Intersection, the Manhattan curve, and Patterson-Sullivan theory in rank 2},  Int. Math. Res. Notices {\bf 7}(1993), 217-225. 


\bibitem{Canary-covering} R. Canary, {\em A covering theorem for hyperbolic 3-manifolds and its applications},  Topology {\bf 35} (1996), 751-778.


\bibitem{Canary-notes} R. Canary, Informal Lecture Notes on Anosov Representations, available at: \texttt{http://www.math.lsa.umich.\\edu/canary/}, 2021.


\bibitem{Canary} R. Canary and K. Tsouvalas, {\em Topological restrictions on Anosov representations}, J. Topol. {\bf 13} (2020), 1497--1520.

\bibitem{CannonThurston} J. Cannon and W. Thurston, {\em Group invariant Peano curves}, Geom. Topol.  {\bf 11} (2007), 1315--1355.


\bibitem{Champetier} C. Champetier, {\em Petite simplification dans les groupes hyperboliques},  Ann. Fac. Sci. Toulouse Math. (6), {\bf 3} (1994), 161--221.

\bibitem{CLT} D. Cooper, D. D. Long and S. Tillmann, {\em On convex projective manifolds and cusps}, Adv. Math. {\bf 277} (2015), 181--251.


\bibitem{CDP} M. Coornaert, T. Delzant, and A. Papadopoulos, G\'eom\'etrie et th\'eorie des groupes: les groupes hyperboliques de Gromov, Vol. 1441. Springer, 2006.

\bibitem{DGK0} J. Danciger, F. Gu\'eritaud, and F. Kassel, {\em Convex cocompact actions in real projective geometry}, preprint, \href{https://arxiv.org/abs/1704.08711}{arXiv:1704.08711}, 2017.

\bibitem{DGLM} T. Delzant, F. Labourie, O. Guichard and S. Mozes , {\em Displacing representations and orbit maps}, Geometry, rigidity and group actions, University of Chicago Press (2011), 494--514.

\bibitem{Floyd} W. Floyd, {\em Group completions and limit sets of Kleinian groups}, Invent. Math. {\bf 11} (1980), 205--218.

\bibitem{Gromov} M. Gromov, Hyperbolic groups, in Essays in Group Theory, Ed. M. Gersten, MSRI publications, p. 75-263 Springer Verlag, 1987.

\bibitem{Gromov2} M. Gromov, Asymptotic invariants of infinite groups, in Geometric Group Theory, vol. 2, Ed. G. A. Niblo and M. A. Roller,  London Math Soc., LNS 182 Cambridge University Press, 1993. 


\bibitem{GGKW} F. Gu\'eritaud, O. Guichard, F. Kassel and A. Wienhard, {\em Anosov representations and proper actions}, Geom. Top. {\bf 21} (2017), 485--584.


\bibitem{GW} O. Guichard and A. Wienhard, {\em Anosov representations: Domains of discontinuity and
applications},  Invent. Math. {\bf 190} (2012), 357--438.


\bibitem{IZ} M. Islam and A. Zimmer, {\em Convex co-compact groups with one dimensional boundary faces}, J. Topol. {\bf 17} (2) (2024): e12332.


\bibitem{johnson-millson} D. Johnson and J. Millson, {\em Deformation spaces associated to compact 
hyperbolic manifolds}, in Discrete groups in geometry and analysis, Progr. Math. vol. 67 (1987), Birkh\"auser 48--106.


\bibitem{KLrel} M. Kapovich and B. Leeb, {\em Relativising characterizations of Anosov groups subgroups, I}, With an appendix by G. Soifer, preprint, \href{https://arxiv.org/abs/1807.00160}{arXiv:1807.00160}, 2018.


\bibitem{KLP1} M. Kapovich, B. Leeb and J. Porti, {\em Anosov subgroups: Dynamical and geometric characterizations}, Eur. Math. J. {\bf 3} (2017), 808--898.


\bibitem{KLP2} M. Kapovich, B. Leeb and J. Porti, {\em A Morse Lemma for quasigeodesics in symmetric spaces and 
Euclidean buildings},  Geom. Topol. {\bf 22} (2018), 3827--3923. 


\bibitem{KLP3} M. Kapovich, B. Leeb and J. Porti, {\em Morse actions of discrete groups on symmetric spaces}, preprint, \href{https://arxiv.org/abs/1403.7671}{arXiv:1403.7671}, 2014.


\bibitem{Karlsson} A. Karlsson, {\em Free subgroups of groups with nontrivial Floyd boundary}, {\em Comm. Algebra}, {\bf 31} (2003), 5361--5376.


\bibitem{kassel-potrie} F. Kassel and R. Potrie, {\em Eigenvalue gaps for hyperbolic groups and semigroups}, J. Mod. Dyn. {\bf 18} (2022), p. 161-208.


\bibitem{kassel-potrie-AMSlemma} F. Kassel and R. Potrie, {\em A simultaneous Abels-Margulis-Soifer lemma}, preprint: arXiv:2508.08111, 2025.


\bibitem{kassel} F. Kassel, {\em Geometric structures and representations of discrete groups}, {\em Proceedings of the ICM 2018}, vol. 2, p.1113-1150, World Scientific, 2019.

\bibitem{labourie} F. Labourie,  {\em Anosov flows, surface groups and curves in projective space},  Invent. Math. {\bf 165} (2006), 51--114.

\bibitem{LLS} G-S. Lee, J. Lee and F. Stecker, {\em Anosov triangle reflection groups in $\mathsf{SL}(3,\mathbb{R})$}, J. Diff.Geom. (to appear), preprint: arXiv:2106.11349, 2021

\bibitem{Mineyev} I. Mineyev, {\em Flows and joins of metric spaces}, Geom. Topol., {\bf 9} (2005), 403--482.

\bibitem{Otal} J. P. Otal, {\em Le th\'eor$\grave{\textup{e}}$me d'hyperbolisation pour les vari\'et\'es fibr\'ees de dimension 3}, Ast\'erisque {\bf 235} (1996), \hbox{175 p.}

\bibitem{Selberg} A. Selberg, {\em On discontinuous groups in higher dimensional symmetric spaces} in Contributions to Function Theory, Tata Inst. Fund. Res. Stud. Math. (1960), 147--164.

\bibitem{Thurston} W. Thurston,  {\em Hyperbolic structures on 3-manifolds, II: Surface groups and 3-manifolds which fiber over the circle}, preprint, \href{https://arxiv.org/abs/math/9801045}{arXiv:math/9801045}, 1998.

\bibitem{Tits} J. Tits. {\em Repr\'esentations lin\'eaires irr\'eductibles d' un groupe r\'eductif sur un corps quelconque}, J. Reine Angew. Math., {\bf 247} (1971), 196--220.


\bibitem{Zhu} F. Zhu, {\em Relatively dominated representations}, Ann. Inst. Fourier {\bf 71} (2021), 2169--2235.


\bibitem{Zimmer} A. Zimmer, {\em Projective Anosov representations, convex cocompact actions, and rigidity}, J. Diff. Geom. {\bf 119} (2021), 513-586.

\bibitem{Weisman} T. Weisman, {\em An extended definition of Anosov representation for relatively hyperbolic groups}, preprint: \href{https://arxiv.org/abs/2205.07183}{arXiv:2205.07183}, 2022.
\end{thebibliography}
\end{document}